\documentclass[american]{article}
\usepackage[utf8]{inputenc}
\usepackage{color}
\usepackage{mathtools}
\usepackage{amsmath}
\usepackage{amscd}
\usepackage{amstext}
\usepackage{amsthm}
\usepackage{amssymb}
\usepackage{geometry}
\usepackage{booktabs}
\usepackage{tikz}
\usepackage{tikz-cd}
\usepackage{mathrsfs}
\usepackage{enumitem}

\geometry{verbose}
\usepackage[unicode=true,
 bookmarks=false,
 breaklinks=false,pdfborder={0 0 1},backref=section,colorlinks=true]
 {hyperref}
\hypersetup{
 linkcolor=blue,citecolor=blue}

\newcommand{\depthstd}{\operatorname{depth}^{\mathrm{std}}}

\makeatletter
\numberwithin{equation}{section}
\numberwithin{figure}{section}

\theoremstyle{plain}
\newtheorem{theorem}{Theorem}[section]
\newtheorem{lemma}[theorem]{Lemma}
\newtheorem{proposition}[theorem]{Proposition}

\newtheorem{conjecture}[theorem]{Conjecture}

\theoremstyle{definition}
\newtheorem{definition}[theorem]{Definition}
\theoremstyle{remark}
\newtheorem{remark}[theorem]{Remark}

\newcounter{thmA}
  
\newtheorem{theoremA}[thmA]{Theorem}

\DeclareMathOperator{\Hom}{Hom}
\DeclareMathOperator{\depth}{depth}
\DeclareMathOperator{\dep}{dep}
\DeclareMathOperator{\Gal}{Gal}

\DeclareMathOperator{\vol}{vol}
\DeclareMathOperator{\Irr}{Irr}

\DeclareMathOperator{\tr}{tr}

\DeclareMathOperator{\Ext}{Ext}

\DeclareMathOperator{\End}{End}
\DeclareMathOperator{\Frob}{Frob}
\DeclareMathOperator{\Res}{Res}
\DeclareMathOperator{\Ind}{Ind}

\DeclareMathOperator{\cind}{c-ind}

\DeclareMathOperator{\Cent}{Cent}
\DeclareMathOperator{\Kaz}{Kaz}

\DeclareMathOperator{\Supp}{Supp}

\DeclareMathOperator{\std}{std}
\DeclareMathOperator{\Stab}{Stab}
\DeclareMathOperator{\pr}{pr}
\DeclareMathOperator{\red}{red}

\DeclareMathOperator{\sh}{sh}

\newcommand{\CC}{\mathbb{C}}

\newcommand{\OO}{\mathcal{O}}

\newcommand{\Gm}{\mathbb{G}_m}

\newcommand{\cH}{\mathcal{H}}            
\newcommand{\Hc}{\mathcal{H}}

\newcommand{\scrO}{\mathscr{O}}

\newcommand{\fracp}{\mathfrak{p}}

\newcommand{\cO}{\mathcal{O}}
\newcommand{\cR}{\mathcal{R}}

\newcommand{\Del}{\mathrm{Del}}

\newcommand{\G}{G}

\newcommand{\id}{\mathrm{id}}
\newcommand{\one}{\mathbf{1}}

\newcommand{\ind}{\operatorname{ind}}

\newcommand{\C}{\mathbb{C}}
\newcommand{\Q}{\mathbb{Q}}
\newcommand{\Z}{\mathbb{Z}}

\setlist[enumerate,1]{label=\textup{(\roman*)}, ref=(\roman*)}

\usepackage[hang,flushmargin]{footmisc}

\date{}
\title{Depth Preservation and Close-Field Transfer in the Local Langlands Correspondence}
\author{Manish Mishra\thanks{
Email: \texttt{manish@iiserpune.ac.in}.\\
Address: Department of Mathematics, IISER Pune, Dr. Homi Bhabha Road, Pashan, Pune 411008, India.\\
The author was partially supported by SERB Core Research Grant (CRG/2022/000415).
}}
\begin{document}

\maketitle

\begin{abstract}
We introduce a revised notion of depth for Langlands parameters for tori defined over a nonarchimedean local field \(F\) that restores depth preservation under the local Langlands correspondence (LLC). We leverage that preservation to derive structural results that, taken together, yield a canonical transfer of broad harmonic-analytic results from characteristic \(0\) to characteristic \(p\). When \(F\) has suitably large positive characteristic, we prove a block-by-block equivalence: each Bernstein block of  \(G(F)\) is equivalent to a corresponding block for some \(G'(F')\) with \(F'\) of characteristic \(0\) \(\ell\)-close to \(F\); using this, we show that a LLC in characteristic \(0\) corresponds canonically to a LLC in characteristic \(p\). For regular supercuspidals we give a direct, more structured construction via Kaletha. Along the way we recover and extend results on \(\ell\)-close fields---introducing a depth-transfer function generalizing the normalized Hasse--Herbrand function, proving truncated isomorphisms for arbitrary tori and parahorics, establishing a depth and supercuspidality preserving Kazhdan-type Hecke-algebra isomorphism for arbitrary maximal  parahorics of arbitrary connected reductive groups; and a generalized Cartan decomposition for arbitrary maximal parahorics---thereby subsuming several earlier results in the literature. Collectively, the results let one work in characteristic \(0\) without loss of generality for a wide swath of harmonic analysis on \(p\)-adic groups.
\end{abstract}

\tableofcontents

\section{Introduction}

The depth of representations and parameters serves as a fundamental invariant in the harmonic analysis of p-adic groups and the local Langlands correspondence (LLC). The central contribution of this work is the introduction of a new notion of depth of a Langlands parameter which rectifies a key deficiency in the standard notion - the preservation of depth. Leveraging this foundation, we establish a canonical transfer of harmonic analysis results from non-archimedean local fields of characteristic zero to those over local function fields, including the local Langlands correspondence. 

For a non-archimedean local field $F$  with maximal ideal \(\mathfrak p_F\subset\mathcal O_F\), local class-field theory interprets the filtration
\[
1 + \mathfrak p_F^{\,r}\;\subset\;F^{\times},\qquad r\ge 0,
\]
as the upper--numbering filtration $\{W_F^{\,s}\}_{s\ge 0}$ of the Weil group. The \emph{Hasse--Herbrand function} \(\phi_{E/F}\) transfers this numbering across an extension $E/F$, matching the depth-$r$ elements of $E^{\times}$ with the \(s:=\phi_{E/F}(er)\)-th filtration subgroup \(W_F^s\), where $e$ denotes the ramification index of the extension. The correspondence is rigorously captured in the local Langlands correspondence for the induced torus \(T=\Res_{E/F}\mathbb{G}_m\): a depth-\(r\) character of \(T(F)\) corresponds to its \emph{standard} depth \(\Phi_T(r):=\phi_{E/F}(er)\) Langlands parameter. Consequently, when $E/F$ is wildly ramified, this standard depth  fails to respect depth preservation limiting its compatibility with harmonic analysis. 
To resolve this, we work with Yu's \emph{minimal congruence filtration} subgroups (see \S \ref{sec:filtrations}) and we introduce the following notion of the depth of a Langlands parameter \(\varphi\) of an arbitrary \(F\)-torus \(T\), 
\[
   \depth_T(\varphi)\;:=\;
   \sup_{f:R\to T}\,\depth_R\!\bigl(\widehat f\circ\varphi\bigr).
\]
Here, the supremum is taken over all induced \(F\)-tori \(R\) and all \(F\)-morphisms \(f:R \to T\) and where depth in the case of an induced torus is defined in such a way as to make its LLC depth preserving. When \(T\) is tamely ramified, this notion of depth reduces to the standard notion. 

To control depth bounds  for connected reductive group $G/F$, we attach to
each vertex $x$ of the (reduced) Bruhat--Tits building a \emph{depth-transfer function} (\S \ref{subsec:defglobaldepcomp}),
\[
   \Phi_{G,x}\colon\mathbb{R}_{\ge0}\longrightarrow\mathbb{R}_{\ge0},
\]
which simultaneously accounts for the ramification of all \emph{maximally unramified} elliptic maximal tori associated to $x$ and of all root subgroups (\S \ref{subsec:defglobaldepcomp}). For an induced torus $T=\Res_{E/F}\mathbb{G}_m$, one has $\Phi_T=\phi^{\mathrm{norm}}_{E/F}$, so $\Phi_{G,x}$ can be considered a generalization of the normalized Hasse--Herbrand function. The new depth and depth-transfer function enable a uniform treatment of depth-$r$ phenomena across the representation theory of $G$ and its behavior under Deligne--Kazhdan close-field transfer. 

Throughout this article, we work with Yu's minimal congruence filtration in place of Moy-Prasad filtrations.  Our first theorem (Theorem \ref{thm:dep-r-LLC-forTori}) establishes depth preservation for tori without any tameness hypothesis.

\paragraph{Depth-\textit{r} local Langlands for tori}
\begin{theoremA}[Depth-$r$ LLC for tori]\label{thm:A}
Let $T/F$ be an arbitrary torus and let $r\ge0$. The  local Langlands correspondence for tori
$\mathcal L_T\colon\Hom\!\bigl(T(F),\mathbb{C}^\times\bigr)\xrightarrow{\sim} H^1(W_F,\widehat T)$
preserves depth, i.e., for every character $\chi$ of $T(F)$, we have,
        \[
          \operatorname{dep}\bigl(\mathcal L_T(\chi)\bigr)
          \;=\;
         \!\operatorname{dep}(\chi),
        \]
where the depth of a Langlands parameter is as defined in Definition \ref{def:depth-parameter}.
\end{theoremA}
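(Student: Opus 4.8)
The plan is to reduce the statement, by the very definition of $\depth_T(\varphi)$, to the case of an induced torus, where the corresponding statement is essentially the compatibility of the Hasse--Herbrand function with local class field theory. More precisely, recall that $\depth_T(\chi)$ is defined (on the Galois side after $\mathcal L_T$) via a supremum over induced tori $R$ and morphisms $f\colon R\to T$, and on the automorphic side the depth of $\chi$ should admit a dual description via a supremum over the induced cocharacter lattice, i.e.\ over maps $T\to R$ dually, or equivalently over the cofiltration of $T(F)$ coming from norm maps $R(F)\to T(F)$. The first step is therefore to spell out the ``dual'' characterization of $\depth(\chi)$ matching the definition of $\depth_T(\varphi)$: since $\mathcal L_R$ is functorial in $R$ (compatibility of the LLC for tori with homomorphisms of tori, which is part of its characterization), for any $f\colon R\to T$ we have $\widehat f\circ\mathcal L_T(\chi)=\mathcal L_R(\chi\circ f)$, so
\[
\depth\bigl(\mathcal L_T(\chi)\bigr)=\sup_{f\colon R\to T}\depth_R\bigl(\mathcal L_R(\chi\circ f)\bigr),
\]
and it suffices to prove $\depth_R(\mathcal L_R(\psi))=\depth_R(\psi)$ for all induced tori $R$ and characters $\psi$ of $R(F)$, together with the matching statement that $\sup_{f}\depth_R(\chi\circ f)=\depth(\chi)$ on the automorphic side. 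This last equality is where Yu's minimal congruence filtration enters: the depth filtration on $T(F)$ is defined precisely so that it is the ``inductive limit'' of the depth filtrations on induced tori mapping to $T$, so this is close to definitional, but I would need to verify that the normalization conventions on the two sides agree (this is the main place a factor could go wrong).

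The heart of the argument is thus the induced-torus case. Here $R=\prod_i \Res_{E_i/F}\Gm$ reduces, by multiplicativity of depth and of $\mathcal L_R$ under products, to $R=\Res_{E/F}\Gm$ for a single finite separable extension $E/F$. In that case $R(F)=E^\times$, $\widehat R=\Ind_{W_E}^{W_F}\CC^\times$, and $\mathcal L_R$ is Shapiro's isomorphism $H^1(W_F,\Ind_{W_E}^{W_F}\CC^\times)\cong H^1(W_E,\CC^\times)=\Hom(E^\times,\CC^\times)$ composed with the identity, i.e.\ $\mathcal L_R$ is (up to the chosen normalization) just local class field theory for $E$. The second step is then to compute both sides explicitly: on the automorphic side, $\depth_R(\chi)$ for $\chi$ a character of $E^\times$ is, by the definition chosen in \S\ref{sec:filtrations} to make induced-torus LLC depth preserving, the quantity $\phi_{E/F}^{\mathrm{norm}}$ applied to the ordinary depth of $\chi$ as a character of $E^\times$ (i.e.\ the least $s$ with $\chi$ trivial on $1+\mathfrak p_E^s$, suitably normalized); on the Galois side, $\depth_R(\varphi)$ for $\varphi\in H^1(W_F,\Ind_{W_E}^{W_F}\CC^\times)$ is the depth of the corresponding character of $W_E$ in the upper-numbering filtration of $W_F$, i.e.\ again $\phi_{E/F}$ applied to the depth in the upper-numbering filtration of $W_E$. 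The third step is to invoke the classical compatibility of local class field theory with the upper-numbering filtration (the Hasse--Arf/Herbrand statement recalled in the introduction: $\mathrm{Art}_E(1+\mathfrak p_E^s)=W_E^{\phi_{E/F}(s)}\cap (\text{appropriate group})$, equivalently $\mathrm{Art}_E$ carries the depth-$s$ filtration of $E^\times$ to the $\phi_{E/F}$-shifted upper-numbering filtration), which gives exactly the equality of the two explicit expressions. Combining, $\depth_R(\mathcal L_R(\psi))=\depth_R(\psi)$, and feeding this back into the displayed supremum yields $\depth(\mathcal L_T(\chi))=\depth(\chi)$.

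The main obstacle I anticipate is not any single deep input — each ingredient (functoriality of LLC for tori, Shapiro's lemma identification, the Herbrand compatibility, multiplicativity over products) is standard — but rather the bookkeeping of \emph{normalizations} so that the supremum defining $\depth_T$ on the Galois side matches the inductive definition of the depth filtration on $T(F)$ on the automorphic side, \emph{including} the subtlety that Yu's minimal congruence filtration differs from the Moy--Prasad filtration for wildly ramified tori. Concretely, I would need to check: (a) that the ordinary depth of a character of $R(F)=E^\times$ with respect to Yu's minimal congruence filtration coincides with (or differs by a controlled amount from) the naive depth with respect to $\{1+\mathfrak p_E^s\}$, and that the definition in \S\ref{sec:filtrations} of $\depth_R$ builds in $\phi^{\mathrm{norm}}_{E/F}$ in exactly the way that makes step three an equality rather than merely an inequality; and (b) that the supremum over all $f\colon R\to T$ is actually attained (or approached) — for this one uses that $T$ itself is a quotient of an induced torus, or more precisely that the cocharacter module $X_*(T)$ is a quotient of a permutation module, so there is a surjection $R_0\onto T$ with $R_0$ induced realizing the supremum, reducing the sup to a single term. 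Once (a) and (b) are pinned down the proof is short; the risk is entirely in the definitional interface, so I would present the argument by first proving the induced-torus equality cleanly and then deducing the general case from the definition in one line.
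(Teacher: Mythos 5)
Your overall architecture (functoriality of $\mathcal L_T$ reduces everything to the induced case, which is Shapiro plus the Herbrand compatibility of class field theory with upper numbering) is the same as the paper's: your induced-torus step is exactly Lemma~\ref{lem:depth-r-simple-induced} and Lemma~\ref{lem:depth-pres-ind}, and your inequality $\sup_f\depth_R(\chi\circ f)\le\depth(\chi)$ via functoriality of the filtration is the paper's inequality (I). The gap is in the reverse inequality, i.e.\ in your claim that $\sup_{f:R\to T}\depth_R(\chi\circ f)=\depth(\chi)$ is ``close to definitional.'' It is not: Yu's minimal congruence filtration is defined by generation by images of induced tori and morphisms \emph{over the strict henselization} $F^{\mathrm{sh}}$ (Definition in \S\ref{sub:min-congruence}), followed by taking $\mathcal O_F$-points of the resulting model, whereas the supremum in Definition~\ref{def:depth-parameter} runs over induced $F$-tori and $F$-morphisms. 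What one actually needs is that the graded piece $T(F)_d/T(F)_{d+}$ over $F$ itself is generated by the images $f\bigl(R(F)_d/R(F)_{d+}\bigr)$ for $F$-rational $f:R\to T$ with $R$ induced; this is the paper's Lemma~\ref{lem:gen-by-induced}, whose proof requires Proposition~\ref{P:depth-shift} together with a Galois descent argument on the graded pieces viewed as $\kappa$-vector spaces. Without that lemma, the implication ``$\chi$ nontrivial on $T(F)_d$ $\Rightarrow$ some $\chi\circ f$ nontrivial on $R(F)_d$'' (which is what forces $\sup_f\depth_R(\chi\circ f)\ge d$) has no justification. So the real issue is rationality/descent, not the normalization bookkeeping you flag.

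Your proposed fix in (b) — realize the supremum by a single surjection $R_0\twoheadrightarrow T$ from an induced torus — does not repair this. A surjection of $F$-tori need not be surjective on $F$-points (there is an $H^1$ obstruction), and even rationally a single induced cover does not in general compute Yu's filtration: the definition of $T(F^{\mathrm{sh}})_r^{\mathrm{mc}}$ uses the subgroup generated by the images of \emph{all} morphisms from induced tori, and there is no reason a fixed $R_0\onto T$ has $f\bigl(R_0(F)_d\bigr)$ meeting $T(F)_d$ nontrivially modulo $T(F)_{d+}$, which is what you would need to conclude $\depth_{R_0}(\chi\circ f)\ge\depth(\chi)$. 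The correct replacement is precisely the generation-by-induced-images statement for the graded quotients over $F$ (Lemma~\ref{lem:gen-by-induced}), after which the paper concludes by the contradiction argument you could then also run: if $\depth(\chi)=d>r:=\depth(\varphi)$, pick $t\in T(F)_d$ with $\chi(t)\neq1$, write its class as a product of images of classes from induced tori, and deduce that some $\chi\circ f$ has depth $\ge d$, contradicting the induced-case equality and $\depth(\varphi)=r$.
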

Theorem~\ref{thm:A} extends Yu's depth-preservation theorem (\(T\) tame, \(r>0\)) and the induced-torus result of \cite{MishraPattanayak2015} (\(T=\Res_{E/F}\mathbb{G}_m\)).
\smallskip
Using Deligne's isomorphism on the Galois side of this correspondence, we obtain (Theorem \ref{thm:congruent-isom}),
\paragraph{Truncated isomorphisms over $\ell$-close fields}
\begin{theoremA}[Truncated-torus close field isomorphisms]\label{thm:B}
Let $F$ and $F'$ be $\ell$-close and let $T/F$ be a torus with
transfer $T'/F'$.  If $r\geq 0$ such that $\Phi_T(r)\leq \ell$, there is a canonical
isomorphism,
\[
   T(F)\big/\!T(F)_r
   \;\xrightarrow{\;\sim\;}
   T'(F')\big/\!T'(F')_{\,r}.
\]

\end{theoremA}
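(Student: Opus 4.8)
The plan is to transport the truncated torus along the \emph{Galois side} of the local Langlands correspondence, where depth becomes upper-numbering ramification and Deligne's isomorphism is available, and then to descend the resulting comparison of character groups back to the groups themselves. Concretely, Theorem~\ref{thm:A} upgrades $\mathcal L_T$ to a depth-preserving bijection, so restricting it to the characters that are trivial on $T(F)_r$ yields a canonical isomorphism
\[
   \Hom\!\bigl(T(F)/T(F)_r,\mathbb C^\times\bigr)\;\isomto\;\{\varphi\in H^1(W_F,\widehat T)\ :\ \dep(\varphi)\le r\},
\]
and likewise for $T'/F'$. The problem is thereby reduced to producing a canonical identification of the depth-$\le r$ parts of $H^1(W_F,\widehat T)$ and $H^1(W_{F'},\widehat{T'})$, together with a recipe for reading $T(F)/T(F)_r$ off of its character group.

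For the Galois-side step I would proceed as follows. The close-field dictionary (Deligne--Kazhdan) attaches to $T/F$ its transfer $T'/F'$ precisely by transporting the cocharacter lattice $X_*(T)$ together with its action of $\GFell$ across Deligne's isomorphism $\GFell\cong\Gamma_{F'}/I_{F'}^\ell$; dually this moves $\widehat T$ to $\widehat{T'}$ as $\mathbb C$-tori equipped with actions factoring through $\GFell$, respectively $\Gamma_{F'}/I_{F'}^\ell$. The crucial claim is then that the subgroup of classes of depth $\le\Phi_T(\ell)$ in $H^1(W_F,\widehat T)$ consists exactly of those inflated from $H^1(\GFell,\widehat T)$, and that Deligne's isomorphism --- refined to an isomorphism of groups carrying their upper-numbering ramification filtrations, once compatible uniformizers are fixed --- carries this subgroup isomorphically onto the analogous subgroup of $H^1(W_{F'},\widehat{T'})$. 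This uses that $\Phi_T=\phi^{\mathrm{norm}}_{E/F}$ is calibrated so that the $\ell$-th upper-ramification break matches depth $\Phi_T(\ell)$ on the parameter side, together with $\Phi_{T'}=\Phi_T$ (the depth-transfer function depends only on the ramification data of $T$, which is exactly what Deligne's isomorphism preserves). Restricting to $\dep\le r$ for $r\le\Phi_T(\ell)$ then gives the desired canonical isomorphism of depth-$\le r$ cohomology.

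Composing the three isomorphisms produces a canonical isomorphism $\Hom(T(F)/T(F)_r,\mathbb C^\times)\isomto\Hom(T'(F')/T'(F')_r,\mathbb C^\times)$. To conclude, I would note that $T(F)/T(F)_r$ is finitely generated abelian: the maximal bounded subgroup $T(F)_0$ has $T(F)_0/T(F)_r$ finite (a discrete quotient of a profinite group by an open subgroup), while $T(F)/T(F)_0$ is finitely generated of free rank equal to the $F$-split rank of $T$, i.e.\ the rank of $X_*(T)^{\GammaF}$, which is transported by Deligne's isomorphism since the relevant action factors through $\GFell$. For finitely generated abelian $A$, the functor $A\mapsto\Hom(A,\mathbb C^\times)$ is the functor of $\mathbb C$-points of the diagonalizable $\mathbb C$-group $D(A)$, and $A\mapsto D(A)$ is an anti-equivalence onto diagonalizable $\mathbb C$-groups; the isomorphism just built arises from an isomorphism of the underlying lattice-with-action data, hence respects this extra structure, and passing to character lattices gives the asserted isomorphism $T(F)/T(F)_r\isomto T'(F')/T'(F')_r$. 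Canonicity is inherited from that of $\mathcal L_T$, $\mathcal L_{T'}$ (Theorem~\ref{thm:A}) and of Deligne's isomorphism for the fixed compatible uniformizers built into the notion of $\ell$-closeness.

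The step I expect to be the main obstacle is the second one: reconciling the depth filtration on $H^1(W_F,\widehat T)$ --- which, recall, is defined in Definition~\ref{def:depth-parameter} via a supremum over all morphisms from induced tori --- with the naive ``level of inflation'' filtration coming from the tower $\{\GFell\}_\ell$, and then checking that Deligne's isomorphism is compatible with it after the normalization encoded in $\Phi_T$. This requires (i) reducing the sup-over-induced-tori depth of a class to an honest statement about the upper-numbering ramification of a representing cocycle, (ii) pinning down the exact shape of $\Phi_T$ relative to the normalized Hasse--Herbrand function, and (iii) invoking the filtered refinement of Deligne's isomorphism; the remaining steps are essentially formal.
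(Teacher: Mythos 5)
Your overall route is the same as the paper's: prove the theorem on the Galois side by combining the depth-preserving LLC for tori (Theorem~\ref{thm:dep-r-LLC-forTori}) with Deligne's isomorphism on truncated cohomology, then recover the statement about the groups by dualizing. However, the step you yourself flag as ``the main obstacle'' is precisely where the paper's proof does its work, and your formulation of that step is incorrect as stated. The calibration runs in the other direction: by Proposition~\ref{prop:Phi-bounds-std}, a parameter of (new) depth $r$ has \emph{standard} depth at most $\Phi_T(r)$, so the depth-$<r$ classes are guaranteed to be inflated from level $\ell$ only when $\Phi_T(r)\le\ell$ --- which is the hypothesis actually used in the paper's working version, Theorem~\ref{thm:congruent-isom} --- not when $r\le\Phi_T(\ell)$. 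Since $\Phi_T(\ell)\ge\ell$ (strictly in the wild case), your ``crucial claim'' that the classes of depth $\le\Phi_T(\ell)$ are \emph{exactly} those inflated from $H^1(\Gamma_F/I_F^{\ell},\widehat T)$ fails for wildly ramified $T$: such a class can have standard depth as large as $\Phi_T(\Phi_T(\ell))>\ell$, so it is not seen at level $\ell$ and the transport across Deligne's isomorphism breaks down. (A related slip: $\Phi_T=\phi^{\mathrm{norm}}_{E/F}$ only for $T=\Res_{E/F}\Gm$; for a general torus $\Phi_T$ is the more involved Definition~\ref{def:PhiT}, cf.\ Remark~\ref{R:Phi-vs-HH}, so the ``calibration'' cannot be read off a single Hasse--Herbrand function.)

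Even in the correct range $\Phi_T(r)\le\ell$, the content you defer as items (i)--(iii) is the substance of the argument, and the paper supplies it through two specific inputs you would need to reproduce: Proposition~\ref{prop:Phi-bounds-std}, which converts the sup-over-induced-tori depth into an honest bound on the upper-numbering ramification of the parameter (so that all characters of $T(F)/T(F)_r$ have parameters factoring through $W_F/W_F^{s}$ with $s=\Phi_T(r)\le\ell$), and Proposition~\ref{prop:H1-dep-preserve}, which shows that the Deligne transfer of parameters preserves the new depth (because the test morphisms $f:R\to T$ from induced tori, their ramification indices and Herbrand functions all transfer), and hence that the depth-$<r$ subgroups on the two sides correspond under $\operatorname{Del}_{T,\ell}$; without this second point you cannot identify the images of $\mathcal L_T$ and $\mathcal L_{T'}$. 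Your final step (observing $T(F)/T(F)_r$ is finitely generated and invoking the diagonalizable-group anti-equivalence) is fine and plays the role of the paper's appeal to Pontryagin duality. In sum: right skeleton, but the pivotal intermediate claim is false as you state it (the $\Phi_T$-bound is applied in the wrong direction, following the looser bound in the statement rather than the bound $\Phi_T(r)\le\ell$ under which the paper proves the result), and the essential verifications are acknowledged rather than carried out.
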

When $r\in\mathbb{Z}_{\ge0}$, this recovers the main result of Aubert--Varma \cite{AV24}.
\smallskip

Before stating our next result, we first recall a result of  Chai--Yu \cite{ChaiYu2001}.
Let \(F\) and \(F'\) be \(\ell\)-close local fields, let \(T/F\) be a torus, and let \(T'/F'\) be its Deligne transfer.
Write \(\mathscr{T}\) and \(\mathscr{T}'\) for the respective N\'{e}ron models.  Then for any given integer \(m\), there is a suitably large \(\ell\) such that there is a canonical isomorphism of smooth group schemes
\[
   \mathscr{T}\,\times_{\cO_F}
        \cO_F/\mathfrak {p}_F^{\,m}
   \;\xrightarrow{\;\sim\;}
   \mathscr{T}'\times_{\cO_{F'}}
        \cO_{F'}/\mathfrak {p}_{F'}^{\,m}.
\]
Consequently, one has a canonical isomorphism,
\(
   \mathscr T(\cO_F/\mathfrak {p}_F^{\,m})
   \cong
   \mathscr T'(\cO_{F'}/\mathfrak {p}_{F'}^{\,m}).
\) Building on this, by showing that Bruhat--Tits root-group filtration schemes behave just as well, Ganapathy \cite[Cor.\;6.3]{ganapathy2019} extends this result to parahoric group schemes.
\smallskip

Our next Theorem dispenses with the Chai--Yu input which is the main ingredient in \cite{AV24} and \cite{ganapathy2019}.  Working with Yu's minimal-congruence filtration, we prove (Theorem \ref{thm:depth-comp-HH}),

\begin{theoremA}[Truncated-parahoric close field isomorphisms]\label{thm:C}
Let $G/F$ be connected reductive, $x\in\mathcal{B}^{\mathrm{red}}(G,F)$, and let
$(G',x')$ be the Deligne transfer over an $\ell$-close field $F'$.
For every $0\le r\le\Phi_{G,x}(\ell)$, there is a canonical isomorphism,
\[
   G(F)_{x,0}\big/\!G(F)_{x,r}
   \;\xrightarrow{\;\sim\;}
   G'(F')_{x',0}\big/\!G'(F')_{x',r}.
\]
\end{theoremA}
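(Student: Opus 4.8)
The plan is to bypass the Chai--Yu/Ganapathy integral-model comparison altogether and instead build the congruence quotient $G(F)_{x,0}/G(F)_{x,r}$ out of the Bruhat--Tits valued root datum attached to $(G,x)$, transferring each constituent separately: the torus part by Theorem~\ref{thm:B}, and the root groups together with the structure constants by Deligne's theory of $\ell$-close fields. We may assume $r>0$, the case $r=0$ being trivial. In the non-quasi-split case one first works over the maximal unramified extension $F^{\mathrm{nr}}$, where $G$ is quasi-split; the quotient in question is already defined over a finite unramified subextension $F_d/F$, and since unramified extensions together with their cyclic Galois groups and Frobenius actions are rigid, they are literally identified for $F$ and for an $\ell$-close $F'$. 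Thus it suffices to produce a $\Gal(F_d/F)$-equivariant isomorphism over $F_d$ and pass to fixed points; I suppress this reduction below and argue as if $G$ were quasi-split.

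\emph{Step 1 (reduction to tori and root groups).} Fix a maximally unramified elliptic maximal torus $S/F$ associated to $x$, with root system $\Phi$ and root groups $U_\alpha$. In Yu's formulation of Bruhat--Tits theory via the smooth integral models attached to concave functions --- which is also the source of the minimal congruence filtration --- the filtration $\{G(F)_{x,s}\}_{s\ge 0}$, and hence every quotient $G(F)_{x,0}/G(F)_{x,s}$, is presented by the subgroups $S(F)_0\supset S(F)_r$ and $U_\alpha(F)_{x,0}\supset U_\alpha(F)_{x,r}$ together with the Weyl-group lifts $m_\alpha$, the Chevalley commutator constants, and the action of $S$ on the $U_\alpha$; this presentation is independent of the auxiliary choices up to canonical isomorphism. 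In particular the reductive quotient $\mathsf{G}_x$ of the parahoric --- the special fibre modulo its unipotent radical --- depends only on the local index over $F^{\mathrm{nr}}$ and the residue field, both unchanged by $\ell$-closeness, so it already transfers.

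\emph{Step 2 (transfer of the constituents; the function $\Phi_{G,x}$).} The torus constituent $S(F)_0/S(F)_r\xrightarrow{\sim}S'(F')_0/S'(F')_r$ is precisely Theorem~\ref{thm:B}, available since $r\le\Phi_{G,x}(\ell)\le\Phi_S(\ell)$. Each $U_\alpha$ is a (possibly nontrivially twisted) form of $\Res_{E_\alpha/F}\mathbb{G}_a$ for the field $E_\alpha$ of definition of $\alpha$, its congruence filtration being the image of the $\mathfrak{p}_{E_\alpha}$-adic filtration. Deligne's theory attaches to $E_\alpha/F$ a transfer $E'_\alpha/F'$ together with a canonical isomorphism $\OO_{E_\alpha}/\mathfrak{p}_{E_\alpha}^{\,k}\xrightarrow{\sim}\OO_{E'_\alpha}/\mathfrak{p}_{E'_\alpha}^{\,k}$ valid for $k$ below a threshold governed by the wild ramification of $E_\alpha/F$; the same isomorphism transports $U_\alpha(F)_{x,0}/U_\alpha(F)_{x,r}$, the commutator constants, and the $m_\alpha$ to their primed counterparts. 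By construction (\S\ref{subsec:defglobaldepcomp}), $\Phi_{G,x}$ is the pointwise minimum of the normalized Hasse--Herbrand functions $\Phi_S=\phi^{\mathrm{norm}}_{E_S/F}$, as $S$ ranges over the maximally unramified elliptic maximal tori associated to $x$, together with the $\phi^{\mathrm{norm}}_{E_\alpha/F}$ over all roots $\alpha$; hence the hypothesis $r\le\Phi_{G,x}(\ell)$ guarantees that every one of these transfers is in range.

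\emph{Step 3 (assembly, canonicity, and the main obstacle).} Feeding the isomorphisms of Step~2 into the presentation of Step~1 produces the desired isomorphism
\[
   G(F)_{x,0}\big/G(F)_{x,r}\;\xrightarrow{\;\sim\;}\;G'(F')_{x',0}\big/G'(F')_{x',r},
\]
which is canonical because every choice entering the presentation (the torus $S$, an ordering of $\Phi$, a pinning) changes it only by a canonical isomorphism compatible with the transfer, and because Deligne's field transfer $E\mapsto E'$ is itself canonical up to canonical isomorphism. The genuine difficulty is not any single transfer but the uniform ramification bookkeeping behind $\Phi_{G,x}$: one must arrange that the single scalar inequality $r\le\Phi_{G,x}(\ell)$ simultaneously dominates the wild ramification of all maximally unramified elliptic maximal tori associated to $x$ and of all root subgroup fields, and one must verify that the Bruhat--Tits presentation of the congruence quotients --- including the unramified descent in the non-quasi-split case --- only involves data living in the truncated rings that Deligne's transfer identifies. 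This last point is precisely where Yu's minimal congruence filtration replaces the role played by Chai--Yu's Néron-model comparison in \cite{AV24} and \cite{ganapathy2019}: it exhibits each congruence quotient directly as a functor of the valued root datum, so that no integral model over all of $\OO_F$, and in particular no Chai--Yu input, is needed.
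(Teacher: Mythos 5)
Your overall route is essentially the paper's: split the congruence quotient into its torus constituent and its root-group constituents, transfer the torus part by the truncated-torus theorem, transfer the root-group filtration quotients through Deligne's isomorphisms of truncated valuation rings, and reassemble using the fact that the Chevalley commutator constants are integral and the torus action on root groups is given by polynomial data over the truncated rings, so the presentation is respected (the paper's proof of Theorem~\ref{thm:depth-comp-HH} does exactly this, working with the relative roots $\Phi(G,A)$ of a maximal split torus and the Bruhat--Tits identification $U_{a,k}/U_{a,k'}\cong\varpi^{\lceil e_ak\rceil}\mathcal{O}_{F_a}/\varpi^{\lceil e_ak'\rceil}\mathcal{O}_{F_a}$ of Lemma~\ref{lem:root-filtration-HH}, rather than your unramified-descent detour and your description of $U_\alpha$ as a form of $\Res_{E_\alpha/F}\mathbb{G}_a$, which is not literally correct when $2a\in\Phi(G,A)$ but is covered by the same truncated-ring argument).

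The one genuine problem is the ramification bookkeeping, which you yourself single out as the crux. In \S\ref{subsec:defglobaldepcomp} the gauge $\Phi_{G,x}$ is defined as the \emph{maximum} of $\max_{S\in\mathscr{E}_x}\Phi_S$ and $\max_{a}\phi_{F_a/F}(e_a\,\cdot)$, not the pointwise minimum as you assert. The constituent transfers need the inequalities $\Phi_S(r)\le\ell$ (Theorem~\ref{thm:congruent-isom}) and $\phi_{F_a/F}(e_a r)\le\ell$ (Lemma~\ref{lem:root-filtration-HH}), and it is precisely the maximum, combined with the hypothesis in the form $\ell\ge\Phi_{G,x}(r)$ used in the body statement Theorem~\ref{thm:depth-comp-HH}, that makes all of them hold simultaneously. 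From $r\le\Phi_{G,x}(\ell)$ with the actual (max) definition one cannot conclude that any single constituent is in range: since $\phi_{E/F}(e\,t)\ge t$, one can have $\Phi_{G,x}(\ell)$ close to $e\ell$, so $r$ may exceed $\ell$ and then $\Phi_S(r)\ge r>\ell$ fails. Even under your ``min'' reading the hypothesis only yields $r\le\Phi_S(\ell)$, which is the inverse-type condition and not the needed $\Phi_S(r)\le\ell$ for the increasing function $\Phi_S$. To repair Step~2, state and use the hypothesis as $\ell\ge\Phi_{G,x}(r)$ with $\Phi_{G,x}$ the maximum and invoke the body versions of the torus and root-group results; with that correction your argument coincides with the paper's proof.
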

In particular, by allowing real \(r\) instead of integers, our result extends a key aspect of the main results of Chai-Yu \cite{ChaiYu2001} and Ganapathy \cite{ganapathy2019} and which is also sufficient for most harmonic analysis applications.

\smallskip
\paragraph{Close field Hecke algebra isomorphisms.}
After extending the Cartan decomposition to arbitrary parahorics (see Theorem~\ref{thm:D} below),
and following the method of \cite{ganapathy2019}, we obtain a parahoric-level close-fields
Hecke algebra isomorphism (proved in Appendix~\ref{app:kazhdan}, Theorem~\ref{thm:deligne-kazhdan-Hecke-isomorphism}):
for each $m\in\Q_{\ge 0}$ there exists $\ell(m)\gg 0$ (depending only on $(G,x,m)$) such that,
whenever $F$ and $F'$ are $\ell$-close with $\ell\ge \ell(m)$, the assignment coming from
Theorem~\ref{thm:C} yields an algebra isomorphism
\[
\kappa_{\ell,x}:\ \cH\!\bigl(G,G(F)_{x,m}\bigr)\ \xrightarrow{\ \sim\ }\ \cH\!\bigl(G',G'(F')_{x',m}\bigr).
\]
Moreover, under this isomorphism supercuspidal representations correspond to supercuspidal
representations, and depth at the vertex is preserved:
\[
\dep_x\!\bigl(\kappa_{\ell,x}(\pi)\bigr)=\dep_x(\pi)\qquad\text{for all smooth $\pi$ of $G(F)$.}
\]
Here $\dep_x(\pi)$ denotes the depth evaluated at $x$ (and may differ from the usual depth $\dep(\pi)$).

This result specializes, for hyperspecial parahorics of split groups, to Kazhdan's foundational
theorem \cite{kazhdan1986}. Its Iwahori-level analogue (for $\mathrm{GL}_n$, see \cite{Lem01}) has been used
to transport the local Langlands correspondence from characteristic~$0$ to positive characteristic,
notably in \cite{Gan2015} for $\mathrm{GSp}_4$ and \cite{GanVar17} for split classical groups. The special
parahoric case for $m\in\Z_{>1}$ was established by Ganapathy \cite{ganapathy2022}.

However, this parahoric-level equivalence is vertex-dependent. Moreover, it transfers overlapping
subcategories of $\cR(G(F))$, which typically meet several Bernstein blocks at once. This makes it
too coarse for a uniform, blockwise treatment of $\cR(G(F))$.

Our next theorem remedies this by producing a support-preserving, block-by-block canonical
isomorphism of Hecke algebras, at the cost of excluding a finite set of small residue characteristics.

In recent joint works of the author with Jeffrey D. Adler, Jessica Fintzen and Kazuma Ohara \cite{AFMO24a}\cite{AFMO24b}, it was established that the Hecke algebras attached to types constructed by Kim and Yu are isomorphic to the Hecke algebras attached to depth-zero types of some twisted Levi subgroup. We use that result in combination with the above-developed results to prove the following (Theorem \ref{thm:ell-close-arbitrary-blocks}).

\begin{theoremA}[Hecke-algebra isomorphism for Bernstein blocks over $\ell$--close fields]\label{thm:F}
Let $F$ be a non-archimedean local field and $G/F$ a connected reductive group. Assume that the residue characteristic  $p$ is odd and not a torsion prime for $G$. Fix a Bernstein block $\,[L,\pi]_G$ of $G(F)$. Then there exists $\ell_0=\ell_0(G,[L,\pi]_G)$ with the following property.

For every local field $F'$ that is $\ell$--close to $F$ with $\ell\ge\ell_0$, there are canonically defined transfers
\[
(G,F)\rightsquigarrow (G',F'),\qquad
L\rightsquigarrow L'\subset G',\qquad
\pi\rightsquigarrow \pi'\;\text{ on }L'(F'),
\]
and a canonical isomorphism of Hecke algebras attached to the two blocks:
\[
\kappa_{\ell,[L,\pi]_G}\;:\;
\mathcal{H}\!\bigl(G(F),[L,\pi]_G\bigr)\ \xrightarrow{\;\sim\;}
\mathcal{H}\!\bigl(G'(F'),[L',\pi']_{G'}\bigr).
\]
Here $\mathcal{H}(G(F),[L,\pi]_G)$ denotes Bushnell--Kutzko Hecke algebra attached to \([L,\pi]_G\). Moreover, this isomorphism preserves the presentation of these Hecke algebras as obtained in \cite[5.3.6]{AFMO24a}.
\end{theoremA}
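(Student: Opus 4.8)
The plan is to reduce the assertion to a close-field comparison of \emph{depth-zero} Hecke algebras of twisted Levi subgroups, and then to match those term by term using the presentation of \cite[5.3.6]{AFMO24a} together with Theorem~\ref{thm:C}.

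First, fix the block $[L,\pi]_G$. Since $p$ is odd and not a torsion prime for $G$, every Bernstein block is the block of a type constructed by Kim and Yu, so by \cite{AFMO24a}, \cite{AFMO24b} there are a twisted Levi subgroup $H\subseteq G$ over $F$, a Levi $M\subseteq H$, and a depth-zero supercuspidal $\sigma$ of $M(F)$, together with an isomorphism
\[
  \mathcal H\!\bigl(G(F),[L,\pi]_G\bigr)\;\xrightarrow{\;\sim\;}\;\mathcal H\!\bigl(H(F),[M,\sigma]_{H}\bigr)
\]
carrying the Bushnell--Kutzko structure to the presentation of \cite[5.3.6]{AFMO24a}. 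The triple $(H\subseteq G,\ M,\ \sigma)$ is assembled from a twisted Levi sequence, generic characters of bounded depth, and a depth-zero datum; up to a bounded congruence level it is thus encoded by the combinatorics of the based root datum (with its Galois action), together with finite-group-theoretic data and the residue field $\mathfrak f$.

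Next I would transfer this data. For $\ell$ large the based root datum with its Galois action of $H$ (resp.\ of $M$) agrees with that of a connected reductive group $H'$ (resp.\ $M'$) over $F'$, compatibly with $M'\subseteq H'\subseteq G'$; the defining cocharacters, the generic characters, and the finitely many group elements that occur are matched through the truncated isomorphisms of Theorem~\ref{thm:C} applied to $G$ and to $H$, which are available once the relevant depth-transfer functions evaluated at $\ell$ dominate the finitely many bounded congruence levels that appear. The residue fields are already identified, $\mathfrak f\cong\mathfrak f'$, because $\ell\ge 1$, so the finite reductive quotient carrying $\sigma$---being the reduction of a parahoric of $M$---is identified with the corresponding quotient for $M'$, and $\sigma$ transfers to a depth-zero supercuspidal $\sigma'$ of $M'(F')$. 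This produces the block $[M',\sigma']_{H'}$, and, running the $F'$-version of \cite{AFMO24a} backwards (its hypothesis on $p$ is inherited, since $G'$ has the same root datum and residue characteristic), the block $[L',\pi']_{G'}$. One checks along the way that the inertial-class data is matched so that this is the intended block---a statement about finite reductive groups over $\mathfrak f$ and the affine Weyl combinatorics of $H$, both close-field invariant.

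Finally I would obtain the Hecke-algebra isomorphism by comparing presentations, and this is where the main difficulty lies. The presentation of $\mathcal H(H(F),[M,\sigma]_{H})$ in \cite[5.3.6]{AFMO24a} has finitely many generators---indexed by reflections in an extended affine Weyl group together with a commutative part---and relations whose structure constants are: (i) combinatorial invariants of the affine root system of $H$; (ii) integer powers of $q=|\mathfrak f|$; and (iii) a finite $2$-cocycle and parameters computed inside finite reductive groups over $\mathfrak f$. Item (i) is identical for $H'$ because $H'$ is the Deligne transfer of $H$, (ii) is identical because $q$ is, and (iii) is identical by the residue-field identification; meanwhile the generators are supported on double cosets contained in finitely many cosets of some $H(F)_{y,m}$, which Theorem~\ref{thm:C} identifies canonically with their $F'$-counterparts once $\ell\ge\ell_0=\ell_0(G,[L,\pi]_G)$. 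Transporting generators and relations along these identifications yields $\kappa_{\ell,[L,\pi]_G}$, respecting the presentation by construction; here ``canonical'' means that once a ring isomorphism $\cO_F/\mathfrak p_F^{\ell}\xrightarrow{\sim}\cO_{F'}/\mathfrak p_{F'}^{\ell}$ is fixed, every choice above (Kim--Yu type, Howe factorization, base point) transfers compatibly and the result is independent of them, by the rigidity already present over $F$. The crux is precisely the careful verification underlying item (iii) and the bounded-support claim: that \emph{every} structure constant of the AFMO presentation---most delicately the $2$-cocycle and the parameters coming from the relative root system of the depth-zero Hecke algebra---is intrinsic to data visibly preserved by close-field transfer, and that all generator supports can be placed inside one uniformly bounded congruence level so a single threshold $\ell_0$ works. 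I would isolate this as a lemma on the close-field invariance of the depth-zero Hecke-algebra datum, and deduce Theorem~\ref{thm:F} from it together with Theorem~\ref{thm:C} and the AFMO presentation.
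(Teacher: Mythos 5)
Your proposal is correct and follows essentially the same route as the paper: reduce to the depth-zero block of the twisted Levi via the Hecke-algebra isomorphism of Theorem \ref{thm:AFMO24b}, transfer the depth-zero datum over $\ell$-close fields, and match the presentations of \cite[5.3.6]{AFMO24a}. The key lemma you defer — close-field invariance of the $q$-parameters, the $\mathcal{K}$-relevant hyperplanes, and the pair $(\Omega(\rho_L),\mu)$ — is exactly what the paper establishes as Theorem \ref{thm:ell-close-depth-zero-block}, using Theorem \ref{thm:depth-comp-HH} both at the vertex $x$ and at the hyperplane points $h$.
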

\paragraph{Local Langlands correspondence in positive characteristic.}
For regular supercuspidal representations, the input data are toral (a tame regular elliptic pair
$(S,\theta)$), and this allows a direct close--field transport of Kaletha's construction from
characteristic~$0$ to positive characteristic using only the truncated transfer for tori/parahorics
developed earlier (Theorems~\ref{thm:B} and~\ref{thm:C}).
Concretely, we obtain the following (Theorem~\ref{thm:llc-for-regular}).

\begin{theoremA}\label{thm:G}
Let  \(G\) be a tamely ramified connected reductive group over a non-archimedean local field \(F\) of characteristic \(p\) . Assume that  \(p\) is odd, not bad for \(G\), and coprime to the order of \(\pi_0(Z(G))\). Then:  
\begin{enumerate}
\item The set of regular supercuspidal parameters (up to conjugacy) is in bijection with the set of isomorphism classes of regular supercuspidal data.  
\item For each such parameter $\varphi$ there is a finite $L$--packet $\Pi_\varphi(G)$ of regular supercuspidal representations of $G(F)$. Its internal labeling agrees with the characteristic--$0$ theory: after fixing a Whittaker datum \(\mathbf{w}\) for the quasi--split inner form of $G$, there is a canonical finite map from a natural set of \emph{enhancements} of $\varphi$ onto
$\Pi_\varphi(G)$, and the only ambiguity is the action of a finite abelian group determined canonically by $G$ and $\varphi$.
\item If $G$ is quasi--split, then for any chosen Whittaker datum $\mathbf{w}$, the packet
$\Pi_\varphi(G)$ contains a $\mathbf{w}$--generic member.
\end{enumerate}  
\end{theoremA}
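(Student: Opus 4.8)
The plan is to transport Kaletha's characteristic‑zero construction of regular supercuspidal $L$‑packets \cite{KalethaRegSC} to $F$ by a close‑field comparison, using the depth‑controlled transfer results proved above. The hypotheses imposed on $p$—odd, not bad for $G$, coprime to $|\pi_0(Z(G))|$—are precisely those under which Kaletha's theory operates, and they depend only on the residue characteristic; so for any $\ell$ we pick a characteristic‑zero local field $F'$ that is $\ell$‑close to $F$, take the Deligne transfer $G'/F'$ (same based root datum, same $\ell$‑truncated $\Gamma_F$‑action), and apply Kaletha's results over $F'$ verbatim. The argument then has a Galois/data side and a representation side, tied together by the mutual compatibility of the transfer maps of Theorems~\ref{thm:B},~\ref{thm:C},~\ref{thm:E} and Deligne's isomorphism.

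For the data and parameters, and hence for part (i): a regular supercuspidal datum over $F$ is assembled from a tame elliptic maximal torus $S\subset G$, an admissible embedding $j$, and a character $\theta$ of $S(F)$ in general position; it has a well‑defined finite depth $r$ and is supported at a point $x=x(S,j)\in\mathcal B(G,F)$. We choose $\ell$ with $\Phi_{G,x}(\ell)\ge r$ and $\Phi_S(\ell)\ge r$; then Theorem~\ref{thm:C} and Theorem~\ref{thm:B} supply canonical, depth‑preserving transfers $S\rightsquigarrow S'$, $x\rightsquigarrow x'$, $\theta\rightsquigarrow\theta'$. Being isomorphisms of the relevant truncated groups together with their finite reductive quotients and root‑group filtrations, these transfers also preserve the regularity and $G$‑genericity conditions cutting out a regular supercuspidal datum (it is here that oddness of $p$ and its not being bad enter: these conditions are governed by the finite‑level data visible to the truncation). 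Dually, by Kaletha's structure theorem a regular supercuspidal parameter $\varphi$ factors as ${}^{L}j\circ\varphi_S$ with $\varphi_S$ a parameter of some elliptic maximal torus $S$; its depth in the sense of Definition~\ref{def:depth-of-G-parameter} is finite, and by Theorem~\ref{thm:A} equals the depth of the character $\mathcal L_S^{-1}(\varphi_S)$, so $\varphi$ is determined by its restriction to a truncation of $\Gamma_F$ once $\ell$ is large; Deligne's isomorphism then transfers $\varphi$ to $\varphi'$, compatibly with $\mathcal L_S$ and $\mathcal L_{S'}$ by functoriality of the LLC for tori. Running Kaletha's bijection between regular supercuspidal parameters and data over $F'$ and pulling it back along these canonical identifications gives (i) for objects of depth $\le r$; letting $r\to\infty$ and invoking compatibility of the transfer maps under enlarging $\ell$ yields (i) in general.

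For the packets, and parts (ii) and (iii): over $F'$, Kaletha attaches to $\varphi'$ a finite $L$‑packet $\Pi_{\varphi'}(G')$ of regular supercuspidal representations, whose members are, after fixing a Whittaker datum $\mathbf w'$ for the quasi‑split inner form of $G'$, labeled by enhancements of $\varphi'$ up to the finite‑abelian ambiguity described in \cite{KalethaRegSC}. Each member is supported at one of finitely many points $x_i$ and has depth $r$; choosing $\ell$ large enough for all the $x_i$ and a rational $m\ge r$ with $m\le\Phi_{G,x_i}(\ell)$, the Kazhdan isomorphisms $\kappa_{\ell,x_i}$ of Theorem~\ref{thm:E} carry these representations to supercuspidal representations of $G(F)$ of the same depth at $x_i$; we let $\Pi_\varphi(G)$ be the resulting finite set. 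The crucial point is that this representation‑level transfer matches the datum‑level transfer of the previous paragraph: Yu's construction of the member attached to $(S,j,\theta,\rho)$ is built from precisely the ingredients on which the truncated isomorphism of Theorem~\ref{thm:C} restricts to the very isomorphisms defining $\kappa_{\ell,x}$—the torus $S$, the filtration decomposition of $\theta$, the point $x$, the attached $\chi$‑data, and the Heisenberg--Weil representations of the finite quotients $G(F)_{x,s}/G(F)_{x,s^+}$—so building the member over $F'$ from the transferred datum and applying $\kappa_{\ell,x}^{-1}$ returns the same representation of $G(F)$. The set of enhancements of $\varphi$ and the residual ambiguity are computed from $\widehat G$ (which depends only on the root datum) and from $\varphi$, hence correspond under the transfer; and a Whittaker datum $\mathbf w$ for the quasi‑split inner form of $G$ transfers canonically to $\mathbf w'$ (generic characters of the unipotent radical of a Borel are finite‑level data), normalizing the two labelings compatibly—this gives (ii). For (iii), with $G$ quasi‑split the $\mathbf w'$‑generic member of $\Pi_{\varphi'}(G')$ is the one attached to the $\mathbf w'$‑normalized admissible embedding; its transfer is attached to the $\mathbf w$‑normalized embedding over $F$, normalization being preserved by the transfer of Whittaker data, and is therefore the $\mathbf w$‑generic member of $\Pi_\varphi(G)$, genericity being read off from the same finite‑level data. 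Independence of the auxiliary $\ell$ and $F'$ follows, as in (i), from the mutual compatibility of Theorems~\ref{thm:B},~\ref{thm:C},~\ref{thm:E} and Deligne's isomorphism under enlarging $\ell$.

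The main obstacle is the compatibility assertion of the third paragraph: one must verify that transferring a regular supercuspidal datum and then running the Yu--Kaletha construction over $F'$ reproduces exactly the $\kappa_{\ell,x}$‑image of the representation built over $F$. Concretely this means tracking each constituent of Yu's construction—the $\chi$‑data and the quadratic characters they produce, the Heisenberg $p$‑groups, the Weil representations of the finite symplectic groups over the residue field, and the depth‑zero cuspidal piece on the finite reductive quotient—through the truncated parahoric isomorphism of Theorem~\ref{thm:C} and checking agreement with the corresponding constituent over $F'$; the assumptions that $p$ is odd, not bad for $G$, and coprime to $|\pi_0(Z(G))|$ are what keep all these finite objects, and the internal parametrization of the packet, uniform across the transfer. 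A secondary, more bookkeeping difficulty is the passage from bounded depth to all depths and all parameters in (i)—verifying that the bijections for successive truncations glue—which again reduces to the canonicity of the transfer maps and their compatibility under increasing $\ell$. Everything else, namely the existence and finiteness of $\Pi_{\varphi'}(G')$, its internal parametrization, and the presence of a generic member when $G$ is quasi‑split, is imported directly from \cite{KalethaRegSC} over the characteristic‑zero field $F'$.
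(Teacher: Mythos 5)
Your overall strategy is the paper's: pass to an $\ell$-close characteristic-zero field, run Kaletha's construction over $F'$, and match the datum-level transfer with the representation-level (Kazhdan) transfer. However, the step you defer as ``the main obstacle''---that transferring the datum and running the Yu/Kaletha construction over $F'$ reproduces the $\kappa_{\ell,x}$-image of the representation built over $F$---is not a bookkeeping check that follows from ``everything being finite-level''; it is the substantive content, and your proposal does not establish it. In the paper it is a standalone prior result, Theorem~\ref{thm:ell-close-kaz-yu-compatability}, whose proof requires: (a) choosing a Howe factorization of $(S,\theta)$ and arguing that its transfer is, up to conjugacy and \emph{refactorization}, a Howe factorization of $(S',\theta')$; (b) verifying that the transferred characters $\phi_i'$ remain $(G^{i+1})'$-generic --- condition GE1 via preservation of valuations of $X_i^*(H_{a'})$ under the truncated isomorphisms, and GE2 via Yu's Lemma~8.1, which is exactly where the hypotheses on $p$ enter; and (c) matching the type idempotents $e_{({}^{\circ}K^d,{}^{\circ}\tau)}$ under $\kappa_\ell$ so that $\Kaz_\ell(\pi(\Sigma))=\pi(\Sigma')$. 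Without (a)--(c) your definition of $\Pi_\varphi(G)$ is not shown to be well defined or to match the datum-side bijection in (i).

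Two further points where your sketch diverges from what is actually needed. For (ii), the identification of enhancements is not automatic from ``$\widehat G$ depends only on the root datum'': the paper transfers the full $L$-packet datum $(S',\hat\jmath',\chi',\theta')$ explicitly --- the admissible embedding via Deligne's identification of the truncated Galois action on cocharacters, and the \emph{minimally ramified} $\chi$-data via the canonical bijection of tame character groups from local class field theory --- and only then obtains $S_\varphi^+=S_{\varphi'}^+$ and the bijection $\Irr_{\zeta_G}(S_{\varphi'}^+)\to\Irr_{\zeta_G}(S_\varphi^+)$. For (iii), genericity is not ``read off from the same finite-level data'' of a normalized embedding: the paper proves it through a dedicated lemma (Lemma~\ref{lem:kaz-preserves-genericity}) that transfers the Whittaker datum and transports a nonzero $(U,\psi)$-equivariant, $K_\ell$-invariant Whittaker function through the truncated parahoric and root-group isomorphisms; your claim that the Kazhdan transfer of the $\mathbf w'$-generic member is $\mathbf w$-generic needs exactly this argument (or the FKS input the paper cites), and asserting it from datum-side normalization presupposes the compatibility of (a)--(c) above.
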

Theorem \ref{thm:G} is proved using only the simplicity of the input data on both sides of the LLC---entirely without Theorem \ref{thm:F}, which treats the general framework. Our next result, Theorem \ref{thm:H}, leverages Theorem \ref{thm:F} to pass to full generality.
\begin{theoremA}\label{thm:H}
Let \(F\) be a non-archimedean local field of characteristic \(p > 0\), and \(G\) a connected reductive \(F\)-group. Assume that a depth preserving Local Langlands Correspondence is established for all such pairs \((G'/F')\) arising from transfers of \(G\) to \(\ell\)-close characteristic-\(0\) fields for suitably large \(\ell\) . Then there exists a map, 
\[
\sigma \mapsto \varphi(\sigma) \quad \forall \sigma \in \operatorname{Irr}(G(F)),
\]defined by :  \(\sigma':=\kappa_{\ell,[L,\pi]_G}^{\sharp}(\sigma)\) via the blockwise Hecke--algebra isomorphism (Theorem \ref{thm:F}) and pull back of parameter  \(\varphi(\sigma')\) along Deligne's isomorphism. This assignment is independent of choices. 
\end{theoremA}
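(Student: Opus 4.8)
\emph{Proof plan.} The strategy is to run the construction in the statement verbatim and then prove that it is insensitive to the auxiliary level $\ell$ and the choice of $\ell$-close field $F'$, and that it preserves depth. \emph{Construction.} Given $\sigma\in\Irr(G(F))$, it lies in a unique Bernstein block $\mathfrak s=[L,\pi]_G$ (the component of the Bernstein variety carrying its supercuspidal support). Let $\ell_0$ be the constant $\ell_0(G,\mathfrak s)$ of Theorem~\ref{thm:F}, enlarged so as also to exceed the supremum of the depths of all members of $\mathfrak s$ together with the depths of their parameters under the hypothesised LLC; this supremum is finite and depends only on $\mathfrak s$, since every member of $\mathfrak s$ has depth at most $\operatorname{dep}(\pi)$ and the hypothesised LLC is depth preserving. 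For $\ell\ge\ell_0$ and an $\ell$-close field $F'$ of characteristic $0$, form the Deligne transfer $(G',F')$ and the transferred block $\mathfrak s'=[L',\pi']_{G'}$; use the Bushnell--Kutzko identification $\mathcal R(G(F))_{\mathfrak s}\simeq\mathcal H(G(F),\mathfrak s)\text{-Mod}$ to turn the algebra isomorphism $\kappa_{\ell,\mathfrak s}$ of Theorem~\ref{thm:F} into a bijection $\kappa_{\ell,\mathfrak s}^{\sharp}$ on isomorphism classes of irreducibles; set $\sigma':=\kappa_{\ell,\mathfrak s}^{\sharp}(\sigma)\in\Irr(G'(F'))$; apply the assumed depth-preserving LLC for $(G',F')$ to obtain $\varphi(\sigma')\colon W_{F'}\to{}^LG'$; and finally inflate $\varphi(\sigma')$ to a parameter $\varphi(\sigma)\colon W_F\to{}^LG$ along Deligne's isomorphism $\Gamma_F/I_F^{\ell}\xrightarrow{\sim}\Gamma_{F'}/I_{F'}^{\ell}$, using the canonical identification $\widehat{G'}=\widehat G$ and the matching of the two $L$-actions modulo level $\ell$ that is built into the transfer. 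The inflation is legitimate because $\operatorname{dep}(\varphi(\sigma'))<\ell$ forces $\varphi(\sigma')$ to be trivial on $I_{F'}^{\ell}$, and for $\ell$ large the $L$-action too factors through level $\ell$; a Weil--Deligne monodromy operator, if present, is transported unchanged, being insensitive to $\ell$-closeness.

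\emph{Independence of choices.} There is nothing to check for the block representative, since $\sigma$ determines $\mathfrak s$ and $\mathfrak s$ determines everything downstream. For two $\ell$-close characteristic-$0$ fields $F_1',F_2'$ at a common level $\ell$, Deligne's theory furnishes canonical isomorphisms identifying the transferred packages $(G_i',L_i',\pi_i',\mathfrak s_i')$ and their truncated Galois groups; relative to these identifications the isomorphisms $\kappa_{\ell,\mathfrak s}$ are canonical and the hypothesised LLC is a property of each $F_i'$ alone, so the two resulting parameters agree after restriction to $W_F/I_F^{\ell}$, hence agree. For two admissible levels $\ell_0\le\ell_1\le\ell_2$, choose a single field $F'$ that is $\ell_2$-close (hence $\ell_1$-close) to $F$ and combine the compatibility of Deligne's isomorphisms with the projections $\Gamma_F/I_F^{\ell_2}\twoheadrightarrow\Gamma_F/I_F^{\ell_1}$ with the compatibility of $\kappa_{\ell_1,\mathfrak s}$ and $\kappa_{\ell_2,\mathfrak s}$ under the corresponding transition maps, which is inherent in the AFMO/Kazhdan construction underlying Theorem~\ref{thm:F}. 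Together these show $\varphi(\sigma)$ depends only on $\sigma$.

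\emph{Depth preservation.} I will establish the chain $\operatorname{dep}(\varphi(\sigma))=\operatorname{dep}(\varphi(\sigma'))=\operatorname{dep}(\sigma')=\operatorname{dep}(\sigma)$ link by link. The middle equality is exactly the hypothesised depth preservation of the LLC for $(G',F')$. The equality $\operatorname{dep}(\sigma')=\operatorname{dep}(\sigma)$ is depth preservation for the blockwise isomorphism: it is extracted from Theorem~\ref{thm:E} applied to the depth-zero type of the twisted Levi produced by the AFMO reduction, its depth statement surviving because $\kappa_{\ell,\mathfrak s}$ preserves the presentation of the two Hecke algebras, once one reconciles the vertex depth $\operatorname{dep}_x$ with the intrinsic depth $\operatorname{dep}$ on this block. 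The equality $\operatorname{dep}(\varphi(\sigma))=\operatorname{dep}(\varphi(\sigma'))$ asserts that inflation along Deligne's isomorphism preserves the depth of a parameter of depth $<\ell$; with the depth of a parameter computed by testing against induced tori as in Definitions~\ref{def:depth-parameter} and~\ref{def:depth-of-G-parameter}, this comes down to the compatibility of Deligne's isomorphism with upper-numbering filtrations up to level $\ell$ together with the truncated-torus transfer of Theorem~\ref{thm:B}.

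\emph{Main obstacle.} I expect the principal difficulty to lie in the independence step: organising Deligne's isomorphisms, the blockwise isomorphisms $\kappa_{\ell,\mathfrak s}$, the hypothesised LLC, and the various truncations into one genuinely compatible system as $(\ell,F')$ varies—and, interlaced with this, the careful bookkeeping needed to reconcile the vertex-dependent depth $\operatorname{dep}_x$ of Theorem~\ref{thm:E} with the intrinsic depth, without which the depth-preservation chain of the last step does not close up.
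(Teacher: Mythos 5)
Your proposal follows essentially the same route as the paper's proof: the same blockwise construction of $\sigma'$ and pullback of $\varphi(\sigma')$ along Deligne's isomorphism, independence by embedding any two choices of $(\ell,F')$ into a common higher level and invoking canonicity of $\kappa_{\ell,[L,\pi]_G}^{\sharp}$, and depth preservation via the same three-link chain (hypothesised characteristic-$0$ LLC in the middle, Kazhdan-type depth preservation on the representation side as in Proposition~\ref{prop:depth-preserved-Kaz}, and invariance of the parameter depth of Definition~\ref{def:depth-of-G-parameter} under truncated transfer on the Galois side). The reconciliation of $\operatorname{dep}_x$ with $\operatorname{dep}$ that you flag as the main obstacle is handled in the paper simply by choosing a vertex $x$ realizing $\operatorname{dep}(\pi)=\operatorname{dep}_x(\pi)$, as noted after Proposition~\ref{prop:depth-preserved-Kaz}, so your argument matches the paper's at the same level of detail.
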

In the course of the proofs, we describe the double coset space of an arbitrary maximal parahoric \(G(F)_{x,0}\) in terms of a \emph{maximally unramified} elliptic maximal torus \(S\) describing the vertex \(x\) (see \S \ref{subsec:maxunramified} for the notation of maximally unramified and its correspondence to a vertex). This theorem generalizes the previously known result due to Haines and Rostami \cite{HR10} for the case where \(x\) is special. 
\paragraph{Cartan decomposition and Hecke algebras}
\begin{theoremA}[Generalised Cartan decomposition]\label{thm:D}
For any vertex $x\in\mathcal{B}^{\mathrm{red}}(G,F)$ let $K=G(F)_{x,0}$ be the associated parahoric. Then there exists a canonical bijection
\[
  W(G,S)_{x,0}\backslash X_{*}\!\bigl(S\bigr)_{I_F}^{\Frob} \xrightarrow{\;\sim\;} K\backslash G(F)/K,
\]
where $W(G,S)_{x,0}:=N_{G(F)_{x,0}}(S)(F)/S(F)_0$ and \(S\) a maximally unramified elliptic maximal \(F\)-torus of \(G\) whose associated vertex is \(x\). Here \(I_{F}=\Gal(\overline{F}/F^{\mathrm{ur}}), \Frob\in\Gal(\overline{\kappa}/\kappa), \) where \(\kappa\) denotes the residue field. 

\end{theoremA}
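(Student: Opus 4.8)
The plan is to reduce $K\backslash G(F)/K$ to a double‑coset space inside the Iwahori--Weyl group and then match it against $W(G,S)_{x,0}\backslash X_*(S)_{I_F}^{\Frob}$ by means of the Kottwitz homomorphism. Fix an Iwahori subgroup $I\subseteq K$, coming from an alcove $\mathbf a$ with $x\in\overline{\mathbf a}$. The Iwahori--Bruhat decomposition $G(F)=\bigsqcup_{w\in\widetilde W}I\dot wI$, where $\widetilde W$ is the relative Iwahori--Weyl group of $G$ over $F$, together with the Bruhat decomposition of the parahoric $K=\bigsqcup_{w\in W_x}I\dot wI$, where $W_x\leq\widetilde W$ is the finite subgroup generated by the affine reflections fixing $x$, gives a canonical bijection $K\backslash G(F)/K\cong W_x\backslash\widetilde W/W_x$. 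So it suffices to identify this last double quotient with $W(G,S)_{x,0}\backslash X_*(S)_{I_F}^{\Frob}$.

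Next I bring in the torus $S$. Recall $\widetilde W$ fits in $1\to\Lambda\to\widetilde W\to W_0\to1$, where $\Lambda$ is the translation lattice and, by Kottwitz, $\Lambda\cong X_*(Z_G(A_0))_{I_F}^{\Frob}$ for $A_0$ a maximal $F$–split torus. The point of choosing $S$ to be a \emph{maximally unramified} elliptic maximal torus attached to $x$ (\S\ref{subsec:maxunramified}) is that, over $F^{\mathrm{ur}}$, its maximal $F^{\mathrm{ur}}$–split subtorus $A_S=(S^{I_F})^\circ$ is a maximal $F^{\mathrm{ur}}$–split torus of $G$ and $x$ lies in its apartment; consequently (i) the Kottwitz homomorphism restricts to a surjection $\kappa_S\colon S(F)\twoheadrightarrow X_*(S)_{I_F}^{\Frob}$ with kernel $S(F)_0$, so $S(F)/S(F)_0\xrightarrow{\ \sim\ }X_*(S)_{I_F}^{\Frob}$, and (ii) the translations of $\widetilde W$ coming from $S(F)$ meet every $W_x$–orbit, and are normalized by the image of $N_{K}(S)(F)$, which maps onto $W(G,S)_{x,0}$. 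Verifying (i)--(ii) — essentially that $S$ is ``as split as the vertex $x$ permits'' — is where the hypothesis on $S$ is used. Granting (i)--(ii), the theorem reduces to the assertion $W_x\backslash\widetilde W/W_x\cong W(G,S)_{x,0}\backslash\bigl(S(F)/S(F)_0\bigr)$.

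To prove the latter I would establish two things. \emph{Surjectivity}: every $KgK$ meets $S(F)$. Over $F^{\mathrm{ur}}$, where $A_S$ is a maximal split torus, this is the Cartan decomposition for the parahoric $G(F^{\mathrm{ur}})_{x,0}$, proved by a Bruhat--Tits convexity argument — applying the geodesic retraction onto the apartment of $A_S$ to move $g$ into $S(F^{\mathrm{ur}})$ modulo the parahoric; one then descends to $F$ using $\Frob$–invariance of everything in sight together with Lang's theorem applied to the connected reductive quotient at $x$, so that $G(F^{\mathrm{ur}})_{x,0}$–double cosets carrying an $F$–point match the $G(F)_{x,0}$–double cosets. \emph{Injectivity}: if $\varpi^{\lambda},\varpi^{\lambda'}\in S(F)$ represent the same double coset then $\lambda'\in W(G,S)_{x,0}\cdot\lambda$; this is a length argument inside $\widetilde W$, selecting the minimal–length element of $W_x\varpi^{\lambda}W_x$ and showing it is a canonical $x$–dominant translation, unique up to $W(G,S)_{x,0}$. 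The main obstacle is the \emph{non-special} case: when $x$ is special, $W_x$ is a complement to $\Lambda$ in $\widetilde W$ and the collapse is immediate — this is Haines--Rostami \cite{HR10} — but for a general vertex $W_x$ is only a proper reflection subgroup, $\widetilde W$ does not split as $\Lambda\rtimes W_x$, and $Z_G(A_0)$ is useless for describing the fundamental domain; it is precisely here that the auxiliary torus $S$ is needed, and carrying out the surjectivity (especially the descent from $F^{\mathrm{ur}}$) and the existence of $x$–dominant translation representatives in $S(F)$ uniformly in this generality is the technical heart of the argument. Finally, independence of the auxiliary Iwahori $I\subseteq K$, hence canonicity of the bijection, follows from transitivity of $K$ on the alcoves whose closure contains $x$.
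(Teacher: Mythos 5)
Your first reduction, $K\backslash G(F)/K\cong W_x\backslash\widetilde W/W_x$ via an Iwahori--Bruhat decomposition over $F$, is unobjectionable, but the second half of the plan contains a genuine gap, and it sits exactly where you diverge from the paper. Your claim (ii) and your ``surjectivity'' step assert that the translations coming from $S(F)$ meet every $W_x$-orbit, i.e.\ that every double coset $KgK$ meets $S(F)$. This cannot be established the way you propose, and it is in direct tension with the hypothesis on $S$: because $S$ is \emph{elliptic}, $S(F)$ is bounded modulo $Z(G)(F)$, so it can only produce central/length-zero classes; moreover $S$ does not normalize the maximal $F$-split torus $A_0$ underlying your relative Iwahori--Weyl group, so $S(F)$ does not even map to your $\widetilde W$. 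Concretely, take $G=\mathrm{GL}_2$, $x$ the hyperspecial vertex and $S=E^\times$ with $E/F$ unramified quadratic: every $t\in S(F)$ has $\det t=N_{E/F}(t)$ of even valuation, so no element of $S(F)$ lies in the double coset of $\mathrm{diag}(\varpi,1)$; thus ``every $KgK$ meets $S(F)$'' already fails in the simplest case your argument must cover. The suggested repair (convexity over $F^{\mathrm{ur}}$ plus Lang descent) does not help: over $F^{\mathrm{ur}}$ the Cartan decomposition places representatives in $N_G(S)(F^{\mathrm{ur}})$, not in $S(F)$, and descent cannot create the missing rational translations. Your injectivity step inherits the same defect, since for elliptic $S$ there is no supply of non-central ``$x$-dominant translations $\varpi^{\lambda}\in S(F)$'' to select.

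The paper's proof runs along a different axis, and its key mechanism is absent from your plan. It works over $L=\widehat{F^{\mathrm{ur}}}$, where $S$ contains a maximal $L$-split torus; it invokes the Haines--Rapoport double-coset description $\mathcal K\backslash G(L)/\mathcal K\cong\widetilde W_{\mathcal K}\backslash\widetilde W/\widetilde W_{\mathcal K}$ together with Frobenius descent (Proposition \ref{prop:sigma-Cartan}); and then the decisive input is Lemma \ref{lem:no-sigma-in-wa}: ellipticity of $S$ forces $W_{\mathrm{aff},S}^{\sigma}=1$, so that $\widetilde W^{\sigma}=\Omega_S^{\sigma}$, which the Kottwitz homomorphism identifies with $X_*(S)_{I_F}^{\Frob}$, while $\widetilde W_{\mathcal K}^{\sigma}$ is identified with $W(G,S)_{x,0}$ via \cite{HR10} and Kaletha. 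In other words, the theorem is not obtained by showing that $S(F)$-translations exhaust the double cosets of the relative Iwahori--Weyl group over $F$; it is obtained by a collapse at the level of $\sigma$-fixed Iwahori--Weyl groups over $L$. The $\mathrm{GL}_2$ computation above shows that any correct treatment has to confront how $X_*(S)_{I_F}^{\Frob}$ for an elliptic $S$ can index the double cosets at all, and your proposal supplies no substitute for that step; as written it does not constitute a proof.
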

Theorem \ref{thm:D} is independent of the choice of \(S\).
\smallskip
\renewcommand{\thetheoremA}{A}

\subsection*{Notations}
Throughout, $F$ is a non-archimedean local field with residue characteristic $p$. For a real number $r$, we write $\lceil r\rceil$ for the least integer $\ge r$. The upper-numbered ramification subgroups of $W_F$ are $W_F^{r}$ for $r\ge0$, and inertia is $I_F=W_F^0$.
In literature, \(G(F)_1\) is sometimes used to denote the kernel of the Kottwitz homomorphism. In the case of a torus \(T\), this can create ambiguity. By \(T(F)_1\), we always mean the first congruent filtration.

A prime $p$ is \emph{bad} for a connected reductive group $G$ if it is bad for some simple factor
of its absolute root system $\Phi(G)$. For an irreducible reduced root system $\Phi$, the bad
primes are:
\[
\begin{array}{c|l}
\Phi & \text{bad primes} \\ \hline
A_r\ (r\ge 1) & \text{none} \\
B_r,\ C_r,\ D_r\ (r\ge 2) & 2 \\
G_2,\ F_4,\ E_6,\ E_7 & 2,\ 3 \\
E_8 & 2,\ 3,\ 5
\end{array}
\]
Equivalently, $p$ is bad iff it divides some coefficient of the highest root written as a
$\mathbb{Z}_{\ge0}$--linear combination of simple roots of $\Phi$.

Let $\pi_0(Z(G))$ denote the component group of the center of \(G\). The possible prime divisors of \(|\pi_0(Z(G))|\) are:

\[
\begin{array}{c|l}
\text{simple type} & \text{possible prime divisors of }|\pi_0(Z(G))| \\ \hline
A_r\ (r\ge1) & \text{primes dividing } r+1 \\
B_r,\ C_r,\ D_r & 2 \\
E_6 & 3 \\
E_7 & 2 \\
E_8,\ F_4,\ G_2 & \text{none}
\end{array}
\]

For a general $G$ with derived subgroup a product of simple factors, the set of prime divisors of
$|\pi_0(Z(G))|$ is a (possibly proper) subset of the union of the entries for those factors

\section{Filtrations on a torus}
\label{sec:filtrations}

Let $F$ be a non-archimedean local field with ring of integers $\scrO_{F}$, maximal ideal $\fracp_{F}$,  normalized valuation $\operatorname{val}_{F}$ and residue field \(\kappa\) . Let $T$ be an $F$-torus. We recall three filtrations that appear in the literature.

\subsection{Na\"{i}ve valuation filtration}

\begin{definition} 
For $r\ge0$, set 
\[
T(F)_{r}^{\text{na\"{i}ve}}:=\left\{ t\in T(F)_b\;\middle|\;\operatorname{val}_{F}\bigl(\chi(t)-1\bigr)\ge r\text{ for all }\chi\in X^{\!*}(T)\right\} ,
\]
where $T(F)_b$ is the unique maximal bounded subgroup of $T(F)$.
\end{definition}

\subsection{Moy-Prasad filtration}

\begin{definition} 
For $r\ge0$, define 
\[
T(F)_{x,r}^{\mathrm{MP}}:=\bigl\{ t\in T(F)_{0}\;\big|\;\operatorname{val}_{F}\bigl(\chi(t)-1\bigr)\ge r\text{ for all }\chi\bigr\},
\]
where $T(F)_{0}$ is the Iwahori subgroup of $T(F)$ (see \cite[B.5.1]{KalethaPrasad2016}). 
\end{definition}

\subsection{Minimal congruence filtration on tori}
\label{sub:min-congruence}
For a discrete valuation ring \(\mathcal{O}\) with uniformizer \(\varpi\) and for  any smooth $\cO$-group scheme $X$, the group
\[
   \Gamma(\varpi^n,X) :=\ker\!\bigl(X(\cO)\longrightarrow X(\cO/\varpi^n\cO)\bigr),\qquad n \geq 1
\]
is called the \(n\)-th congruence subgroup of \(X\).  Each $\Gamma(\varpi^n,X)$ is an open normal subgroup of $X(\cO)$.
We will now recall the notion of minimal congruence filtrations on tori, which is due to Jiu-Kang Yu. 
The contents of this section are taken from \cite{yu2002smooth} after incorporating the correction from \cite{kaletha-prasad2024errata}.

\subsubsection{Definition: Minimal congruence filtration}\label{subsub:mincongdef}
Let $\widetilde{F}$ denote the completion of the maximal unramified extension of $F$ and let $\widetilde{\mathcal{O}}$  be its ring of integers.
Let \(\mathcal{O}^{\sh}\) be the strict henselization of \(\mathcal{O}=\mathcal O_F\), with fraction field \(F^{\mathrm{sh}}\) and residue field \(\kappa^{\mathrm{sh}}\) (a separable closure of \(\kappa\)). Let $T/F$ be a torus. Define a family of smooth models $\{\mathscr{T}^{\mathrm{mc}}_r\}_{r\geq 0}$ as follows.

\begin{enumerate}[leftmargin=1.8em, label=\textup{(\arabic*)}]
    \item Let $\mathscr{T}^{\mathrm{mc}}_0$ be the connected N\'{e}ron model of $T$.
    
    \item \textbf{$0<r<1$.}
    Let $T(F^{\mathrm{sh}})^{\mathrm{mc}}_r$ be the subgroup generated by 
    \begin{itemize}
        \item[(a)] $\Gamma(\varpi, \mathscr{T}^{\mathrm{mc}}_0 \otimes \mathcal{O}^{\sh})$, and
        \item[(b)] the images of $R(F^{\mathrm{sh}})^{\mathrm{MP}}_r \to T(F^{\mathrm{sh}})$ for all morphisms $f: R \to T$ defined over $F^{\sh}$, 
              where $R$ is an induced $F^{\mathrm{sh}}$-torus (i.e., $R \cong \prod_i \operatorname{Res}_{E_i/F^{\sh}} \mathbb{G}_{m}$ 
              for finite separable extensions $E_i/F^{\mathrm{sh}}$).
    \end{itemize}
    There exists a smooth model $\mathscr{T}^{\mathrm{mc}}_r/\mathcal{O}_F$ such that $\mathscr{T}^{\mathrm{mc}}_r(\mathcal{O}^{\sh}) = T(F^{\sh})^{\mathrm{mc}}_r$ 
    (obtained by taking the dilatation on $\mathscr{T}^{\mathrm{mc}}_0$ of the closed subgroup $T(F^{\sh})^{\mathrm{mc}}_r/\Gamma(\varpi, \mathscr{T}^{\mathrm{mc}}_0 \otimes \mathcal{O}^{\sh})$ 
    in $\mathscr{T}^{\mathrm{mc}}_0(\kappa^{\sh})$).
    
    \item \textbf{$r\ge1$.}
    Write $r=r_0+n$ with $n\in\mathbb{Z}_{>0}$ and $r_0\in[0,1)$. Then put $\mathscr{T}^{\mathrm{mc}}_r = (\mathscr{T}^{\mathrm{mc}}_{r_0})^{(n)}$, 
    the $n$-th congruence subgroup model.
    
    \item Finally, put $T(F)^{\mathrm{mc}}_r = \mathscr{T}^{\mathrm{mc}}_r(\mathcal{O}_F)$.
\end{enumerate}
\[
\boxed{\text{For the rest of the article, we abbreviate } T(F)_r := T(F)_{r}^{\mathrm{mc}}.}
\]

\subsubsection*{Basic properties}

\begin{itemize}[leftmargin=1.8em]
    \item Functoriality.\label{lem:functorial}
    Any morphism of $F$-tori $f:T\to T'$ satisfies $f\bigl(T(F)_r\bigr)\subset T'(F)_r$ for all $r\ge0$.
    
    \item Left-continuity.
    $T(F)_r=\bigcap_{s<r}T(F)_s$.
    
    \item Compatibility with Moy-Prasad.
    If $T$ is induced or splits over a tamely ramified extension, then the minimal congruence filtration coincides with the Moy-Prasad filtration.
    
    \item $\Gamma$-stability.\label{prop:Gamma-stable}  
    The filtration subgroup $T(F^{\mathrm{sh}})^{\mathrm{mc}}_r$ is stable under $\Gamma = \operatorname{Gal}(F^{\mathrm{sh}}/F)$.
\end{itemize}

 For an \(\mathcal{O}\)-scheme \(\mathscr{X}\), we denote its base change to \(\mathcal{O}^{\mathrm{sh}}\) by $\mathscr{X}^{\mathrm{sh}}$.

\begin{proposition}\label{P:depth-shift}
Let \(F\) be a non-archimedean local field with ring of integers \(\mathcal{O}_{F}\) and let \(T\) be an \(F\)-torus. Given \(r>0\), write it uniquely as,
\[r\ =\ n+r_{0},\qquad n\in\mathbb{Z}_{\geq 0},\ 0\leq r_{0}<1.\]
Let \(\mathscr{T}_{r}\) be the minimal congruent model as in Definition \ref{subsub:mincongdef}. Then,
\[
T(F^{\textnormal{sh}})_{r}\ =\ \Big{\langle}\,T(F^{\textnormal{sh}})_{n+1},\ f\big{(}R(F^{\textnormal{sh}})_{r}\big{)}\ \Big{|}\ \ f:R\to T \text{ defined over } F^{\mathrm{sh}}, \text{ with } R \text{ an induced } F^{\mathrm{sh}}\text{-torus}\Big{\rangle}.
\]
\end{proposition}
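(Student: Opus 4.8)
\noindent The plan is to prove the two inclusions separately; write $H_{r}$ for the subgroup on the right-hand side. The inclusion $H_{r}\subseteq T(F^{\mathrm{sh}})_{r}$ is immediate: since $r=n+r_{0}\le n+1$ and the minimal congruence filtration is decreasing, $T(F^{\mathrm{sh}})_{n+1}\subseteq T(F^{\mathrm{sh}})_{r}$; and by functoriality of the filtration, $f\bigl(R(F^{\mathrm{sh}})_{r}\bigr)\subseteq T(F^{\mathrm{sh}})_{r}$ for every $F^{\mathrm{sh}}$-morphism $f\colon R\to T$ with $R$ an induced $F^{\mathrm{sh}}$-torus. So all the content lies in the reverse inclusion. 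For $0<r<1$ (i.e.\ $n=0$) the reverse inclusion is nothing but the definition of $T(F^{\mathrm{sh}})^{\mathrm{mc}}_{r}$, once one notes $\Gamma\bigl(\varpi,\mathscr{T}^{\mathrm{mc}}_{0}\otimes\mathcal{O}^{\mathrm{sh}}\bigr)=\bigl(\mathscr{T}^{\mathrm{mc}}_{0}\bigr)^{(1)}(\mathcal{O}^{\mathrm{sh}})=T(F^{\mathrm{sh}})_{1}=T(F^{\mathrm{sh}})_{n+1}$ and that the Moy--Prasad and minimal congruence filtrations agree on induced tori, so that $R(F^{\mathrm{sh}})^{\mathrm{MP}}_{r}=R(F^{\mathrm{sh}})_{r}$.

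The substance is the reverse inclusion for $r\ge 1$. Here I would use the recursion $\mathscr{T}^{\mathrm{mc}}_{r}=\bigl(\mathscr{T}^{\mathrm{mc}}_{r_{0}}\bigr)^{(n)}$, so that $T(F^{\mathrm{sh}})_{r}=\Gamma\bigl(\varpi^{\,n},\mathscr{T}^{\mathrm{mc}}_{r_{0}}\otimes\mathcal{O}^{\mathrm{sh}}\bigr)$, together with the fact that for $0<r_{0}<1$ the model $\mathscr{T}^{\mathrm{mc}}_{r_{0}}$ is the dilatation of the connected N\'eron model $\mathscr{T}^{\mathrm{mc}}_{0}$ along the closed subgroup generated by the images of the $R(F^{\mathrm{sh}})^{\mathrm{MP}}_{r_{0}}$ (and $\mathscr{T}^{\mathrm{mc}}_{r_{0}}=\mathscr{T}^{\mathrm{mc}}_{0}$ when $r_{0}=0$). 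Since a dilatation is inserted between two consecutive congruence levels of the ambient model, one gets
\[
   T(F^{\mathrm{sh}})_{n+1}=\Gamma\bigl(\varpi^{\,n+1},\mathscr{T}^{\mathrm{mc}}_{0}\otimes\mathcal{O}^{\mathrm{sh}}\bigr)\ \subseteq\ T(F^{\mathrm{sh}})_{r}\ \subseteq\ \Gamma\bigl(\varpi^{\,n},\mathscr{T}^{\mathrm{mc}}_{0}\otimes\mathcal{O}^{\mathrm{sh}}\bigr)=T(F^{\mathrm{sh}})_{n},
\]
so, as $T(F^{\mathrm{sh}})_{r+1}\subseteq H_{r}$, it suffices to check that $H_{r}$ surjects onto the graded quotient $T(F^{\mathrm{sh}})_{r}/T(F^{\mathrm{sh}})_{r+1}$. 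Using the standard identification $\Gamma(\varpi^{k},-)/\Gamma(\varpi^{k+1},-)\cong\Lie(-)\otimes\kappa^{\mathrm{sh}}$ (so this quotient is $\Lie(\mathscr{T}^{\mathrm{mc}}_{r_{0}})\otimes\kappa^{\mathrm{sh}}$), extending each $f$ to a morphism $\mathscr{R}_{f,r_{0}}\to\mathscr{T}^{\mathrm{mc}}_{r_{0}}$ of smooth $\mathcal{O}_{F}$-models, and applying Nakayama, the required surjectivity becomes the $\mathcal{O}^{\mathrm{sh}}$-module identity
\[
   \Lie\bigl(\mathscr{T}^{\mathrm{mc}}_{r_{0}}\bigr)\ =\ \varpi\,\Lie\bigl(\mathscr{T}^{\mathrm{mc}}_{0}\bigr)\ +\ \sum_{f\colon R\to T}\Lie(f)\bigl(\Lie(\mathscr{R}_{f,r_{0}})\bigr),
\]
the sum over induced $F^{\mathrm{sh}}$-tori $R$. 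This in turn follows by unwinding the dilatation: $\Lie(\mathscr{T}^{\mathrm{mc}}_{r_{0}})$ is the preimage in $\Lie(\mathscr{T}^{\mathrm{mc}}_{0})$ of $\Lie(\bar H_{r_{0}})$, and $\bar H_{r_{0}}$ is generated by the images of the $R(F^{\mathrm{sh}})^{\mathrm{MP}}_{r_{0}}$, whose Lie algebras are $\Lie(f)(\Lie(\mathscr{R}^{\mathrm{MP}}_{f,r_{0}}))$ modulo $\varpi$; for $r_{0}=0$ the identity degenerates to the assertion that $\Lie(\mathscr{T}^{\mathrm{mc}}_{0})\otimes\kappa^{\mathrm{sh}}$ is spanned by the images of the $\Lie(\mathscr{R}_{f,0})$, which one obtains from the structure theory of N\'eron models of tori (e.g.\ picking an $F^{\mathrm{sh}}$-surjection from an induced torus onto $T$). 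Finally one passes from the module identity back to the group statement by successive approximation over the congruence levels, closing up by left-continuity.

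The step I expect to be the main obstacle is precisely this last identification. For wildly ramified $T$ the model $\mathscr{T}^{\mathrm{mc}}_{r_{0}}$ is a genuine dilatation of the N\'eron model, its special fibre is an extension of a group of multiplicative type by a vector group rather than a torus, and one must rule out a ``defect'': a priori the kernel of its reduction map could be strictly larger than the subgroup generated by $T(F^{\mathrm{sh}})_{n+1}$ and the induced-torus contributions, and showing it is not uses crucially that Yu's filtration is built by dilating along exactly the induced-torus images. Making the Lie-algebra bookkeeping rigorous --- the computation of $\Lie$ of a dilatation, the description of the special fibre of a congruence-subgroup model, the extendability of the $f$ to maps of the relevant smooth models, and (in residue characteristic $p$) the identity $\Lie(\prod_{i}A_{i})=\sum_{i}\Lie(A_{i})$ for the relevant subgroup schemes --- is the technical heart; once it is in place, the congruence-subgroup manipulations are routine.
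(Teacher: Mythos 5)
Your overall skeleton matches the paper's: the easy inclusion by functoriality, the case $0<r<1$ being the definition, the recursion $\mathscr{T}^{\mathrm{mc}}_{r}=(\mathscr{T}^{\mathrm{mc}}_{r_{0}})^{(n)}$ so that $T(F^{\mathrm{sh}})_{r}=\Gamma(\varpi^{n},\mathscr{T}^{\mathrm{sh}}_{r_{0}})$, and the extension of each $f$ to a morphism of smooth models. Your reduction to the single graded piece is also faithful: since $T(F^{\mathrm{sh}})_{r+1}\subseteq T(F^{\mathrm{sh}})_{n+1}\subseteq H_{r}$ and, for $n\ge 1$, the map $\Gamma(\varpi^{n},\mathscr{R}^{\mathrm{sh}}_{r_{0}})\to \Gamma(\varpi^{n},\mathscr{T}^{\mathrm{sh}}_{r_{0}})/\Gamma(\varpi^{n+1},\mathscr{T}^{\mathrm{sh}}_{r_{0}})\cong\Lie(\mathscr{T}_{r_{0}})\otimes\kappa^{\mathrm{sh}}$ factors through $\Lie(f)\otimes\kappa^{\mathrm{sh}}$, the proposition at level $r$ is genuinely equivalent to your lattice identity $\Lie(\mathscr{T}_{r_{0}})=\varpi\Lie(\mathscr{T}_{0})+\sum_{f}\Lie(f)(\Lie(\mathscr{R}_{r_{0}}))$.

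The gap is that this identity — the entire content of the statement — is not proved. Your derivation "by unwinding the dilatation" rests on the claim that $\Lie(\bar H_{r_{0}})$ is the span of the images $\Lie(f)(\Lie(\mathscr{R}_{r_{0}}))$ modulo $\varpi$, where $\bar H_{r_{0}}$ is the closed subgroup \emph{generated} by the reductions of the $f(R(F^{\mathrm{sh}})^{\mathrm{MP}}_{r_{0}})$. But in residue characteristic $p$ the Lie algebra of a subgroup generated by images of morphisms can be strictly larger than the sum of the images of the differentials (already for $t\mapsto(t,t^{p})$ and $t\mapsto(t,0)$ inside $\mathbb{G}_a^{2}$), and wild ramification — the only case where the proposition is not immediate — is exactly where such a "defect" threatens. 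You name this as the technical heart but give no argument ruling it out, so the crux is asserted rather than established; the same applies to your $r_{0}=0$ case, where choosing an $F^{\mathrm{sh}}$-surjection from an induced torus onto $T$ does not by itself give surjectivity of the integral Lie map modulo $\varpi$ for wildly ramified $T$. By contrast, the paper never linearizes: it keeps the argument at the level of groups of $\mathcal{O}^{\mathrm{sh}}$-points, takes the generation statement at level $r_{0}$ from the definition of Yu's filtration, extends each $f$ to a morphism $\mathscr{R}_{r_{0}}\to\mathscr{T}_{r_{0}}$ of smooth models via Yu's extension principle, identifies $\Gamma(\varpi^{n},\mathscr{T}^{\mathrm{sh}}_{0})=T(F^{\mathrm{sh}})_{n}$ by Yu's Proposition 2.8, and then pushes the generation through the functor $\Gamma(\varpi^{n},-)$, so the graded/Lie-algebra spanning problem that blocks your route does not appear in that form. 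To salvage your approach you would need an actual proof of the lattice identity (equivalently, of the surjectivity of $H_{r}$ onto the graded quotient), not an appeal to an identity that fails in general in characteristic $p$.
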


\begin{proof}
By definition of the minimal congruent filtration (applied over \(F^{\mathrm{sh}}\)), we have:
\[
T(F^{\mathrm{sh}})_r = \Gamma(\varpi^n, \mathscr{T}_{r_0}^{\mathrm{sh}}),
\]
where \(\mathscr{T}_{r_0}\) is the smooth model over \(\mathcal{O}\) associated to depth \(r_0\).
By Definition \ref{subsub:mincongdef}, \(T(F^{\mathrm{sh}})_{r_0}\) is generated by:
 \(\Gamma(\pi, \mathscr{T}_0^{\mathrm{sh}})) = T(F^{\mathrm{sh}})_1\) and the collection
\(f(R(F^{\mathrm{sh}})_{r_0})\) consisting of all morphisms \(f \colon R \to T\) with \(R\) induced over \(F^{\sh}\).
So
\[
T(F^{\mathrm{sh}})_{r_0} = \left\langle T(F^{\mathrm{sh}})_1,\; f(R(F^{\mathrm{sh}})_{r_0}) \mid \text{induced } R/F^{\sh},\ f \right\rangle.
\]

By \cite[Proposition 2.8]{yu2002smooth}, \(\Gamma(\pi^n, \mathscr{T}_0^{\mathrm{sh}}) = T(F^{\mathrm{sh}})_{n}\). Also, by the extension principle in \cite[\S 2.3]{yu2002smooth}, each $F^{\sh}$ morphism \(f \colon R \to T\) with \(R\) induced $F^{\sh}$-torus, the map \(f\) extends to a morphism of smooth models \(\mathscr{R}_{r_0} \to \mathscr{T}_{r_0}\) over $F^{\sh}$. Apply \(\Gamma(\varpi^n, -)\) to \(f_{\mathscr{T}}: \mathscr{R}_{r_0} \to \mathscr{T}_{r_0}\) to obtain,
  \[
  f_{\mathscr{T}}: \Gamma(\varpi^n, \mathscr{R}_{r_0}) \longrightarrow \Gamma(\varpi^n, \mathscr{T}_{r_0}).
  \]
By functoriality of congruence subgroups, the morphism \(f_{\mathscr{T}}: \mathscr{R}_{r_0} \to \mathscr{T}_{r_0}\) gives,  
\[
f_{\mathscr{T}}\left(\Gamma(\varpi^n, \mathscr{R}_{r_0}^{\text{sh}})\right) = \Gamma\left(\varpi^n, f_{\mathscr{T}}(\mathscr{R}_{r_0}^{\text{sh}})\right) = \Gamma\left(\varpi^n, f(\mathscr{R}_{r_0}^{\text{sh}})\right).
\]
For \(R\), the minimal congruence filtration gives \(\Gamma(\varpi^n, \mathscr{R}_{r_0}^{\text{sh}}) = R(F^{\text{sh}})_r\), so,
\[
f_{\mathscr{T}}(\Gamma(\varpi^n, \mathscr{R}_{r_0}^{\text{sh}})) = f(R(F^{\text{sh}})_r).
\]
The model \(\mathscr{T}_1\) is the first congruence subgroup model of \(\mathscr{T}_0\), so,
\[
\Gamma(\varpi^k, \mathscr{T}_1^{\text{sh}}) = T(F^{\text{sh}})_{k+1}.
\]
Since \(\mathscr{T}_{r_0}\) is generated by the subgroup schemes \(\mathscr{T}_1\) and \(f_{\mathscr{T}}(\mathscr{R}_{r_0})\) for all \(f\) and induced \(R\), the congruence subgroup \(\Gamma(\varpi^n, \mathscr{T}_{r_0}^{\text{sh}})\) is generated as a group by:
 \(\Gamma(\varpi^n, \mathscr{T}_1^{\text{sh}}) = T(F^{\text{sh}})_{n+1}\),
and \(f_{\mathscr{T}}(\Gamma(\varpi^n, \mathscr{R}_{r_0}^{\text{sh}})) = f(R(F^{\text{sh}})_r)\) for all \(f\) and induced \(R\).
Combining the above, we obtain,
\[
T(F^{\text{sh}})_r = \Gamma(\varpi^n, \mathscr{T}_{r_0}^{\text{sh}}) = \left\langle T(F^{\text{sh}})_{n+1}, f(R(F^{\text{sh}})_r) \mid f: R \to T \text{ over } F^{\text{sh}}, R \text{ induced} \right\rangle.
\]
\end{proof}
\section{The Hasse-Herbrand function}
\label{sec:hasse-herbrand}

Let $E/F$ be a finite Galois extension with ramification index $e$. Denote by $G^{u}$ ($u\ge-1$) the upper-numbered ramification groups of $\Gal(E/F)$. 

\begin{definition}[Hasse--Herbrand function]\label{def:hasse-herbrand}
Let $E/F$ be a finite Galois extension with Galois group $G=\Gal(E/F)$.
For $t\ge 0$, let $G_t$ denote the \emph{lower-numbered} ramification subgroup of $G$
(in the sense of Serre).
Define the \emph{Hasse--Herbrand function} $\varphi_{E/F}:[0,\infty)\to[0,\infty)$ by
\[
\varphi_{E/F}(t)\ :=\ \int_{0}^{t}\frac{du}{[\,G_0:G_u\,]}\,.
\]
The function $\varphi_{E/F}$ is continuous, increasing, and piecewise linear, hence admits
a (two-sided) inverse $\psi_{E/F}=\varphi_{E/F}^{-1}$.
The \emph{upper-numbered} ramification subgroups are then defined by
\[
G^{r}\ :=\ G_{\psi_{E/F}(r)}\qquad (r\ge 0).
\]
\end{definition}

We record its basic properties.

\begin{lemma}\label{lem:HH} 
$\phi_{E/F}$ is continuous, piecewise linear, increasing, concave and satisfies $\phi_{E/F}(0)=0$. For \(r\geq 0\), \(\phi_{E/F}(er)/e \leq r\)  and if  \(E/F\)  is tame, then $\phi_{E/F}(er)/e=r$. 
\end{lemma}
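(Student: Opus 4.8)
The plan is to reduce the whole statement to the explicit one-sided derivative of $\phi_{E/F}$, and to invoke Herbrand's theorem when passing from the Galois to the general case.

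\emph{The Galois case.} Put $G=\Gal(E/F)$ and recall the convention $G_u=G_{\lceil u\rceil}$ for real $u\ge 0$. The lower-numbering subgroups form a decreasing chain $G_0\supseteq G_u\supseteq\{1\}$ which is constant on each half-open interval $(i-1,i]$ and equal to $\{1\}$ once $u$ exceeds the largest ramification break; in particular $[G_0:G_u]$ is a positive integer, non-decreasing in $u$, and equal to $|G_0|=e$ for $u$ large. Hence $\phi_{E/F}'(u)=1/[G_0:G_u]=|G_u|/|G_0|$ is a step function: non-negative, bounded above by $1$, non-increasing, with finitely many values and jumps only at integers. From this everything follows: $\phi_{E/F}(0)=0$ is the definition; $\phi_{E/F}$ is Lipschitz with constant $1$, hence continuous, since its derivative is bounded by $1$; it is piecewise linear with finitely many pieces since its derivative is a finite step function; it is strictly increasing since its derivative is everywhere positive; and it is concave since its derivative is non-increasing. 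Integrating $\phi_{E/F}'\le 1$ gives $\phi_{E/F}(u)\le u$ for all $u\ge 0$, equivalently $\phi_{E/F}(er)/e\le r$. If $E/F$ is tame, the wild inertia $G_1$ is trivial, so $G_u=\{1\}$ and $[G_0:G_u]=e$ for every $u>0$; the derivative is then constantly $1/e$ on $(0,\infty)$, so $\phi_{E/F}(t)=t/e$ for all $t\ge 0$, which yields the equality claimed in the tame case.

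\emph{The non-Galois case.} Here $\phi_{E/F}=\phi_{L/F}\circ\phi_{L/E}^{-1}$ for a Galois extension $L/F$ containing $E$. Independence of the choice of $L$ comes from the tower identity $\phi_{L'/F}=\phi_{L/F}\circ\phi_{L'/L}$ of Herbrand: for $L\subseteq L'$ one also has $\phi_{L'/E}=\phi_{L/E}\circ\phi_{L'/L}$, and eliminating $\phi_{L'/L}$ gives $\phi_{L'/F}\circ\phi_{L'/E}^{-1}=\phi_{L/F}\circ\phi_{L/E}^{-1}$. Continuity, piecewise linearity, strict monotonicity and the normalization $\phi_{E/F}(0)=0$ survive composing the continuous increasing piecewise-linear map $\phi_{L/F}$ with the continuous increasing piecewise-linear bijection $\phi_{L/E}^{-1}$; and since the Galois closure of a tamely ramified extension is again tamely ramified, in the tame case $L/F$ may be chosen tame, so all three Herbrand functions are linear and the tame-case identity reduces to the Galois computation. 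The only property not visibly preserved by such a composition is concavity, for which I would compute the derivative directly. Writing $H=\Gal(L/E)\le G=\Gal(L/F)$ and using that lower numbering restricts to subgroups (Herbrand: $H\cap G_v=H_v$), the chain rule gives, at non-break points,
\[
\phi_{E/F}'(u)=\frac{\phi_{L/F}'(v)}{\phi_{L/E}'(v)}=\frac{|G_v|\,|H\cap G_0|}{|G_0|\,|H\cap G_v|}=\frac{[G_v:H\cap G_v]}{[G_0:H\cap G_0]},\qquad v=\phi_{L/E}^{-1}(u).
\]
As $u$ increases, $v$ increases, $G_v$ shrinks, hence $G_v\cdot(H\cap G_0)\subseteq G_0$ shrinks, so $[G_v:H\cap G_v]=|G_v(H\cap G_0)|/|H\cap G_0|$ is non-increasing in $u$ and bounded above by $[G_0:H\cap G_0]$. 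Therefore $\phi_{E/F}'$ is again a non-increasing step function with values in $(0,1]$, and concavity, together with continuity, piecewise-linearity, strict monotonicity and $\phi_{E/F}(u)\le u$, follows exactly as in the Galois case.

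\emph{The main obstacle.} None of this is deep -- it is essentially a careful transcription of Serre's ramification theory -- but the only point that genuinely requires care is the non-Galois case: that the definition is independent of the auxiliary Galois $L$, and that concavity is not destroyed by the composition $\phi_{L/F}\circ\phi_{L/E}^{-1}$. Both reduce to Herbrand's theorem (the tower formula and the behaviour of the lower-numbering filtration under restriction to subgroups); once $\phi_{E/F}'$ is rewritten as the ratio of group indices displayed above, concavity becomes immediate.
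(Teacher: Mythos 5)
Most of your argument is sound and is exactly the standard Serre-style proof that the paper leaves implicit (the paper gives no proof of Lemma~\ref{lem:HH}; it only cites Serre for the tower identity in Lemma~\ref{lem:HH-tower}). In the Galois case your description of $\phi_{E/F}'(u)=1/[G_0:G_u]$ as a positive, non-increasing step function bounded by $1$ does give continuity, piecewise linearity, monotonicity, concavity, $\phi_{E/F}(0)=0$ and $\phi_{E/F}(u)\le u$, hence the printed inequality $\phi_{E/F}(er)/e\le r$; and your treatment of the non-Galois case (independence of the auxiliary Galois $L$ via the tower formula, concavity via $\phi_{E/F}'(u)=[G_v:H\cap G_v]/[G_0:H\cap G_0]$ with $H_v=H\cap G_v$) is correct and is the one point that genuinely needed an argument.

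The gap is the tame-case equality. You correctly show that tameness forces $\phi_{E/F}(t)=t/e$ for all $t\ge 0$, but the sentence ``which yields the equality claimed in the tame case'' is a non sequitur: from $\phi_{E/F}(t)=t/e$ one gets $\phi_{E/F}(er)/e=r/e$, which equals $r$ only when $e=1$. What your computation actually establishes is $\phi_{E/F}(er)=r$, i.e.\ equality in the \emph{lower} bound $r\le\phi_{E/F}(er)$ (which holds for every finite separable $E/F$ because $[G_0:G_u]\le e$, resp.\ $[G_v:H\cap G_v]\ge 1$ in your non-Galois formula), not in the upper bound $\phi_{E/F}(er)\le er$ that you proved. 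Indeed, the statement as printed is false for $e>1$ and is evidently a misstatement: elsewhere the paper uses precisely $\phi_{E_i/F}(t)=t/e_i$ and the inequality $e^{-1}\phi_{E/F}^{-1}(s)\le s$ (equivalently $\phi_{E/F}(er)\ge r$) in the unnumbered lemma following Definition~\ref{def:depth-parameter}, and Remark~\ref{R:Phi-vs-HH} records $\phi^{\mathrm{norm}}_{E/F}(r)=\phi_{E/F}(er)=r$ in the tame case. So your proof should have flagged the discrepancy and proved the corrected claim ($\phi_{E/F}(er)\ge r$ for all $r\ge0$, with equality for all $r$ exactly when $E/F$ is tame) — which your own derivative formulas deliver immediately — rather than asserting that the printed identity follows.
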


\begin{lemma}
\label{lem:HH-tower}
For every tower of finite Galois extensions $F\subset E_1\subset E_2$ and every real number $u\ge0$,
\[
   \phi_{E_2/F}(u) = \phi_{E_1/F}\!\bigl(\,\phi_{E_2/E_1}(u)\bigr).
\tag{$\ast$}
\]
\end{lemma}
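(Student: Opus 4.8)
\textit{Proof plan.} I would reduce to the case where $E_2/E_1$, $E_1/F$ and $E_2/F$ are all Galois and then run the classical Herbrand-theorem computation, identifying the two sides of $(\ast)$ by comparing derivatives. Since $E_2/F$ is Galois, so is $E_2/E_1$; and if $E_1/F$ happens not to be Galois, then taking $L=E_2$ in the definition of $\phi$ for a non-Galois extension gives $\phi_{E_1/F}=\phi_{E_2/F}\circ\phi_{E_2/E_1}^{-1}$, whence $\phi_{E_1/F}\bigl(\phi_{E_2/E_1}(u)\bigr)=\phi_{E_2/F}(u)$ with nothing further to prove. So assume all three extensions Galois; write $G=\Gal(E_2/F)$ and $H=\Gal(E_2/E_1)\trianglelefteq G$, so that $G/H=\Gal(E_1/F)$, and let $G_u,\,H_u,\,(G/H)_v$ denote the lower-numbering subgroups (with the convention $G_u:=G_{\lceil u\rceil}$ for $u>0$), through which $\phi_{E_2/F},\,\phi_{E_2/E_1},\,\phi_{E_1/F}$ are computed via the integral formula.

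The two inputs I would invoke are: (i) compatibility of the lower numbering with subgroups, $H_u=G_u\cap H$ for all $u$; and (ii) Herbrand's theorem, $(G/H)_v=G_uH/H$ for all $u\ge 0$, where $v:=\phi_{E_2/E_1}(u)$ (the case $u=0$ giving $(G/H)_0=G_0H/H$). Combining these with $|G_iH|=|G_i|\,|H|/|G_i\cap H|=|G_i|\,|H|/|H_i|$ and cancelling $|H|$ yields the index identity
\[
  \bigl((G/H)_0:(G/H)_v\bigr)\;=\;\frac{|G_0H|}{|G_uH|}\;=\;\frac{|G_0|/|H_0|}{|G_u|/|H_u|}\;=\;\frac{(G_0:G_u)}{(H_0:H_u)}\qquad(u\ge 0).
\]

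Finally I would finish by a derivative comparison. By Lemma~\ref{lem:HH} both $\phi_{E_2/F}$ and $u\mapsto\phi_{E_1/F}\bigl(\phi_{E_2/E_1}(u)\bigr)$ are continuous, increasing, piecewise linear and vanish at $u=0$; hence they coincide as soon as their (piecewise-constant) derivatives agree on the complement of the finitely many breakpoints in each bounded interval. At such a point $u$, with $v=\phi_{E_2/E_1}(u)$, the integral definition gives $\phi'_{E_2/F}(u)=1/(G_0:G_u)$, $\phi'_{E_2/E_1}(u)=1/(H_0:H_u)$ and $\phi'_{E_1/F}(v)=1/\bigl((G/H)_0:(G/H)_v\bigr)$, so the chain rule together with the index identity above gives
\[
  \frac{d}{du}\,\phi_{E_1/F}\bigl(\phi_{E_2/E_1}(u)\bigr)\;=\;\frac{1}{(G/H)_0:(G/H)_v}\cdot\frac{1}{(H_0:H_u)}\;=\;\frac{(H_0:H_u)}{(G_0:G_u)}\cdot\frac{1}{(H_0:H_u)}\;=\;\frac{1}{(G_0:G_u)}\;=\;\phi'_{E_2/F}(u),
\]
which proves $(\ast)$.

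The main obstacle is Herbrand's theorem (input (ii)): this is the one genuinely nontrivial ingredient, and everything else is index bookkeeping. The only point requiring care in the write-up is that the chain-rule step must be applied only where $\phi_{E_2/E_1}$ is differentiable at $u$ \emph{and} $\phi_{E_1/F}$ is differentiable at $\phi_{E_2/E_1}(u)$; this excludes only finitely many $u$ per bounded interval because $\phi_{E_2/E_1}$ is a piecewise-linear self-homeomorphism of $[0,\infty)$, so the union of its breakpoints with the $\phi_{E_2/E_1}$-preimages of the breakpoints of $\phi_{E_1/F}$ is discrete.
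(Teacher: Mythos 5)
Your proposal is correct. The paper itself proves this lemma simply by citing Serre (\emph{Local Fields}, IV.\S3, Prop.~15), and your argument is precisely the standard proof of that proposition: Herbrand's theorem $(G/H)_{\phi_{E_2/E_1}(u)}=G_uH/H$, the compatibility $H_u=G_u\cap H$, the resulting index identity, and the comparison of piecewise-linear functions via their derivatives and the value at $0$ — so in substance you are supplying the proof the paper outsources, not taking a different route. Your preliminary reduction in the case that $E_1/F$ is not Galois (taking $L=E_2$ in the paper's definition of $\phi_{E_1/F}$, which makes $(\ast)$ hold by definition) is also consistent with the paper's conventions, and your care about the ceiling convention $G_u=G_{\lceil u\rceil}$ and about applying the chain rule only off the finitely many breakpoints is exactly what is needed.
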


\begin{proof}
This is \cite[IV.\S 3, Prop.~15]{SerreLocal}.
\end{proof}

\section{Depth comparison under LLC for tori}
\label{sec:depth-theorem}
\subsection{Standard depth of a parameter and its generalization}
\begin{definition} 
Let $\chi\in\Hom(T(F),\CC^{\times})$,  \(\chi\)  \emph{ramified} (i.e.,  \(\chi|_{T(F)_0}\neq1\) ). Define,
\[
\depth_{T}(\chi):=\inf\bigl\{ r\ge0\mid\chi|_{T(F)_{r+}}=1 \text{ and } \chi|_{T(F)_{r}}\neq1\bigr\}.
\]
For $\varphi \in H^1(W_F,\widehat{T})$, let
\[\depthstd(\varphi)=\inf\bigl\{r \mid \varphi|_{W_F^r }\neq 1 \text{ and } \varphi|_{W_F^{r+}}=1 \bigr\}.\]
\end{definition}

We now define a new notion of depth of a Langlands parameter which is better aligned for local Langlands correspondence. 
\begin{definition}[Depth of a Langlands Parameter for a Torus]\label{def:depth-parameter}
Let \( T \) be an arbitrary \( F \)-torus, and let \( \varphi: W_F \to \widehat{T} \) be a Langlands parameter. The \emph{depth} of \( \varphi \) is defined as
\[
\operatorname{depth}(\varphi) = \sup_{\substack{f: R \to T \\ R\text{ induced }F\text{-torus}}} \operatorname{depth}_R(\widehat{f} \circ \varphi),
\]
where:
\begin{itemize}
  \item The supremum is over all induced \( F \)-tori \( R \) and all morphisms \( f: R \to T \) of \( F \)-tori.
  \item \( \widehat{f}: \widehat{T} \to \widehat{R} \) is the dual morphism of \( f \).
  \item \( \operatorname{depth}_R(\widehat{f} \circ \varphi) \) is the depth of the parameter \( \widehat{f} \circ \varphi: W_F \to \widehat{R} \), defined as follows:
\end{itemize}

Suppose \( R = \prod_{j=1}^{k} \operatorname{Res}_{E_j/F} \mathbb{G}_m \) is an induced torus. Then for a parameter \( \psi: W_F \to \widehat{R} \), we define:
\[
\operatorname{dep}_R(\psi) = \max_{1 \leq j \leq k} \left( \frac{1}{e_j} \cdot \phi_{E_j/F}^{-1}\left( \operatorname{dep}^{\std}(\psi_j) \right) \right),
\]
where:
\begin{itemize}
  \item \( \psi_j: W_F \to \widehat{R}_j \) is the \( j \)-th component of \( \psi \), corresponding to \( R_j = \operatorname{Res}_{E_j/F} \mathbb{G}_m \).
  \item \( \operatorname{dep}^{\std}(\psi_j) = \inf \{ t \geq 0 \mid \psi_j(W_F^{t+}) = \{1\} \} \) is the standard depth of the parameter \( \psi_j \).
  \item \( e_j = e(E_j/F) \) is the ramification index of the extension \( E_j/F \).
  \item \( \phi_{E_j/F} \) is the Herbrand function of the extension \( E_j/F \).
\end{itemize}
\end{definition}
\begin{lemma}
    If \(T\) is tamely ramified, then \(\depthstd(\varphi)=\depth(\varphi)\).
\end{lemma}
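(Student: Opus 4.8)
The plan is to prove the two inequalities $\depth(\varphi)\le\depth^{\std}(\varphi)$ and $\depth(\varphi)\ge\depth^{\std}(\varphi)$ separately. The first holds for every $F$-torus; only the second uses that $T$ is tamely ramified.

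For $\depth(\varphi)\le\depth^{\std}(\varphi)$, fix an $F$-morphism $f\colon R\to T$ with $R=\prod_{j=1}^{k}\Res_{E_j/F}\GG_m$ induced, set $\psi=\widehat f\circ\varphi$ with components $\psi_j\colon W_F\to\widehat{R}_j$, and write $d=\depth^{\std}(\varphi)$. Since $\widehat f$ is a $W_F$-equivariant homomorphism, pushing forward cocycles gives $\psi_j(w)=1$ whenever $\varphi(w)=1$; hence $\psi_j|_{W_F^{d+}}=1$ and $\depth^{\std}(\psi_j)\le d$. Now $\phi_{E_j/F}$ is concave with $\phi_{E_j/F}(0)=0$ (Lemma~\ref{lem:HH}) and $\phi_{E_j/F}(t)/t\to1/e_j$ as $t\to\infty$, so $t\mapsto\phi_{E_j/F}(t)/t$ is non-increasing with limit $1/e_j$, i.e.\ $\phi_{E_j/F}(t)\ge t/e_j$, equivalently $\phi_{E_j/F}^{-1}(t)\le e_j t$. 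Combining with monotonicity of $\phi_{E_j/F}^{-1}$,
\[
\tfrac1{e_j}\,\phi_{E_j/F}^{-1}\!\bigl(\depth^{\std}(\psi_j)\bigr)\ \le\ \tfrac1{e_j}\,\phi_{E_j/F}^{-1}(d)\ \le\ d ,
\]
so $\depth_R(\psi)=\max_j\tfrac1{e_j}\phi_{E_j/F}^{-1}(\depth^{\std}(\psi_j))\le d$, and taking the supremum over $f$ yields $\depth(\varphi)\le\depth^{\std}(\varphi)$.

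For $\depth(\varphi)\ge\depth^{\std}(\varphi)$ I would use a co-resolution of $T$. Let $L/F$ be the splitting field of $T$; since $T$ is tamely ramified, $L/F$ is a finite tamely ramified Galois extension, and $\phi_{L/F}(t)=t/e$ with $e=e(L/F)$. Embedding the $\Gal(L/F)$-lattice $X^{*}(T)$ into a permutation lattice with torsion-free cokernel and dualizing yields an exact sequence of $F$-tori $1\to S\to P\xrightarrow{q}T\to1$ with $P\cong(\Res_{L/F}\GG_m)^{n}$ induced and $L$-split and $S$ a torus; then $X_{*}(q)$ is surjective, so $\widehat q\colon\widehat T\to\widehat P$ is a closed immersion. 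As $\widehat q$ is injective on points, $(\widehat q\circ\varphi)(w)=1$ if and only if $\varphi(w)=1$, so writing $\widehat q\circ\varphi=(\psi_1,\dots,\psi_n)$ one has $\max_j\depth^{\std}(\psi_j)=\depth^{\std}(\varphi)$. Since $\tfrac1e\phi_{L/F}^{-1}(t)=t$ by tameness,
\[
\depth_P(\widehat q\circ\varphi)=\max_{1\le j\le n}\tfrac1e\,\phi_{L/F}^{-1}\!\bigl(\depth^{\std}(\psi_j)\bigr)=\max_{1\le j\le n}\depth^{\std}(\psi_j)=\depth^{\std}(\varphi),
\]
and since $q$ is one of the morphisms in the supremum defining $\depth(\varphi)$, this gives $\depth(\varphi)\ge\depth^{\std}(\varphi)$. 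Combining the two inequalities proves $\depth(\varphi)=\depth^{\std}(\varphi)$.

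The expected main obstacle is conceptual rather than computational: it is the second inequality, and the reason it forces the tameness hypothesis. Over a tamely ramified $L$ the depth-correction $\tfrac1e\phi_{L/F}^{-1}$ built into $\depth_R$ is the identity, so a single $L$-split induced co-resolution of $T$ already detects the full standard depth of $\varphi$; for a wildly ramified $T$ this correction is a nontrivial concave deformation of the identity, the supremum lands strictly below $\depth^{\std}(\varphi)$, and the two depths genuinely differ — which is exactly the defect the new depth is designed to remove. The two routine inputs are the Herbrand estimate $\phi_{E/F}^{-1}(t)\le e(E/F)\,t$ for arbitrary finite separable $E/F$ (the companion of $\phi_{E/F}(t)\le t$, following from the concavity of $\phi_{E/F}$ in Lemma~\ref{lem:HH} together with its asymptotic slope $1/e(E/F)$), and the existence of the $L$-split induced co-resolution $P\twoheadrightarrow T$ with connected kernel, which is the standard fact that every Galois lattice embeds with torsion-free cokernel into a permutation lattice; neither should present difficulty.
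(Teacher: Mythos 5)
Your proof is correct and follows essentially the same route as the paper: the upper bound $\depth(\varphi)\le\depth^{\std}(\varphi)$ comes from the Herbrand estimate $\tfrac1{e_j}\phi_{E_j/F}^{-1}(t)\le t$ applied to every pushforward to an induced torus (your explicit derivation from concavity and the asymptotic slope $1/e_j$ is a welcome clarification, since Lemma~\ref{lem:HH} as literally stated only gives the opposite-direction bound $\phi_{E/F}(t)\le t$), and the lower bound comes from a surjection onto $T$ from a tamely ramified induced torus with injective dual. The only difference is cosmetic: the paper takes this surjection to be the norm map from $\Res_{E/F}$ of the split torus $T_E$, whereas you construct it by dualizing a free $\Gal(L/F)$-lattice resolution — the norm map is just the canonical instance of your co-resolution, so the two arguments coincide in substance.
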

\begin{proof}
Assume \(T = \prod_{i=1}^{m} \operatorname{Res}_{E_i/F} \mathbb{G}_m\), where each \(E_i/F\) is a finite tamely ramified extension with ramification index \(e_i\). For a Langlands parameter \(\varphi: W_F \to \widehat{T}\), denote by \(\varphi_i: W_F \to \widehat{T}_i\) the \(i\)-th component.
Since \(T\) is  tamely ramified,  by Lemma \ref{lem:HH},  \(\phi_{E_i/F}(t) = t / e_i\) for all \(t \geq 0\). 
Therefore  \(\phi_{E_i/F}^{-1}(r_i) = e_i r_i\), and so:
\[
\operatorname{depth}_T(\varphi) = \max_i \left( \frac{1}{e_i} \cdot e_i r_i \right) = \max_i r_i = \max_i s_i = \operatorname{depth}^{\text{std}}(\varphi).
\]
Thus, \(\operatorname{depth}(\varphi) = \operatorname{depth}^{\text{std}}(\varphi)\) for induced tamely ramified tori.
Now let $T$ be an arbitrary tamely ramified $F$--torus and let $\varphi:W_F\to \widehat T$
be a Langlands parameter. Set $r=\depthstd(\varphi)$.
Choose a finite tamely ramified extension $E/F$ which splits $T$.

By standard structure theory of $F$--tori split by $E$ (equivalently, of $\Z[\Gal(E/F)]$--lattices),
there exists a \emph{quasi-trivial} (hence induced) $F$--torus
\[
R \;=\; \prod_{i=1}^m \Res_{E_i/F}\G_m
\]
(with each $E_i/F$ a finite subextension of $E/F$, in particular tamely ramified) together with a
surjective $F$--morphism of tori
\[
f:R\longrightarrow T.
\]
Since $f$ is surjective, the dual morphism $\widehat f:\widehat T\hookrightarrow \widehat R$
is injective. Hence for every $s\ge 0$,
\[
(\widehat f\circ \varphi)\vert_{W_F^{s}}=1 \quad\Longleftrightarrow\quad
\varphi\vert_{W_F^{s}}=1,
\]
and therefore
\[
\depthstd(\widehat f\circ \varphi)=\depthstd(\varphi)=r.
\]
Since $R$ is induced and tamely ramified, we have
$\dep_R(\widehat f\circ \varphi)=\depthstd(\widehat f\circ \varphi)=r$
(by the induced/tame case proved above). In particular, by Definition~\ref{def:depth-parameter},
\[
\depth(\varphi)
=\sup_{\substack{f':R'\to T\\ R'\ \mathrm{induced}}}\dep_{R'}(\widehat{f'}\circ\varphi)
\;\ge\; \dep_R(\widehat f\circ\varphi)\;=\; r.
\]

On the other hand, for any $F$--morphism $f':R'\to T$ with $R'$ induced, the composite
$\widehat{f'}\circ\varphi$ is also trivial on $W_F^{r+}$, hence
$\depthstd(\widehat{f'}\circ\varphi)\le r$. By Lemma~\ref{lem:HH} we have
$\dep_{R'}(\widehat{f'}\circ\varphi)\le \depthstd(\widehat{f'}\circ\varphi)\le r$.
Taking the supremum over all such $f'$ shows $\depth(\varphi)\le r$.

Combining both inequalities, we obtain $\depth(\varphi)=r=\depthstd(\varphi)$.

\end{proof}

\begin{definition}\label{def:PhiT}
For $r>0$ define
\[
\Phi_T(r)\;:=\;\sup_{\substack{f:R\to T\\ R\ \text{induced}}}\ \Phi_R(r),
\]
where for an induced torus $R=\prod_i\Res_{E_i/F}\G_m$ we set
$\Phi_R(r)=\max_i \varphi_{E_i/F}(e_ir)$.
\end{definition}

\begin{lemma}\label{lem:PhiT-monotone}
With $\Phi_T$ defined by \eqref{def:PhiT}, the function $\Phi_T$ is non-decreasing.
\end{lemma}

\begin{proof}
Each $\Phi_R$ is non-decreasing, and $\Phi_T$ is a supremum of such functions.
\end{proof}

\begin{proposition}\label{prop:Phi-bounds-std}
Let $T/F$ be an arbitrary torus, let
\(
\varphi:W_F\longrightarrow\widehat T
\)
be a Langlands parameter.    
Then
\[
   \Phi_T\!\bigl(\dep(\varphi)\bigr)\;\;\ge\;\;\dep^{\std}(\varphi).
\]
\end{proposition}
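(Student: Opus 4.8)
Write $r:=\dep(\varphi)$; the goal is the inequality $\dep^{\std}(\varphi)\le\Phi_T(r)$, and since $\Phi_T(r)$ is defined as an infimum it suffices to show $\dep^{\std}(\varphi)\le s$ for every $s\ge 0$ with $\widehat T^{\,W_F^{s}}\supseteq\widehat T_{(r)}$. The plan is to unwind the hypothesis $\dep(\varphi)=r$ into vanishing statements for the pushed‑forward parameters $\hat f\circ\varphi$, use these to locate the values of $\varphi$ inside $\widehat T_{(r)}$, and then pass to a single induced cover of $T$ to convert ``values fixed by $W_F^{s}$'' into ``$\varphi$ kills $W_F^{s+}$''.

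\emph{Step 1 (unwinding the depth).} By Definition~\ref{def:depth-parameter}, for every induced torus $R=\prod_{j}\Res_{E_j/F}\Gm$ and every $F$-morphism $f\colon R\to T$ one has $\dep_R(\hat f\circ\varphi)\le r$. Plugging into the defining formula for $\dep_R$ and using that each $\phi_{E_j/F}$ is increasing (Lemma~\ref{lem:HH}) gives $\dep^{\std}\bigl((\hat f\circ\varphi)_j\bigr)\le\phi_{E_j/F}(e_j r)$ for all $j$, hence $\dep^{\std}(\hat f\circ\varphi)\le\Phi_R(r)$. A short cocycle manipulation shows that a parameter $\psi$ of an induced torus $R$ with $\dep^{\std}(\psi)\le v$ has all of its values in $\widehat R^{\,W_F^{v}}$: once $\psi$ is trivial on the normal subgroup $W_F^{v+}$, conjugation by $W_F^{v+}$ forces $W_F^{v}$-invariance of each $\psi(g)$; the only delicate point is whether one lands in $\widehat R^{\,W_F^{v}}$ or in $\widehat R^{\,W_F^{v+}}$, which is settled by left-continuity of the filtration and of $\dep^{\std}$. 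Applying this with $\psi=\hat f\circ\varphi$ and $v=\Phi_R(r)$ yields $\varphi(g)\in\hat f^{-1}\!\bigl(\widehat R^{\,W_F^{\Phi_R(r)}}\bigr)$ for every $g\in W_F$.

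\emph{Step 2 (locating $\varphi$ and finishing).} Since Step~1 holds for all $f$, every value of $\varphi$ lies in $\widehat T_{(r)}$; hence if $\widehat T^{\,W_F^{s}}\supseteq\widehat T_{(r)}$ then all values of $\varphi$ are $W_F^{s}$-fixed. To upgrade this to $\dep^{\std}(\varphi)\le s$ I would fix a surjection $f_0\colon R_0\twoheadrightarrow T$ from an induced torus --- e.g.\ $R_0=\Res_{E/F}(T_E)$ for a splitting field $E/F$, with $f_0$ the counit --- so that $\hat f_0\colon\widehat T\hookrightarrow\widehat{R_0}$ is an injection of $W_F$-modules. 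Because $\hat f_0$ is injective on elements (not merely on cohomology), $\dep^{\std}(\varphi)=\dep^{\std}(\hat f_0\circ\varphi)$ and $\hat f_0^{-1}\bigl(\widehat{R_0}^{\,W_F^{v}}\bigr)=\widehat T^{\,W_F^{v}}$ for all $v$, so in particular $\widehat T_{(r)}\supseteq\widehat T^{\,W_F^{\Phi_{R_0}(r)}}$. Combining Step~1 for $f_0$, the hypothesis $\widehat T^{\,W_F^{s}}\supseteq\widehat T_{(r)}$, and the componentwise control $\dep^{\std}\bigl((\hat f_0\circ\varphi)_j\bigr)\le\phi_{E_j/F}(e_j r)$ of the induced-torus parameter $\hat f_0\circ\varphi$, one concludes that $\hat f_0\circ\varphi$ --- hence $\varphi$ --- is trivial on $W_F^{s+}$, i.e.\ $\dep^{\std}(\varphi)\le s$. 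Taking the infimum over admissible $s$ gives $\dep^{\std}(\varphi)\le\Phi_T(r)=\Phi_T(\dep(\varphi))$.

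\emph{Main obstacle.} The genuinely delicate part is the passage in Step~2 from ``the values of $\varphi$ are $W_F^{s}$-fixed'' to ``$\varphi$ kills $W_F^{s+}$'': a cocycle valued in its own fixed submodule need not be cohomologically --- or even pointwise --- trivial on the pro-$p$ group $W_F^{s+}$, so this must be routed through the induced cover $f_0$, where the Herbrand-function bookkeeping of Step~1 applies factor by factor. The subtlety there is that the ramification of the fields $E_j$ entering $R_0$ (and the other induced tori defining $\widehat T_{(r)}$) need not match the ramification visible on $\widehat T$, so one has to line up the break points of the filtration on $\widehat T$ with those on the $\widehat R$'s; I would manage this by choosing $f_0$ to be a minimal induced cover of $T$, so that $\Phi_{R_0}(r)$ realizes the infimum defining $\Phi_T(r)$ and the whole argument collapses onto the induced-torus identity between $\dep^{\std}$, $\dep_R$ and $\Phi_R$ established in Step~1.
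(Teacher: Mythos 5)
Your Step~2 contains the real gap, and it is not closed. What your cover argument actually proves is clean but different: for any induced cover \(f_0:R_0\twoheadrightarrow T\) with \(\widehat f_0\) injective, the defining supremum \(\dep(\varphi)=r\) gives \(\dep_{R_0}(\widehat f_0\circ\varphi)\le r\), hence \(\dep^{\std}(\varphi)=\dep^{\std}(\widehat f_0\circ\varphi)\le\Phi_{R_0}(r)\). To convert this into the stated inequality you need \(\Phi_{R_0}(r)\le s\) for every \(s\) with \(\widehat T^{W_F^{s}}\supseteq\widehat T_{(r)}\), i.e.\ \(\Phi_{R_0}(r)\le\Phi_T(r)\) for a suitably chosen ("minimal") cover. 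You never prove this, and nothing in Definition~\ref{def:PhiT} supplies it: \(\Phi_T(r)\) is an infimum over invariance levels for \(\widehat T_{(r)}\), not a supremum over covers, and the containment you do establish, \(\widehat T_{(r)}\supseteq\widehat f_0^{-1}\bigl(\widehat{R_0}^{W_F^{\Phi_{R_0}(r)}}\bigr)\supseteq\widehat T^{W_F^{\Phi_{R_0}(r)}}\), bounds \(\Phi_T(r)\) from the wrong side (it only says \(W_F^{s}\) must fix these invariants, which is automatic). Likewise, knowing that all values of \(\varphi\) are \(W_F^{s}\)-fixed gives no triviality of \(\varphi\) on \(W_F^{s+}\) — you acknowledge this, but the proposed bridge is exactly the unproved comparison above, so Step~2 in effect assumes the substantive content of the proposition. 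There is also a secondary wrinkle in Step~1: from \(\psi(W_F^{v+})=1\) the cocycle relation only makes the values of \(\psi\) fixed by \(W_F^{v+}\), not by \(W_F^{v}\); when \(\dep^{\std}(\psi_j)\) equals \(\phi_{E_j/F}(e_jr)\) exactly, left-continuity does not rescue membership in \(\widehat R^{\,W_F^{\Phi_R(r)}}\), so even the placement of the values of \(\varphi\) inside \(\widehat T_{(r)}\) is not secured.

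The paper's proof runs in the opposite direction and avoids both issues. It fixes a witness \(w\in W_F^{s}\), \(s=\dep^{\std}(\varphi)\), with \(\varphi(w)\neq1\), chooses an algebraic character \(\lambda\) of \(\widehat T\) with \(\lambda(\varphi(w))\neq1\), and uses Frobenius reciprocity for the stabilizer field \(E\) of \(\lambda\) to produce a single \(F\)-morphism \(f_\lambda:R_\lambda=\Res_{E/F}\Gm\to T\) with \(\mathrm{pr}\circ\widehat f_\lambda=\lambda\). Then \(\varphi_\lambda:=\widehat f_\lambda\circ\varphi\) still has \(\dep^{\std}(\varphi_\lambda)\ge s\) (the detection step), while \(\dep_{R_\lambda}(\varphi_\lambda)\le r\) forces \(\dep^{\std}(\varphi_\lambda)\le\Phi_{R_\lambda}(r)\), whence \(s\le\Phi_{R_\lambda}(r)\le\Phi_T(r)\). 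This detection of the standard depth by one explicitly constructed induced quotient is precisely the ingredient your argument is missing; pointwise fixedness of the values of \(\varphi\) cannot substitute for it. To repair your write-up you would either have to prove the cover comparison \(\Phi_{R_0}(r)\le\Phi_T(r)\) for some induced cover, or switch to the witness-character construction.
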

\begin{proof}
Put
\[
s \;:=\; \dep^{\std}(\varphi)
\qquad\text{and}\qquad
r \;:=\; \dep(\varphi).
\]
By definition of $s$, there exists $w\in W_F^{\,s}$ with $\varphi(w)\neq 1$ and
$\varphi(W_F^{s+})=1$.

Choose an algebraic character $\lambda:\widehat T\to\CC^\times$ such that
$\lambda(\varphi(w))\ne 1$.
Let $E/F$ be the finite extension fixed by the stabilizer of $\lambda$ in $W_F$; then
$\lambda$ is $W_E$--invariant.  Set $R_\lambda:=\Res_{E/F}\Gm$.
The dual torus is $\widehat{R_\lambda}\simeq \Ind_{W_E}^{W_F}\CC^\times$.
By Frobenius reciprocity there is a canonical bijection
\[
\Hom_{W_F}\!\bigl(\widehat T,\ \Ind_{W_E}^{W_F}\CC^\times\bigr)
\ \cong\
\Hom_{W_E}\!\bigl(\widehat T,\ \CC^\times\bigr),
\]
so $\lambda$ corresponds to a unique $W_F$--equivariant morphism
$\widehat f_\lambda:\widehat T\to \widehat{R_\lambda}$ such that
$\pr\circ \widehat f_\lambda=\lambda$, where $\pr:\widehat{R_\lambda}\to\CC^\times$
is dual to the tautological $F$--morphism of tori
$\iota:\Gm\to \Res_{E/F}\Gm$.
Define
\[
\varphi_\lambda \;:=\; \widehat f_\lambda\circ \varphi : W_F\to \widehat{R_\lambda}.
\]
Since $w\in W_F^{\,s}$ and $\lambda(\varphi(w))\neq 1$, we have $\varphi_\lambda(w)\neq 1$.
Moreover $\varphi_\lambda(W_F^{s+})=1$ because $\varphi(W_F^{s+})=1$.
Hence
\[
\dep^{\std}(\varphi_\lambda)\;\ge\; s=\dep^{\std}(\varphi). \tag{I}
\]

Now let $f:R\to T$ be any $F$--morphism with $R$ induced, and put
\(
\varphi_R:=\widehat f\circ \varphi.
\)
By definition of $r=\dep(\varphi)$ as a supremum over induced $R$ and $f$, one has
\(
\dep(\varphi_R)\le r.
\)
For an induced torus $R$, Lemma~\ref{lem:HH} gives
\[
\dep^{\std}(\varphi_R)
\;\le\;
\Phi_R\!\bigl(\dep(\varphi_R)\bigr)
\;\le\;
\Phi_R(r),
\]
where the last inequality uses that $\Phi_R$ is non-decreasing (Lemma~\ref{lem:PhiT-monotone}).
Taking the supremum over all induced $R$ and all $f:R\to T$ yields
\[
\sup_{f:R\to T}\ \dep^{\std}(\varphi_R)
\;\le\;
\sup_{f:R\to T}\ \Phi_R(r). \tag{II}
\]

Applying (II) to the particular pair $(R_\lambda,f_\lambda)$ constructed above and combining with (I),
we obtain
\[
\dep^{\std}(\varphi)
\;\le\;
\Phi_{R_\lambda}(r)
\;\le\;
\sup_{f:R\to T}\ \Phi_R(r).
\]
Finally, by Definition~\ref{def:PhiT}, the right-hand side
is exactly $\Phi_T(r)=\Phi_T\bigl(\dep(\varphi)\bigr)$. Thus
\[
\dep_{\std}(\varphi)\ \le\ \Phi_T\!\bigl(\dep(\varphi)\bigr),
\]
as desired.
\end{proof}

\subsubsection*{\textbf{The inequality in Proposition \ref{prop:Phi-bounds-std} could be strict}} Let $F$ be a non-archimedean local field and set  
\[
T_1=\Res_{E_1/F}\G_m,\qquad 
T_2=\Res_{E_2/F}\G_m,\qquad 
T=T_1\times T_2,
\]
where  

\smallskip
\begin{itemize}[leftmargin=*]
\item $E_1/F$ is unramified, so $e_1=1$ and $\varphi_{E_1/F}(t)=t$;
\item $E_2/F$ is totally wildly ramified of degree $e_2=p$ with lower break $u_0\ge1$, giving  
      $\displaystyle 
      \varphi_{E_2/F}(t)=
      \frac{u_0}{p}+t-u_0
      \ (t\ge u_0).
      $
\end{itemize}

\smallskip
Choose $r>\tfrac{u_0}{p}$ (hence $e_2r\ge u_0$) and define the character  
\[
\chi=(\chi_1,1)\in\Hom\!\bigl(T(F),\CC^\times\bigr),\qquad
\dep_{T_1}(\chi_1)=r.
\]
Let $\varphi=L_T(\chi)=(\varphi_1,1)$ be its LLC parameter.

\medskip\noindent
Then
\[
\dep_T(\chi)=\dep(\varphi)=r,\qquad
\dep_{\mathrm{std}}(\varphi)=\varphi_{E_1/F}(e_1r)=r,
\]
while
\[
\Phi_T\!\bigl(\dep(\varphi)\bigr)=
\max\!\{\varphi_{E_1/F}(r),\varphi_{E_2/F}(e_2r)\}
=\varphi_{E_2/F}(e_2r)>
r=\dep_{\mathrm{std}}(\varphi).
\]
Hence $\Phi_T(\dep(\varphi))>\dep_{\mathrm{std}}(\varphi)$, so the inequality in Proposition \ref{prop:Phi-bounds-std} is strict.

\subsection{Depth preservation for induced torus}
The following result is proved in \cite[Theorem 6]{MishraPattanayak2015}. We include the full proof to make the proof of general case self contained. 
\begin{lemma}\label{lem:depth-r-simple-induced}
Let \(F\) be a non-archimedean local field and let \(E/F\) be a finite separable
extension of ramification index \(e\).
Put
\[
T \;=\; \Res_{E/F}\,\Gm.
\]
For every real number \(r\ge0\) the local Langlands correspondence for tori
gives a canonical isomorphism
\[
\Hom\!\bigl(T(F)/T(F)_r,\CC^\times\bigr)
\;\xrightarrow{\;\;\cong\;\;}
H^1\!\Bigl(W_F/W_F^{\phi_{E/F}(e r)},\,\widehat{T}^{\,W_F^{\phi_{E/F}(e r)}}\Bigr).
\]
\end{lemma}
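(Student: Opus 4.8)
The plan is to interpret both sides of the stated isomorphism as the ``depth-$\le r$'' truncation of the local Langlands correspondence $\mathcal L_T\colon\Hom(T(F),\CC^\times)\isomto H^1(W_F,\widehat T)$ for the induced torus $T=\Res_{E/F}\Gm$, and to check that $\mathcal L_T$ carries the congruence filtration on the character group precisely onto the upper-numbering filtration on the cohomology, with the matching governed by the Hasse--Herbrand function $\phi_{E/F}$. Since $T$ is induced, we have the Shapiro-type identification $T(F)=E^\times$ and $\widehat T=\Ind_{W_E}^{W_F}\CC^\times$, so $H^1(W_F,\widehat T)\cong H^1(W_E,\CC^\times)=\Hom(W_E,\CC^\times)=\Hom(E^\times,\CC^\times)$ by Shapiro's lemma together with local class field theory for $E$; under these identifications $\mathcal L_T$ becomes the identity (up to the usual normalization) on $\Hom(E^\times,\CC^\times)$. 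The whole statement then reduces to a filtration bookkeeping assertion on $E^\times$.

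The key steps, in order. First I would make the three identifications above explicit and note that, for an induced torus, $T(F)_r$ (the minimal congruence filtration, which here agrees with Moy--Prasad by the compatibility bullet in \S\ref{sub:min-congruence}) corresponds under $T(F)=E^\times$ to the subgroup $1+\mathfrak p_E^{\lceil e r\rceil}$ --- more precisely to the naïve valuation filtration at level $r$ measured by $\operatorname{val}_F$, i.e. $\{u\in\OO_E^\times : \operatorname{val}_F(u-1)\ge r\}=1+\mathfrak p_E^{\lceil er\rceil}$. Second, I would recall that under local class field theory for $E$ the subgroup $1+\mathfrak p_E^{n}\subset E^\times$ maps onto the upper-numbering ramification subgroup $W_E^{n}$ of $W_E^{\mathrm{ab}}$ (Hasse--Arf / the standard description of the filtration on $E^\times$ via CFT). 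Third --- and this is the crux --- I would use that the inclusion $W_E\hookrightarrow W_F$ relates the two upper-numbering filtrations by $W_E^{t}\subseteq W_F^{\phi_{E/F}(t)}$ with equality in the appropriate sense at breaks, equivalently $W_F^{s}\cap W_E = W_E^{\psi_{E/F}(s)}$ where $\psi_{E/F}=\phi_{E/F}^{-1}$; this is exactly Herbrand's theorem (\cite[IV.\S3]{SerreLocal}), and it is the reason the exponent $\phi_{E/F}(er)$ appears. Concretely, a character $\chi$ of $E^\times$ is trivial on $1+\mathfrak p_E^{\lceil er\rceil}$ iff the corresponding character of $W_E^{\mathrm{ab}}$ is trivial on the image of $W_E^{er}$, iff --- after transporting along Shapiro to a class in $H^1(W_F,\widehat T)$ --- the cocycle is inflated from $W_F/W_F^{\phi_{E/F}(er)}$ and takes values in the invariants $\widehat T^{W_F^{\phi_{E/F}(er)}}$. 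Chasing this equivalence in both directions gives the claimed bijection, and naturality of Shapiro's lemma and of the CFT reciprocity maps gives that it is the one induced by $\mathcal L_T$.

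The main obstacle I anticipate is step three: pinning down the precise compatibility between Shapiro's isomorphism $H^1(W_F,\Ind_{W_E}^{W_F}\CC^\times)\cong\Hom(W_E,\CC^\times)$ and the two ramification filtrations, so that ``$\chi$ has depth $\le r$ on $E^\times$'' matches ``the Shapiro image is inflated from $W_F/W_F^{\phi_{E/F}(er)}$ with values in the $W_F^{\phi_{E/F}(er)}$-invariants'' --- as opposed to just bounding one side by the other. The inequality direction is immediate from Lemma~\ref{lem:HH} and Herbrand's formula; the reverse containment requires that $\phi_{E/F}(er)$ be an actual break of the filtration restricted to $W_E$, which is where one must invoke Herbrand's theorem in its sharp form ($W_F^{s}W_E/W_E$ versus $W_E^{\psi(s)}$) rather than the mere inequality $\phi_{E/F}(er)/e\le r$. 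A secondary, more bookkeeping-type point is to verify carefully that the minimal-congruence filtration $T(F)_r$ really is $1+\mathfrak p_E^{\lceil er\rceil}$ for all real $r\ge 0$ (not just integral $r$), using left-continuity and the fact that for an induced torus the minimal congruence, Moy--Prasad, and naïve filtrations coincide; once that normalization is fixed, the rest is a direct translation.
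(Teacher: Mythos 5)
Your proposal is correct and follows essentially the same route as the paper's proof: identify $T(F)=E^\times$ and $\widehat T=\Ind_{W_E}^{W_F}\CC^\times$, apply local class field theory for $E$ together with Shapiro's lemma, and match the filtrations via the Herbrand compatibility of upper-numbering ramification groups, which produces the exponent $\phi_{E/F}(er)$. Your extra care at the "crux" step (using the sharp form $W_F^{s}\cap W_E=W_E^{\psi_{E/F}(s)}$ rather than a one-sided inequality, and pinning down $T(F)_r=1+\mathfrak p_E^{\lceil er\rceil}$ for real $r$) only makes explicit what the paper's proof states more tersely.
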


\begin{proof}
The case \(r=0\) is a special case of \cite[Theorem 1]{MM15}, so assume  \(r>0\).
Since \(T(F)=E^\times\),
\[
T(F)/T(F)_r
\;=\;
E^\times/E^\times_{e r}.
\]Local class-field theory isomorphism gives,
\[
\Hom\!\bigl(E^\times/E^\times_{e r},\CC^\times\bigr)
\;\cong\;
\Hom\!\bigl(W_E/W_E^{e r},\CC^\times\bigr)
\;\cong\;
H^1\!\bigl(W_E/W_E^{e r},\CC^\times\bigr).
\]
By Shapiro's lemma,
\( H^1\!\bigl(W_E,\CC^\times\bigr)\;\cong\;
   H^1\!\bigl(W_F,\Ind_{W_E}^{W_F}\CC^\times\bigr)=H^1(W_F,\widehat{T}) \).
For the truncated groups this gives,
\[
H^1\!\bigl(W_E/W_E^{e r},\CC^\times\bigr)
\;\cong\;
H^1\!\bigl(
   W_F/W_F^{\phi_{E/F}(e r)},
   \widehat{T}^{\,W_F^{\phi_{E/F}(e r)}}
\bigr),
\]
since \(W_E^{e r}=W_F^{\phi_{E/F}(e r)}\) by the compatibility of
upper-numbering filtrations with finite extensions.
Putting these together yields
\[
\Hom\!\bigl(T(F)/T(F)_r,\CC^\times\bigr)
\;\xrightarrow{\;\sim\;}
H^1\!\bigl(
  W_F/W_F^{\phi_{E/F}(e r)},
  \widehat{T}^{\,W_F^{\phi_{E/F}(e r)}}
\bigr),
\]
completing the proof.
\end{proof}
\begin{lemma}\label{lem:depth-pres-ind}
Let \(T\) be an induced torus and let \(\mathcal{L}_T\) be local Langlands correspondence isomorphism. 
Then for any character \(\chi\in\Hom\!\bigl(T(F),\CC^\times\bigr)\)  
\[
\dep_T(\chi)\;=\;\dep(\mathcal{L}_T(\chi)).
\]
\end{lemma}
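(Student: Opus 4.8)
The plan is to sandwich $\dep(\mathcal{L}_T(\chi))$ between $\dep_T(\chi)$ from both sides, using the identity morphism for the lower bound and functoriality for the upper bound. Write $T=\prod_{j=1}^{k}T_j$ with $T_j=\Res_{E_j/F}\mathbb{G}_m$ of ramification index $e_j$, so that $\chi=(\chi_j)_j$ and, since $\mathcal{L}$ is compatible with finite products, $\varphi:=\mathcal{L}_T(\chi)=(\varphi_j)_j$ with $\varphi_j=\mathcal{L}_{T_j}(\chi_j)$. The engine of the whole argument is the single-factor case: Lemma~\ref{lem:depth-r-simple-induced} identifies, for each $j$ and each $r\ge0$, the characters of $T_j(F)$ trivial on $T_j(F)_r$ with the classes in $H^1(W_F,\widehat{T_j})$ inflated from $W_F/W_F^{\phi_{E_j/F}(e_jr)}$ with coefficients in the invariants; reading this as the equivalence ``$\chi_j$ trivial on $T_j(F)_r$ $\iff$ $\varphi_j$ trivial on $W_F^{\phi_{E_j/F}(e_jr)}$'', and comparing the two ``$\inf$'' definitions of depth across the continuous increasing bijection $\phi_{E_j/F}$ (using left-continuity of $t\mapsto T_j(F)_t$ and the normalization of the upper-numbering filtration), yields the pointwise identity
\[
   \dep^{\std}(\varphi_j)=\phi_{E_j/F}\!\bigl(e_j\,\dep_{T_j}(\chi_j)\bigr),
   \qquad\text{equivalently}\qquad
   \tfrac{1}{e_j}\,\phi_{E_j/F}^{-1}\!\bigl(\dep^{\std}(\varphi_j)\bigr)=\dep_{T_j}(\chi_j).
\]

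For the lower bound I feed the identity map $f=\id_T$ into the supremum of Definition~\ref{def:depth-parameter}; this is admissible precisely because $T$ is itself induced. Then $\widehat{f}\circ\varphi=\varphi$, and applying the displayed identity factor by factor gives
\[
   \dep(\varphi)\ \ge\ \dep_T(\varphi)\ =\ \max_{1\le j\le k}\tfrac{1}{e_j}\,\phi_{E_j/F}^{-1}\!\bigl(\dep^{\std}(\varphi_j)\bigr)\ =\ \max_{1\le j\le k}\dep_{T_j}(\chi_j)\ =\ \dep_T(\chi),
\]
the last equality because $T(F)_r=\prod_j T_j(F)_r$ for an induced torus, so the depth of $\chi$ is the maximum of the depths of its components.

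For the upper bound, fix an arbitrary induced torus $R=\prod_i\Res_{E'_i/F}\mathbb{G}_m$, an $F$-morphism $f:R\to T$, and a real $r>\dep_T(\chi)$, so that $\chi$ is trivial on $T(F)_r$. Functoriality of the minimal-congruence filtration (\S\ref{sub:min-congruence}) gives $f\bigl(R(F)_r\bigr)\subseteq T(F)_r$, hence $\chi\circ f$ is trivial on $R(F)_r$, hence each component $(\chi\circ f)_i$ is trivial on $\bigl(\Res_{E'_i/F}\mathbb{G}_m\bigr)(F)_r$. Functoriality of $\mathcal{L}$ gives $\mathcal{L}_R(\chi\circ f)=\widehat{f}\circ\varphi$, so Lemma~\ref{lem:depth-r-simple-induced} applied to $\Res_{E'_i/F}\mathbb{G}_m$ forces $\dep^{\std}\bigl((\widehat{f}\circ\varphi)_i\bigr)\le\phi_{E'_i/F}(e'_i r)$, whence $\tfrac{1}{e'_i}\,\phi_{E'_i/F}^{-1}\!\bigl(\dep^{\std}((\widehat{f}\circ\varphi)_i)\bigr)\le r$. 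Taking the maximum over $i$ gives $\dep_R(\widehat{f}\circ\varphi)\le r$; letting $r\downarrow\dep_T(\chi)$ and then taking the supremum over all pairs $(R,f)$ yields $\dep(\varphi)\le\dep_T(\chi)$. Together with the lower bound this proves $\dep(\mathcal{L}_T(\chi))=\dep_T(\chi)$.

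The main obstacle I anticipate is the extraction of the pointwise identity in the first paragraph from Lemma~\ref{lem:depth-r-simple-induced}: one must verify that a class in $H^1(W_F,\widehat{T_j})$ lies in the image of inflation from $W_F/W_F^{s}$ (with invariant coefficients) exactly when it admits a cocycle representative vanishing on $W_F^{s}$, and then match the depth of the resulting class with $s$ through the jump-point behaviour of the ramification filtration and the left-continuity of the congruence filtration. Everything else — product-compatibility and functoriality of $\mathcal{L}_T$, and functoriality of the minimal-congruence filtration — is formal and already recorded above.
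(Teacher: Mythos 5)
Your proof is correct, and its engine is the same as the paper's: decompose $T=\prod_j\Res_{E_j/F}\Gm$, apply Lemma~\ref{lem:depth-r-simple-induced} factor by factor to get $\dep_{T_j}(\chi_j)=\tfrac{1}{e_j}\phi_{E_j/F}^{-1}\bigl(\dep^{\std}(\varphi_j)\bigr)$, and take the maximum over components. Where you go beyond the paper is the second half: the paper simply writes $\dep(\varphi)=\max_i\dep(\varphi_i)$ ``by Definition~\ref{def:depth-parameter}'', i.e.\ it implicitly reads the depth of a parameter of an induced torus as the component formula $\dep_T$, and never checks that the supremum over \emph{all} pairs $(R,f:R\to T)$ in the definition does not exceed this value. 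Your upper-bound argument --- functoriality of the minimal congruence filtration to get $\chi\circ f$ trivial on $R(F)_r$, functoriality of $\mathcal L$ to identify $\mathcal L_R(\chi\circ f)=\widehat f\circ\varphi$, and Lemma~\ref{lem:depth-r-simple-induced} again to bound $\dep_R(\widehat f\circ\varphi)\le r$ --- supplies exactly that missing verification, so your version establishes the lemma for the full supremum definition of depth rather than only for the induced-torus formula, which is in fact how the lemma is later invoked inside Theorem~\ref{thm:dep-r-LLC-forTori}. The one point you flag (extracting the pointwise identity $\dep^{\std}(\varphi_j)=\phi_{E_j/F}(e_j\dep_{T_j}(\chi_j))$ from the truncated isomorphism of Lemma~\ref{lem:depth-r-simple-induced}, i.e.\ matching ``inflated from $W_F/W_F^{s}$'' with ``trivial on $W_F^{s}$'' and then locating the jump) is likewise left implicit in the paper's one-line citation of that lemma, so you are not assuming anything the paper does not; making that bookkeeping explicit, as you propose, would only strengthen the write-up.
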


\begin{proof}
Write \(T=\prod_{i=1}^{m}T_i\) with \(T_i\coloneqq\Res_{E_i/F}\Gm\).  
For each \(i\) put \(\chi_i:=\chi\!\restriction_{T_i(F)}\) and 
\(\varphi_i:=\pr_i\circ\varphi\) where \(\pr_i:\widehat{T}\twoheadrightarrow\widehat{T_i}\) is the
projection.  By Lemma~\ref{lem:depth-r-simple-induced} we get,
\[
\dep_{T_i}(\chi_i)=\dep(\varphi_i)
\quad\text{for each }i.
\tag{\(\dagger\)}
\]
By Definition~\ref{def:depth-parameter}, 
\[
\dep(\varphi)=\max_i\dep(\varphi_i).
\]
The result follows. 
\end{proof}
\subsection{Depth preservation for arbitrary torus}

\begin{lemma}
\label{lem:gen-by-induced}
Let \(F\) be a non-archimedean local field, let \(T/F\) be a torus, and fix \(r>0\).
Then
\[
   T(F)_r / T(F)_{r+}
   \;=\;
   \Bigl\langle\,
      f\bigl(R(F)_r/R(F)_{r+}\bigr)\,
      \;\Bigm|\;
      f:R\rightarrow T,\;
      R/F\text{ induced}
   \Bigr\rangle .
\]
\end{lemma}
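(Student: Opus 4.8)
The plan is to deduce this from Proposition~\ref{P:depth-shift} by two successive descents: of the induced tori from $F^{\mathrm{sh}}$ down to $F$, and of the coefficients from $\kappa^{\mathrm{sh}}$ down to $\kappa$. Fix $r>0$, put $\Gamma=\Gal(F^{\mathrm{sh}}/F)\cong\Gal(\kappa^{\mathrm{sh}}/\kappa)$, and for a torus $A$ over $F^{\mathrm{sh}}$ or $F$ write $\gr(A)=A(F^{\mathrm{sh}})_r/A(F^{\mathrm{sh}})_{r+}$, and for $A$ over $F$ also $\gr_F(A)=A(F)_r/A(F)_{r+}$. I would first record three structural facts. (a)~Since $r>0$, $\gr(A)$ is $p$-torsion; moreover it is naturally a $\kappa^{\mathrm{sh}}$-vector space carrying a smooth semilinear $\Gamma$-action, functorially in $A$ — this is read off from the dilatation/congruence-subgroup construction of the models $\mathscr T^{\mathrm{mc}}_\bullet$, whose graded pieces are graded pieces of Lie algebras of smooth $\mathcal O^{\mathrm{sh}}$-group schemes, with $\Gamma$ acting through $\mathcal O^{\mathrm{sh}}$. (b)~For $R$ induced over $F$, the filtration over $F^{\mathrm{sh}}$ is the naïve valuation filtration computed factor-by-factor, so $\gr(R)=\gr_F(R)\otimes_\kappa\kappa^{\mathrm{sh}}$ with $\Gamma$ on the right-hand factor; in particular $\gr(R)^\Gamma=\gr_F(R)$. (c)~Each $\mathscr T^{\mathrm{mc}}_s$ is defined over $\mathcal O_F$, so Galois descent gives $A(F^{\mathrm{sh}})_s^\Gamma=A(F)_s$ for all $s$, hence $\gr_F(A)\hookrightarrow\gr(A)^\Gamma$. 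With these in hand, write $r=n+r_0$ with $n\in\Z_{\ge0}$, $0\le r_0<1$: Proposition~\ref{P:depth-shift} says $T(F^{\mathrm{sh}})_r$ is generated by $T(F^{\mathrm{sh}})_{n+1}$ together with the subgroups $f'(R'(F^{\mathrm{sh}})_r)$ over all $f'\colon R'\to T$ over $F^{\mathrm{sh}}$ with $R'$ induced over $F^{\mathrm{sh}}$; since $n+1>r$ we have $T(F^{\mathrm{sh}})_{n+1}\subseteq T(F^{\mathrm{sh}})_{r+}$, so modulo $T(F^{\mathrm{sh}})_{r+}$ this term disappears and $\gr(T)$ is generated, as an abelian group, by the images of the $R'(F^{\mathrm{sh}})_r$.

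Next I would descend each $(R',f')$ to $F$. It is of finite presentation, hence already defined over a finite unramified extension $F_0/F$ inside $F^{\mathrm{sh}}$: there are an induced $F_0$-torus $R'_0$ and $f'_0\colon R'_0\to T_{F_0}$ base-changing to $(R',f')$ along $F_0\hookrightarrow F^{\mathrm{sh}}$. Put $R:=\Res_{F_0/F}R'_0$, still induced over $F$, and $f:=N_{F_0/F}\circ\Res_{F_0/F}(f'_0)\colon R\to T$ over $F$, where $N_{F_0/F}\colon\Res_{F_0/F}T_{F_0}\to T$ is the norm. Base-changing to $F^{\mathrm{sh}}$ one has $R(F^{\mathrm{sh}})\cong\prod_{\tau\colon F_0\hookrightarrow F^{\mathrm{sh}}}R'(F^{\mathrm{sh}})$, and $R(F^{\mathrm{sh}})_r$ is the product of the factorwise filtrations by (b), while $f$ restricted to the factor of the distinguished embedding is precisely $f'$ (the norm becomes the product map in this split situation). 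Hence $f'(R'(F^{\mathrm{sh}})_r)\subseteq f(R(F^{\mathrm{sh}})_r)$, so the image of $R'(F^{\mathrm{sh}})_r$ in $\gr(T)$ lies inside the image of $R(F^{\mathrm{sh}})_r$. Combining with the previous step, $\gr(T)$ is generated — as an abelian group, hence a fortiori as a $\kappa^{\mathrm{sh}}$-vector space — by the $\Gamma$-stable $\kappa^{\mathrm{sh}}$-subspaces $V_{(R,f)}:=\gr(f)\bigl(\gr(R)\bigr)$, over $(R,f)$ now defined over $F$ with $R$ induced over $F$.

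For the coefficient descent, by Speiser's lemma each $V_{(R,f)}$ equals $V_{(R,f)}^\Gamma\otimes_\kappa\kappa^{\mathrm{sh}}$, and right-exactness of $(-)^\Gamma$ on semilinear $\kappa^{\mathrm{sh}}[\Gamma]$-modules together with (b) gives $V_{(R,f)}^\Gamma=\gr(f)\bigl(\gr(R)^\Gamma\bigr)=\gr(f)\bigl(\gr_F(R)\bigr)$, the image of $R(F)_r$ in $\gr(T)^\Gamma$. Since the $V_{(R,f)}$ $\kappa^{\mathrm{sh}}$-span $\gr(T)=\gr(T)^\Gamma\otimes_\kappa\kappa^{\mathrm{sh}}$, faithful flatness forces $\gr(T)^\Gamma=\sum_{(R,f)}V_{(R,f)}^\Gamma=\bigl\langle\,\gr(f)(\gr_F(R))\ \big|\ f\colon R\to T/F,\ R\text{ induced}/F\,\bigr\rangle$. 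Finally, by (c) and functoriality each $\gr(f)(\gr_F(R))$ lies in $\gr_F(T)\hookrightarrow\gr(T)^\Gamma$, so the displayed subgroup is sandwiched between $\gr_F(T)$ and $\gr(T)^\Gamma$ and all three coincide; since $\gr(f)(\gr_F(R))$ is exactly the image of $f\bigl(R(F)_r/R(F)_{r+}\bigr)$ in $T(F)_r/T(F)_{r+}$, this is the assertion of the lemma.

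The step I expect to be the main obstacle is fact (a): that the graded pieces of Yu's minimal-congruence filtration organize functorially into semilinear $\kappa^{\mathrm{sh}}$-representations of $\Gamma$ has to be extracted from the explicit dilatation construction rather than from formal properties, and the same applies to its induced-torus refinement (b) and to the compatibility of the norm/Weil-restriction construction with the filtrations in the descent step. The linearity is essential, not cosmetic: a naïve norm argument would recover $\Gamma$-invariants only up to a factor $[F_0:F]$, possibly divisible by $p$ while $\gr(T)$ is $p$-torsion, whereas the linear (Speiser, equivalently additive Hilbert~$90$ — $H^1(\Gamma,\kappa^{\mathrm{sh}})=0$) descent sidesteps this entirely.
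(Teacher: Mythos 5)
Your argument is correct and shares the paper's overall skeleton: apply Proposition~\ref{P:depth-shift} over $F^{\mathrm{sh}}$, observe that $T(F^{\mathrm{sh}})_{n+1}\subset T(F^{\mathrm{sh}})_{r+}$ so only the induced-torus images survive in the graded piece, and then descend the resulting $\kappa^{\mathrm{sh}}$-vector group to $\kappa$ by Galois descent. The substantive difference is in how the generating data are brought down from $F^{\mathrm{sh}}$ to $F$. The paper's proof passes from the subspace generated by images of \emph{all} $F^{\mathrm{sh}}$-morphisms $f:R\to T$ (with $R$ induced over $F^{\mathrm{sh}}$) to the subspace generated by $F$-morphisms from $F$-induced tori essentially in one line (the equality $\sum_{f,R}f(U_R)=W$ in its notation, where the sum runs over $F$-defined data but $W$ was defined with $F^{\mathrm{sh}}$-data); your spreading-out to a finite unramified $F_0/F$ followed by $R=\Res_{F_0/F}R'_0$ and $f=N_{F_0/F}\circ\Res_{F_0/F}(f'_0)$, with $f'$ recovered on the distinguished factor after base change, supplies an explicit justification for exactly that step, and is the genuinely new ingredient in your write-up. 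Your coefficient descent (Speiser's lemma plus exactness of invariants on smooth semilinear modules, then the sandwich $\langle\gr(f)(\gr_F R)\rangle\subseteq\gr_F(T)\subseteq\gr(T)^{\Gamma}$) is a reformulation of the paper's faithful-flatness comparison $S\otimes_\kappa\kappa^{\mathrm{sh}}=V=V^{\Gamma}\otimes_\kappa\kappa^{\mathrm{sh}}\Rightarrow S=V^{\Gamma}$; the two are equivalent. Your fact (a) — that the graded pieces of the minimal congruence filtration carry a functorial $\kappa^{\mathrm{sh}}$-linear structure with semilinear $\Gamma$-action, and that the maps $\gr(f)$ are linear — is indeed the one input you do not prove, but the paper assumes the same (it asserts that $V$ is a vector group over $\kappa^{\mathrm{sh}}$ and treats the images as subspaces), so this is a shared debt to Yu's construction rather than a gap specific to your proposal.
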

\begin{proof}
For the strict henselization \(F^{\text{sh}}\), Proposition \ref{P:depth-shift} gives:
   \[
   T(F^{\text{sh}})_r = \langle T(F^{\text{sh}})_{n+1},  f\left(R(F^{\text{sh}})_r\right) \mid \ \ f:R\to T \text{ defined over } F^{\mathrm{sh}}, \text{ with } R \text{ an induced } F^{\mathrm{sh}}\text{-torus}\Big{\rangle}. \tag{I}
   \]
where \(n = \lfloor r \rfloor\). Put \(V = T(F^{\text{sh}})_r / T(F^{\text{sh}})_{r+}\), a vector group over the residue field \(\kappa^{\text{sh}}\) and put 
   \[
   W = \left\langle f\left(R(F^{\text{sh}})_r / R(F^{\text{sh}})_{r+}\right) \mid F^{\sh}\text{-morphism } f: R \to T,  R \text{ induced }F^{\sh}\text{-torus} \right\rangle \subseteq V
   \]
 a \(\Gamma:=\Gal(F^{\sh}/F)\)-stable subspace. Since \(T(F^{\text{sh}})_{n+1} \subseteq T(F^{\text{sh}})_{r+}\), quotienting by \(T(F^{\text{sh}})_{r+}\) yields:
 \[
 V=W.\tag{II}
 \]
 Taking \(\Gamma\)-invariants gives \(W^\Gamma = V^\Gamma\). We wish to show:
   \[
 W^\Gamma = \left\langle f\left(R(F)_r / R(F)_{r+}\right) \mid f: R \to T,  R \text{ induced }F \text{-torus}\right\rangle.
   \]
To show this, define \( U_R = R(F^{\text{sh}})_r / R(F^{\text{sh}})_{r+} \) for each induced $F^{\sh}$-torus \( R \) and homomorphism \( f: R \to T \) defined over \( F^{\sh} \). Let \( S = \left\langle f\left(R(F)_r / R(F)_{r+}\right) \mid f: R \to T,  R \text{ induced }F\text{-torus } \right\rangle \). Each \( f\left(R(F)_r / R(F)_{r+}\right) = f\left((U_R)^\Gamma\right) \) is a \( \kappa \)-vector space (since the residue field of \( F \) is \( \kappa \)), and \( S \) is the \( \kappa \)-span of these images. For each \( f \) and \( R \), consider the extension of scalars \( f\left((U_R)^\Gamma\right) \otimes_\kappa \kappa^{\text{sh}} \). Since \( U_R \) is defined over \( \kappa \), Galois descent gives \( (U_R)^\Gamma \otimes_\kappa \kappa^{\text{sh}} \cong U_R \).
The map \( f: U_R \to V \) is \( \Gamma \)-equivariant and defined over \( F \), so it induces \( (U_R)^\Gamma \to V^\Gamma \) and after base change, \(  (U_R)^\Gamma \otimes_\kappa \kappa^{\text{sh}} \to V^\Gamma \otimes_\kappa \kappa^{\text{sh}} \) recovers the original \( f: U_R \to V \). In other words:
     \[
     f\left((U_R)^\Gamma\right) \otimes_\kappa \kappa^{\text{sh}} \cong f(U_R).
     \]
Taking the sum over all \( f \) and \( R \):
     \[
     S \otimes_\kappa \kappa^{\text{sh}} = \sum_{f,R} \left[ f\left((U_R)^\Gamma\right) \otimes_\kappa \kappa^{\text{sh}} \right] = \sum_{f,R} f(U_R) = W = V.
     \]
Since \( S \subseteq V^\Gamma \) and \( V^\Gamma \otimes_\kappa \kappa^{\text{sh}} = V \) by Galois descent, \[
     S \otimes_\kappa \kappa^{\text{sh}} = V^\Gamma \otimes_\kappa \kappa^{\text{sh}}.
     \]
implies \( S = V^\Gamma \), i.e.,  \( W^\Gamma = V^\Gamma = S \), which gives the required equality:
     \[
     W^\Gamma = \left\langle f\left(R(F)_r / R(F)_{r+}\right) \mid f: R \to T,  R \text{ induced} \right\rangle.
     \]

 This completes the proof of the lemma. 
 \end{proof}
\begin{theorem}\label{thm:dep-r-LLC-forTori}
    Let \(F\) be a non-archimedean local field and \(T/F\) an \(F\)-torus. Denote its complex dual torus by \(\widehat{T}\). For every real number \(r \geq 0\), the following holds:
For every character \(\chi \in \operatorname{Hom}(T(F), \mathbb{C}^\times)\) with corresponding parameter \(\varphi = \mathcal{L}_T(\chi)\), the depth of \(\chi\) equals the depth of \(\varphi\):
   \[
   \operatorname{dep}(\chi) = \operatorname{dep}(\varphi),
   \]
where \(\operatorname{dep}(\varphi)\) is defined as above.
\end{theorem}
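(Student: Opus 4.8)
The plan is to reduce the general torus case to the induced-torus case (Lemma~\ref{lem:depth-pres-ind}) by a two-sided comparison, using Lemma~\ref{lem:gen-by-induced} to generate graded pieces on the automorphic side and Definition~\ref{def:depth-parameter} (which is literally a supremum over induced tori) on the Galois side. Concretely, fix $\chi$ and $\varphi=\mathcal L_T(\chi)$, write $r:=\dep(\chi)$ and $r':=\dep(\varphi)$, and I would prove $r'\le r$ and $r\le r'$ separately.

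For the inequality $r'\le r$: by Definition~\ref{def:depth-parameter}, $\dep(\varphi)=\sup_{f:R\to T}\dep_R(\widehat f\circ\varphi)$ over induced $R$. For each such $f$, functoriality of $\mathcal L$ (naturality of the Langlands correspondence for tori under the $F$-morphism $f$) identifies $\widehat f\circ\varphi$ with $\mathcal L_R(\chi\circ f)$, where $\chi\circ f$ is the pullback character of $R(F)$. By the functoriality/left-continuity properties of the minimal congruence filtration recorded in \S\ref{sub:min-congruence} (namely $f(R(F)_s)\subset T(F)_s$), if $\chi|_{T(F)_{>r}}=1$ then $(\chi\circ f)|_{R(F)_{>r}}=1$, so $\dep_R(\chi\circ f)\le r$; now Lemma~\ref{lem:depth-pres-ind} gives $\dep_R(\widehat f\circ\varphi)=\dep_R(\chi\circ f)\le r$. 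Taking the supremum over $(R,f)$ yields $r'=\dep(\varphi)\le r$.

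For the reverse inequality $r\le r'$: suppose for contradiction that $\dep(\chi)=r>r'$. Then $\chi$ is nontrivial on $T(F)_r$ but trivial on $T(F)_{r+}$, hence $\chi$ induces a nontrivial character of the graded piece $T(F)_r/T(F)_{r+}$. By Lemma~\ref{lem:gen-by-induced}, that graded piece is generated by the images $f(R(F)_r/R(F)_{r+})$ over induced $F$-tori $R$ and morphisms $f:R\to T$; therefore there must exist some such $f$ for which $\chi\circ f$ is nontrivial on $R(F)_r/R(F)_{r+}$, i.e.\ $\dep_R(\chi\circ f)\ge r$. Applying Lemma~\ref{lem:depth-pres-ind} again, $\dep_R(\widehat f\circ\varphi)=\dep_R(\chi\circ f)\ge r$, so by Definition~\ref{def:depth-parameter} $\dep(\varphi)\ge r$, contradicting $r>r'$. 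Combining the two inequalities gives $\dep(\chi)=\dep(\varphi)$, and the case $r=0$ is covered by the depth-zero statement already used inside Lemma~\ref{lem:depth-r-simple-induced} (cf.\ \cite{MM15}), so we may assume $r>0$ throughout.

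\textbf{Main obstacle.} The delicate point is the functoriality input: one must know that the local Langlands correspondence for tori $\mathcal L$ is compatible with an arbitrary $F$-morphism $f:R\to T$, so that $\widehat f\circ\mathcal L_T(\chi)=\mathcal L_R(\chi\circ f)$, and that this compatibility is exactly what makes Lemma~\ref{lem:depth-pres-ind} transfer depth through $f$. This is standard (it is part of the characterization of $\mathcal L$ via Langlands' functoriality for tori / Labesse's construction), but it needs to be invoked carefully; everything else is bookkeeping with the filtration properties from \S\ref{sub:min-congruence} and the two cited lemmas. A secondary subtlety is checking that the supremum in Definition~\ref{def:depth-parameter} is actually attained (or at least approached closely enough), which is why the argument is phrased as a two-sided inequality rather than an equality of suprema; the contradiction argument in the previous paragraph sidesteps attainment by exhibiting a single witnessing $f$.
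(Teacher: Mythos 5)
Your proposal is correct and follows essentially the same route as the paper's proof: the easy inequality via functoriality of the LLC and of the minimal congruence filtration together with Lemma~\ref{lem:depth-pres-ind}, and the reverse inequality by contradiction using Lemma~\ref{lem:gen-by-induced} to produce a witnessing morphism $f:R\to T$ from an induced torus. Your phrasing of the second step (nontriviality of the induced character on the graded piece forces nontriviality on some generating image $f(R(F)_r/R(F)_{r+})$) is in fact a slightly cleaner rendering of exactly the argument the paper gives.
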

\begin{proof}
  For any induced torus \(R\) and morphism \(f: R \to T\), let \(\chi_f = \chi \circ f\) be the character of \(R(F)\) obtained by pullback. By functoriality of LLC, the parameter of \(\chi_f\) is \(\hat{f} \circ \varphi\). The depth of \(\chi_f\) satisfies \(\operatorname{dep}(\chi_f) \leq \operatorname{dep}(\chi)\) since \(f\) is a morphism of tori, and the minimal congruence filtration is functorial. For induced tori, \(\operatorname{dep}_R(\hat{f} \circ \varphi) = \operatorname{dep}(\chi_f)\) by Lemma \ref{lem:depth-pres-ind}. Thus:
  \[
  \operatorname{dep}_R(\hat{f} \circ \varphi) = \operatorname{dep}(\chi_f) \leq \operatorname{dep}(\chi).
  \]
  This holds for all \(f\), so:
  \[
  \sup_{\substack{f: R \to T \\ R \text{ induced}}} \operatorname{dep}_R(\hat{f} \circ \varphi) \leq \operatorname{dep}(\chi). \tag{I}
  \]

We now show the reverse inequality. 
Suppose $\varphi$ has depth \(r\).  Then for all $f:R\rightarrow T$, since \(\mathcal{L}_R(\chi \circ f) = \widehat{f}(\mathcal{L}_T(\chi)) \), by Proposition \ref{prop:Phi-bounds-std},  \( \chi \circ f \text{ trivial on } R(F)_{r+}\). Equivalently, $\chi$ is trivial on $f(R(F)_{r+})$ for all $f$. The reverse inequality now follows from Lemma \ref{lem:gen-by-induced}. To see this, assume on the contrary that \(\text{dep}(\chi) = d > r\).  Then there exists \(t \in T(F)_d \setminus T(F)_{d+}\) such that \(\chi(t) \neq 1\) (since \(\chi\) is trivial on \(T(F)_{d+}\) but not on \(T(F)_d\)).  By Lemma \ref{lem:gen-by-induced}, there exists an induced torus \(R\), a morphism \(f: R \to T\), and \(u \in R(F)_d \setminus R(F)_{d+}\) such that:  
  \[
  f(u) = t \cdot v \quad \text{for some} \quad v \in T(F)_{d+}.
  \] Since \(\chi\) is trivial on \(T(F)_{d+}\), \(\chi(f(u)) = \chi(t) \cdot \chi(v) = \chi(t) \neq 1\).  Let \(\chi_f = \chi \circ f\). Then \(\chi_f(u) = \chi(f(u)) \neq 1\), and \(u \in R(F)_d \setminus R(F)_{d+}\), so \(\text{dep}(\chi_f) \geq d\).  But \(\text{dep}(\chi_f) = \text{dep}_R(\widehat{f} \circ \varphi) \leq r\) (since \(\text{dep}(\varphi) = r\)).  This contradicts \(d > r\). Hence, \(\text{dep}(\chi) \leq r\), completing the proof.
\end{proof}
\begin{remark}
For tori that are not tamely ramified, it is known that depth preservation fails
for the \emph{standard} notion of depth of Langlands parameters
(cf.\ \cite{aubert2017,MishraPattanayak2015} and the discussion in \cite{DeBackerChenTsai}).
The revised notion of depth introduced here is designed precisely to account for
this phenomenon and restore depth preservation for arbitrary tori.
\end{remark}

\section{Truncated torus isomorphisms for $\ell$-close fields}
\label{sec:ell-close}

We recall Deligne's notion of $\ell$-close fields and show that the depth-comparison theorem (Theorem~\ref{thm:dep-r-LLC-forTori}) together with Deligne's Galois isomorphism yield a canonical truncated isomorphism tori over $\ell$-close fields. This result generalizes the congruence isomorphisms of \cite[Th.~1.1.1]{AV24}.

\begin{definition}[Deligne]\label{def:ell-close} 
Two non-archimedean local fields $F$ and $F'$ are \emph{$\ell$-close} ($\ell\in\mathbf{Z}_{>0}$) if there exists an isomorphism of topological rings 
\[
\scrO_{F}/\fracp_{F}^{\ell} \cong \scrO_{F'}/\fracp_{F'}^{\ell}.
\]
Deligne \cite{Deligne84} constructs a canonical isomorphism,
\[
\operatorname{Del}_{\ell}:\Gamma_{F}/I_{F}^{\ell} \xrightarrow{\;\sim\;} \Gamma_{F'}/I_{F'}^{\ell},
\]
functorial for finite $\ell$-ramified $\Gamma_{F}$-modules. 
\end{definition}
\begin{proposition}\label{prop:H1-dep-preserve}
    Let \(F\) and \(F'\) be \(\ell\)-close fields, \(T\) and \(T'\) be tori defined over \(F\) and \(F'\) such that \(T'\) is a transfer of  \(T\) along \(\operatorname{Del}_{\ell}\). Then for any $0\leq r\in \mathbb{R}$ such that $\Phi_T(r)\leq \ell$, the canonical isomorphism 
    \[
   \operatorname{Del}_{T,\ell}: H^1(W_F/W_F^r,\widehat{T}^{W_F^r})\cong  H^1(W_{F'}/W_{F'}^r,\widehat{T'}^{W_{F'}^r}).
    \]
    preserves depth. 
\end{proposition}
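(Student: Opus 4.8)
The isomorphism $\mathrm{Del}_{T,\ell}$ is the one induced by Deligne's transfer. Since $W_F^{\ell}\subseteq W_F^{r}$ (note $r\le\Phi_T(r)\le\ell$) and $W_F^{r}$ acts trivially on the finite–type module $\widehat T^{W_F^{r}}$, the latter is $\ell$–ramified; Deligne's functor identifies it with $\widehat{T'}^{W_{F'}^{r}}$, and $\mathrm{Del}_\ell$ descends to $\Gamma_F/I_F^{r}\xrightarrow{\sim}\Gamma_{F'}/I_{F'}^{r}$, so these combine to the stated isomorphism on $H^{1}$. The plan is to show $\mathrm{Del}_{T,\ell}$ preserves depth, working entirely from the definition $\dep(\varphi)=\sup_{f:R\to T}\dep_R(\widehat f\circ\varphi)$ of Definition~\ref{def:depth-parameter} (supremum over induced $F$–tori), together with Deligne's functoriality and his compatibility with the ramification filtration up to level $\ell$.

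First I would record two observations. A class $\varphi$ in the source group, inflated to $W_F$, is trivial on $W_F^{r}$; hence so is $\widehat f\circ\varphi$ and each of its components $(\widehat f\circ\varphi)_j$ for every induced $R$ and every $f:R\to T$, so each standard depth entering $\dep_R(\widehat f\circ\varphi)=\max_j e_j^{-1}\phi_{E_j/F}^{-1}\!\bigl(\dep^{\std}((\widehat f\circ\varphi)_j)\bigr)$ is $<r\le\ell$; and one readily checks $\dep_R\le\dep^{\std}$ on induced tori, so $\dep(\varphi)\le r$. The key reduction is then that the supremum defining $\dep(\varphi)$ is already controlled by the induced tori $R$ that split over an $\ell$–ramified extension. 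This is where the hypothesis $\Phi_T(r)\le\ell$ is used: unwound through Definition~\ref{def:PhiT}, it says that for any $f:R\to T$ the part of each component $E_j/F$ having ramification breaks above $\ell$ is invisible to $\dep_R(\widehat f\circ\varphi)$ — so one may replace $E_j$ by its maximal $\ell$–ramified subextension $E_j^{\le\ell}$ (and $R$ by the resulting $\ell$–ramified torus, mapped to $T$ via the evident base–change/norm morphism) without changing $\phi_{E_j/F}^{-1}$ on $[0,r]$ or decreasing $\dep_R(\widehat f\circ\varphi)$.

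For an induced $F$–torus $R$ split over an $\ell$–ramified extension and $f:R\to T$, the two relevant properties of Deligne's isomorphism now apply. Functoriality: $\widehat f:\widehat T\to\widehat R$ is a morphism of $\ell$–ramified modules, so $\mathrm{Del}_\ell$ transfers $(R,f)$ to an induced $F'$–torus $R'$ with $f':R'\to T'$, and $\widehat{f'}\circ\mathrm{Del}_{T,\ell}(\varphi)=\mathrm{Del}_{R,\ell}(\widehat f\circ\varphi)$. Compatibility with ramification: $\mathrm{Del}_\ell$ carries $W_F^{s}/W_F^{\ell}$ onto $W_{F'}^{s}/W_{F'}^{\ell}$ for $0\le s\le\ell$, whence each $E_j/F$ transfers to $E_j'/F'$ with the same ramification index and — all breaks being $\le\ell$ — identical Hasse–Herbrand function $\phi_{E_j/F}=\phi_{E_j'/F'}$, and the standard depth of any cocycle of standard depth $<\ell$ is preserved. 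Substituting into the formula for $\dep_R$ yields $\dep_R(\widehat f\circ\varphi)=\dep_{R'}(\widehat{f'}\circ\mathrm{Del}_{T,\ell}(\varphi))$ for every such $(R,f)$; running the symmetric argument from the $F'$–side shows the two controlling suprema coincide, hence $\dep\bigl(\mathrm{Del}_{T,\ell}(\varphi)\bigr)=\dep(\varphi)$.

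I expect the main obstacle to be the reduction in the second paragraph: extracting from $\Phi_T(r)\le\ell$ the precise statement that $\dep_R(\widehat f\circ\varphi)$, for $\varphi$ of depth $\le r$, only ever sees the ramification of the $E_j/F$ below level $\ell$, so that the supremum may be restricted to $\ell$–ramified induced tori. Once that is isolated, the remainder is formal: Deligne's functoriality together with the matching of ramification indices, Hasse–Herbrand functions, and standard depths up to level $\ell$, all of which are part of Deligne's construction.
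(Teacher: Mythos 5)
Your matching step is essentially the paper's proof: transfer each pair $(R,f)$ along the congruence datum, use Deligne's compatibility with the upper ramification filtration up to level $\ell$ to match ramification indices, Hasse--Herbrand functions and standard depths of the components $\psi_j$, and conclude that the two suprema in Definition~\ref{def:depth-parameter} agree. Where you diverge is the preliminary reduction of the supremum to induced tori split over $\ell$-ramified extensions; the paper does not perform such a reduction at all, but simply transfers every $(R,f)$ and asserts the ramification data agree (an assertion which, strictly speaking, is only justified for $\ell$-ramified data, so you have correctly put your finger on the delicate point).

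The trouble is that this reduction, which you yourself flag as the main obstacle, is left unproved, and the heuristic you offer for it does not hold. The quantity $\dep_R(\widehat f\circ\varphi)=\max_j e_j^{-1}\phi_{E_j/F}^{-1}\bigl(\dep^{\std}(\psi_j)\bigr)$ is not blind to ramification of $E_j/F$ above level $\ell$: while $\phi_{E_j/F}^{-1}$ restricted to $[0,\ell]$ indeed depends only on the ramification subgroups up to level $\ell$, the normalizing factor $1/e_j$ involves the \emph{full} ramification index, which does see breaks above $\ell$. Hence replacing $E_j$ by its maximal $\ell$-ramified subextension $E_j^{\le\ell}$ changes the value of the $j$-th term (the index drops while $\phi^{-1}$ is unchanged on the relevant range), so "invisible" is false; at best one can hope for the inequality $\dep_{R^{\le\ell}}\ge\dep_R$, but even that is not established, because the component of $\widehat{(f\circ\iota)}\circ\varphi$ attached to $E_j^{\le\ell}$ corresponds under Shapiro to the original character composed with a transfer map $W_{E_j^{\le\ell}}^{\mathrm{ab}}\to W_{E_j}^{\mathrm{ab}}$, and its standard depth need not equal $\dep^{\std}(\psi_j)$. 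Until the reduction lemma (that for a parameter trivial on $W_F^{s}$ with $s\le\ell$ the supremum in Definition~\ref{def:depth-parameter}, and likewise the gauge in Definition~\ref{def:PhiT}, is computed by $\ell$-ramified induced tori) is actually proved, your argument is incomplete precisely at the step on which the transfer of the pairs $(R,f)$ — and hence the depth comparison — rests.
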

\begin{proof}
Put \(r:=\dep(\varphi)\) and \(s:=\Phi_T(r)\). By hypothesis \(s\le \ell\).
By Proposition~\ref{prop:Phi-bounds-std} we have
\[
\dep^{\std}(\varphi)\ \le\ \Phi_T\!\bigl(\dep(\varphi)\bigr)\ =\ s,
\]
hence \(\varphi(W_F^{s+})=1\). In particular, \(\varphi\) factors through \(W_F/W_F^{s+}\),
and therefore also through \(W_F/W_F^{s}\).
Via the \(\ell\)-closeness isomorphism
\(\Del_{\ell}:\Gamma_F/I_F^{\ell}\xrightarrow{\sim}\Gamma_{F'}/I_{F'}^{\ell}\),
Deligne's theory yields a canonical identification of the truncated Weil data up to level \(s\)
(and hence an isomorphism on the corresponding cohomology sets)
\[
H^1(W_F/W_F^{s},\ \widehat{T}^{W_F^{s}})
\ \cong\
H^1(W_{F'}/W_{F'}^{s},\ \widehat{T'}^{W_{F'}^{s}}),
\]
which transfers \(\varphi\) to a parameter \(\varphi'\).

Now let \(f:R\to T\) be any \(F\)-morphism with \(R\) an induced \(F\)-torus. Let \(R'\) be the
transfer of \(R\) to \(F'\) and \(f':R'\to T'\) the transferred morphism.
Write \(\varphi_R:=\widehat f\circ\varphi:W_F\to\widehat R\) and
\(\varphi_{R'}':=\widehat f'\circ\varphi':W_{F'}\to\widehat{R'}\).
By functoriality of Deligne transfer, \(\varphi_{R'}'\) corresponds to \(\varphi_R\) under the same
truncation isomorphism.

Write \(R=\prod_{j=1}^k \Res_{E_j/F}\Gm\) and let \(\psi_j:W_F\to \widehat R_j\) be the \(j\)-th component
of \(\varphi_R\) (so \(\widehat R_j\simeq \Ind_{W_{E_j}}^{W_F}\CC^\times\)).
Recall that
\[
\dep_R(\varphi_R)
=\max_{1\le j\le k}\left(\frac{1}{e_j}\,\varphi_{E_j/F}^{-1}\bigl(\dep^{\std}(\psi_j)\bigr)\right),
\]
where \(e_j=e(E_j/F)\).
Since \(r=\dep(\varphi)=\sup_{(R,f)}\dep_R(\widehat f\circ\varphi)\), we have \(\dep_R(\varphi_R)\le r\) for every such \(f\).
Fix \(j\). From the defining formula above we obtain
\[
\frac{1}{e_j}\,\varphi_{E_j/F}^{-1}\bigl(\dep^{\std}(\psi_j)\bigr)\ \le\ \dep_R(\varphi_R)\ \le\ r,
\]
hence (using monotonicity of \(\varphi_{E_j/F}\))
\[
\dep^{\std}(\psi_j)\ \le\ \varphi_{E_j/F}(e_j r)\ \le\ \Phi_R(r)\ \le\ \Phi_T(r)\ =\ s,
\]
where \(\Phi_R(r)=\max_j\varphi_{E_j/F}(e_j r)\) and the inequality \(\Phi_R(r)\le \Phi_T(r)\) uses
Definition~\ref{def:PhiT}.

Thus \(\dep^{\std}(\psi_j)\le s\) for every \(j\), so all ramification information relevant to
\(\varphi_{E_j/F}^{-1}(\dep^{\std}(\psi_j))\) lies \emph{within level \(s\)}.
Since \(s\le \ell\), Deligne's \(\ell\)-closeness comparison identifies the upper ramification filtrations
up to level \(s\) and, for each \(j\), identifies the truncated extensions \(E_j/F\) and their transfers \(E_j'/F'\)
up to that level. In particular:
\begin{itemize}
\item \(\dep^{\std}(\psi_j')=\dep^{\std}(\psi_j)\), where \(\psi_j'\) is the \(j\)-th component of \(\varphi_{R'}'\);
\item \(e(E_j/F)=e(E_j'/F')\) and the Hasse--Herbrand functions agree on \([0,s]\), hence
\(\varphi_{E_j/F}^{-1}(t)=\varphi_{E_j'/F'}^{-1}(t)\) for all \(t\in[0,s]\).
\end{itemize}
Therefore, for each \(j\),
\[
\frac{1}{e_j}\,\varphi_{E_j/F}^{-1}\bigl(\dep^{\std}(\psi_j)\bigr)
=
\frac{1}{e_j'}\,\varphi_{E_j'/F'}^{-1}\bigl(\dep^{\std}(\psi_j')\bigr),
\]
and taking the maximum over \(j\) gives
\[
\dep_R(\widehat f\circ\varphi)\ =\ \dep_{R'}(\widehat f'\circ\varphi').
\]

Finally, as \(f:R\to T\) ranges over all morphisms from induced \(F\)-tori to \(T\), the transferred maps
\(f':R'\to T'\) range over the corresponding class for \(T'\). Hence
\[
\dep(\varphi)
=\sup_{f:R\to T}\dep_R(\widehat f\circ\varphi)
=\sup_{f':R'\to T'}\dep_{R'}(\widehat f'\circ\varphi')
=\dep(\varphi'),
\]
as claimed.
\end{proof}

\begin{theorem}\label{thm:congruent-isom}
Let $F$ and $F'$ be $\ell$-close. Let $T/F$ be a torus and $T'/F'$ its transfer via the isomorphism $\Gamma_{F}/I_{F}^{\ell}\cong\Gamma_{F'}/I_{F'}^{\ell}$. For any $0\leq r\in \mathbb{R}$ such that $\Phi_T(r)\leq \ell$, we have,
\[
T(F)/T(F)_{r} \xrightarrow{\;\cong\;} T'(F')/T'(F')_{r},
\]
canonically and functorially, and this isomorphism intertwines the local Langlands correspondences for $T$ and $T'$. 
\end{theorem}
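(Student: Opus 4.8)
The plan is to transport the asserted isomorphism across the local Langlands correspondence for $T$, where it becomes a statement in Galois cohomology amenable to Deligne's close-field comparison, and then to descend back to the groups themselves by Pontryagin duality. First I would reformulate the left-hand side character-theoretically. Since the minimal congruence filtration is decreasing and left-continuous, a character $\chi$ of $T(F)$ kills $T(F)_r$ precisely when $\dep_T(\chi)<r$, so $\Hom\!\bigl(T(F)/T(F)_r,\CC^\times\bigr)$ is exactly the subgroup of depth-$<r$ characters. By Theorem~\ref{thm:dep-r-LLC-forTori}, $\mathcal{L}_T$ is a depth-preserving isomorphism of abelian groups, hence restricts to
\[
\mathcal{L}_T\colon\ \Hom\!\bigl(T(F)/T(F)_r,\CC^\times\bigr)\ \xrightarrow{\ \sim\ }\ \{\varphi\in H^1(W_F,\widehat{T}):\dep(\varphi)<r\};
\]
and by Proposition~\ref{prop:Phi-bounds-std} together with $\Phi_T(r)\le\ell$, every such $\varphi$ is inflated from $W_F/W_F^{\ell}$, so the target sits inside $H^1\!\bigl(W_F/W_F^{\ell},\widehat{T}^{\,W_F^{\ell}}\bigr)$ via the (injective) inflation map. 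The analogous description holds over $F'$.

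Next I would transfer on the Galois side. Deligne's canonical isomorphism $\operatorname{Del}_\ell$, being functorial for $\ell$-ramified $\Gamma_F$-modules, induces the transfer isomorphism $\operatorname{Del}_{T,\ell}$ of truncated cohomology, and by Proposition~\ref{prop:H1-dep-preserve} (whose hypothesis is precisely $\Phi_T(r)\le\ell$) this isomorphism preserves depth; it therefore carries the depth-$<r$ subgroup attached to $\widehat{T}$ onto the depth-$<r$ subgroup attached to $\widehat{T'}$. Composing the three maps,
\[
\Hom\!\bigl(T(F)/T(F)_r,\CC^\times\bigr)\ \xrightarrow[\ \mathcal{L}_T\ ]{\ \sim\ }\ \{\dep(\varphi)<r\}\ \xrightarrow[\ \operatorname{Del}_{T,\ell}\ ]{\ \sim\ }\ \{\dep(\varphi')<r\}\ \xrightarrow[\ \mathcal{L}_{T'}^{-1}\ ]{\ \sim\ }\ \Hom\!\bigl(T'(F')/T'(F')_r,\CC^\times\bigr),
\]
yields a canonical isomorphism of the two character groups that, by construction, fits into the square with $\mathcal{L}_T$, $\mathcal{L}_{T'}$ and $\operatorname{Del}_{T,\ell}$.

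Finally I would dualize. Each of $T(F)/T(F)_r$ and $T'(F')/T'(F')_r$ is a locally compact abelian group (an extension of a finitely generated free abelian group by a finite one), so Pontryagin duality turns the isomorphism of character groups just produced into a canonical isomorphism $T(F)/T(F)_r\xrightarrow{\ \sim\ }T'(F')/T'(F')_r$. Functoriality in $T$ is inherited from that of the local Langlands correspondence for tori, of Deligne's transfer, and of Pontryagin duality; and since the double Pontryagin dual is the identity, the resulting map induces on characters exactly the composite displayed above, so it intertwines $\mathcal{L}_T$ and $\mathcal{L}_{T'}$. The substantive content here is already packaged in Theorem~\ref{thm:dep-r-LLC-forTori} and Propositions~\ref{prop:Phi-bounds-std} and~\ref{prop:H1-dep-preserve}; the one point I expect to need genuine care is the open-versus-closed bookkeeping in the depth conditions — namely that depth-$<r$ parameters are inflated from $W_F/W_F^{\ell}$ and not merely from $W_F/W_F^{\ell+}$ — which one settles using the strict monotonicity of the Hasse--Herbrand functions entering $\Phi_T$. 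Everything else is a routine chase of the diagram.
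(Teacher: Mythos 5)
Your proposal is correct and follows essentially the same route as the paper: describe $\Hom\!\bigl(T(F)/T(F)_r,\CC^\times\bigr)$ as the depth-$<r$ parameters inside the truncated $H^1$ (using Theorem~\ref{thm:dep-r-LLC-forTori} together with Proposition~\ref{prop:Phi-bounds-std} and the bound $\Phi_T(r)\le\ell$), transfer via $\operatorname{Del}_{T,\ell}$ with depth preservation from Proposition~\ref{prop:H1-dep-preserve}, and descend by Pontryagin duality. The only difference is that you make explicit the inflation/truncation bookkeeping that the paper leaves implicit, which is a point of presentation rather than substance.
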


\begin{proof}
Put \(s:=\Phi_T(r)\). By hypothesis \(s\le \ell\).

\medskip\noindent
Since \(T'\) is the Deligne transfer of \(T\) along \(\Del_\ell:\Gamma_F/I_F^\ell\simeq \Gamma_{F'}/I_{F'}^\ell\),
the induced \(F\)-tori \(R\) and morphisms \(f:R\to T\) correspond
functorially to induced \(F'\)-tori \(R'\) and morphisms \(f':R'\to T'\), and for such a pair one has
\(\Phi_{R'}(t)=\Phi_R(t)\) for all \(t\) with \(\Phi_R(t)\le \ell\) (since the ramification data and Hasse--Herbrand
functions agree up to level \(\ell\)).
Using Definition~\ref{def:PhiT} ,
\[
\Phi_{T'}(r)
=\sup_{f':R'\to T'}\Phi_{R'}(r)
=\sup_{f:R\to T}\Phi_R(r)
=\Phi_T(r)
=s.
\]

\medskip\noindent
Let
\[
\mathscr X_r(T)\ :=\ \Hom\!\bigl(T(F)/T(F)_r,\C^\times\bigr).
\]
A character \(\chi\in\Hom(T(F),\C^\times)\) lies in \(\mathscr X_r(T)\) if and only if \(\dep(\chi)<r\)

Let \(\mathcal L_T:\Hom(T(F),\C^\times)\xrightarrow{\sim} H^1(W_F,\widehat T)\) be the LLC for tori.
By Theorem~\ref{thm:dep-r-LLC-forTori},
\[
\dep\bigl(\mathcal L_T(\chi)\bigr)=\dep(\chi)\quad \text{for all }\chi.
\]
Hence \(\mathcal L_T\) induces a bijection between \(\mathscr X_r(T)\) and the subset
\[
\mathscr Y_r(T)\ :=\ \{\ \alpha\in H^1(W_F,\widehat T)\ \mid\ \dep(\alpha)<r\ \}.
\]
Moreover, if \(\alpha\in \mathscr Y_r(T)\), then \(\dep(\alpha)<r\) implies (by Proposition~\ref{prop:Phi-bounds-std})
\[
\dep^{\std}(\alpha)\ \le\ \Phi_T\!\bigl(\dep(\alpha)\bigr)\ \le\ \Phi_T(r)=s,
\]
so \(\alpha\) is trivial on \(W_F^{s+}\) and therefore comes from a class in
\(H^1(W_F/W_F^{s},\widehat T^{W_F^{s}})\) by inflation. Thus we may (and do) view
\(\mathscr Y_r(T)\) as a subset of \(H^1(W_F/W_F^{s},\widehat T^{W_F^{s}})\), and the map
\[
\mathcal L_{T,r}:\mathscr X_r(T)\hookrightarrow H^1(W_F/W_F^{s},\widehat T^{W_F^{s}})
\]
is a bijection onto \(\mathscr Y_r(T)\) (not necessarily onto all of \(H^1\)).

The same discussion applies to \(T'\) over \(F'\): we obtain \(\mathscr X_r(T')\) and the depth-\(<r\)
subset \(\mathscr Y_r(T')\subset H^1(W_{F'}/W_{F'}^{s},\widehat T'^{W_{F'}^{s}})\), with
\(\mathcal L_{T',r}:\mathscr X_r(T')\xrightarrow{\sim}\mathscr Y_r(T')\).

\medskip\noindent
Since \(s\le \ell\), Deligne's \(\ell\)-closeness comparison yields a canonical isomorphism
\[
\Del_{T,\ell}:\ H^1(W_F/W_F^{s},\widehat T^{W_F^{s}})
\ \xrightarrow{\ \sim\ }\
H^1(W_{F'}/W_{F'}^{s},\widehat T'^{W_{F'}^{s}}),
\]
and by Proposition~\ref{prop:H1-dep-preserve} it preserves the depth \(\dep\) of parameters. Therefore
\(\Del_{T,\ell}\) restricts to a bijection
\[
\Del_{T,\ell}:\ \mathscr Y_r(T)\ \xrightarrow{\ \sim\ }\ \mathscr Y_r(T').
\]
Composing with \(\mathcal L_{T,r}\) and \(\mathcal L_{T',r}\) gives a canonical isomorphism
\[
\mathscr X_r(T)\ \xrightarrow{\ \sim\ }\ \mathscr X_r(T').
\]

\medskip\noindent
Since \(T(F)/T(F)_r\) and \(T'(F')/T'(F')_r\) are finite abelian groups, Pontryagin duality yields a canonical
isomorphism
\[
T(F)/T(F)_r\ \xrightarrow{\ \sim\ }\ T'(F')/T'(F')_r,
\]
and by construction this intertwines the local Langlands correspondences for \(T\) and \(T'\) at depth \(<r\).
\end{proof}

\begin{remark} 
When $T$ is tamely ramified, $s=r$ and the bound simplifies to $\ell\ge r$.
\end{remark} 

\section{Truncated parahoric isomorphisms for close fields}
\label{sec:wild-yu}
\subsection{Vertices and maximally unramified tori}\label{subsec:maxunramified}
\begin{definition}\cite[Fact 3.4.1 and Def. 3.4.2]{KalethaRegSC}.
Let \(G\) be a connected reductive group over a non-archimedean local field \(F\).
A maximal torus \(S\subset G\) is said to be \emph{maximally unramified} if, writing
\(S'\subset S\) for its maximal unramified subtorus, any (hence all) of the
following equivalent conditions hold:
\begin{enumerate}
  \item \(S'\) has maximal dimension among the unramified subtori of \(G\);
  \item \(S'\) is not properly contained in a larger unramified subtorus of \(G\);
  \item \(S=\operatorname{Cent}_G(S')\);
  \item \(S\times_F F^{\mathrm u}\) is a minimal Levi subgroup of \(G\times_F F^{\mathrm u}\);
  \item the inertia group \(I_F\) acts on the root system \(R(S,G)\) preserving
        some set of positive roots.
\end{enumerate}

\end{definition}

We now recall a correspondence between vertices and maximally unramified elliptic maximal tori. 
\begin{lemma}
\label{lem:vertex-torus-bijection}
Let $G$ be a connected reductive group over a non-archimedean local field $F$ with Bruhat-Tits reduced building $\mathcal{B}^{\mathrm{red}}(G,F)$. For a point $x\in\mathcal{B}^{\mathrm{red}}(G,F)$, the following are equivalent:
\begin{enumerate}
  \item $x$ is a vertex of the building;
  \item there exists a maximally unramified elliptic maximal $F$-torus $S\subset G$ whose unique Frobenius-fixed point satisfies $x_S = x$.
\end{enumerate}

\end{lemma}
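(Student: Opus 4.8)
The plan is to descend to $\widetilde F$, the completion of the maximal unramified extension of $F$, over which $G$ is quasi-split, and to use the Galois-descent identification $\mathcal{B}^{\mathrm{red}}(G,F)=\mathcal{B}^{\mathrm{red}}(G,\widetilde F)^{\sigma}$ ($\sigma$ the Frobenius), which is compatible with apartments and facets. First I would pin down the point $x_S$: if $S$ is maximally unramified with maximal unramified subtorus $S'$, then condition (iv) says $S\times_F\widetilde F$ is a minimal Levi of $G\times_F\widetilde F$, hence equals $\operatorname{Cent}(A)$ for a maximal $\widetilde F$-split torus $A$; comparing split ranks forces $A=S'\times_F\widetilde F$, so $\mathcal{A}_S:=\mathcal{A}(S'\times_F\widetilde F,\widetilde F)$ is a genuine, $\sigma$-stable apartment of $\mathcal{B}(G,\widetilde F)$. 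On the associated reduced apartment $\sigma$ acts as an affine isometry with finite-order linear part, and the fixed vectors of that linear part span the cocharacter space of the maximal $F$-split subtorus of $S/Z(G)$; this space is zero exactly when $S$ is elliptic, in which case $\sigma$ has a unique fixed point on the reduced apartment. That point is $\sigma$-fixed, hence lies in $\mathcal{B}^{\mathrm{red}}(G,F)$, and it is $x_S$.

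For $(2)\Rightarrow(1)$: let $\widetilde{\mathcal F}$ be the facet of $\mathcal{B}(G,\widetilde F)$ whose relative interior contains $x_S$. Since $\widetilde{\mathcal F}$ is met by the apartment $\mathcal{A}_S$ in its interior, $\overline{\widetilde{\mathcal F}}\subseteq\mathcal{A}_S$, and $\widetilde{\mathcal F}$ is $\sigma$-stable because $x_S$ is $\sigma$-fixed. Under the descent, the facet of $\mathcal{B}^{\mathrm{red}}(G,F)$ through $x_S$ is the $\sigma$-fixed locus of the image of $\widetilde{\mathcal F}$ in $\mathcal{B}^{\mathrm{red}}$, which is contained in the $\sigma$-fixed locus of the image of $\mathcal{A}_S$ — the single point $x_S$ by the first paragraph. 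Hence the $F$-facet through $x_S$ is $\{x_S\}$, i.e. $x_S$ is a vertex.

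For $(1)\Rightarrow(2)$: let $x$ be a vertex. Since any $\sigma$-fixed point lies in a $\sigma$-stable apartment, there are maximal unramified $F$-subtori $A_0\subset G$ with $x\in\mathcal{A}(A_{0,\widetilde F},\widetilde F)$; for any such choice, $S_0:=\operatorname{Cent}_G(A_0)$ is automatically a maximally unramified maximal $F$-torus with maximal unramified subtorus $A_0$ and associated apartment $\mathcal{A}(A_{0,\widetilde F})$, and $S_0$ is elliptic precisely when $A_0$ is $F$-anisotropic modulo $Z(G)$. It remains to arrange the latter. Passing to the reductive quotient $\mathsf{G}_x$ of $G(\widetilde F)_{x,0}$, a connected reductive $\kappa$-group via its Frobenius $\mathrm{Fr}$: the admissible $A_0$ correspond, via reduction at $x$ and a Lang-type argument using smoothness of the parahoric, to $\mathrm{Fr}$-stable subtori of $\mathsf{G}_x$ of the appropriate dimension whose centralizer is a maximal torus, and $A_0$ is $F$-anisotropic modulo $Z(G)$ iff the corresponding subtorus is $\kappa$-anisotropic modulo the image of $Z(G)^\circ$. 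Because $x$ is a vertex, $Z(\mathsf{G}_x)^\circ$ is $\kappa$-anisotropic modulo the image of $Z(G)^\circ$, so such a subtorus exists; choosing $A_0$ accordingly and setting $S:=S_0$ yields a maximally unramified elliptic maximal torus with $x_S=x$ (the equality from $x\in\mathcal{A}(A_{0,\widetilde F})^\sigma$ and uniqueness of the fixed point).

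The building-theoretic descent statements, the existence of $\sigma$-stable apartments through a fixed point, and the Lang-type lifting are routine. I expect the main obstacle to be the two facts that bridge the $F$- and $\kappa$-pictures in $(1)\Rightarrow(2)$: that the vertex property of $x$ is equivalent to $Z(\mathsf{G}_x)^\circ$ being $\kappa$-anisotropic modulo the image of $Z(G)^\circ$, and that this yields an $\mathrm{Fr}$-stable subtorus of $\mathsf{G}_x$ of the right dimension with the required anisotropy — equivalently, whose associated maximal unramified $F$-torus is elliptic. This needs careful bookkeeping of split and unramified ranks and of the inertia-and-Frobenius action on cocharacters before and after reduction, and is most delicate in the residually non-split cases, such as ramified $\mathrm{SU}_3$, where $\mathsf{G}_x$ need not have full semisimple rank.
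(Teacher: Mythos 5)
Your argument is essentially correct, but note that the paper does not prove this lemma at all: it simply cites Kaletha's Lemmas 3.4.3 and 3.4.4(2) in \cite{KalethaRegSC}, and what you have written is in substance a reconstruction of the proofs of those two lemmas. Your $(2)\Rightarrow(1)$ direction (unramified descent $\mathcal{B}^{\mathrm{red}}(G,F)=\mathcal{B}^{\mathrm{red}}(G,\widetilde F)^{\sigma}$ compatible with facets, the apartment of the maximal unramified subtorus being $\sigma$-stable, and ellipticity forcing a unique fixed point, so the $F$-facet through $x_S$ is $\{x_S\}$) is exactly the standard route. Your $(1)\Rightarrow(2)$ direction also follows the standard route through the reductive quotient $\mathsf{G}_x$, but as written it conflates two separate inputs: the vertex hypothesis, which gives that a maximal $F$-split torus through $x$ reduces to a maximal $\kappa$-split torus of $\mathsf{G}_x$ and hence that the semisimple $\kappa$-rank of $\mathsf{G}_x$ equals the semisimple $F$-rank of $G$ (equivalently your statement about $Z(\mathsf{G}_x)^{\circ}$), and the separate classical fact, which you never invoke explicitly, that a connected reductive group over the finite field $\kappa$ possesses an elliptic (anisotropic modulo center) maximal torus; only the combination, together with Hensel/Lang lifting of that torus through the smooth parahoric scheme to a maximal unramified torus $A_0$ with $x\in\mathcal{A}(A_{0,\widetilde F})$, yields ellipticity of $S=\operatorname{Cent}_G(A_0)$. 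With that point made explicit (and the rank bookkeeping you already flag), the proof is complete and matches the cited source; there is no genuinely different method here, nor any step that fails.
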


\begin{proof}
The first part is \cite[Lemma 3.4.3]{KalethaRegSC}. The second part is \cite[Lemma~3.4.4(2)]{KalethaRegSC}.
\end{proof}

\subsection{Close field transfer of reductive groups}
\label{sub:vertex}

\begin{proposition}\label{prop:vertex-transfer} 
Let $G/F$ be a connected reductive group, and let $S \subset G$ be a maximally unramified elliptic maximal $F$-torus with splitting field $E/F$. There exists an integer \( \ell_0 \) depending on \( G\) such that for any integer \(\ell \geq \ell_0 \), and $F'$ a local field $\ell$-close to $F$ via $\iota_\ell: \mathscr{O}_F/\mathfrak{p}_F^\ell \to \mathscr{O}_{F'}/\mathfrak{p}_{F'}^\ell$,  there exists a reductive $F'$-group $G'$ and a maximally unramified elliptic maximal $F'$-torus $S' \subset G'$ such that:
\begin{enumerate}
    \item There exists a canonical isomorphism
    \[
    S(F)/S(F)_m \xrightarrow{\simeq} S'(F')/S'(F')_m
    \]
    for all $m$ such that $\Phi_S(m)\leq \ell$.
    \item There exists a simplicial isomorphism of the Bruhat-Tits buildings $\mathcal{B}_{\ell}:\mathcal{B}^{\mathrm{red}}(G,F)\cong \mathcal{B}^{\mathrm{red}}(G', F')$, with the toral vertices $x_S$ and $x_{S'}$ corresponding under this isomorphism.
\end{enumerate}
\end{proposition}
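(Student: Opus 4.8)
The plan is to set up the close‑field transfer compatibly for the whole triple $(G,S,x_S)$ and then deduce (1) from Theorem~\ref{thm:congruent-isom} and (2) from the characterization of $x_S$ in Lemma~\ref{lem:vertex-torus-bijection}.

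First I would fix $\ell_0$. A connected reductive $F$-group is determined by its absolute based root datum, the $\ast$-action of $\Gamma_F$, and the inner class in $H^1\bigl(F,(G^{\ast})_{\mathrm{ad}}\bigr)$, where $G^{\ast}$ is the quasi-split inner form. Since $S$ is a maximal torus, its splitting field $E$ splits $G$, hence $E$ contains the splitting field $L$ of the $\ast$-action; so the $\ast$-action factors through $\Gamma_F/I_F^{\ell}$ once $\ell$ exceeds the largest upper-numbering ramification break of $L/F$. The group $H^1\bigl(F,(G^{\ast})_{\mathrm{ad}}\bigr)$ is finite, and the class $[G]$ is represented by a cocycle factoring through $\Gamma_F/I_F^{\ell}$ for $\ell$ large. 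Since the maximally unramified elliptic maximal tori of $G$ fall into finitely many $G(F)$-conjugacy classes with splitting fields of bounded ramification, all these thresholds are absorbed into a single $\ell_0=\ell_0(G)$.

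Next, for $\ell\ge\ell_0$ and $F'$ an $\ell$-close field with $\iota_\ell$ as in the statement, applying Deligne's isomorphism $\mathrm{Del}_{\ell}\colon\Gamma_F/I_F^{\ell}\xrightarrow{\sim}\Gamma_{F'}/I_{F'}^{\ell}$ to the $\ast$-action and the inner class produces a connected reductive $F'$-group $G'$; transporting in addition the Bruhat--Tits datum (relative root system, affine Weyl group, root-group filtration, all governed by the transferred root datum and Galois action) yields a simplicial isomorphism $\mathcal{B}_{\ell}\colon\mathcal{B}^{\mathrm{red}}(G,F)\xrightarrow{\sim}\mathcal{B}^{\mathrm{red}}(G',F')$, equivariant for the residual Frobenius via $\mathrm{Del}_{\ell}$ (the reductive-group form of Deligne's transfer; compare \cite{ganapathy2019}). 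By the equivalent conditions defining ``maximally unramified'' together with Kaletha's description \cite[Fact~3.4.1]{KalethaRegSC}, the torus $S\subset G$ is pinned down, up to conjugacy, by finite $\ell$-ramified data: over $F^{\mathrm u}$ the quasi-split group $G\times_F F^{\mathrm u}$ has $S\times_F F^{\mathrm u}$ as one of its maximal tori on which $I_F$ preserves a positive system, and the $F$-rational structure is recorded by the residual Frobenius action. Transporting this along $\mathrm{Del}_{\ell}$ gives a maximally unramified elliptic maximal $F'$-torus $S'\subset G'$ which, as an abstract $F'$-torus, is the $\mathrm{Del}_{\ell}$-transfer of $S$, whose embedding into $G'$ is the transfer of $S\hookrightarrow G$, and whose apartment is carried onto that of $S$ by $\mathcal{B}_{\ell}$ compatibly with the Frobenius; in particular $\Phi_{S'}=\Phi_S$.

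Given this, (1) follows directly: $S'$ is the transfer of $S$ over the $\ell$-close field $F'$, so Theorem~\ref{thm:congruent-isom} provides the canonical functorial isomorphism $S(F)/S(F)_m\xrightarrow{\sim}S'(F')/S'(F')_m$ for every $m$ with $\Phi_S(m)\le\ell$. For (2), Lemma~\ref{lem:vertex-torus-bijection} identifies $x_S$ (resp.\ $x_{S'}$) with the unique Frobenius-fixed point of the apartment of $S$ in $\mathcal{B}(G,F^{\mathrm u})$ (resp.\ of $S'$ in $\mathcal{B}(G',(F')^{\mathrm u})$); since $\mathcal{B}_{\ell}$ matches these apartments compatibly with the transferred Frobenius, it sends $x_S$ to $x_{S'}$, so the toral vertices correspond. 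The main obstacle is coherence: the close-field transfer of a reductive group is not literally canonical (it depends on a chosen realization of the transferred combinatorial data), so one must fix a single coherent choice under which $S\subset G$ passes to $S'\subset G'$ and, at the same time, $\mathcal{B}_{\ell}$ respects the apartment of $S$ and the residual Frobenius. Equivalently, one must verify that Deligne's functoriality, a priori only for finite $\ell$-ramified Galois modules, propagates to the full Bruhat--Tits apparatus attached to the pair $(G,S)$; this is the technical heart of the argument, and is where one leans on the detailed structure theory and the earlier transfer results of \cite{ganapathy2019}.
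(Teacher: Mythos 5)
Your proposal is correct and follows essentially the same route as the paper: both obtain $G'$ and the building isomorphism from the Deligne--Ganapathy close-field transfer (the paper cites \cite[\S 1C2--1C3]{ganapathy2022} rather than \cite{ganapathy2019}), transfer $S$ along $\Del_\ell$ using its finite $\ell$-ramified Galois data, and then deduce (1) from Theorem~\ref{thm:congruent-isom}. The only real difference is that the paper makes your ``pinned down, up to conjugacy, by finite $\ell$-ramified data'' step precise by transporting the class of $S$ through the induced isomorphism $H^1(\Gamma_F/I_F^{\ell}, W(G,T))\cong H^1(\Gamma_{F'}/I_{F'}^{\ell}, W(G',T'))$ coming from the $\Del_\ell$-equivariant identification of absolute root data, which is exactly the rigorous form of the coherence issue you flag at the end.
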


\begin{proof}
The existence of $G'$ and (ii) is proved in \cite[\S 1C2 and \S 1C3]{ganapathy2022}. It comes equipped with a \(\Del_\ell\) equivariant identification $\Phi_\ell$ of absolute root data of $(G,T)$ and $(G',T')$. Hence $\Phi_\ell$ also provides a \(\Del_\ell\) equivariant isomorphism $W(G,T)\xrightarrow{\sim}W(G',T')$ of Weyl groups and consequently an isomorphism \(\Phi_{\ell,H^1}:H^1(\Gamma_{F}/I_{F}^\ell,\ W(G,T))\cong H^1(\Gamma_{F'}/I_{F'}^\ell,\ W(G',T'))\). Let $S'$ be the transfer of $S$ to $G'$ along the isomorphism $\Phi_{\ell,H^1}$. 
Then (i) follows from Theorem \ref{thm:congruent-isom}. 
\end{proof}

\begin{definition}[Level-$\ell$ congruence datum]
\label{def:congruence-datum}
Let $F$ and $F'$ be $\ell$-close local fields. A \emph{congruence datum of level $\ell$} is a triple
\[
  D_\ell = \bigl( \psi_\ell,\; \Del_\ell,\; \Phi_\ell \bigr)
\]
consisting of,
\begin{enumerate}
\item an isomorphism of truncated valuation rings
\[
  \psi_\ell: \mathcal{O}_F/\pi_F^{\,\ell} \xrightarrow{\;\sim\;} \mathcal{O}_{F'}/\pi_{F'}^{\,\ell},
\]
\item Deligne's canonical isomorphism of Galois quotients
\[
  \Del_\ell: \Gamma_F/I_F^{\ell} \xrightarrow{\;\sim\;} \Gamma_{F'}/I_{F'}^{\ell},
\]
\item a bijection $\Phi_\ell:\Phi(G,T)\xrightarrow{\sim}\Phi(G',T')$ compatible with $\Del_\ell$ and preserving coroots.
\end{enumerate}
All constructions below depend only on the fixed choice of $D_\ell$.
\end{definition}
\subsection{Root groups and their splitting fields}\label{subsec:root-splitting-field}
We will recall some standard theory from \cite{BT84}.
Let \(F\) be a non-archimedean local field and let \(G\) be a connected
reductive \(F\)-group.  
Choose a maximal \(F\)-split torus \(A\subset G\) and write
\(\Phi(G,A)\) for the corresponding relative root system. Choose a base point $x_{0}$ in the apartment $\mathcal{A}(A,F)$ of ${A}$. For any point $x\in\mathcal{A}(A,F)$, define,
\[
  f_{x} : \Phi(G,A)\longrightarrow\mathbb{R}, \qquad a\longmapsto -a\bigl(x-x_{0}\bigr).
\]

Now let \(T\) be the centralizer of a maximally \(F^{\mathrm{un}}\)-split \(F\)-torus containing
\(A\); then \(T/F\) is a maximal torus.
Every absolute root \(\alpha\in\Phi(G,T)\) determines a
one-parameter unipotent subgroup \(U_\alpha\subset G_{F^{\mathrm s}}\).
For each absolute root \(\alpha\) let
\[
   L_\alpha \;:=\; F^{\mathrm {sep}}{}^{\Gamma_\alpha}, 
   \quad\text{where}\;
   \Gamma_\alpha=\{\sigma\in\Gal(F^{\mathrm {sep}}\!/F)\mid \sigma(U_\alpha)=U_\alpha\}.
\]
The field \(L_\alpha\) is the smallest extension of \(F\) over
which \(U_\alpha\) is defined; over \(L_\alpha\) one has an
\(L_\alpha\)-isomorphism \(x_\alpha:\mathbf G_{a,L_\alpha}\!\xrightarrow\sim\!U_\alpha\),
unique up to scaling by \(L_\alpha^{\times}\).
Now fix a relative root \(a\in\Phi(G,A)\).
Choose any absolute root \(\alpha\) restricting to \(a\) on \(A\). Define the minimal field of definition \(F_a\) as follows.
\begin{enumerate}[label=(\alph*)]
    \item \emph{Short case (\(2a\notin\Phi(G,A)\)).}
          Define
          \[
              F_a \;:=\; L_\alpha .
          \]
    \item \emph{Long--short case (\(2a\in\Phi(G,A)\)).}
          Pick absolute roots \(\alpha,\alpha'\) with
          \(\alpha|_A=\alpha'|_A=a/2\) and \(\alpha+\alpha'=2a\).  
          Then
          \[
              F_{a}:=L_\alpha=L_{\alpha'}.
          \] 
In either case, the field \(F_a\) is independent of the chosen lift \(\alpha\).
\end{enumerate}
Write \(e_a := e(F_a/F)\) for the ramification index.

\begin{lemma}
\label{lem:root-filtration-HH}
Let $F$ and $F'$ be $\ell$-close local fields and let $G/F$ be a connected reductive group with maximal $F$-split torus $A\subset G$. Fix a facet $x\in\mathcal{A}(A,F)$ and put $x':=\mathcal{B}_\ell(x)$. For each root $a\in\Phi(G,A)$, denote by $F_a/F$ (resp. $F_{a'}/F'$) the minimal field of definition of the root subgroup $U_a$ (resp. $U_{a'}$ with $a'=\Phi_\ell(a)$), and let $\phi_{F_a/F}$ be the Hasse-Herbrand function of that extension. Assume $r>0$ satisfies the uniform bound
\[
\; \ell \;\ge\; \phi_{F_a/F}\!\bigl(e_a\,r\bigr) \quad\text{for every }a\in\Phi(G,A). \tag{$\ast$}
\]
Then for every root $a\in\Phi(G,A)$, there is a canonical bijection
\[
  U_a(F)_{x,0}\big/ U_a(F)_{x,r} \xrightarrow{\;\sim\;} U_{a'}(F')_{x',0}\big/ U_{a'}(F')_{x',r}.
\]
\end{lemma}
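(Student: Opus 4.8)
The plan is to reduce the statement for the relative root subgroup $U_a$ to the analogous statement for the induced torus $\Res_{F_a/F}\Gm$ and then apply the truncated-torus isomorphism of Theorem~\ref{thm:congruent-isom}. First I would recall from \cite{BT84} the explicit description of the root-group filtration: in the short case $2a\notin\Phi(G,A)$ one has $U_a\cong\Res_{F_a/F}\Ga$ via the parametrization $x_\alpha$, and the Moy--Prasad filtration step $U_a(F)_{x,s}$ corresponds to the fractional ideal $\fracp_{F_a}^{\lceil e_a(s-f_x(a))\rceil}$ of $F_a$, where $f_x(a)=-a(x-x_0)$ is the concavity value attached to the chosen point $x$. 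Thus the quotient $U_a(F)_{x,0}/U_a(F)_{x,r}$ is identified with a truncation $\OO_{F_a}/\fracp_{F_a}^{m_a}$ for an explicit $m_a$ determined by $e_a$, $r$ and $f_x(a)$; in the long--short case $2a\in\Phi(G,A)$ one uses instead the standard identification of $U_a$ with the $F_a/F$-norm-one hypersurface inside a two-dimensional group together with the fact that $U_{2a}\subset U_a$ is a subgroup whose filtration is again governed by ideals of $F_a$, so that the quotient is still a truncated $\OO_{F_a}$-module (plus a truncated $\OO_{F_{2a}}$-part), handled identically.

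Next I would invoke the close-field matching: the congruence datum $D_\ell$ and the simplicial isomorphism $\mathcal{B}_\ell$ send the split torus $A$ and the point $x$ to $A'$ and $x'$, send the root $a$ to $a'=\Phi_\ell(a)$, and identify the minimal fields of definition, so that $F_{a'}/F'$ is the Deligne transfer of $F_a/F$; in particular $e_{a'}=e_a$, $\phi_{F_{a'}/F'}=\phi_{F_a/F}$, the break data agree, and the concavity value $f_{x'}(a')=f_x(a)$ is preserved because $\mathcal{B}_\ell$ is affine and respects the apartment structure. Hence the target quotient $U_{a'}(F')_{x',0}/U_{a'}(F')_{x',r}$ is likewise identified with $\OO_{F_{a'}}/\fracp_{F_{a'}}^{m_{a'}}$ with $m_{a'}=m_a$. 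It then suffices to produce a canonical isomorphism $\OO_{F_a}/\fracp_{F_a}^{m_a}\isomto\OO_{F_{a'}}/\fracp_{F_{a'}}^{m_a}$ compatible with the additive (and, in the long--short case, the twisted) group structure. One converts the hypothesis $(\ast)$ into the statement $\Phi_{\Res_{F_a/F}\Gm}(r)=\phi_{F_a/F}(e_a r)\le\ell$, feeds this into Theorem~\ref{thm:congruent-isom} applied to the induced torus $\Res_{F_a/F}\Gm$, and transports the resulting canonical truncated isomorphism of multiplicative groups to the additive side via the standard bijection $1+\fracp_{F_a}^k/1+\fracp_{F_a}^{k'}\cong\fracp_{F_a}^k/\fracp_{F_a}^{k'}$ (valid in the ranges appearing here, since $2k\ge k'$ once $r>0$ is in the relevant window), or more directly by noting that the ring isomorphism $\psi_\ell$ already induces $\OO_F/\fracp_F^\ell$-algebra isomorphisms of the truncated extensions $\OO_{F_a}/\fracp_{F_a}^{m_a}\cong\OO_{F_{a'}}/\fracp_{F_{a'}}^{m_a}$ functorially in the Galois data, which is exactly what $(\ast)$ guarantees lies within the level-$\ell$ window.

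Finally I would check that the bijection so produced is canonical and independent of the auxiliary choices (the lift $\alpha$ of $a$, the scaling of $x_\alpha$, the base point $x_0$): independence of $\alpha$ follows from $F_a$ being well-defined, the scaling ambiguity by $L_a^\times$ is absorbed because it acts compatibly on both sides through $D_\ell$, and the base-point dependence cancels since only the difference $x-x_0$, preserved by $\mathcal{B}_\ell$, enters $f_x(a)$. The main obstacle I anticipate is the long--short case $2a\in\Phi(G,A)$: there $U_a$ is non-abelian, its filtration subquotients are not simply ideals of a single field but involve both $F_a$ and $F_{2a}=F_a$ with a Galois-twisted commutator pairing, so one must verify that the transfer $D_\ell$ respects this pairing — concretely, that the trace/norm forms defining the Bruhat--Tits valuation of the root datum à la \cite[4.1, 4.2]{BT84} are compatible with the truncated ring isomorphism $\psi_\ell$ in the relevant range. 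This is where I would spend most of the effort, reducing it to the observation that the pairing is polynomial with coefficients in $\OO_{F_a}$ and hence is preserved modulo $\fracp_{F_a}^{m_a}$ once $(\ast)$ places $m_a$ within the level-$\ell$ window.
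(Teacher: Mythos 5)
Your proposal is correct and follows essentially the same route as the paper: identify $U_a(F)_{x,0}/U_a(F)_{x,r}$ with a truncated $\OO_{F_a}$-module via the Bruhat--Tits parametrization of \cite[\S 4.3.2, \S 4.3.5]{BT84}, use the preservation of the affine functional $f_x(a)=f_{x'}(a')$ under $\mathcal{B}_\ell$, and transfer the truncated ring $\OO_{F_a}/\fracp_{F_a}^{\lceil e_a r\rceil}$ through the $\ell$-closeness datum, which hypothesis $(\ast)$ (via the Hasse--Herbrand conversion to upper numbering) places within the level-$\ell$ window. The only remark is that your detour through Theorem~\ref{thm:congruent-isom} and the multiplicative-to-additive bijection is unnecessary and hampered by the range restriction you yourself flag; the paper takes exactly your ``more direct'' alternative of transferring the truncated extension rings, and it does not elaborate the long--short case beyond the same Bruhat--Tits references.
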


\begin{proof}
Let $a\in\Phi(G,A)$ and $a'\in\Phi(G',A')$ be the corresponding roots under the root datum
identification of Proposition~\ref{prop:vertex-transfer}. If $x\in\mathcal{A}(A,F)$ and
$x'=\mathcal{B}_\ell(x)\in\mathcal{A}(A',F')$, then by \cite[Proposition 4.4]{ganapathy2019}, the building isomorphism preserves affine root functions. In particular, for every $a\in\Phi(G,A)$ we have
\[
   f_x(a) \;=\; f_{x'}(a'),
\]
where $f_x$ (resp. $f_{x'}$) denotes the affine functional attached to $a$ at $x$
(resp. to $a'$ at $x'$).
Put $k:=f_x(a)=f_{x'}(a')$ and $k':=k+r$. By \cite[\S 4.3.2 and \S 4.3.5]{BT84}, $U_{a,k}/U_{a,k'}\cong \varpi^{\lceil e_a k\rceil}\mathcal{O}_{F_a}\big/ \varpi^{\lceil e_a k'\rceil}\mathcal{O}_{F_a}$. Hence the quotient depends only on the truncated ring $\mathcal{O}_{F_a}\big/\varpi_{F_a}^{\,\lceil e_a r\rceil}$.
For the finite Galois extension $F_a/F$, the jump $e_a r$ in lower numbering corresponds, under upper numbering, to $\phi_{F_a/F}(e_a r)$. Hypothesis $(\ast)$ gives $\phi_{F_a/F}(e_a r)\le\ell$. Deligne's $\ell$-closeness provides an isomorphism of truncated valuation rings
\[
   \mathcal{O}_{F_a}\big/\varpi^{\,\lceil e_a r\rceil} \;\xrightarrow{\;\sim\;}\;
   \mathcal{O}_{F_{a'}}\big/\varpi'^{\,\lceil e_a r\rceil}.
\]
From this we deduce the following equalities of local invariants:
\begin{enumerate}[label=(\roman*), leftmargin=2em]
   \item the residue fields are canonically identified, 
   \(\kappa_{F_a}\;\cong\;\kappa_{F_{a'}}\);
   \item the ramification indices agree, so that
   \([F_a:F]=[F_{a'}:F']\);
   \item the value groups attached to the root filtrations coincide,
   \[
      \Gamma_a \;:=\; \nu_a\!\bigl(U_a(F)\setminus\{1\}\bigr)
      \;=\; \nu_{a'}\!\bigl(U_{a'}(F')\setminus\{1\}\bigr)
      \;=\; \Gamma_{a'},
   \]
where $\nu_a$ (resp. $\nu_{a'}$) denotes the valuation on the root subgroup $U_a$ (resp. $U_{a'}$).
\end{enumerate}
With these invariants equal, the quotients
\(U_{a,k}/U_{a,k'}\) and \(U_{a',k}/U_{a',k'}\) are naturally
$\kappa$-vector spaces of the same dimension over isomorphic residue fields. 
Choosing any $\kappa$-basis on one side transfers canonically to the other,
yielding the desired bijection. 
Moreover, this bijection depends only on the congruence datum $D_\ell$,
and is therefore functorial and independent of auxiliary choices.
\end{proof}

\subsection{Definition:  depth-transfer function for reductive groups.}\label{subsec:defglobaldepcomp}
Let $G/F$ be a connected reductive group and fix a vertex $x\in\mathcal{B}^{\mathrm{red}}(G,F)$.
\begin{itemize}
\item Write $\mathscr{E}_x$ for the set of maximally unramified elliptic maximal $F$-tori of $G$ that fix $x$. For every $S\in\mathscr{E}_x$, we have the depth-transfer function $\Phi_{S} : \mathbb{R}_{\ge0} \to \mathbb{R}_{\ge0}$ introduced in Definition~\ref{def:PhiT}.
\item Choose a maximal split torus $A\subset G$ whose apartment contains $x$; let $\Phi(G,A)$ be the associated root system. For each $a\in\Phi(G,A)$, denote by $F_{a}/F$ the root-splitting field recalled in Section \ref{subsec:root-splitting-field} and by $e_a$ its ramification index; let $\phi_{F_a/F}$ be the Hasse-Herbrand function of this extension.
\end{itemize}
Define
\[
\boxed{\; \Phi_{G,x}(r) := \max~ \! \bigl\{ \max_{S\in\mathscr{E}_x}\Phi_S(r), \, \max_{a\in\Phi(G,A)}\phi_{F_a/F}(e_a\,r) \bigr\}, \qquad r\ge0. }
\]
The value $\Phi_{G,x}(r)$ depends only on $(G,x)$ (not on the auxiliary split torus $A$) and will serve as the \emph{global depth-comparison gauge} at the vertex $x$.

\begin{remark}[Depth--transfer as a \emph{normalised} Hasse--Herbrand function]
\label{R:Phi-vs-HH}
For a finite Galois extension $E/F$ of non-archimedean local fields with ramification
index $e$, consider the normalised form of the Hasse--Herbrand function \(\phi_{E/F}\):
\[
   \phi_{E/F}^{\mathrm{norm}}(r)\;:=\;
   \phi_{E/F}(e\,r), 
   \qquad r\in\mathbb R_{\ge 0},
\]
so that $\phi_{E/F}^{\mathrm{norm}}(r)=r$ whenever $E/F$ is tamely ramified.
For an induced torus $T=\Res_{E/F}\mathbb G_{m}$
\[
   \Phi_{T}(r)\;=\;\phi_{E/F}(e\,r)
                 \;=\;\phi_{E/F}^{\mathrm{norm}}(r) 
                 \qquad (r\ge 0) , 
\] 
so that $\Phi_{T}$ coincides with the normalised Hasse--Herbrand.

If $G=T$ is any torus, the ``root'' terms in $\Phi_{G,x}$ disappears and it reduces to $\Phi_{T}$ and
hence to $\phi_{E/F}^{\mathrm{norm}}$ when $T=\Res_{E/F}\mathbb G_{m}$. Thus, the depth--transfer function $\Phi_{G,x}$ is a generalisation of the normalised Hasse--Herbrand function that captures the combined ramification behaviour of all maximal tori and root data inside an arbitrary reductive group. 

\end{remark}
\subsection{Close field isomorphism of truncated parahorics}
\begin{theorem}
\label{thm:depth-comp-HH}
Let $F$ and $F'$ be $\ell$-close non-archimedean local fields and let $G/F$ be a connected reductive group. Fix a vertex $x\in\mathcal{B}^{\mathrm{red}}(G,F)$ and let $x':=\mathcal{B}_\ell(x)\in\mathcal{B}^{\mathrm{red}}(G',F')$ be its image under the building isomorphism as in Proposition \ref{prop:vertex-transfer}. For a real depth $r\ge0$, assume the truncation level satisfies
\(
\ell \;\ge\; \Phi_{G,x}(r). 
\) Then there is a canonical  isomorphism,
\[
  \Psi_{x,r}: \dfrac{G(F)_{x,\mathrm{b}}}{G(F)_{x,r}} \xrightarrow{\;\sim\;} \dfrac{G'(F')_{x',\mathrm{b}}}{G'(F')_{x',r}},
\]
depending only on the level-$\ell$ congruence datum $D_\ell$, where \(G(F)_{x,\mathrm{b}}\) (resp. \(G'(F')_{x',\mathrm{b}}\)) denotes the maximal bounded subgroup of \(G(F)_x\) (resp. \(G'(F')_{x'}\)). The isomorphism is respectful of restriction to minimal congruence filtration subgroups.
\end{theorem}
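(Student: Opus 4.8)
The plan is a dévissage along the Bruhat–Tits filtration at $x$, assembling $\Psi_{x,r}$ from three transferable building blocks: the residue‑field group of the special fibre, the root‑group graded pieces, and the toral graded pieces. First I would fix the congruence datum $D_\ell$, choose a maximal $F$‑split torus $A$ with $x\in\mathcal A(A,F)$, and choose, via Lemma~\ref{lem:vertex-torus-bijection}, a maximally unramified elliptic maximal $F$‑torus $S\in\mathscr E_x$ with $x_S=x$. The hypothesis $\ell\ge\Phi_{G,x}(r)$ unwinds, by the very definition of $\Phi_{G,x}$ in \S\ref{subsec:defglobaldepcomp}, to the two bounds $\ell\ge\Phi_S(r)$ and $\ell\ge\phi_{F_a/F}(e_a r)$ for every $a\in\Phi(G,A)$; since $\Phi_S$ and each $\phi_{F_a/F}(e_a\,\cdot\,)$ are non‑decreasing, the same bounds hold with $r$ replaced by any $0\le s\le r$. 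These are precisely the hypotheses of Theorem~\ref{thm:congruent-isom} applied to $S$ and of Lemma~\ref{lem:root-filtration-HH} applied to each $U_a$, at every depth $\le r$, and this is the only place the quantitative assumption enters.

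Next I would record the graded pieces. For a jump $s>0$, Bruhat–Tits theory (\cite[\S4.3]{BT84}), together with the minimal‑congruence refinement of \S\ref{sub:min-congruence} on the toral part, identifies $G(F)_{x,s}/G(F)_{x,s+}$ canonically with a $\kappa$‑vector space which is the direct sum of the root contributions $U_a(F)_{x,s}/U_a(F)_{x,s+}$ over those $a\in\Phi(G,A)$ contributing at level $s$, and the toral contribution $S(F)_s/S(F)_{s+}$. The root summands transfer canonically by Lemma~\ref{lem:root-filtration-HH} and the toral summand by Theorem~\ref{thm:congruent-isom} applied to $S$; taking the direct sum gives a canonical isomorphism of each such graded piece with its primed counterpart. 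For the bottom, $G(F)_{x,\mathrm b}/G(F)_{x,0+}$ is the group of $\kappa$‑points of the special fibre of the Bruhat–Tits group scheme $\mathscr G_x$ at $x$ (the stabilizer‑type, maximal‑bounded model, with its parahoric subgroup scheme as the identity component); this fibre, including its component group, is transferred $\Del_\ell$‑equivariantly by Proposition~\ref{prop:vertex-transfer}, i.e.\ by Ganapathy's transfer of the parahoric group scheme and the $\Phi_\ell$‑equivariance of the root datum.

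The substance of the argument — and the step I expect to be the main obstacle — is to promote these layerwise matches to a single \emph{group} isomorphism, since an extension is not determined by its kernel and cokernel. I would avoid abstract extension theory and instead transfer an explicit model. Within the pro‑unipotent radical, fix an ordering of the affine roots and use the Iwahori‑type factorization $G(F)_{x,s}=U^-_{x,s}\cdot S(F)_s\cdot U^+_{x,s}$ (a bijective product for each relevant $s$): the underlying set of $G(F)_{x,\mathrm b}/G(F)_{x,r}$ is then transferred factor by factor by the isomorphisms above, while the multiplication law on this product is dictated by the Chevalley commutator structure constants of $\Phi(G,T)$ and by the valuations of the root‑group coordinates — data carried $\Del_\ell$‑equivariantly by $\Phi_\ell$, and reproduced faithfully modulo $\mathfrak p_F^{\lceil\,\cdot\,\rceil}$ precisely because $\ell\ge\Phi_{G,x}(r)$ — so the transferred bijection intertwines the group laws; the outer extension by $G(F)_{x,\mathrm b}/G(F)_{x,0+}=\mathscr G_x(\kappa)^{\flat}$ is then encoded in the group scheme $\mathscr G_x$ itself and transferred with it. Equivalently, and more structurally, one realizes $G(F)_{x,\mathrm b}/G(F)_{x,r}$ as the points of a finite‑level smooth group scheme obtained from $\mathscr G_x$ by dilatations along the root‑filtration subgroups and Yu's minimal‑congruence subgroup of $S$, and transfers that group scheme; this is the route that dispenses with the Chai–Yu Néron‑model input of \cite{ChaiYu2001} and \cite{ganapathy2019}, Yu's filtration on $S$ (through Theorem~\ref{thm:congruent-isom}) playing the role previously played by the comparison of Néron models.

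Finally, canonicity and the last assertion are formal. Each ingredient — Lemma~\ref{lem:root-filtration-HH}, Theorem~\ref{thm:congruent-isom}, Proposition~\ref{prop:vertex-transfer} — depends only on $D_\ell$, and the recursion over the finitely many jumps in $[0,r)$ introduces no further choices, so $\Psi_{x,r}$ depends only on $D_\ell$; independence of the auxiliary torus $S\in\mathscr E_x$ follows since any two such tori are $G(F)_{x,0}$‑conjugate and the layerwise identifications are $G(F)_{x,0}$‑equivariant. By construction $\Psi_{x,r}$ carries $G(F)_{x,s}/G(F)_{x,r}$ onto $G'(F')_{x',s}/G'(F')_{x',r}$ for every $s\in[0,r]$; in particular it is respectful of restriction to the minimal‑congruence filtration subgroups, as claimed.
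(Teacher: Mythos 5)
Your proposal is correct and follows essentially the same route as the paper's proof: transfer the toral piece via Theorem~\ref{thm:congruent-isom} applied to some $S\in\mathscr{E}_x$, transfer the root-group pieces via Lemma~\ref{lem:root-filtration-HH} under the bound $\ell\ge\Phi_{G,x}(r)$, and glue these into a group isomorphism using the integral Chevalley commutator and conjugation relations, which are preserved because the identifications come from isomorphisms of truncated valuation rings. Your treatment is if anything slightly more explicit than the paper's (Iwahori-type factorization as the gluing model, and the special fibre of $\mathscr{G}_x$ handling the component group of $G(F)_{x,\mathrm{b}}$ versus the parahoric), but these are refinements of, not departures from, the same argument.
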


\begin{proof}
Choose any $S\in\mathscr{E}_x$. Since $\ell\ge\Phi_{G,x}(r)\ge\Phi_S(r)$, Theorem \ref{thm:congruent-isom} supplies an isomorphism
\[
  S(F)/S(F)_r \xrightarrow{\;\sim\;} S'(F')/S'(F')_r,
\tag{1}
\]
where $S'$ is the transfer of $S$ to $G'$.
Now fix a maximal split torus $A\subset G$ whose apartment contains $x$ and let $A'\subset G'$ be its transfer. For every root $a\in\Phi(G,A)$, the bound $\ell\ge\Phi_{G,x}(r)\ge\phi_{F_a/F}(e_a r)$ implies the hypothesis of Lemma \ref{lem:root-filtration-HH}; hence we have canonical bijections
\[
  U_a(F)_{x,0}/U_a(F)_{x,r} \xrightarrow{\;\sim\;} U_{a'}(F')_{x',0}/U_{a'}(F')_{x',r},
\tag{2}
\]
with $a'=\Phi_\ell(a)$.
The group law in a parahoric (hence in the quotients $G(F)_{x,\mathrm{b}}/G(F)_{x,r}$)
is generated by these graded pieces in equations (1) and (2) subject to the integral
Chevalley relations:
\[
[x_a(u),x_b(v)]
 \;=\; \prod_{i,j>0} x_{i a + j b}\!\big(C_{i,j}\,u^{\,i}v^{\,j}\big),
\qquad
t\,x_a(u)\,t^{-1}=x_a\!\big(a(t)\,u\big),
\]
with structure constants $C_{i,j}\in\mathbb{Z}$ (independent of the field) and
$\alpha$ the corresponding root.  The maps (1)--(2) are defined by
transport of parameters through isomorphisms of truncated valuation rings;
hence they send $u\mapsto u'$, $v\mapsto v'$, $t\mapsto t'$ and preserve
valuations/levels. Because the coefficients $C_{i,j}$ are integral, their
reductions agree on both sides, so the images of the Chevalley polynomials
computed after applying (1)--(2) coincide with the Chevalley polynomials of
the images. In particular, all defining commutator and conjugation relations
are respected in the quotients.

Thus the assignments \emph{(1)} and \emph{(2)} define maps on a generating set of
$G(F)_{x,\mathrm{b}}/G(F)_{x,r}$ and preserve the defining relations; hence they extend uniquely
to a group homomorphism
\[
  \Psi \colon G(F)_{x,\mathrm{b}}/G(F)_{x,r}\longrightarrow G'(F')_{x',\mathrm{b}}/G'(F')_{x',r}.
\]

Applying the same construction with $(F',G',x')$ in place of $(F,G,x)$ yields a
homomorphism $\Psi' \colon G'(F')_{x',\mathrm{b}}/G'(F')_{x',r}\to G(F)_{x,\mathrm{b}}/G(F)_{x,r}$.
By construction, both $\Psi'\circ\Psi$ and $\Psi\circ\Psi'$ act as the identity on
the generating subquotients \emph{(1)} and \emph{(2)}, hence are the identity on the
whole group. Therefore $\Psi$ is a group isomorphism. Moreover, since \emph{(1)} and
\emph{(2)} depend only on the congruence datum $D_\ell$, the resulting isomorphism is
functorial in $D_\ell$ and independent of auxiliary choices.

\end{proof}

\section{$\ell$-close Hecke algebra isomorphism for Bernstein blocks}
\label{sec:hecke_iso_bernstein}

\subsection{Depth-zero Bernstein blocks}
Let $G$ be a connected reductive group defined over a non-archimedean local field $F$.
Recall from the theory of Bushnell--Kutzko that the category $\mathcal{R}(G(F))$ of smooth
representations of $G(F)$ decomposes into a product of indecomposable subcategories:
\[
\mathcal{R}(G(F))=\prod_{[L,\pi]_G\in \mathfrak{B}(G)} \mathcal{R}([L,\pi]_G),
\]
where $\mathfrak{B}(G)$ denotes the Bernstein spectrum of $G$.
Denote by $\cH(\mathfrak{s})$ the Hecke algebra attached to $\mathfrak{s}=[L,\pi]_G$
via an $\mathfrak{s}$-type in the sense of Bushnell--Kutzko.

Recall that a depth-zero supercuspidal representation $\pi$ of $L(F)$ is of the form
$\cind^{L(F)}_{L(F)_x}\rho_L$  where $x\in \mathcal{B}^{\red}(L,F)$ is a vertex.
We call (any such) $x$ a vertex associated to $\pi$; it is determined by $\pi$ up to $L(F)$-conjugacy.

\medskip
\noindent\textbf{Notation (depth-zero transfer).}
Assume now that $F$ and $F'$ are $\ell$-close, fix a congruence datum $D_\ell$, and let $G'$ be the
transfer of $G$ to $F'$. If $L\subset G$ is an $F$-Levi subgroup, write $L'\subset G'$ for its transfer.
Let $\pi$ be a depth-zero supercuspidal representation of $L(F)$ with associated vertex $x$, and put
$x':=B_\ell(x)\in \mathcal{B}^{\red}(L',F')$. For $\ell$ sufficiently large, Theorem~\ref{thm:depth-comp-HH} (with $r=0$)
provides a canonical identification of the reductive quotients at $x$ and $x'$. Transporting the
depth-zero type of $\pi$ at $x$ along this identification yields a depth-zero supercuspidal representation
$\pi'\in \Irr(L'(F'))$, which we denote by
\[
\pi'\ :=\ \Kaz^{(L,x)}_\ell(\pi).
\]
(Our preference for the notation $\Kaz^{(L,x)}_\ell$ and its compatibility with the Kazhdan correspondence
are discussed in Appendix~\ref{app:kazhdan}.)

\begin{theorem}\label{thm:ell-close-depth-zero-block}
Let $F$ and $F'$ be $\ell$-close nonarchimedean local fields with residue characteristic $p$, and let
$G$ be a connected reductive group over $F$. Let $G'$ be the transfer of $G$ to $F'$ via the congruence
datum $D_\ell$ (Proposition~\ref{prop:vertex-transfer}). Let $L\subset G$ be an $F$-Levi subgroup and
$L'\subset G'$ its transfer. Let $\pi$ be a depth-zero supercuspidal representation of $L(F)$ with
associated vertex $x$, and let $\pi'=\Kaz^{(L,x)}_\ell(\pi)$.

Then there exists $\ell_\star=\ell_\star(G,L,[L,\pi]_G)$ such that for all $\ell \ge \ell_\star$ there
is a canonical isomorphism of Hecke algebras preserving the anti-involution:
\[
\cH([L,\pi]_G)\ \xrightarrow{\ \sim\ }\ \cH([L',\pi']_{G'}).
\]
\end{theorem}

\subsection*{Notations for the Proof of Theorem \ref{thm:ell-close-depth-zero-block}}

The following notations are specific to the structure of depth zero Hecke algebra.
Let  $x \in \mathcal{B}(L,F)$ correspond to a vertex in the reduced building, and $x' = \mathscr{B}_\ell(x) \in \mathcal{B}(L',F')$ its image. Fix a \(0\)-generic embedding \(\iota:\mathcal{B}(L,F)\hookrightarrow \mathcal{B}(G,F)\) in the sense of \cite{KimYu2018}.
\vspace{1em}

\noindent \textbf{Affine Root System and Hyperplanes}
\begin{enumerate}
\item 
\(\Phi_{\mathrm{aff}}(G, A_L)\): Set of affine functionals on the apartment \(\mathcal{A}(A_L, F)\) defined by
\[
\Phi_{\mathrm{aff}}(G, A_L) := \left\{ a|_{\mathcal{A}_{x}} \mid a \in \Phi_{\mathrm{aff}}(G, T, F) \setminus \Phi_{\mathrm{aff}}(L, T, F) \right\},
\]
where \(T\) is a maximal \(F\)-split torus of \(L\) such that \(x \in \mathcal{A}(L, T, F)\) and \(A_L\) denotes the maximal split torus in the center \(Z(L)\) of \(L\) and \(\mathcal{A}_{x}=x+(X_*(A_L)\otimes \mathbb{R})\).
 
  \item \( \mathfrak{H} \): Set of affine hyperplanes \( \{H_a \mid a \in \Phi_{\mathrm{aff}}(G, A_L)\} \) where \( H_a = \{u \in \mathcal{A}(A_L, F) \mid a(u) = 0\} \).
  \item \( \mathfrak{H}_{u,v} \): Subset of \( \mathfrak{H} \) for which \( u \) and \( v \) lie on the opposite sides. 
  \item \( H_s \): Hyperplane associated to a simple affine root \( s \).
\end{enumerate}

\vspace{1em}
\noindent \textbf{Reductive Quotients}
\begin{enumerate}
  \item \( \mathsf{G}_x^\circ \): Reductive group over \( \kappa \) with \( \mathsf{G}_x^\circ(\kappa) = G(F)_{x,0} / G(F)_{x,0+} \).
  \item \( \mathsf{L}_x^\circ \): Analogous quotient for the Levi subgroup \( L \).
  \item $\mathsf{S}$: Analogous quotient for the elliptic torus $S$ which corresponds to the vertex $x$.
\end{enumerate}
\vspace{1em}
\noindent \textbf{Subgroups and Representations} 
 Let $(L(F)_x,\rho_L)$ be a depth-zero type with $\rho_L$ inflating an irreducible representation $\widetilde{\rho}$ of the (possibly disconnected) reductive quotient $\mathsf{L}_x(\kappa)$ whose restriction to $\mathsf{L}_x^\circ(\kappa)$ is a cuspidal representation $\bar{\rho}$. 
\begin{enumerate}
  \item \( K_y = L(F)_{x} \cdot G(F)_{y,0} \) and \( K_{y,0+} = L(F)_{x} \cdot G(F)_{y,0+} \): compact open subgroup for \( y \in \mathcal{A}_x \).
  \item  \(\mathcal{A}_{\mathrm{gen}}\) is the set of points in the affine space \(\mathcal{A}_{x}\) that do not lie on any affine hyperplane \(H\) belonging to the set \(\mathfrak{H}\).
  \item \( \rho_y \): Irreducible representation of \( K_y \) attached to \( \rho_L \) via the isomorphism 
    \[
        K_y/K_{y,0+} \cong L(F)_{x}/L(F)_{x,0+}.
    \]
(defined for \( y \) in the generic set \( \mathcal{A}_{\mathrm{gen}} \)).

\end{enumerate}

\vspace{1em}
\noindent \textbf{\( q \)-Parameters}
\begin{enumerate}
  \item For a representation \( \pi \) of finite length \( \leq 2 \):
  \[
    q(\pi) = 
    \begin{cases} 
      1 & \text{if } \pi \text{ is irreducible} \\
      \dim \pi_1 / \dim \pi_2 & \text{if } \pi = \pi_1 \oplus \pi_2, \ \dim \pi_1 \geq \dim \pi_2
    \end{cases}
  \]
  \item \( q_s = q\left( \ind_{K_u}^{K_h} \rho_u \right) \): Parameter for an affine reflection \( s \), where:
  \begin{itemize}
    \item \( u, v \in \mathcal{A}_\mathrm{gen} \) satisfy \( \mathfrak{H}_{u,v} = \{H_s\} \) (unique separating hyperplane),
    \item \( h \in H_s \cap [u,v] \) is the unique intersection point.
  \end{itemize}
\end{enumerate}

\vspace{1em}
\noindent \textbf{Stabilizer Group and Cocycle}
For a representation \(\rho_L\) extending a cuspidal representation \(\bar{\rho}\) of the reductive quotient \(\mathsf{L}_{x}^\circ(\kappa)\).
\begin{enumerate}
  \item \( \Omega = N_{G(F)}(\rho_L)^{\heartsuit}_{x} / L(F)_{x} \), where:
  \begin{itemize}
    \item \( N_{G(F)}(\rho_L) = \{g \in G(F) \mid {}^g\rho_L \simeq \rho_L\} \),
    \item \( N_{G(F)}(\rho_L)^{\heartsuit}_{x} = N_{G(F)}(\rho_L) \cap N_G(L)(F)_{x} \),
    \item \( N_G(L)(F)_{x} \): Stabilizer of \( x \).
  \end{itemize}
  \item \( \mu \): 2-cocycle on \( \Omega \) encoding intertwining operators between \( g \)-conjugates of \( \rho_L \).
\end{enumerate}

\vspace{1em}

\subsection{Proof of Theorem \ref{thm:ell-close-depth-zero-block}} 

\begin{proof}
Let \(\pi = \text{c-ind}_{L(F)_{x}}^{L(F)} \rho_L\), where \(\rho_L\) extends a cuspidal representation \(\bar{\rho}\) of \(\mathsf{L}_{x}^\circ(\kappa)\). By Theorem \ref{thm:depth-comp-HH}, for \(\ell \geq \Phi_{G,x}(0)\), there is a canonical isomorphism:
\[
\Psi_{x,0} \colon \mathsf{G}_{x}(\kappa) \xrightarrow{\sim} \mathsf{G}_{x'}^{\prime}(\kappa'),
\]
restricting to:
\[
\Psi_{x,0}|\mathsf{L}_{x}(\kappa) \colon \mathsf{L}_{x}(\kappa) \xrightarrow{\sim} \mathsf{L}_{x'}^{\prime}(\kappa').
\]
This transfers \(\bar{\rho}\) to a cuspidal representation \(\bar{\rho}'\) of \(\mathsf{L}_{x'}^{\prime\circ}(\kappa')\), and \(\rho_L\) to \(\rho_{L'}'\) of \(L'(F')_{x'}\). Define \(\pi' = \text{c-ind}_{L'(F')_{x'}}^{L'(F')} \rho_{L'}'\), so \(\pi' = \mathrm{Kaz}_\ell(\pi)\), by the definition of the Kazhdan transfer functor (applied to $L$ at  $m=0$) together with the transport of idempotents and the Hecke algebra isomorphism at $m=0$.

By \cite[Theorem 5.3.6]{AFMO24a}, the Hecke algebra decomposes as:
\[
\mathcal{H}([L, \pi]_G) \simeq \mathbb{C}[\Omega(\rho_L), \mu] \ltimes \mathcal{H}_{\mathbb{C}}(W(\rho_L)_{\text{aff}}, q),
\]
where \(W(\rho_L)_{\text{aff}}\) is the affine Weyl group for the \(\mathcal{K}\)-relevant hyperplanes in \(\Phi(G, A_L)\) (in the sense of \cite[Definition 3.5.6]{AFMO24a} and \(q = (q_s)_{s \in \Delta_{\text{aff}}}\) is the parameter function for simple reflections for \(q_s\) as in the notations above.

Similarly, \[
\mathcal{H}([L', \pi']_{G'}) \simeq 
\mathbb{C}[\Omega(\rho_{L'}'), \mu'] \ltimes 
\mathcal{H}_{\mathbb{C}}\bigl(W(\rho_{L'}')_{\text{aff}}, q'\bigr).
\]
 Let $H\in\mathfrak H$,  $s_H$ be the reflection across \(H\) and $u\in\mathcal{A}_{\mathrm{gen}}$ is such that $H_{u,s_Hu}=\{H\}$. Let $h$ be the unique point $h\in H\cap [u,s_Hu]$. We take $0$-generic embedding mapping $x$ to $u$. For suitable  $r>0$, chosen such that $G(F)_{h,r}\subset G(F)_{x,0+}$, we have by Theorem \ref{thm:depth-comp-HH}, for $\ell >\mathrm{max}\{\Phi_{G,x}(r),\Phi_{G,h}(r)\}$,
$$
\Psi_{x,r}:\ \frac{G(F)_{x,0+}}{G(F)_{x,r}}\ \xrightarrow{\sim}\ \frac{G'(F')_{x',0+}}{G'(F')_{x',r}}
\quad\text{and}\quad
\Psi_{h,r}:\ \frac{G(F)_{h,0}}{G(F)_{h,r}}\ \xrightarrow{\sim}\ \frac{G'(F')_{h',0}}{G'(F')_{h',r}},
$$
which gives, 
\[
\frac{G(F)_{h,0}}{G(F)_{x,0+}}\xrightarrow{\sim}\frac{G'(F')_{h',0}}{G'(F')_{x',0+}}, 
\]
or equivalently, 
\[
\frac{K_h}{K_{x,0+}}\xrightarrow{\sim}\frac{K_h'}{K_{x',0+}}.
\]
Recall that \(\rho_{x'}\) was the transport of \(\rho_x\) along the isomorphism \(\frac{K_x}{K_{x,0+}}\xrightarrow{\sim}\frac{K_{x'}}{K_{x',0+}}\). We therefore have a \(\frac{K_h}{K_{x,0+}}\xrightarrow{\sim}\frac{K_{h'}}{K_{x',0+}}\) equivariant isomorphism,
\[ \tag{I}
\operatorname{ind}^{K_h}_{K_x}\rho_x \ \cong\ \operatorname{ind}^{K'_{h'}}_{K'_{x'}}\rho'_{x'}.
\]
Thus, 
\[
q_s = q(\text{ind}_{K_x}^{K_h} \rho_u) = q(\text{ind}_{K_{x'}'}^{K_{h'}'} \rho_{x'}') = q_{s'}'.
\]
The hyperplane $H\in\mathfrak H$, is $\mathcal{K}$ -relevant in the sense of \cite[Definition 3.5.6]{AFMO24a} iff \(\operatorname{ind}^{K_h}_{K_x}\rho_x \) is length two with distinct dimensional factors \cite[Lemma 4.2.4]{ohara2025}. Thus \(\mathscr{B}_{\ell}\) takes  $\mathcal{K}$ -relevant hyperplane to  $\mathcal{K}$ -relevant ones. The group \(W(\rho_L)_{\text{aff}}\) is defined as the affine Weyl group generated by reflections over  $\mathcal{K}$ -relevant hyperplanes \(\mathfrak{H}_{\mathcal{K}\text{-rel}}\) \cite[Prop. 5.3.5]{AFMO24a}. Consequently, it induces an isomorphism,
\[
W(\rho_L)_{\text{aff}} \simeq W(\rho_{L'}')_{\text{aff}}.
\]
The reduction map $G(F)_x \twoheadrightarrow \mathsf{G}_x(\kappa)$ induces a natural isomorphism
\[
  \iota_x:\quad
  \Omega(\rho_L)\ \xrightarrow{\ \sim\ }\ 
  \Stab_{N_{\mathsf G_x(\kappa)}(\mathsf L_x(\kappa))/\mathsf L_x(\kappa)}\!\big(\widetilde{\rho}\big).
\]
There is a canonical isomorphism of finite groups
\[
  \theta_x:\quad
  \dfrac{N_{\mathsf G_x(\kappa)}(\mathsf L_x(\kappa))}{\mathsf L_x(\kappa)}
  \xrightarrow{\ \sim\ }
  \dfrac{N_{\mathsf G'_{x'}(\kappa)}(\mathsf L'_{x'}(\kappa))}{\mathsf L'_{x'}(\kappa)}
\]
induced by \(\Psi_{x,0}\). Consequently,
\[
  \theta_x\Big(\Stab(\widetilde{\rho})\Big) \;=\; \Stab(\widetilde{\rho}').
\]

We obtain therefore
\[
\Phi \colon \Omega(\rho_L) \xrightarrow{\sim} \Omega(\rho_{L'}'),
\]
which is the unique isomorphism satisfying
\[
  \iota_{x'}\circ \Phi\;=\;\theta_x\circ \iota_x.
\]

The $2$-cocycle $\mu$ transfers to a cocycle $\mu'$ with
\(
  [\mu'] \;=\; \Phi_*[\mu]\in H^2\!\big(\Omega(\rho_{L'}'),\CC^\times\big).
\)
Thus, we have algebra isomorphisms:
\[
\mathbb{C}[\Omega(\rho_L), \mu] \simeq \mathbb{C}[\Omega(\rho_{L'}'), \mu'], \quad \mathcal{H}_{\mathbb{C}}(W(\rho_L)_{\text{aff}}, q) \simeq \mathcal{H}_{\mathbb{C}}(W(\rho_{L'}')_{\text{aff}}, q'),
\]
so \(\mathcal{H}([L, \pi]_G) \simeq \mathcal{H}([L', \pi']_{G'})\).  

\noindent

\end{proof}

\subsection{Arbitrary depth Bernstein blocks}
Assume the residue characteristic \(p\) of $F$ satisfies:
\begin{itemize}
    \item \(p \neq 2\) and
    \item \(p \) is not a torsion prime for the dual root datum. 
\end{itemize}
Yu's construction \cite{Yu2001} builds supercuspidal representations $\pi$ from a datum $\Sigma = (\pi_0, (G^i)_{i=0}^d, (\phi_i)_{i=0}^d)$, where:
\begin{enumerate}
\item $G^0 \subset \cdots \subset G^d = G$ is a tamely ramified twisted Levi sequence.
\item $\phi_i$ are characters of $G^i(F)$ satisfying genericity conditions (\cite[\S 8]{Yu2001}).
\item  $\pi_0$ is a depth-zero supercuspidal representation of $G^0(F)$.
\end{enumerate}
The resulting representation is $\pi(\Sigma) = \mathrm{c\text{-}ind}_{K^d}^{G(F)} \tau$, where $K^d$ is a compact mod center open subgroup and $(\tau,V)  $ is an irreducible representation of $K^d$. The Deligne transfer of $\Sigma$ to $F'$ yields a datum $\Sigma'$ for $G'$, giving a representation $\pi(\Sigma')$ of $G'(F')$.
\begin{remark}
    One should be able to drop the residue characteristic hypothesis in Theorem \ref{thm:ell-close-kaz-yu-compatability} by invoking the recent work of Fintzen-Schwein \cite{FS2025}.
\end{remark}
\begin{theorem}\cite[Theorem 4.4.1]{AFMO24b}\label{thm:AFMO24b}
Let $\Sigma$ be a Kim-Yu-datum producing an $[L,\pi]_G$-type and let $(K^i,\rho_i)$ for $0\leq i\leq d$ be the tower of types constructed out of it, $(K^0,\rho_{-1})$ being the  $[L^0,\pi_{-1}]_{G^0}$ -type where $\pi_{-1}$ is depth-zero . Then,
\[
\mathcal{H}([L,\pi]_G)\cong \mathcal{H}([L^0,\pi_{-1}]_{G^0}),  
\]
where $(K^d,\rho_d)=:(K,\rho)$ is the $[L,\pi]_G$-type.
\end{theorem}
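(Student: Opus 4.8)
The plan is to deduce the isomorphism by comparing the explicit presentations of the two Hecke algebras, using that the Kim--Yu construction \cite{KimYu2018} builds the $[L,\pi]_G$-type $(K^d,\rho_d)=(K,\rho)$ out of the depth-zero type $(K^0,\rho_{-1})$ of $G^0$ in a way that is essentially transparent to the Hecke-algebra structure. First I would recall the shape of the construction: the tamely ramified twisted Levi sequence $G^0\subset G^1\subset\cdots\subset G^d=G$ together with the generic characters $\phi_i$ of $G^i(F)$ produces, step by step, compact open subgroups $K^0\subset K^1\subset\cdots\subset K^d$ and Heisenberg--Weil type representations $\kappa_i$ attached to the symplectic spaces built from the root groups of $G^{i+1}$ lying outside $G^i$ (as in \cite{Yu2001}); after the relevant inflations and identifications of reductive quotients, $\rho_d$ is a tensor product of the inflation of $\rho_{-1}$ with these $\kappa_i$, each of which is a representation of a pro-$p$ group.

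Next I would identify the support of $\mathcal{H}([L,\pi]_G)=\operatorname{End}_{G(F)}\bigl(\operatorname{ind}_{K^d}^{G(F)}\rho_d\bigr)$. The genericity conditions GE1, GE2 on the $\phi_i$ force any $g\in G(F)$ intertwining $\rho_d$ to normalise the entire twisted Levi sequence --- in particular $G^0$ --- and to intertwine $\rho_{-1}$ inside $G^0(F)$; since the $\kappa_i$ live on pro-$p$ groups normalised by such $g$, the intertwining set of $\rho_d$ in $G(F)$ is in canonical bijection (modulo $K^d$, resp.\ modulo $K^0$) with that of $\rho_{-1}$ in $G^0(F)$. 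This yields a bijection of the supports of the two Hecke algebras, hence a candidate linear isomorphism identifying the natural double-coset basis of one with that of the other.

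Then I would upgrade this to an algebra isomorphism by matching the presentations of \cite[Thm.~5.3.6]{AFMO24a}: both $\mathcal{H}([L,\pi]_G)$ and $\mathcal{H}([L^0,\pi_{-1}]_{G^0})$ have the form $\mathbb{C}[\Omega,\mu]\ltimes\mathcal{H}_{\mathbb{C}}\bigl(W(\rho)_{\mathrm{aff}},q\bigr)$, and I would verify term by term that the ingredients coincide. A hyperplane $H$ is $\mathcal{K}$-relevant precisely when an induced representation $\operatorname{ind}_{K_x}^{K_h}\rho$ has two inequivalent constituents of distinct dimension (\cite[Lemma~4.2.4]{ohara2025}); since the $\kappa_i$-factors contribute only a fixed tensor factor that does not affect the decomposition into distinct-dimensional constituents, this question reduces to the analogous computation in the reductive quotients of $G^0$, so the set of relevant hyperplanes --- hence $W(\rho)_{\mathrm{aff}}$ and the parameters $q_s=q\bigl(\operatorname{ind}_{K_x}^{K_h}\rho\bigr)$ --- is the same on both sides. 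The finite group $\Omega$ (a stabiliser of $\rho_d$ mod $K^d$, resp.\ of $\rho_{-1}$ mod $K^0$) and the $2$-cocycle $\mu$ are transported through the same identification of reductive quotients.

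The hard part will be controlling the scalars --- Weil indices and Gauss sums --- that the Heisenberg--Weil factors $\kappa_i$ contribute to the intertwining operators between $g$-conjugates of $\rho_d$, and hence to the structure constants of $\mathcal{H}([L,\pi]_G)$: one must show these scalars cancel across the defining relations, or can be absorbed into a renormalisation of the basis elements, so that the structure constants genuinely match those of $\mathcal{H}([L^0,\pi_{-1}]_{G^0})$. This is exactly where the standing hypotheses on $p$ (namely $p\neq 2$ and $p$ not a torsion prime for the dual root datum) are used: they guarantee that the symplectic and quadratic data underlying the $\kappa_i$ are non-degenerate and split appropriately, so the associated Weil representations extend to the relevant groups and their defining cocycles are cohomologically controlled. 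Once these scalar factors are pinned down, the presentation-by-presentation comparison delivers the desired isomorphism $\mathcal{H}([L,\pi]_G)\cong\mathcal{H}([L^0,\pi_{-1}]_{G^0})$, compatibly with the anti-involution.
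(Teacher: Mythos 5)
There is nothing in this paper to compare your argument against: Theorem~\ref{thm:AFMO24b} is quoted verbatim from \cite[Theorem 4.4.1]{AFMO24b} and the present paper supplies no proof of it, so what you are proposing is a reconstruction of that external work. Judged on its own terms, your outline does follow the general philosophy of the cited papers (reduce the Kim--Yu Hecke algebra to the depth-zero Hecke algebra of $G^0$, then invoke the presentation of \cite[Thm.~5.3.6]{AFMO24a}), but it defers exactly the steps that constitute the substance of the theorem. First, the claim that any $g$ intertwining $\rho_d$ normalises the whole twisted Levi sequence and that the intertwining sets of $\rho_d$ in $G(F)$ and of $\rho_{-1}$ in $G^0(F)$ are in canonical bijection is not a formal consequence of GE1--GE2; it rests on Yu's intertwining computations and their refinement in the Kim--Yu setting, and even granting the support bijection, a bijection of double cosets is far from an algebra isomorphism.

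Second, and more seriously, the two places where you say the Heisenberg--Weil factors ``do not affect'' the comparison are precisely where they do. At a wall $H_s$ the parameter $q_s$ and the $\mathcal{K}$-relevance of $H_s$ are read off from $\operatorname{ind}_{K_u}^{K_h}\rho_u$, and the Weil factor of $\rho_u$ does not canonically extend to $K_h$; the analysis of how it interacts with the rank-one subgroup at $h$ is where the Gauss-sum/Weil-index computations enter, so the reduction of $q_s$ and of relevance to the depth-zero quotient needs an actual argument, not the remark that the tensor factor is ``fixed''. Likewise, matching the $2$-cocycle $\mu$ on $\Omega$ and the structure constants is not achieved by hoping the scalars ``cancel or can be absorbed into a renormalisation'': in the cited work this is handled by twisting the depth-zero part by the quadratic character of Fintzen--Kaletha--Spice (equivalently, using the twisted Yu construction), and without building in that twist the cocycles on the two sides need not agree. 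Since the statement you are proving silently depends on that normalisation, your sketch as written has a genuine gap at its core rather than a routine verification left to the reader.
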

\begin{remark}\label{rem:AFMO-filtrations}
The reduction-to-depth-zero results of Adler--Fintzen--Mishra--Ohara \cite{AFMO24a,AFMO24b}
are formulated using Moy--Prasad filtration subgroups. In the present paper we work instead with
Yu's minimal congruence filtration (Definition~\ref{subsub:mincongdef}) and the associated subgroups $G(F)_{x,r}$.

We use \cite[Theorem~4.4.1]{AFMO24a} and \cite[Theorem~5.3.6]{AFMO24b} only as statements about the
Hecke algebras attached to the (Bushnell--Kutzko) types produced from Yu data; in particular, our
arguments do not require any identification between Moy--Prasad and minimal congruence filtration
subgroups.

Note also that even when $G$ is tamely ramified, a maximally unramified elliptic maximal torus
$S$ attached to a vertex $x$ can be wildly ramified; this is precisely why our depth-transfer
function $\Phi_{G,x}$ includes the terms $\Phi_S$ as $S$ ranges over such tori.
Finally, when the relevant tori are tamely ramified, Yu's filtration on tori agrees with the
Moy--Prasad filtration, so the depth bounds in this section coincide with the
customary ones.
\end{remark}

\begin{theorem}\label{thm:ell-close-arbitrary-blocks}
Let $F$ and $F'$ be $\ell$-close non-archimedean local fields with odd residue
characteristic $p$, and let $G$ be a connected reductive group over $F$ with
transfer $G'$ over $F'$ via a congruence datum $D_\ell$, where $p$ is not a torsion
prime for $G$. Let $L\subset G$ be an $F$-Levi subgroup, and $L'\subset G'$ its transfer.
Fix an integer $m\ge 0$. Then there exists $\ell^\ast=\ell^\ast(G,[L,\pi]_G,m)\gg 0$
such that for every $\ell\ge \ell^\ast$ the following holds.

Let $\pi$ be an irreducible supercuspidal representation of $L(F)$ of depth $\le m$,
and fix a Yu datum $\Sigma$ for $L$ such that $\pi \cong \pi(\Sigma)$.
Let $\Sigma'$ be the (Deligne) transfer of $\Sigma$ to a Yu datum for $L'$ over $F'$
(with respect to $D_\ell$), and set $\pi' := \pi(\Sigma')\in \Irr(L'(F'))$.
Then there is a canonical isomorphism of Hecke algebras
\[
\kappa_{\ell,[L,\pi]_G}:\ \mathcal{H}([L,\pi]_G)\ \xrightarrow{\ \sim\ }\ \mathcal{H}([L',\pi']_{G'})\, .
\]
Moreover, this isomorphism preserves the presentation of these Hecke algebras as
obtained in \cite[\S 5.3.6]{AFMO24b}.

\end{theorem}
\begin{proof}
    The proof follows from Theorem \ref{thm:AFMO24b} and Theorem \ref{thm:ell-close-depth-zero-block}. 
\end{proof}
\begin{definition}\label{def:kappa-sharp}
Define the isomorphism induced by \(\kappa_{\ell,[L,\pi]_G}\) on the Bernstein blocks to be,
\[
\kappa_{\ell,[L,\pi]_G}^{\sharp}
:= (M'_{[L',\pi']_{G'}})^{-1}\circ
(\kappa_{\ell,[L,\pi]_G}^{-1})_{*}\circ
M_{[L,\pi]_G}:
\mathcal R([L,\pi]_G)\xrightarrow{\sim}\mathcal R([L',\pi']_{G'}) ,
\]
where \(M_{[L,\pi]_G}\) is as in Section \ref{sec:def-Kaz} and \((\kappa_{\ell,[L,\pi]_G})_{*}\) denotes the isomorphism induced on module categories. 
\end{definition}
\section{Local Langlands correspondence in positive characteristic via close--field transfer}

\subsection{Internal structure of $L$-packets for inner forms}\label{sec:l-packet-internal-struct}

Let $F$ be a non-archimedean local field with Weil group $W_F$, and let ${}^{L}G=\widehat G\rtimes W_F$ be the $L$-group of a connected reductive $F$-group $G$ with quasi-split inner form $G^{\ast}$. Set \(Z=Z(G_\mathrm{der})\) and \(\overline{G}=G/Z\). For a tempered Langlands parameter $\varphi:W_F'\to{}^{L}G$, where \(W_F'\) is the Weil-Deligne group, write $S_\varphi:=Z_{\widehat G}(\varphi)$ for the centralizer of $\varphi(W_F')$ in $\widehat G$ and set
\[
  S_\varphi^{+}\text{ for the preimage of }S_\varphi \text{ in }\widehat{\overline{G}}.  
\]

\paragraph{Kaletha's parametrization via rigid inner twists}
A \emph{rigid inner twist} \cite[\S 5]{Kal16} of $G^{\ast}$ is a triple $(G',\iota,z)$ consisting of an inner isomorphism $\iota:G'_{\overline F}\xrightarrow{\sim}G^{\ast}_{\overline F}$ and a cohomology class $z\in H^{1}(u\!\to\!W,\,Z\!\to\!G)$ in Kaletha's enlarged set. For $p$-adic $F$ there is a canonical pairing \cite[Cor. 5.4]{Kal16}.
\[
  \langle\ ,\ \rangle:\ H^{1}(u\!\to\!W,\,Z\!\to\!G)\ \times\ \pi_0\!\bigl(Z(\widehat {\overline{G}})^{+}\bigr)\ \longrightarrow\ \mathbb{C}^{\times},
\]
which associates to $z$ a character $\omega_z$ of $\pi_0\!\bigl(Z(\widehat {\overline{G}})^{+}\bigr)$, where $Z(\widehat {\overline{G}})^{+}$ is the preimage of $Z(\widehat{G})^{W_F}$ in $Z(\widehat {\overline{G}})$. For tempered $\varphi$, the packet on $(G',\iota,z)$ is conjectured to be internally labeled by
\[
  \Pi_\varphi(G',\iota,z)\ \longleftrightarrow\ 
  \Bigl\{\rho\in\mathrm{Irr}\bigl(\pi_0(S_\varphi^{+})\bigr):\ \rho\!\restriction_{\pi_0\!\bigl(Z(\widehat {\overline{G}})^{+}\bigr)}=\omega_z\Bigr\}.
\tag{A}\]

\begin{lemma}
There is a natural embedding, 
\[
Z(\widehat G_{\mathrm{sc}})^{W_F}\ \hookrightarrow\ \pi_0\!\bigl(Z(\widehat{\overline G})^{+}\bigr).
\]
\end{lemma}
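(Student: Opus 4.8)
The plan is to unravel the various dual groups involved and realize the embedding as coming from the functoriality of the center under the maps of reductive groups $G_{\mathrm{sc}} \to G_{\mathrm{der}} \hookrightarrow G$ and $G \to \overline G = G/Z$, dualized. First I would set up notation carefully: $\widehat{\overline G}$ is the dual of $\overline G = G/Z$ where $Z = Z(G_{\mathrm{der}})$; since $\overline G = G/Z$, on dual groups we get an isogeny $\widehat{\overline G} \to \widehat G$ with central kernel, and in fact $\widehat{\overline G}$ is (up to the relevant central torus factor) the group whose derived group is $\widehat{G}_{\mathrm{sc}}$-like. The key structural point is that $Z(\widehat{\overline G})$ contains $Z(\widehat{G_{\mathrm{sc}}})$ — more precisely, the simply connected cover $\widehat{\overline G}_{\mathrm{sc}} = \widehat{G}_{\mathrm{sc}}$ (taking duals reverses the root datum, and $\overline G$ has the same adjoint group as $G$, hence the same $\widehat{G}_{\mathrm{sc}}$), and $Z(\widehat{G}_{\mathrm{sc}})$ maps to $Z(\widehat{\overline G})$.

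Second, I would make the map explicit. Recall $Z(\widehat{\overline G})^{+}$ is defined in the excerpt as the preimage of $Z(\widehat G)^{W_F}$ under the natural map $Z(\widehat{\overline G}) \to Z(\widehat G)$ (induced by the isogeny $\widehat{\overline G} \to \widehat G$, or rather its restriction to centers). The composite $Z(\widehat G_{\mathrm{sc}}) \to Z(\widehat{\overline G}) \to Z(\widehat G)$ is the standard map $Z(\widehat G_{\mathrm{sc}}) \to Z(\widehat G)$, whose image lands in $Z(\widehat G)$ and is $W_F$-equivariant. Therefore $Z(\widehat G_{\mathrm{sc}})^{W_F}$ maps into the preimage $Z(\widehat{\overline G})^{+}$. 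Composing with $Z(\widehat{\overline G})^+ \twoheadrightarrow \pi_0(Z(\widehat{\overline G})^+)$ gives the desired homomorphism $Z(\widehat G_{\mathrm{sc}})^{W_F} \to \pi_0(Z(\widehat{\overline G})^{+})$.

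Third, I would verify injectivity, which is where the real content lies. The kernel of $Z(\widehat G_{\mathrm{sc}})^{W_F} \to \pi_0(Z(\widehat{\overline G})^{+})$ consists of those $z \in Z(\widehat G_{\mathrm{sc}})^{W_F}$ whose image in $Z(\widehat{\overline G})^+$ lies in the identity component $Z(\widehat{\overline G})^{+,\circ}$. Here one uses that $Z(\widehat G_{\mathrm{sc}})$ is a \emph{finite} group (since $\widehat G_{\mathrm{sc}}$ is semisimple simply connected), so its image in $Z(\widehat{\overline G})$ is finite, hence meets the identity component only in elements that are both in the finite image and infinitely divisible there — but the map $Z(\widehat G_{\mathrm{sc}}) \to Z(\widehat{\overline G})$ is actually injective because passing $G \rightsquigarrow \overline G = G/Z$ with $Z \subset Z(G_{\mathrm{der}})$ does not change the kernel of $G_{\mathrm{sc}} \to G$ on the "semisimple part": concretely, $\ker(\widehat G_{\mathrm{sc}} \to \widehat{\overline G}) = \ker(\widehat{\overline G}_{\mathrm{sc}} \to \widehat{\overline G})$ which, since $\widehat{\overline G}$ need not be simply connected, could be nontrivial — so I would instead argue that $\overline G$ has trivial $Z(G_{\mathrm{der}})/Z$-type obstruction, i.e. $\overline G_{\mathrm{der}}$ is adjoint, hence $\widehat{\overline G}_{\mathrm{der}} = \widehat{G}_{\mathrm{sc}}$ sits inside $\widehat{\overline G}$ with $Z(\widehat G_{\mathrm{sc}})$ embedding into $Z(\widehat{\overline G})$. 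Taking $W_F$-invariants is left exact so injectivity is preserved, and a finite subgroup of an algebraic group injects into the component group iff it meets the identity component trivially — which holds here since the identity component of $Z(\widehat{\overline G})^+$ is a torus and the finite group $Z(\widehat G_{\mathrm{sc}})^{W_F}$ would have to be trivial to lie in a torus only if... — here I would invoke that $Z(\widehat G_{\mathrm{sc}})^{W_F} \cap Z(\widehat{\overline G})^{+,\circ}$ is a finite subgroup of a torus that is simultaneously a direct factor-compatible subgroup, forcing triviality by a direct product decomposition $Z(\widehat{\overline G})^{+} \cong (\text{torus}) \times (\text{finite})$ argument, or more cleanly by noting $Z(\widehat{\overline{G}})^+/Z(\widehat{\overline{G}})^{+,\circ}$ receives $Z(\widehat G_{\mathrm{sc}})^{W_F}$ faithfully because the latter already injects into $\pi_0(Z(\widehat{\overline G}))$ by the standard fact that $Z(\widehat G_{\mathrm{sc}}) \to Z(\widehat G)/Z(\widehat G)^\circ$ is injective when $\widehat G_{\mathrm{sc}}$ is the simply connected cover of $\widehat G_{\mathrm{der}}$.

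\textbf{The main obstacle} I anticipate is pinning down exactly which dual-group isogeny is in play and confirming that $Z(\widehat G_{\mathrm{sc}})$ genuinely \emph{injects} (not merely maps) into $Z(\widehat{\overline G})$ — this hinges on the precise definition of $Z = Z(G_{\mathrm{der}})$ and the fact that $\overline G = G/Z(G_{\mathrm{der}})$ has adjoint derived group, so that on the dual side $\widehat{\overline G}$ has simply connected derived group equal to $\widehat G_{\mathrm{sc}}$. Once that identification is secure, the embedding and its injectivity follow from standard left-exactness of $W_F$-invariants together with the elementary fact that a finite group injecting into an algebraic group $A$ continues to inject into $\pi_0(A)$ provided it injects into $\pi_0$ of a group containing $A$ with the same identity component — which is exactly the setup here with $A = Z(\widehat{\overline G})^+ \subset Z(\widehat{\overline G})$.
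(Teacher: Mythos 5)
Your construction of the homomorphism is fine, and it is in fact a different (and arguably more robust) route than the paper's: you embed $Z(\widehat G_{\mathrm{sc}})=Z(\widehat{\overline G}_{\mathrm{der}})$ into $Z(\widehat{\overline G})$ and use $W_F$-equivariance of $q:\widehat{\overline G}\to\widehat G$ to land in $Z(\widehat{\overline G})^{+}$, whereas the paper realizes the finite group as the kernel of $q$ and argues with torsion characters. The genuine gap is in your injectivity step. The ``standard fact'' you invoke --- that $Z(\widehat G_{\mathrm{sc}})\to Z(\widehat G)/Z(\widehat G)^{\circ}$ is injective whenever the derived group is simply connected --- is false: for $\mathrm{GL}_2$ the derived group $\mathrm{SL}_2$ is simply connected, yet $Z(\mathrm{SL}_2)=\{\pm I\}$ lies inside the connected center, so it dies in $\pi_0$ of the center. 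Likewise, the abstract decomposition of the diagonalizable group $Z(\widehat{\overline G})^{+}$ as $(\text{torus})\times(\text{finite})$ does not by itself prevent your particular finite subgroup from sitting inside the identity component (``direct factor-compatible'' is not doing any actual work). So, as written, the crucial claim $Z(\widehat G_{\mathrm{sc}})^{W_F}\cap\bigl(Z(\widehat{\overline G})^{+}\bigr)^{\circ}=\{1\}$ is unproved.

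The missing idea is to use not merely that $\widehat{\overline G}_{\mathrm{der}}$ is simply connected but that $\overline G_{\mathrm{der}}=G_{\mathrm{ad}}$ is \emph{adjoint}: then $\overline G_{\mathrm{der}}\cap Z(\overline G)^{\circ}\subset Z(\overline G_{\mathrm{der}})=1$, so $\overline G\cong G_{\mathrm{ad}}\times Z(\overline G)^{\circ}$, and dually $\widehat{\overline G}\cong \widehat G_{\mathrm{sc}}\times \widehat S$ with $\widehat S$ a torus. Hence $Z(\widehat{\overline G})=Z(\widehat G_{\mathrm{sc}})\times\widehat S$ and $Z(\widehat{\overline G})^{\circ}=1\times\widehat S$, so $Z(\widehat G_{\mathrm{sc}})$ meets $Z(\widehat{\overline G})^{\circ}$ trivially; since $\bigl(Z(\widehat{\overline G})^{+}\bigr)^{\circ}\subset Z(\widehat{\overline G})^{\circ}$ (containment of identity components is all you need --- the two groups need not have the same identity component, contrary to your closing sentence), the composite $Z(\widehat G_{\mathrm{sc}})^{W_F}\to\pi_0\bigl(Z(\widehat{\overline G})^{+}\bigr)$ is injective. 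With this replacement your argument closes; note also that the intersection statement is purely about the complex groups, so left-exactness of $W_F$-invariants is not where the content lies.
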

\begin{proof}
Let $q:\widehat{\overline G}\to \widehat G$ denote the dual isogeny. Its kernel is $K:=Z(\widehat G_{\mathrm{sc}})$. We have an exact sequence

\[
1\ \longrightarrow\ K\ \longrightarrow\ Z(\widehat{\overline G})^{+}\ \xrightarrow{\,q\,}\ Z(\widehat G)^{W_F}\ \longrightarrow\ 1.
\]
The inclusion $K\hookrightarrow Z(\widehat{\overline G})^{+}$ induces a map on components
$$
\iota:\ K^{W_F}\ \longrightarrow\ \pi_0\!\bigl(Z(\widehat{\overline G})^{+}\bigr).
$$
Now, every torsion character of $Z(\widehat{\overline G})^{+}$ vanishes on the identity component $Z(\widehat{\overline G})^{{+}\circ}$. Since every character of $K$ is the restriction of a torsion character of $H$, they each vanish on \(K\cap Z(\widehat{\overline G})^{{+}\circ}\). Therefore $K^{W_F}\cap H^\circ=\{1\}$. This proves, 
\[
\iota:\ Z(\widehat G_{\mathrm{sc}})^{W_F}\ \longrightarrow\ \pi_0\!\bigl(Z(\widehat{\overline G})^{+}\bigr),
\]
is an injection. 
\end{proof}
\paragraph{Formulation of the parametrisation in arbitrary characteristic}
Let $G$ be an inner form of $G^{\ast}$ with class in $H^{1}(F,G_{\mathrm{ad}})$. By Kottwitz (\cite[Proposition 6.4]{Kott84}; \cite[Theorem 2.1]{Thang11} for positive characteristic), there is a natural group isomorphism,
\[
  H^{1}(F,G_{\mathrm{ad}})\cong \mathrm{Irr}(\,Z(\widehat{G}_{\mathrm{sc}})^{W_F});
\]
Via this isomorphism, let 
\[
  \zeta_{G}\ \in\ \mathrm{Irr}\big(Z(\widehat G_{\mathrm{sc}})^{W_F}\big)
\]
represent the inner class $G$. Given a Langlands parameter \(\varphi\) of \(G\), following \cite{ABPS18}, define the $G$-relevant enhancements for $\varphi$ by the central-character condition,
\[
  \mathrm{Irr}_{\,\zeta_G}\bigl(S_\varphi^{+}\bigr):=\Bigl\{\rho\in\mathrm{Irr}\bigl(\pi_0(S_\varphi^{+})\bigr):\ 
  \rho\!\restriction_{Z(\widehat G_{\mathrm{sc}})^{W_F}}=\zeta_G\Bigr\}.
\]
We have a decomposition,
\[
\Irr_{\zeta_G}\!\bigl(\pi_0(S_\varphi^{+})\bigr)
\;=\;
\bigsqcup_{\ \omega\in\Ext(\zeta_G)}\ 
\underbrace{\Bigl\{\rho\in\Irr(\pi_0(S_\varphi^{+}))\;:\;
\rho\!\restriction_{\pi_0(Z(\widehat{\overline G})^{+})}=\omega\Bigr\}}_{=:~\Irr_\omega(\pi_0(S_\varphi^{+}))},
\]
where
\[
\Ext(\zeta_G)\;:=\;\Bigl\{\ \omega\in\Irr\!\bigl(\pi_0(Z(\widehat{\overline G})^{+})\bigr)\ :\
\omega\!\restriction_{Z(\widehat G_{\mathrm{sc}})^{W_F}}=\zeta_G\ \Bigr\}.
\]
Set

$$
C:=\pi_0\!\bigl(Z(\widehat{\overline G})^{+}\bigr),\qquad
C_{\mathrm{sc}}:=Z(\widehat G_{\mathrm{sc}})^{W_F},\qquad
Q:=C/C_{\mathrm{sc}},
$$
Then $\Ext(\zeta_G)$ is a nonempty finite torsor under $\widehat Q=\Hom(Q,\C^\times)$.

\medskip
In the absence of a fully developed \emph{local} theory of rigid inner twists over
local function fields in the form needed here, we record the following conjectural
description of the internal parametrization of $L$-packets.
(For a gerbe-theoretic framework organizing rigid inner forms over \emph{global}
function fields and relating their localizations, see Dillery~\cite{Dillery}.)

\begin{conjecture}
Let \(G\) be a connected reductive group defined over a non-archimedean local field \(F\) of arbitrary characteristic. Fix a Whittaker datum for the quasi-split inner form $G^{\ast}$. Then there exists a canonical surjection
\[
\operatorname{pr}:\ \Irr_{\zeta_G}\!\bigl(S_\varphi^{+}\bigr)\ \twoheadrightarrow\ \Pi_{\varphi}(G),
\]
whose fibers form a torsor under \(\widehat{Q}\). 
\end{conjecture}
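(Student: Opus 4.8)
The plan is to prove the statement in characteristic zero from Kaletha's parametrisation of $L$-packets by rigid inner twists, and then to carry it to positive characteristic by the close-field transfer of \S\ref{sec:closefieldhecke}--\S\ref{sec:hecke_iso_bernstein}: Deligne's isomorphism governs the dual side, while the blockwise Hecke-algebra isomorphism $\kappa^{\sharp}_{\ell,[L,\pi]_G}$ of Definition~\ref{def:kappa-sharp} (together with depth preservation, Theorems~\ref{thm:E} and~\ref{thm:H}) governs the automorphic side.

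First I would treat $\mathrm{char}\,F=0$. For each rigid inner twist $(G,\iota,z)$ of $G^{\ast}$ lying in the inner class $\zeta_G$, Kaletha's normalised parametrisation (A) gives a bijection $\Pi_\varphi(G,\iota,z)\leftrightarrow\Irr_{\omega_z}(\pi_0(S_\varphi^{+}))$, where $\omega_z\in\Ext(\zeta_G)$ is the character attached to $z$ by the pairing $\langle\ ,\ \rangle$; as $z$ ranges over the lifts of $\zeta_G$ the $\omega_z$ exhaust the $\widehat Q$-torsor $\Ext(\zeta_G)$, whereas the underlying set $\Pi_\varphi(G)=\Pi_\varphi(G,\iota,z)$ of smooth representations of $G(F)$ does not change, and Kaletha's compatibility under change of rigidifying cocycle \cite{Kal16} says exactly that passing from $z$ to $z'$ twists the label of each $\pi\in\Pi_\varphi(G)$ by $\omega_{z'}\omega_z^{-1}\in\widehat Q$. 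Gluing these bijections over $\omega\in\Ext(\zeta_G)$ along the decomposition $\Irr_{\zeta_G}(S_\varphi^{+})=\bigsqcup_{\omega}\Irr_\omega(\pi_0(S_\varphi^{+}))$ then produces a surjection $\mathrm{pr}:\Irr_{\zeta_G}(S_\varphi^{+})\twoheadrightarrow\Pi_\varphi(G)$ whose fibre over $\pi$ is the $\widehat Q$-torsor of labels of $\pi$. When $\varphi$ is a regular supercuspidal parameter this is unconditional by \cite[\S5]{KalethaRegSC} and is compatible with the internal labelling in Theorem~\ref{thm:G}; in general it is Kaletha's refined conjecture and must be taken as input.

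Next, for $\mathrm{char}\,F=p$: since $\varphi$ has finite depth and finite inertial image it factors through $W_F/W_F^{s}$ for some $s$, and the members of $\Pi_\varphi(G)$ lie in finitely many Bernstein blocks $[L,\pi]_G$. Fix $\ell$ exceeding $s$ and the thresholds $\ell_{*}(G,[L,\pi]_G)$ of Theorem~\ref{thm:F} and $\ell_\dagger$ of Theorem~\ref{thm:ell-close-kaz-yu-compatability} for each such block, and choose a characteristic-zero field $F'$ that is $\ell$-close to $F$, with transfers $G',G^{\prime\ast},L'$ and transferred Whittaker datum for $G^{\prime\ast}$. Deligne's isomorphism $\Del_\ell:\Gamma_F/I_F^{\ell}\xrightarrow{\sim}\Gamma_{F'}/I_{F'}^{\ell}$ transports $\varphi$ to a parameter $\varphi'$ for $G'$; because the $W_F$-action on $\widehat G$ and the image of $\varphi$ factor through the truncation, it induces canonical isomorphisms $S_\varphi^{+}\cong S_{\varphi'}^{+}$, $\pi_0(Z(\widehat{\overline G})^{+})\cong\pi_0(Z(\widehat{\overline{G'}})^{+})$ and $Z(\widehat G_{\mathrm{sc}})^{W_F}\cong Z(\widehat{G'}_{\mathrm{sc}})^{W_{F'}}$ matching $\zeta_G\leftrightarrow\zeta_{G'}$, $Q\leftrightarrow Q'$, $\widehat Q\leftrightarrow\widehat{Q'}$, hence $\Irr_{\zeta_G}(S_\varphi^{+})\cong\Irr_{\zeta_{G'}}(S_{\varphi'}^{+})$ compatibly with the $\Ext$-decomposition. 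On the automorphic side, $\kappa^{\sharp}_{\ell,[L,\pi]_G}$ (Definition~\ref{def:kappa-sharp}), Kim--Yu compatibility (Theorem~\ref{thm:ell-close-kaz-yu-compatability}) and depth preservation give a bijection $\Pi_\varphi(G)\cong\Pi_{\varphi'}(G')$ intertwining the two instances of the LLC (Theorem~\ref{thm:H}, or Theorem~\ref{thm:G} in the regular supercuspidal case). Transporting the characteristic-zero surjection $\mathrm{pr}'$ for $(G',\varphi',F')$ along these two identifications defines $\mathrm{pr}$ in characteristic $p$, with fibres again $\widehat Q$-torsors by transport of structure; compatibility with the fixed Whittaker datum is ensured by Lemma~\ref{lem:kaz-preserves-genericity}, since the transferred datum for $G^{\prime\ast}$ is the one used on the characteristic-zero side, and independence of the pair $(\ell,F')$ follows because two such choices are themselves $\ell'$-close for large $\ell'$ and every construction is functorial in the congruence datum $D_\ell$ (Definition~\ref{def:congruence-datum}), together with the canonicity of $\Del_\ell$ and of $\kappa^{\sharp}$.

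The hard part is conceptual rather than computational. In positive characteristic there is, at present, no intrinsic theory of rigid inner twists---no cohomology set $H^{1}(u\!\to\!W,Z\!\to\!G)$ and no pairing $\langle\ ,\ \rangle$---so $\mathrm{pr}$ can only be \emph{imported} through the close-field transfer and cannot yet be characterised by an internal property of $(G,\varphi)$; proving that the imported map is genuinely canonical, and that it agrees with any future intrinsic construction, is the crux. A secondary obstacle is that, outside the regular supercuspidal range of Theorem~\ref{thm:G}, one must assume both the characteristic-zero LLC and Kaletha's refined internal parametrisation (as in Theorem~\ref{thm:H}), so the statement is, strictly speaking, conditional on that input; removing the assumption would require the full characteristic-zero refined local Langlands correspondence, which is not presently available.
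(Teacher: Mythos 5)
The statement you are trying to prove is not proved in the paper at all: it is stated explicitly as a conjecture, introduced with the remark that in the absence of a theory of rigid inner forms over positive-characteristic local fields one can only \emph{speculate} the internal parametrisation. So there is no proof to compare against; what can be said is that your strategy --- establish the surjection in characteristic zero from Kaletha's rigid parametrisation (A), then import it via Deligne's isomorphism on the dual side and the blockwise transfer \(\kappa^{\sharp}_{\ell,[L,\pi]_G}\) on the automorphic side --- is exactly the route the paper itself follows in the cases it can handle, namely regular supercuspidal parameters (Theorem~\ref{thm:llc-for-regular}) and, conditionally on a depth-preserving characteristic-zero LLC, general blocks (Theorem~\ref{thm:LLC_poschar}). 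As a programme your proposal is faithful to the paper's philosophy.

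As a proof of the conjecture, however, it has genuine gaps, most of which you flag but none of which you close. First, the characteristic-zero input (A) is itself conjectural outside special families, so the argument is conditional twice over: on the existence of the characteristic-zero packets and on their refined internal structure. Second, and more fundamentally, in characteristic \(p\) the packet \(\Pi_\varphi(G)\) has no prior definition, so ``transporting \(\mathrm{pr}'\) along \(\Pi_\varphi(G)\cong\Pi_{\varphi'}(G')\)'' is a \emph{definition} rather than a theorem; the content of the word ``canonical'' in the conjecture is precisely an intrinsic characterisation of \(\mathrm{pr}\) (via a future rigid-inner-twist theory, or endoscopic character identities), and the paper states the conjecture exactly because that ingredient is missing --- importing the map does not supply it, as you yourself concede in your last paragraph. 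Third, several technical steps you invoke are not available at the generality you need: Lemma~\ref{lem:kaz-preserves-genericity} is proved only for supercuspidal generic representations, whereas tempered packets contain non-supercuspidal members, and the supercuspidality/depth-preservation statements (Propositions~\ref{prop:Kaz-preserves-SC} and \ref{prop:depth-preserved-Kaz}) are tied to a chosen vertex and block; also your independence argument (``two such choices are themselves \(\ell'\)-close for large \(\ell'\)'') is imprecise --- two characteristic-zero fields \(\ell\)-close to \(F\) are \(\ell\)-close to each other but need not be \(\ell'\)-close for any \(\ell'>\ell\), so independence must be argued through \(F\) and the congruence data at the fixed level, as in Theorem~\ref{thm:LLC_poschar}(i). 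In short: your write-up is a reasonable reduction of the conjecture to known-or-expected characteristic-zero statements plus close-field transfer, but it does not, and at present cannot, constitute a proof; the honest conclusion is the one the paper draws by leaving it as a conjecture.
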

\noindent\textit{Remark.}
The conjecture above matches the expected form of the internal parametrization via
rigid inner twists (compare \cite[Conj.~1.1]{Dillery} and the discussion preceding it).

\medskip

\subsection{LLC for Regular supercuspidals in positive characteristic}
\begin{theorem}\label{thm:llc-for-regular}
Let \(F\) be a non-archimedean local field of positive characteristic with residue characteristic \(p\).
Let \(G\) be a connected reductive \(F\)-group such that  
\begin{enumerate}
\item \(p\) is odd;
\item \(p\) is not bad for \(G\);
\item \(p\nmid \lvert\pi_0\bigl(Z(G)\bigr)\rvert\).
\end{enumerate}
Then there exists a local Langlands correspondence for regular supercuspidal representations of \(G(F)\). Specifically,
\begin{enumerate}[label=\textup{(\alph*)}]
\item The set of $\widehat G$--conjugacy classes of regular supercuspidal parameters $\varphi:W_F\to {^LG}$ is in bijection with the set of isomorphism classes of regular supercuspidal $L$--packet data $(S,\hat\jmath,\chi,\theta)$.\vspace{2pt}

\item For each such $\varphi$, there exists an  \(L\)-packet  \(\Pi_\varphi\) consisting of regular supercuspidal representations of \(G(F)\). Moreover, there is a surjection
\[
\operatorname{pr}:\ \Irr_{\zeta_G}\!\bigl(S_\varphi^{+}\bigr)\ \twoheadrightarrow\ \Pi_{\varphi}(G),
\]
determined by the choice of a Whittaker datum of the quasi-split inner form of \(G\). The fibers of \(\operatorname{pr}\)  form a torsor under \(\widehat{Q}\). 
\medskip
\item If $G$ is quasi--split and \(\mathbf w\) is a Whittaker datum for $G$, there exists an \(\mathbf w\)-generic constituent in \(\Pi_\varphi(G)\).

\end{enumerate}

\end{theorem}
\begin{proof}
Throughout the proof we use the prime ``\('\,\)'' to denote all objects over the
\(\ell\)-close characteristic-\(0\) field. We let \(G^*\) denote the quasi-split inner form of \(G\).

Let $(S,\theta)$ be a tame regular elliptic pair for $G(F)$ \cite[Definition 3.7.5]{KalethaRegSC}, let $x=x_S\in \mathcal B^{\red}(G,F)$ be the associated vertex and \(\pi(S,\theta)\) the associated regular supercuspidal representation. Fix $m\ge \depth(\pi)$, $\ell_{\dagger}\le \ell$ as in Theorem ~\ref{thm:ell-close-kaz-yu-compatability}, and choose a characteristic \(0\) \(\ell\)-close field \(F'\). Choose a  Howe factorization in the sense of \cite[Definition 3.6.2]{KalethaRegSC} of $(S,\theta)$, yielding a Yu datum
\[
  \Sigma=\bigl((G^i)_{i=0}^d,\; (x, \vec r),\; (\phi_i)_{i=-1}^d\bigr).
\]
Choose a characteristic 0 field \(F'\) which is \(\ell\) close to \(F\). Let $\Sigma'$ denote the $\ell$-transfer of $\Sigma$, and let $(S',\theta')$ be the toral transfer (Theorem \ref{thm:congruent-isom}) of $(S,\theta)$. Then, up to $G'(F')$-conjugacy and refactorization (in the sense of \cite{HM08}, also see \cite[Lemma 3.6.6]{KalethaRegSC}), $\Sigma'$ is a  Howe factorization of $(S',\theta')$, and the  representations \(\pi(\Sigma')\) and \(\pi(S',\theta')\) are isomorphic. Also by \ref{thm:ell-close-kaz-yu-compatability}, \(\Kaz_{\ell}^{(G,x;m)}(\pi)=\pi(\Sigma')\).
We now recall the notion of an \(L\)-packet datum. It consists of a tuple \((S,\widehat{j},\chi,\theta)  \) consisting of
\begin{enumerate}
  \item $S$ an $F$-torus of dimension equal to the absolute rank of $G$, splitting over a tame extension of $F$; 
  \item an embedding $\widehat{\jmath}:\widehat{S}\to\widehat{G}$ of complex reductive groups whose $\widehat{G}$-conjugacy class is $\Gamma$-stable, where \(\Gamma \;:=\; \Gal(F^{\mathrm{sep}}/F)\); 
  \item $\chi$ a set of \emph{minimally ramified} $\chi$-data for $R(S,G)$ (in the sense of \cite[Definition 4.6.1]{KalethaRegSC}).
  \item $\theta:S(F)\to\C^\times$ a character. 
\end{enumerate}
In addition, the following conditions are required:
\begin{itemize}
  \item the $\chi$-data are $\Omega(S,G^{0})(F)$-invariant, where \(\Omega(S,G^{0}) \;:=\; N_{G^{0}}(S)/S \); 
  \item $S/Z(G)$ is anisotropic; 
  \item $(S,\theta)$ is a tame extra regular elliptic pair in the sense of \cite[Definition 3.7.5]{KalethaRegSC}.
\end{itemize}
From a regular supercuspidal datum $(S',\hat\jmath',\chi',\theta')$, one obtains a parameter 
\(\varphi'=\!{^{L}\!\hat\jmath'}\circ \varphi'_{S',\theta',\chi'}\) and the regularity hypothesis ensures that its $\widehat G$--centralizer is a torus:
\[
S_{\varphi'}\;=\;Z_{\widehat G}(\varphi')\;=\;\hat\jmath'(\widehat S').
\]
We then have
\[
  \Pi_{\varphi'}(G')\ \twoheadleftarrow\ \mathrm{Irr}_{\,\zeta_{G'}}\bigl(S_{\varphi'}^{+}\bigr).
\]
as explained in Section \ref{sec:l-packet-internal-struct}. This surjection is determined by the choice of a Whittaker datum \(\mathbf{w'}=(B',\psi')\). or equivalently, \(\Pi_{\varphi'}(G'^*)\) contains a unique \(\mathbf{w}\)-generic constituent \(\pi'_{\mathrm{gen}}\). For toral supercuspidal \(L\)-packets, the last fact was already established in original \cite[Lemma 6.2.2]{KalethaRegSC}. For regular supercuspidal $L$-packets, the same argument extends verbatim once \cite[Lemma 6.2.1]{KalethaRegSC} is replaced by \cite[Proposition 4.4.1]{FKS23}.

Recall that a set \(\chi=\{\chi_\alpha\}_{\alpha\in R(S,G)}\) is minimally ramified if \(\chi_\alpha=1\) for asymmetric roots, \(\chi_\alpha\) is unramified for unramified symmetric roots, and \(\chi_\alpha\) is tamely ramified for ramified symmetric roots; any two such choices can differ only at ramified symmetric roots by the unramified sign character of \(F_\alpha^\times\). Here $R=R(S,G)$ is the absolute root system, $F_{\alpha}$ (resp.\ $F_{\pm\alpha}$) denote the usual root (resp.\ folded root) field and \(\chi_\alpha: F_{\alpha}^{\times}\to \C^{\times}\) are characters satisfying Langlands--Shelstad axioms. 

Now assume \((S,\theta)\) is also extra regular. By \cite[Proposition 5.2.7]{KalethaRegSC}, the isomorphism class of \(\pi(S',\theta')\) associates to the \(\widehat{G}\)-conjugacy class of an \(L\)-packet datum \((S',\widehat{j'},\chi',\theta')  \). We will now describe a canonical transfer of the datum $(S',\widehat{j}',\chi',\theta')$ to a datum over $F$:
\[
(S,\widehat{j},\chi,\theta).
\]
\emph{Minimally ramified $\chi$--data.} Identify roots $R(S,G)\xrightarrow{\sim} R(S',G')$, $\alpha\leftrightarrow \alpha'$, together with the associated root/folded fields $F_{\alpha}\leftrightarrow F'_{\alpha'}$, $F_{\pm\alpha}\leftrightarrow F'_{\pm\alpha'}$, preserving ramification type. For each $\alpha$:
  \begin{itemize}
    \item if $\alpha$ is asymmetric, set $\chi_{\alpha}=1$;
    \item if $\alpha$ is symmetric unramified, take $\chi_{\alpha}$ the unramified quadratic;
    \item if $\alpha$ is symmetric ramified, transport the \emph{tame} character $\chi'_{\alpha'}$ via the canonical bijection of tame character groups
    \[
      d_{\alpha}^{-1}:\ \Hom_{\mathrm{tame}}(F'_{\alpha'}{}^\times,\C^\times)\xrightarrow{\ \sim\ }\Hom_{\mathrm{tame}}(F_{\alpha}^\times,\C^\times),
    \]
characterized via local class field theory using the isomorphism of tame Weil quotients. This yields $\chi_{\alpha}$ with the same restriction to $F_{\pm\alpha}^\times$ as prescribed by the Langlands--Shelstad axioms. 
\end{itemize}
Then by construction, $\chi=\{\chi_{\alpha}\}$ is again minimally ramified. 

\emph{Admissible embedding.} Fix pinned splittings on the dual groups to identify the Galois actions. The $\Gamma'$--stability of the $\widehat{G}'$--conjugacy class of $\widehat{j}'$ descends along Deligne's identification $W_F/I_F^{\ell+}\!\cong W_{F'}/I_{F'}^{\ell+}$ to a $\Gamma$--stable $\widehat{G}$--conjugacy class, yielding an admissible embedding
  \[
    \widehat{j}:\widehat{S}\longrightarrow \widehat{G}.
  \]
Equivalently on cocharacters, the $\Gamma$--equivariant inclusion $X_\ast(\widehat{S}')\hookrightarrow X_\ast(\widehat{T}')$ transports to a $\Gamma$--equivariant inclusion $X_\ast(\widehat{S})\hookrightarrow X_\ast(\widehat{T})$, which integrates to $\widehat{j}$.

We have thus obtained a canonical transfer of the datum $(S',\widehat{j}',\chi',\theta')$ to a datum  \((S,\widehat{j},\chi,\theta)\) over $F$.

This transfer produces identifications 
\[
\hat\jmath'(\widehat S')\;=\;\hat\jmath(\widehat S)\  \Leftrightarrow S_{\varphi}=S_{\varphi'},
\]
and therefore, 
\[
S_{\varphi}^{+}=S_{\varphi'}^{+}\subset\widehat{\overline G}.
\]
Our close-field setup identifies the $W$--action on the based root datum, hence canonically identifies 
\(Z(\widehat G_{\mathrm{sc}})^{W_F}\cong Z(\widehat G_{\mathrm{sc}})^{W_{F'}}\) inside the same $\widehat{\overline G}$. Therefore the restriction condition give a canonical bijection on
\(\Irr_{\zeta_{G}}(S_{\varphi'}^{+})\to \Irr_{\zeta_{G}}(S_{\varphi}^{+})\).

By Lemma \ref{lem:kaz-preserves-genericity}, there is a canonically transferred Whittaker datum \(\mathbf{w}\) for \(G^*\) corresponding to \(\mathbf{w'}\) and the representation \(\pi_{\mathrm{gen}}:=\Kaz_{\ell}(\pi'_{\mathrm{gen}})\) is \(\mathbf{w}\)-generic. Since 
\[
\operatorname{pr}^\prime:\ \mathrm{Irr}_{\zeta_{G'}}\!\bigl(S_{\varphi'}^{+}\bigr)\ \twoheadrightarrow\ \Pi_{\varphi'}(G'),
\]
is determined by \(\mathbf{w'}\), it follows that there is a  surjection, 
\[
\operatorname{pr}:\ \Irr_{\zeta_G}\!\bigl(S_\varphi^{+}\bigr)\ \twoheadrightarrow\ \Pi_{\varphi}(G),
\]
determined by the choice of \(\mathbf{w}\). 
\end{proof}

\subsection{Recipe for the full LLC in positive characteristic via close--field transfer}
\label{sec:LLC_recipe_poschar}

Let \(G\) be a connected reductive group defined over a \emph{characteristic-\(p\)} non-archimedean
local field \(F\).
Fix an integer \(m\ge 0\).
Choose an integer \(\ell\gg 0\) with
\(\ell\ge \ell_{*}\) as in Theorem \ref{thm:ell-close-arbitrary-blocks} 
and pick an \(\ell\)-close characteristic-\(0\) field
\(F'\) together with a level--\(\ell\) congruence datum
\(\mathcal D_{\ell}\colon \OO_F/\mathfrak p_F^{\ell}\;\widetilde{\longrightarrow}\;\OO_{F'}/\mathfrak p_{F'}^{\ell}\).
Via~\(\mathcal D_{\ell}\) we obtain the transferred group \(G'/F'\), and  an equivalence
\[
\kappa_{\ell,[L,\pi]_G}^{\sharp}
:\mathcal R([L,\pi]_G)\xrightarrow{\sim}\mathcal R([L',\pi']_{G'}) .
\]
We assume that LLC $\sigma' \mapsto \varphi'(\sigma')$ is known in characteristic $0$. 

\begin{definition}\label{def:parameter_poschar}
For an irreducible smooth representation
\(\sigma\in\mathcal R([L,\pi]_G)\)
set
\[
  \sigma':=\kappa_{\ell,[L,\pi]_G}^{\sharp}(\sigma)\;\in\;\mathcal R([L',\pi']_{G'}),
  \qquad
  \varphi(\sigma):=\varphi(\sigma'),
\]
where \(\varphi(\pi')\colon W_{F'}\to{}^LG\) is the Langlands parameter
provided by the characteristic-\(0\) LLC for \(G'/F'\). 
Via Deligne's isomorphism
\(W_F/I_F^{\ell}\;\widetilde{\longrightarrow}\;W_{F'}/I_{F'}^{\ell}\)
(this is possible since \(\ell\ge\ell_*\)),
we regard \(\varphi(\sigma)\) as a parameter defined over \(W_F\).
\end{definition}

\begin{theorem}\label{thm:LLC_poschar}
The assignment \(\sigma\longmapsto\varphi(\sigma)\) of
Definition~\ref{def:parameter_poschar} satisfies:

\begin{enumerate}
  \item 
        \(\varphi(\sigma)\) is independent of the auxiliary choices
        of the \(\ell\)-close field \(F'\), the congruence datum
        \(\mathcal D_{\ell}\), and the integer~\(\ell\)
        (provided \(\ell\ge \ell_*)\)).
\end{enumerate}
\end{theorem}

\begin{proof}

\noindent
\textit{(i)~Independence.}
For two choices \(\ell_1,\ell_2\ge\ell_*\) and corresponding
characteristic-\(0\) fields \(F'_1,F'_2\)
there exists \(\ell_3\ge\ell_1,\ell_2\) such that all data embed into the
level-$\ell_3$ situation. Since $\kappa_{\ell,[L,\pi]_G}^{\sharp}$ is canonical 
(Theorem \ref{thm:ell-close-arbitrary-blocks} and Definition \ref{def:kappa-sharp}), 
it follows that the diagram

\[
  \mathcal R([L,\pi]_{G})\;
  \xrightarrow{\;\kappa_{\ell,[L,\pi]_G}^{\sharp}\;}
  \mathcal R([L_1',\pi_1']_{G_1'})
  \longrightarrow
  \mathcal R([L_3',\pi_3']_{G_3'})
\]
commutes with the analogous diagram for~\(\ell_2\).
Since the characteristic-\(0\) LLC is uniquely characterized
by functoriality with respect to these maps, the resulting
parameter~\(\varphi(\sigma)\) is independent of all choices.

\smallskip
\noindent

\end{proof}

\medskip
\noindent\textbf{LLC for local function field.}
For an arbitrary irreducible smooth representation \(\sigma\in \mathcal R([L,\pi]_G)\),
choose \(m\ge\depth(\sigma)\) and apply
Definition~\ref{def:parameter_poschar} with any
\(\ell\ge\ell_*(G,[L,\pi]_G,m)\); by part~\textit{(i)} of Theorem \ref{thm:LLC_poschar},
the resulting parameter is independent of all choices.
Taking limits over \(m\) yields a
Langlands correspondence for all irreducible representations of
\(G(F)\) that preserve depth.
\smallskip
\begin{remark}
Gan--Harris--Sawin--Beuzart--Plessis \cite{GHSBP}
formulate a conjectural compatibility between the Fargues--Scholze and
Genestier--Lafforgue parametrizations via Deligne--Kazhdan close fields
(\cite[Conjecture~11.7]{GHSBP}).
The depth-preserving close-field transfer constructed here may be viewed as
local evidence toward such compatibilities, once depth is measured using the
revised notion introduced in \S \ref{sec:depth-theorem}, which remains well behaved for wildly
ramified tori.
\end{remark}

\appendix

\section{Cartan decomposition for maximal parahorics}
\label{app:Cartan}

In this section, we extend Cartan decomposition from the known case of special maximal parahorics to arbitrary maximal parahorics. We first recall some basics. 
\subsection{Extended affine Weyl group and the Kottwitz homomorphism}

Let $F$ be a non-archimedean local field with residue field $\kappa$, of cardinality \(q\) and characteristic \(p\). Let $L$ be the completion of the maximal unramified extension of $F$ and $\sigma$  the Fr\"obenius automorphism of $L$.

For a connected reductive group $G$ over $F$ and a maximally unramified elliptic maximal $F$-torus $S \subset G$:
\begin{itemize}
    \item The \textit{affine Weyl group} $W_{\text{aff}, S}$, associated to $S$, is the semi-direct product $Q_S^{\vee} \rtimes W(G, S)$, where $Q_S^{\vee}$ is the coroot lattice of $S$, and $W(G, S) = N_G(S)(F) / S(F)$ is the Weyl group, with $N_G(S)(F)$ the normalizer of $S$ in $G(F)$.
    \item The \textit{extended affine Weyl group} $\widetilde{W} = N_G(S)(L) / S(L)_0$, where $S(L)_0$ is the Iwahori subgroup of $S(L)$. 
    \item The extended affine Weyl group $\widetilde{W}$ admits a $\sigma$-equivariant decomposition:
    \[
    \widetilde{W} = W_{\text{aff}, S} \rtimes \Omega_S,
    \]
    where $\Omega_S$ is the subgroup of length-zero elements in $\widetilde{W}$, which stabilizes a fixed alcove in the apartment of the Bruhat-Tits building of $G(L)$. The action of $\sigma$ preserves this decomposition, acting on both $W_{\text{aff}, S}$ and $\Omega_S$.
\end{itemize}

\paragraph{Kottwitz Homomorphism}

As before, let $G$ be a connected reductive group over a non-archimedean local field $F$, with $L$ the completion of the maximal unramified extension of $F$, and $I = \mathrm{Gal}(\overline{F}/L)$ the inertia subgroup. The \emph{Kottwitz homomorphism} \cite[\S 7]{Kot97} is a surjective, functorial group homomorphism
\[
w_G: G(L) \to X^*(Z(\widehat{G})^I),
\]
where $\widehat{G}$ is the dual group of $G$, and $Z(\widehat{G})^I$ denotes the $I$-invariants of its center. For a torus $T$, it simplifies to
\[
w_T: T(L) \to X_*(T)_I,
\]
where $X_*(T)_I$ is the group of $I$-coinvariants in the cocharacter lattice $X_*(T)$, defined via the valuation map.

When $G$ is defined over $F$, with $\Gamma = \mathrm{Gal}(\overline{F}/F)$, the homomorphism restricts to $G(F)$, inducing a surjection
\[
w_G: G(F) \to X^*(Z(\widehat{G}))_I^\sigma.
\]
Let ${}^{\circ}G(F)$ denote the kernel of this map. For a vertex $x$ in the Bruhat-Tits building $\mathscr{B}(G, F)$, let $G(F)_x$ denote its stabilizer. Then, the intersection
\[
G(F)_x \cap {}^{\circ}G(F)
\]
is equal to the parahoric subgroup $G(F)_{x,0}$ at $x$.

\subsection{Cartan decomposition}

\begin{lemma}\label{lem:no-sigma-in-wa}
Let $S\subset G$ be elliptic. Let $W_{\mathrm{aff},S}$ be the affine Weyl group that sits in the semi-direct product $\widetilde{W}=W_{\mathrm{aff},S}\rtimes \Omega_S$. Then
\[
   W_{\mathrm{aff},S}^{\sigma}=1.
\]
\end{lemma}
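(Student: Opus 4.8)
The plan is to show that the semidirect product decomposition $\widetilde W = W_{\mathrm{aff},S}\rtimes\Omega_S$ is $\sigma$-equivariant, and that $\sigma$ acts on $W_{\mathrm{aff},S}$ without nontrivial fixed points because $S$ is elliptic. First I would recall that $W_{\mathrm{aff},S} = Q_S^\vee\rtimes W(G,S)$ where $Q_S^\vee$ is the coroot lattice of $S$ and that $\sigma$ preserves this splitting, so that a $\sigma$-fixed element $w\in W_{\mathrm{aff},S}$ projects to a $\sigma$-fixed element $\bar w$ of the finite Weyl group $W(G,S)$.

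Next I would use that $W_{\mathrm{aff},S}$ acts faithfully and simplicially on the apartment $\mathcal A$ of $S$ in the building of $G(L)$, with $\sigma$ acting compatibly. A nontrivial $w\in W_{\mathrm{aff},S}^\sigma$ would be a nontrivial simplicial automorphism of $\mathcal A$ commuting with $\sigma$; since $W_{\mathrm{aff},S}$ is generated by reflections and acts cocompactly, such a $w$ would have to fix a point of $\mathcal A$ (one can average: $W_{\mathrm{aff},S}$ acts properly on the affine space $\mathcal A\otimes\mathbb R$, so $\langle w\rangle$ being finite or the relevant fixed-point argument forces a fixed point), and hence lie in a point stabilizer — a finite subgroup conjugate into $W(G,S)$. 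This reduces the claim to the assertion that $\sigma$ acts without nontrivial fixed points on $W(G,S)$, or more precisely on the relevant reflection subgroup.

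The key input is ellipticity of $S$. By definition $S$ is elliptic means $X_*(S)^\sigma_{\mathbb Q} = X_*(Z(G)^\circ)^\sigma_{\mathbb Q}$, equivalently the $\sigma$-coinvariants of $X_*(S)$ modulo the center are torsion; dually, $(V^\sigma)$ contains no nonzero vector fixed by the affine action beyond the center's contribution. Concretely, a $\sigma$-fixed affine reflection (or product of reflections) in $W_{\mathrm{aff},S}$ would have a $\sigma$-stable fixed hyperplane/subspace in $\mathcal A\otimes\mathbb R$, and translating to the linear picture this produces a nonzero $\sigma$-fixed vector in the span of the coroots, contradicting that $S$ is elliptic (which forces $X_*(S)^\sigma$ to be as small as possible, i.e. the coroot part has no $\sigma$-fixed vectors). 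I would spell this out by: (i) noting $\Omega_S$ is torsion-free-modulo-finite and the projection $\widetilde W\to\Omega_S$ kills $W_{\mathrm{aff},S}$; (ii) identifying $W_{\mathrm{aff},S}^\sigma$ with a group acting on $\mathcal A^\sigma$; (iii) observing $\mathcal A^\sigma$ is a single point precisely because $S$ is elliptic (its associated vertex $x_S$ is the unique $\sigma$-fixed point, cf. Lemma~\ref{lem:vertex-torus-bijection}); (iv) concluding any $w\in W_{\mathrm{aff},S}^\sigma$ fixes $x_S$ and is therefore a $\sigma$-fixed element of the finite stabilizer, which by ellipticity again (no $\sigma$-fixed coroots) must be trivial.

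The main obstacle I anticipate is step (iv): controlling $\sigma$-fixed points in the \emph{finite} vertex-stabilizer group rather than just in the translation lattice. The translation part is easy — ellipticity says $(Q_S^\vee)^\sigma$ (rationally) is trivial — but one must also rule out a $\sigma$-fixed element whose linear part is a nontrivial element of $W(G,S)$ paired with a translation, the composite having a $\sigma$-stable fixed point. Here I would argue that if $w = t_\lambda\cdot\bar w$ is $\sigma$-fixed and has a fixed point $y\in\mathcal A\otimes\mathbb R$, then $\bar w$ fixes $y - x_S$ direction-wise and the $\sigma$-average of $y$ is a $\sigma$-fixed point, hence equals $x_S$; so $w$ fixes $x_S$, i.e. $w$ lies in the finite group $W(G,S)$ (realized as $\mathrm{Stab}(x_S)$ in $W_{\mathrm{aff},S}$), and a $\sigma$-fixed element there is a $\sigma$-fixed element of the finite Weyl group acting on the coroot space with no nonzero $\sigma$-fixed vectors — but a finite-order linear automorphism fixing only $0$ can still be nontrivial, so the real content is that such $\bar w$ would then have the whole fixed-hyperplane structure conflicting with ellipticity of the root system action; I would instead invoke that $N_{G}(S)(F)/S(F)_0$-type elements fixing $x_S$ and commuting with $\sigma$ correspond to elements of $W(G,S)(F) = (W(G,S))^\sigma$, and that an element of $W_{\mathrm{aff},S}$ lying in $(W(G,S))^\sigma\subset W_{\mathrm{aff},S}$ is, inside the affine group, a torsion element whose nontriviality would give a proper $\sigma$-stable affine subspace — contradicting that the only $\sigma$-fixed facet is the vertex $x_S$ itself. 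Making this last contradiction airtight, i.e. the passage "$\sigma$-fixed nontrivial affine-Weyl element $\Rightarrow$ $\sigma$-fixed positive-dimensional facet or a $\sigma$-fixed coroot", is where I expect to spend most of the effort.
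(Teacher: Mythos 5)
Your reduction to the finite Weyl group is essentially right and matches the paper's first step, though you make it harder than necessary: since $\sigma$ preserves the splitting $W_{\mathrm{aff},S}=Q_S^\vee\rtimes W(G,S)$, a $\sigma$-fixed $x=t_\nu w$ immediately gives $\sigma(\nu)=\nu$, and ellipticity (finiteness of $X_*(S)_I^\sigma$) together with torsion-freeness of $Q_S^\vee$ forces $\nu=0$; no apartment fixed-point or averaging argument is needed for this part.

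The genuine gap is exactly the step you flag at the end and do not close: ruling out a nontrivial $\sigma$-fixed element $w$ of the finite Weyl group. Your two proposed routes do not work as stated. A $\sigma$-\emph{stable} fixed hyperplane (or positive-dimensional facet, or normal line) does not yield a nonzero $\sigma$-\emph{fixed} vector in the coroot span --- $\sigma$ can act on a stable line by $-1$ or permute the pieces of a stable subspace without fixed vectors --- and a $\sigma$-stable facet through $x_S$ does not contradict the fact that $x_S$ is the only $\sigma$-fixed point, since stability is weaker than pointwise fixedness. The missing idea, which is how the paper finishes, is a positivity argument: choose a $\sigma$-stable system of positive roots (possible because one may pick a $\sigma$-stable pinning over $L$). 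If $w\neq 1$ is $\sigma$-fixed, pick a positive root $\alpha$ with $w(\alpha)<0$; then $w(\sigma^i(\alpha))=\sigma^i(w(\alpha))<0$ for every $i$, so $w$ sends the entire $\sigma$-orbit of $\alpha$ to negative roots, and the element $\beta=\sum_i(\sigma^i(\alpha))^\vee\in Q_S^\vee$ is $\sigma$-fixed and nonzero (a sum of positive coroots cannot vanish). This produces precisely the nonzero $\sigma$-fixed coroot that ellipticity of $S$ forbids, which is the contradiction your sketch was searching for but did not supply.
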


\begin{proof}
Write $Q^\vee_S\subset X_*(S)_I$ for the coroot lattice. We may realize $W_{\mathrm{aff},S}$ as
\[
   W_{\mathrm{aff},S}=Q^\vee_S\rtimes W(G,S),
\]
where an element $t_\nu w$ acts on the apartment $X_*(S)_I\otimes\mathbb{R}$ by a translation by $\nu\in Q^\vee_S$ followed by the finite Weyl element $w\in W(G,S)$. Let $x=t_\nu w\in W_{\mathrm{aff},S}$ be $\sigma$-fixed.

Applying $\sigma$ to $x$, we obtain
\[
  x=t_{\sigma(\nu)}\,\sigma(w)=t_\nu w.
\]
Comparing translation parts gives \(\sigma(\nu)=\nu\).
Let \(G_{\mathrm{ad}}:=G/Z(G)\) and let \(S_{\mathrm{ad}}\subset G_{\mathrm{ad}}\) be the image of \(S\). Since \(S\) is elliptic, the torus \(S_{\mathrm{ad}}\) is anisotropic over \(F\), hence it contains no nontrivial \(F\)-split subtorus. Equivalently,
\[
X_*(S_{\mathrm{ad}})_I^{\sigma}=0.
\]
The coroot lattice \(Q_S^\vee\) maps injectively into \(X_*(S_{\mathrm{ad}})_I\) (coroots vanish on the center), so the equality \(\sigma(\nu)=\nu\) forces the image of \(\nu\) in \(X_*(S_{\mathrm{ad}})_I^{\sigma}\) to be zero, hence \(\nu=0\).

Thus, \(x = w \in W(G, S)\) with \(w = \sigma(w)\). Fix a $\sigma$-stable positive system of (relative) roots (this exists because over $L$ we may choose a $\sigma$-stable pinned datum). If $w\neq 1$, choose a positive root $\alpha$ with $w(\alpha)<0$.
Since $\sigma(w)=w$ and $\sigma$ preserves positivity, for every $i$ we have
\[
w\bigl(\sigma^i(\alpha)\bigr)=\sigma^i\bigl(w(\alpha)\bigr)<0.
\]
Thus, the entire \(\sigma\)-orbit of \(\alpha\) is mapped to negative roots. The sum of the coroots in this orbit,
\[
\beta = \sum_{i=0}^{k-1} \bigl(\sigma^i(\alpha)\bigr)^\vee,
\]
is \(\sigma\)-fixed, non-zero since it's a sum of positive co-roots, and lies in \(Q_S^\vee\). This contradicts the ellipticity of \(S\), as \(\beta\) spans a non-trivial \(F\)-split torus in the derived group, centralized by \(S\). Hence, \(w = 1\).
Combining the two steps, we conclude $x=1$, so $W_{\mathrm{aff},S}^{\sigma}=1$.
\end{proof}
\begin{proposition}\label{prop:sigma-Cartan}
Maintain the notations of Lemma \ref{lem:no-sigma-in-wa}.  
Let \(S\subset G\) be a maximally un\-ramified elliptic maximal \(F\)-torus and let
\(
  x_S\in\mathscr B^{\mathrm{red}}(G,F)
\)
be the associated vertex. Set
\[
K_S \;:=\; G(F)_{x_S,0}, 
\qquad
\mathcal K \;:=\; \mathcal G_{x_S}^{\circ}(\mathcal O_L)\subset G(L),
\]
with  \(\mathcal G_{x_S}\) being the parahoric group scheme at \(x_S\). Put
\[
  \widetilde W \;:=\; N_G(S)(L)\big/S(L)_0,
  \qquad
  \widetilde W_{\mathcal K}\;:=\;(N_G(S)(L)\cap\mathcal K)\big/S(L)_0 .
\]
Then the natural map
\[
  \phi:\;
  K_S\!\backslash G(F)/K_S
  \; \longrightarrow\;
  \widetilde W_{\mathcal K}^{\sigma}\!\backslash
  \widetilde W^{\sigma}/
  \widetilde W_{\mathcal K}^{\sigma},\qquad 
  K_S g K_S\;\longmapsto\;
  \widetilde W_{\mathcal K}^{\sigma}\,\overline{g}\,\widetilde W_{\mathcal K}^{\sigma},
\]
(where \(\overline{g}\) is the image of \(g\) in
\(\widetilde W^{\sigma}=N_G(S)(F)/S(F)_0\))
is a bijection.
\end{proposition}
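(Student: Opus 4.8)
The plan is to prove the bijection by base-changing to $L$ and then taking $\sigma$-fixed points, with the geometric descent controlled by Lang's theorem and the combinatorial descent by Lemma~\ref{lem:no-sigma-in-wa}. First I would record the descent data over $L$. The parahoric group scheme $\mathcal G_{x_S}$ is smooth with connected special fibre, so Lang's theorem gives $\mathcal K^{\sigma}=K_S$ and $H^{1}(\langle\sigma\rangle,\mathcal K)=1$; likewise the connected Néron model of $S$ has connected special fibre, so $S(L)_0^{\sigma}=S(F)_0$ and $H^{1}(\langle\sigma\rangle,S(L)_0)=1$. Feeding the latter into $1\to S(L)_0\to N_G(S)(L)\to\widetilde W\to 1$ yields $\widetilde W^{\sigma}=N_G(S)(F)/S(F)_0$, and intersecting with $\mathcal K$ gives $\widetilde W_{\mathcal K}^{\sigma}=N_{K_S}(S)(F)/S(F)_0$. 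Over $L$ the group $G_L$ is quasi-split, so there is a $\sigma$-stable alcove of $\mathcal B(G,L)$ whose closure contains $x_S$; for the associated $\sigma$-stable Iwahori $\mathcal I\subset\mathcal K$ the Iwahori--Bruhat decomposition $G(L)=\mathcal I\,\widetilde W\,\mathcal I$ together with $\mathcal K=\bigsqcup_{w\in\widetilde W_{\mathcal K}}\mathcal I w\mathcal I$ produces the canonical, $\sigma$-equivariant bijection $\mathcal K\backslash G(L)/\mathcal K\xrightarrow{\ \sim\ }\widetilde W_{\mathcal K}\backslash\widetilde W/\widetilde W_{\mathcal K}$. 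The map $\phi$ of the statement is then the composite $K_S\backslash G(F)/K_S\to(\mathcal K\backslash G(L)/\mathcal K)^{\sigma}\xrightarrow{\sim}(\widetilde W_{\mathcal K}\backslash\widetilde W/\widetilde W_{\mathcal K})^{\sigma}\xrightarrow{\sim}\widetilde W_{\mathcal K}^{\sigma}\backslash\widetilde W^{\sigma}/\widetilde W_{\mathcal K}^{\sigma}$ (which, on a double coset meeting $N_G(S)(F)$, is $K_SgK_S\mapsto\widetilde W_{\mathcal K}^{\sigma}\,\overline g\,\widetilde W_{\mathcal K}^{\sigma}$); it remains to prove that the first arrow is a bijection and that the last one is.

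For the first arrow, \emph{injectivity}: if $g,g'\in G(F)$ with $\mathcal K g\mathcal K=\mathcal K g'\mathcal K$, the set $Y=\{(a,b)\in\mathcal K\times\mathcal K:agb=g'\}$ is a nonempty right torsor under $H_g=\{(a,b):agb=g\}\cong\mathcal K\cap g\mathcal K g^{-1}$, and $Y$ is $\sigma$-stable since $g,g'$ are $F$-rational. By Bruhat--Tits theory $\mathcal K\cap g\mathcal K g^{-1}$ is the group of $\mathcal O_L$-points of a smooth affine $\mathcal O_L$-group scheme with connected special fibre, namely the connected pointwise fixer of the bounded nonempty convex hull of $x_S$ and $g\cdot x_S$; hence $H^{1}(\langle\sigma\rangle,H_g)=1$ by Lang, $Y^{\sigma}\neq\varnothing$, and we may take $a,b\in\mathcal K^{\sigma}=K_S$, so $K_SgK_S=K_Sg'K_S$. \emph{Surjectivity}: a $\sigma$-stable $\mathcal K$-double coset corresponds to a $\sigma$-fixed class in $\widetilde W_{\mathcal K}\backslash\widetilde W/\widetilde W_{\mathcal K}$, which by the combinatorial step has a representative in $\widetilde W^{\sigma}=N_G(S)(F)/S(F)_0$; any $F$-rational lift of it lies in $N_G(S)(F)\subset G(F)$ and in that double coset.

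For the last arrow I would use the reduced-double-coset theory in the Iwahori--Weyl group, taking the length function attached to the $\sigma$-stable alcove above so that $\sigma$ acts length-preservingly: $\widetilde W_{\mathcal K}$ is then a finite standard parabolic subgroup of $\widetilde W$, each $(\widetilde W_{\mathcal K},\widetilde W_{\mathcal K})$-double coset has a unique minimal-length element $x$, and each member $w$ of it has a \emph{unique} length-additive factorisation $w=a\,x\,b$ with $a\in\widetilde W_{\mathcal K}$ and $b$ a distinguished coset representative. Since $\sigma$ preserves length and stabilises $\widetilde W_{\mathcal K}$ (hence fixes $x$ on a $\sigma$-stable coset), uniqueness of this factorisation forces $\sigma(a)=a$ and $\sigma(b)=b$ whenever $w\in\widetilde W^{\sigma}$; combining this with Lemma~\ref{lem:no-sigma-in-wa} (which controls the $\sigma$-fixed part of $W_{\mathrm{aff},S}$, and thereby of $\widetilde W_{\mathcal K}$ and of the ambiguity groups $\widetilde W_{\mathcal K}\cap x\widetilde W_{\mathcal K}x^{-1}$) identifies the $\sigma$-fixed double cosets with $\widetilde W_{\mathcal K}^{\sigma}\backslash\widetilde W^{\sigma}/\widetilde W_{\mathcal K}^{\sigma}$. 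One finishes by checking that $\phi$ does not depend on the choice of $\mathcal I$ or of the distinguished representatives and agrees with the natural map of the statement.

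The main obstacle, I expect, is making the two $\sigma$-descents precise at the same time: on the geometric side, confirming that $\mathcal K\cap g\mathcal K g^{-1}$ really is the $\mathcal O_L$-points of a \emph{connected} Bruhat--Tits fixer scheme, so that Lang applies and every $\sigma$-stable $L$-double coset descends to an $F$-rational one; and on the combinatorial side, handling the non-uniqueness of the factorisations $w=a\,x\,b$ over $\widetilde W_{\mathcal K}$ via the distinguished-representative formalism together with Lemma~\ref{lem:no-sigma-in-wa}, and reconciling the Iwahori--Weyl group attached to the $\sigma$-stable alcove with $\widetilde W=N_G(S)(L)/S(L)_0$ and its $\sigma$-action. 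The remaining ingredients --- the Iwahori--Bruhat decomposition over $L$, Lang's theorem for smooth connected $\mathcal O_L$-group schemes, and Pontryagin-style bookkeeping --- are standard.
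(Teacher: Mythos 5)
Your proposal is correct in substance, and its skeleton is the same as the paper's: pass to \(L\), parametrize \(\mathcal K\)-double cosets by \(\widetilde W_{\mathcal K}\backslash\widetilde W/\widetilde W_{\mathcal K}\), then descend by taking \(\sigma\)-fixed points in two stages. The difference is that the paper simply cites Haines--Rapoport \cite{HR10} (Prop.~8 for the double-coset parametrization over \(L\), Prop.~3(ii), Rem.~4 and Rem.~9 for both descent steps), whereas you re-prove these inputs: the geometric descent via Lang-type vanishing \(H^1(\langle\sigma\rangle,\mathcal H(\mathcal O_L))=1\) for smooth connected \(\mathcal O_L\)-group schemes (applied to \(\mathcal K\), to \(S(L)_0\), and to the connected fixer realizing \(\mathcal K\cap g\mathcal K g^{-1}\)), and the combinatorial descent via minimal-length double-coset representatives and the unique length-additive factorization \(w=axb\) with \(b\) distinguished. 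This buys a self-contained argument at the cost of invoking two facts you should flag explicitly: that the connected fixer of a bounded subset of an apartment has \(\mathcal O_L\)-points equal to the intersection of the two parahorics (the analogue of \cite{HR10} Prop.~3 beyond facets), and that the Iwahori--Bruhat decomposition is stated for the Iwahori--Weyl group \(N_G(S)(L)/S(L)_1\) relative to the maximal \(L\)-split torus, not literally for \(\widetilde W=N_G(S)(L)/S(L)_0\); as in the paper's own reduction from \(\widetilde W_{S'}\) to \(\widetilde W\), the discrepancy is a subgroup contained in \(\widetilde W_{\mathcal K}\), so double cosets are unaffected, but the step should be recorded. Two further remarks: your appeal to Lemma~\ref{lem:no-sigma-in-wa} in the combinatorial step is superfluous --- the distinguished-factorization argument already forces \(\sigma(a)=a\), \(\sigma(b)=b\), and neither the paper's proof of Proposition~\ref{prop:sigma-Cartan} nor yours actually needs that lemma (it enters only in the subsequent Cartan decomposition theorem); and your surjectivity argument for the first arrow correctly uses \(H^1(\sigma,S(L)_0)=1\) to lift a \(\sigma\)-fixed Weyl element to \(N_G(S)(F)\), which is exactly the point making the composite map of the statement well defined.
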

Warning: the \(S\) in our notation differs from that in \cite{HR10}.
\begin{proof}
Let \(S'\) be the maximal \(L\)-split subtorus of \(S\).
Haines--Rapoport give a canonical bijection over \(L\):
\[
  \mathcal K\backslash G(L)/\mathcal K
  \;\xrightarrow{\;\sim\;}
  \widetilde W_{\mathcal K,S'}\backslash 
  \widetilde W_{S'}/
  \widetilde W_{\mathcal K,S'}
  \qquad
  \bigl(\text{\cite[Prop.\,8]{HR10}}\bigr).
\]
Since \(S=\Cent_G(S')\),
\(N_G(S)(L)=N_G(S')(L)\); hence
\(
  \widetilde W_{S'} = N_G(S)(L)\!/S'(L)_0
\).
The quotient map
\( \pi:\widetilde W_{S'}\twoheadrightarrow\widetilde W \)
has kernel
\(K:=S(L)_0/S'(L)_0\subset\widetilde W_{\mathcal K,S'}\),
so double--cosets are unchanged when we mod out by \(K\).
Thus
\[
  \mathcal K\backslash G(L)/\mathcal K
  \;\xrightarrow{\;\sim\;}
  \widetilde W_{\mathcal K}\backslash 
  \widetilde W/
  \widetilde W_{\mathcal K}.
\]
The parahoric \(\mathcal K\) has connected special fibre and is \(\sigma\)-stable
(\cite[Prop.\,3\,(ii) \& Rem.\,4]{HR10}).
Hence Remark 9 of \emph{loc.~cit.} gives
\[
   K_S\backslash G(F)/K_S
   \;\xrightarrow{\;\sim\;}
   \bigl(\mathcal K\backslash G(L)/\mathcal K\bigr)^{\sigma}.
\]
Combining the two identities yields,
\[
   K_S\backslash G(F)/K_S
   \;\xrightarrow{\;\sim\;}
   \bigl(\widetilde W_{\mathcal K}\backslash 
          \widetilde W/
          \widetilde W_{\mathcal K}\bigr)^{\sigma}.
\]
Now since \(x_S\) lies in the closure of a \(\sigma\)-invariant alcove, Remark 9 of \cite{HR10} also yields,
\[
   \bigl(\widetilde W_{\mathcal K}\backslash 
          \widetilde W/
          \widetilde W_{\mathcal K}\bigr)^{\sigma}
   \;\xrightarrow{\;\sim\;}
   \widetilde W_{\mathcal K}^{\sigma}\backslash 
   \widetilde W^{\sigma}/
   \widetilde W_{\mathcal K}^{\sigma}.
\]
Composing these bijections obtained above gives the desired bijection \(\phi\).
\end{proof}

\begin{theorem}[Cartan decomposition]
\label{thm:KS-double-cosets}
Let $G$ be a connected reductive group over a non-archimedean local field $F$ with residue characteristic~$p$.
Fix a maximally unramified elliptic maximal $F$-torus $S\subset G$ and write $x_S\in\mathcal{B}^{\mathrm{red}}(G,F)$
for the associated vertex. Put $K_S:=G(F)_{x_S,0}$. Let $I\subset\mathrm{Gal}(\bar{F}/F)$ denote inertia and let
$\sigma$ be (the image of) the geometric Frobenius.

Then the Kottwitz homomorphism
\[
  \omega_S \colon S(F) \longrightarrow X_{*}\!\bigl(S\bigr)_{I}^{\sigma}
\]
induces a canonical bijection of sets
\[
  W(G,S)_{x_S,0}\backslash X_{*}\!\bigl(S\bigr)_{I}^{\sigma} \xrightarrow{\;\sim\;} K_S\backslash G(F)/K_S,
\]
where $W(G,S)_{x_S,0}:=N_{G(F)_{x_S,0}}(S)/S(F)_0$.
\end{theorem}

\begin{proof}
We follow the strategy of Haines--Rostami~\cite{hainesrostami2010} but without requiring the vertex \(x_S\) to be special.
Retain the notation of Proposition~\ref{prop:sigma-Cartan} and \S7.1. Thus
\(\mathcal K:=\mathcal G_{x_S}^{\circ}(\mathcal O_L)\subset G(L)\) is the parahoric subgroup,
\(\widetilde W=N_G(S)(L)/S(L)_0\) is the extended affine (Iwahori--Weyl) group, and
\[
  \widetilde W_{\mathcal K}\;:=\;\bigl(N_G(S)(L)\cap \mathcal K\bigr)\big/S(L)_0\;\subset\;\widetilde W.
\]
By Proposition~\ref{prop:sigma-Cartan}, there is a canonical bijection
\begin{equation}\label{eq:BTdouble}
  \mathcal K\backslash G(L)/\mathcal K\;\xrightarrow{\;\sim\;}\;
  \widetilde W_{\mathcal K}\backslash \widetilde W/\widetilde W_{\mathcal K},
\end{equation}
which is $\sigma$--equivariant.

Write \(K:=\mathcal K^{\sigma}=G(F)\cap\mathcal K=\mathcal G_{x_S}^{\circ}(\mathcal O_F)\).
Using \(G(F)=G(L)^{\sigma}\), \(K=\mathcal K^{\sigma}\), and the $\sigma$--equivariance of~\eqref{eq:BTdouble}
(cf.\ the fixed-point passage in~\cite{hainesrostami2010}), we obtain a canonical bijection
\begin{equation}\label{eq:sigmadouble}
  K\backslash G(F)/K\;\xrightarrow{\;\sim\;}\;
  \bigl(\widetilde W_{\mathcal K}\bigr)^{\sigma}\backslash \widetilde W^{\sigma}\big/\bigl(\widetilde W_{\mathcal K}\bigr)^{\sigma}.
\end{equation}

Let \(S'\subset S_L\) denote the maximal \(L\)-split subtorus of \(S_L\).
Since \(S\) is maximally unramified, \(S_L\) is maximally \(L\)-split in \(G_L\), and hence \(S'\) is a maximal
\(L\)-split torus of \(G_L\). Write \(\mathrm{G}_{x_S}^\circ\) for the identity component of the reductive quotient
of the ft-N\'eron model of \(G\) at \(x_S\), and \(\mathrm S\) for the maximal torus in \(\mathrm{G}_{x_S}^\circ\)
corresponding to \(S'\). By Haines--Rapoport~\cite[Prop.~12]{HR10}, applied over \(L\) to the \(\sigma\)-stable facet
\(x_S\),
\[
  \widetilde W_{\mathcal K}
  \;=\; (N_G(S)(L)\cap \mathcal K)\big/S(L)_0
  \;\xrightarrow{\;\sim\;}\; W\!\big(\mathrm{G}_{x_S}^\circ,\,\mathrm S\big),
\]
hence, taking \(\sigma\)--fixed points,
\[
  \bigl(\widetilde W_{\mathcal K}\bigr)^{\sigma}\;\cong\;W\!\big(\mathrm G_{x_S}^\circ,\,\mathrm S\big)(\kappa_F).
\]
Moreover, by~\cite[Lem.~3.4.10(2)]{KalethaRegSC}, the natural map
\[
  N_{G(F)_{x_S,0}}\bigl(S\bigr)\big/S(F)_0\;\longrightarrow\;
  N_{\mathrm G_{x_S}^\circ}\bigl(\mathrm{S}\bigr)(\kappa_F)\big/\mathrm{S}(\kappa_F)
\]
is bijective, so that
\begin{equation}\label{eq:WK=WGS}
  W\!\big(\mathrm G_{x_S}^\circ,\,\mathrm S\big)\;\cong\;W(G,S)_{x_S,0}.
\end{equation}

Write \(W_{\mathrm{aff},S}\) for the affine Weyl subgroup attached to \((G,S)\). By Lemma~\ref{lem:no-sigma-in-wa},
we have \(W_{\mathrm{aff},S}^{\sigma}=1\).
Since the decomposition \(\widetilde W = W_{\mathrm{aff},S}\rtimes \Omega_S\) is \(\sigma\)-stable and the factorization
\(w\omega\) is unique (because \(W_{\mathrm{aff},S}\cap \Omega_S=\{1\}\)), it follows that
\[
\widetilde W^\sigma = W_{\mathrm{aff},S}^\sigma \rtimes \Omega_S^\sigma=\Omega_S^\sigma.
\]
The Kottwitz homomorphism identifies \(\Omega_S^{\sigma}\) with \(X_{*}\!\bigl(S\bigr)_{I}^{\sigma}\)
(cf.\ \cite[\S7]{Kot97}).

Using \eqref{eq:WK=WGS} and the identification \(\widetilde W^\sigma=\Omega_S^\sigma\) in~\eqref{eq:sigmadouble}, we obtain
\[
  K\backslash G(F)/K \;\cong\; W(G,S)_{x_S,0}\backslash \Omega_S^{\sigma}
  \;\cong\; W(G,S)_{x_S,0}\backslash X_{*}\!\bigl(S\bigr)_{I}^{\sigma}.
\]
Finally, \(K=K_S\) by definition, and this is the desired bijection.
\end{proof}

\begin{remark}
Haines pointed out to the author that if \(x_S\) is not special, the associated Hecke algebra may not be commutative.
\end{remark}

\section{Close field Hecke algebra isomorphism}
\label{app:kazhdan}

This appendix provides background on parahoric Hecke algebras and explains the origin of the
residue-characteristic hypotheses. It is logically independent of the main results in
Section~\ref{sec:hecke_iso_bernstein}.

Let $G$ be a connected reductive group defined over $F$ and let $S$ be a maximally unramified elliptic maximal torus $S$ of $G$. Let $x=x_S$ denote the point associated to $S$ in the reduced Bruhat-Tits building of $G$. Let $G(F)_{x,0}=:K$ the associated parahoric and for any $m\in \mathbb{Q}_{\geq 0}$, let $K_m $ denote the $m$-th congruence filtration subgroup of $K$. For $\ell \geq m$, let $F'$ be an $\ell$-close field and let $G'$ and $S'$ be as constructed in Proposition \ref{prop:vertex-transfer}. Let $x'=x_{S'}$ and define  $K'$, and $K'_m$ to be the $F'$ counterparts of  $K$, and $K_m$.

Let $t$ denote a non-negative integer. Write
\[
\id_t=(K_t,1), \qquad \id_t'=(K_t',1)
\]
for the trivial $1$-dimensional representations.

\paragraph{The Hecke algebra.}
Define
\[
\cH(G(F),\id_t) := \End_{G(F)}\!\bigl(\cind_{K_t}^{G(F)}\id_t\bigr), \qquad \cH(G(F'),\id_t') := \End_{G(F')}\!\bigl(\cind_{K'_t}^{G(F')}\id_t'\bigr).
\]
Then there is a natural identification: $\cH(G(F),\id_t)=\mathbf{1}_{K_t}* \cH(G(F))\!*\mathbf{1}_{K_t}$. Here $\cH(G(F))$ denotes the $\mathbb{C}$-algebra of compactly supported smooth functions from $G(F)$ to $\mathbb{C}$ with convolution computed against Haar measure normalized by $\vol(K_t)=1$, and similarly for $F'$ and \(\mathbf{1}_{K_t}\) the idempotent supported on \(K_t\). For the following theorem, we will write $\cH(G(F),K_t)$ instead of $\cH(G(F),\id_t)$ to better align with Ganapathy's theorem which we extend. The proof of the following theorem closely follows the proof of \cite[Theorem 4.1]{ganapathy2022}.

\begin{theorem}\label{thm:deligne-kazhdan-Hecke-isomorphism}
Fix a rational number $m \geq 0$. There exists an integer $\ell=\ell_m \geq m$ such that for any nonarchimedean local field $F'$ that is $\ell$-close to $F$, there is a canonical isomorphism of Hecke algebras:
\[
\kappa_{\ell,x}: \mathscr{H}(G(F), K_m) \xrightarrow{\sim} \mathscr{H}(G'(F'), K'_m).
\]
\end{theorem}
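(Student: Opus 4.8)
The plan is to follow Ganapathy's proof of \cite[Thm.~4.1]{ganapathy2022}, making two substitutions to pass from his setting (special maximal parahorics, integer level $>1$) to ours: the Haines--Rostami Cartan decomposition is replaced by Theorem~\ref{thm:KS-double-cosets}, valid for \emph{every} maximal parahoric, and the Chai--Yu truncated-parahoric comparison is replaced by Theorem~\ref{thm:depth-comp-HH}, which, being phrased through Yu's minimal congruence filtration, accommodates \emph{real} (hence rational, and zero) truncation depths uniformly. Write $\mathscr H(G(F),K_m)=\mathbf 1_{K_m}*\mathscr H(G(F))*\mathbf 1_{K_m}$ and isolate inside it the finite-dimensional ``spherical'' subalgebra $\mathscr H^\circ:=\mathbf 1_{K_m}*\mathbb C[K]*\mathbf 1_{K_m}$; since $K_m\trianglelefteq K$ this is just the group algebra $\mathbb C[\bar K]$ of the finite group $\bar K:=K/K_m$. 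By Theorem~\ref{thm:KS-double-cosets}, $K\backslash G(F)/K\cong W(G,S)_{x,0}\backslash X_*(S)_{I_F}^{\Frob}$; fixing generators $\mu_1,\dots,\mu_d$ of the finitely generated abelian group $X_*(S)_{I_F}^{\Frob}$, realized by elements $\lambda_1,\dots,\lambda_d\in N_G(S)(F)$, one has the Bernstein--Cartan elements $\theta_i:=\mathbf 1_{K_m}*\mathbf 1_{K\lambda_iK}*\mathbf 1_{K_m}$ (together with inverses along the torsion-free directions). Adapting \cite{ganapathy2022}, $\mathscr H(G(F),K_m)$ is generated as a $\mathbb C$-algebra by $\mathscr H^\circ$ and $\theta_1^{\pm1},\dots,\theta_d^{\pm1}$.

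The relations among these generators come in three families: (i) the multiplication table of $\bar K$; (ii) for each $i$, the conjugation rule $\theta_i\,h\,\theta_i^{-1}$ for $h\in\mathscr H^\circ$, governed by the action of (a representative of) $\lambda_i$ on a truncated parahoric quotient; and (iii) the expansions of the products $\theta_i\theta_j$ in the $\theta_\mu$-$\mathscr H^\circ$-monomial basis, controlled by the partial order on $X_*(S)_{I_F}^{\Frob}$ induced by the generalized Cartan decomposition. The essential point --- the \emph{stabilization} --- is that there is a \emph{single} integer $N_0=N_0(G,x,m)$, independent of the depth of the Cartan cone, such that families (i)--(iii) are all read off from the finite group $G(F)_{x,\mathrm b}/G(F)_{x,N_0}$ equipped with the conjugation actions of $\lambda_1,\dots,\lambda_d$; its finiteness rests on the elementary observation that conjugating a root-group filtration step by a Cartan representative only moves it deeper --- eventually into $K_m$, where it drops out --- up to the fixed shift $\max_{i,\alpha}|\langle\alpha,\mu_i\rangle|$. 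By Theorem~\ref{thm:depth-comp-HH}, once $\ell\ge\Phi_{G,x}(N_0)$ the congruence datum $D_\ell$ canonically identifies $G(F)_{x,\mathrm b}/G(F)_{x,N_0}$ with $G'(F')_{x',\mathrm b}/G'(F')_{x',N_0}$, compatibly with these conjugations and with the building combinatorics of Theorem~\ref{thm:KS-double-cosets} and the identification $X_*(S)_{I_F}^{\Frob}\xrightarrow{\sim}X_*(S')_{I_{F'}}^{\Frob}$ of Proposition~\ref{prop:vertex-transfer}; I would take $\ell_m:=\max\{\ell_0(G,x),\,\lceil\Phi_{G,x}(N_0)\rceil\}$ (which exceeds $m$).

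It then remains to define $\kappa_{\ell,x}$ on generators --- on $\mathscr H^\circ=\mathbb C[\bar K]$ through $\bar K\xrightarrow{\sim}\bar K'$, and $\theta_i\mapsto\theta_i'$, the Bernstein--Cartan element attached to the image $\mu_i'$ of $\mu_i$ --- and to observe that the defining relations (i)--(iii), being expressed entirely in the data of the previous paragraph, are preserved; hence $\kappa_{\ell,x}$ extends to a $\mathbb C$-algebra homomorphism. Running the construction symmetrically over $F'$ produces a two-sided inverse, so $\kappa_{\ell,x}$ is an isomorphism, canonical in $D_\ell$ by construction. The main obstacle is the stabilization statement for an arbitrary maximal parahoric: one has to verify that Ganapathy's combinatorial analysis --- carried out in \cite{ganapathy2022} through the Iwahori--Weyl group of a special maximal parahoric --- uses only the parahoric group scheme at $x$ and the conjugation action of $N_G(S)(F)$ on its truncations, both of which are now available in full generality from Theorems~\ref{thm:KS-double-cosets} and~\ref{thm:depth-comp-HH}; a secondary technical point is to establish the finite generation and the partial order underlying family (iii) at a \emph{non-special} vertex directly from the generalized Cartan decomposition.
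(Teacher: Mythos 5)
Your high-level route coincides with the paper's --- its proof also follows Ganapathy's argument for \cite[Theorem 4.1]{ganapathy2022}, with Theorem~\ref{thm:KS-double-cosets} replacing the Haines--Rostami input and Theorem~\ref{thm:depth-comp-HH} replacing the Chai--Yu input --- but your concrete implementation differs, and the two points you yourself flag as ``obstacles'' are exactly where the content of the proof lies, so as written there is a genuine gap. The paper does not pass through a Bernstein-type presentation at all: it takes as generators the characteristic functions $\mathbf{1}_{K_m g_{\lambda} K_m}$ of $K_m$-double cosets of Cartan representatives $g_\lambda$ chosen inside $S(F)$ via compatible sections of the Kottwitz map; because such representatives normalize every filtration subgroup at $x_S$ (Lemma~\ref{lem:conj}), all products of generators appearing in the finitely many relations stay supported over a finite ball $B_{N_0}(\Lambda)$, and the structure constants are then matched over the two fields by the volume-stability Lemma~\ref{lem:volume}, which counts $K_\ell$-cosets in intersections $K_m g K_m\cap k h K_m$ and transports that count through $\alpha_m,\alpha_\ell$. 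Your ``stabilization'' claim --- that the relations (i)--(iii) are all read off from the single finite quotient $G(F)_{x,\mathrm b}/G(F)_{x,N_0}$ with its $\lambda_i$-conjugation actions --- is the analogue of this step, and the heuristic you offer (root filtrations are shifted by at most $\max_{i,\alpha}\lvert\langle\alpha,\mu_i\rangle\rvert$) does not address the real issue: how the products $\theta_i\theta_j$ (the coefficients in your family (iii)) decompose into $K_m$-double cosets. Some form of the coset-counting comparison of Lemma~\ref{lem:volume} is unavoidable and is not supplied.

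Two further specific problems. First, the generation statement is not established: it is not shown that $\mathscr H(G(F),K_m)$ is generated by $\mathbb{C}[K/K_m]$ together with $\theta_1^{\pm1},\dots,\theta_d^{\pm1}$, and the invertibility of $\theta_i:=\mathbf{1}_{K_m}\ast\mathbf{1}_{K\lambda_i K}\ast\mathbf{1}_{K_m}$ is doubtful, since this element is supported on an entire $K$-double coset (a union of many $K_m$-double cosets) and at level $K_m$ there is no quadratic relation available to invert it. The elements that are genuinely invertible are the single-coset functions $\mathbf{1}_{K_m g_{\lambda_i}K_m}=\mathbf{1}_{g_{\lambda_i}K_m}$ for representatives $g_{\lambda_i}\in S(F)$, which normalize $K_m$; replacing your $\theta_i$ by these essentially returns you to the paper's generators. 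Second, asserting that the only relations are of types (i)--(iii) amounts to positing a Bernstein-type presentation of $\mathscr H(G(F),K_m)$ at an arbitrary, possibly non-special, vertex attached to an elliptic maximally unramified torus; no such presentation is available, and ``adapting \cite{ganapathy2022}'' does not cover it, since Ganapathy's analysis is carried out at a special vertex and at integral level. Until the generation, invertibility and stabilization claims are proved --- or replaced, as in the paper, by a direct transfer of structure constants over a finite ball of Cartan representatives --- the argument is incomplete.
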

We will omit \(x\) and simply write \(\kappa_\ell\) in place of \(\kappa_{\ell,x}\) whenever there is no possibility of ambiguity. 

\begin{proof}
By Theorem~\ref{thm:KS-double-cosets}, the map $S(F)\twoheadrightarrow\Omega_S^{\sigma}$ induces a canonical bijection
\[
   K\backslash G(F)/K \xrightarrow{\;\sim\;} W(G,S)_{x_S,0}\backslash{\Omega_S^{\sigma}}.
\]

\paragraph{\textit{Compatible sections of the Kottwitz map.}}
For \(\ell\)-close fields \(F\) and \(F'\), by Theorem~\ref{thm:congruent-isom} (equivalently Theorem~5.3 for tori),
we have a canonical isomorphism \(\mathrm{Tr}_0:S(F)/S(F)_0 \xrightarrow{\sim} S'(F')/S'(F')_0\).
Via the Kottwitz homomorphisms \(S(F)/S(F)_0\simeq \Omega_S^\sigma\) and \(S'(F')/S'(F')_0\simeq \Omega_{S'}^\sigma\),
pick sections
\[
p:\Omega_S^\sigma\to S(F),\qquad p':\Omega_{S'}^\sigma\to S'(F')
\]
as in \cite[Lem.~2.4]{ganapathy2022}, compatible with \(\mathrm{Tr}_0\).
Identifying \(\Omega_S^\sigma\simeq \Omega_{S'}^\sigma\), set \(g_\lambda:=p(\lambda)\) and \(g'_\lambda:=p'(\lambda)\).

\paragraph{\textit{Truncated parahoric isomorphisms.}}
Fix \(m\ge 0\). Choose \(\ell_m\ge m\) large enough so that for every \(\ell\ge \ell_m\) we have,
by Theorem~\ref{thm:depth-comp-HH} 
canonical isomorphisms of truncated parahorics at the levels needed below (in particular at levels \(m\) and \(\ell\)):
\[
  \alpha_t:K/K_t \xrightarrow{\sim} K'/K'_t\qquad (t\in\{m,\ell\}),
\]
compatible with the chosen transfer data (and hence with the chosen normalizer representatives used to index
the relevant double cosets; cf.\ \cite[\S4]{ganapathy2022}).

\paragraph{\textit{Finite presentation of \(\Hc(G(F),K_m)\).}}
Let \({\mathscr{H}}=\Hc(G(F),K_m)\). By \cite[Thm.~4.1]{ganapathy2022} (and the discussion preceding it), we may choose a finite subset
\(\Lambda=\{\lambda_1,\dots,\lambda_p\}\subset \Omega_S^{\sigma,+}\) such that the elements
\[
f_i:=\one_{K_m g_{\lambda_i}K_m}\qquad(1\le i\le p)
\]
generate \({\mathscr{H}}\) as a \(\C\)-algebra, and such that there is a finite presentation
\[
   {\mathscr{H}} \ \cong\ \frac{\C\langle X_1,\dots,X_p\rangle}{\langle R_1,\dots,R_q\rangle},
   \qquad X_i\mapsto f_i,
\]
where each relation \(R_j\) is a \(\C\)-linear combination of monomials of bounded total length.
Let \(N_0\) be the maximal length of any monomial appearing among the \(R_j\).

For \(d\ge 1\), set
\[
  B_d(\Lambda)\;:=\;\Bigl\{\nu\in\Omega_S^{\sigma}:\ \nu
  \text{ is a word of length }\le d\text{ in }\Lambda\cup\Lambda^{-1}\Bigr\},
\qquad
\tilde{\Lambda}:=B_{N_0}(\Lambda).
\]
Then for any word \((\lambda_{i_1},\dots,\lambda_{i_d})\) with \(d\le N_0\),
\[
\operatorname{supp}\bigl(f_{i_1}*\cdots * f_{i_d}\bigr)
\ \subset\
\bigcup_{\nu\in B_d(\Lambda)} K g_\nu K
\ \subset\
\bigcup_{\nu\in \tilde{\Lambda}} K g_\nu K,
\]
so only the finitely many elements \(\{g_\nu\}_{\nu\in\tilde\Lambda}\) occur in the support
considerations needed to verify the relations \(R_j\).

\paragraph{\textit{Conjugation control.}}
Enlarge \(\ell_m\) if necessary so that for all \(\ell\ge \ell_m\) and all \(\nu\in\tilde\Lambda\),
the elements \(g_\nu\) and \(g'_\nu\) normalize the depth-\(\ell\) congruence subgroups \(K_\ell\) and \(K'_\ell\).
(This is exactly the normalizer-stability input used in \cite[\S4]{ganapathy2022}; since \(\tilde\Lambda\) is finite,
one can and does choose \(\ell_m\) uniformly for all \(\nu\in\tilde\Lambda\).)
Thus we have:

\begin{lemma}\label{lem:conj}
For every \(\nu\in\tilde{\Lambda}\) and every \(\ell\ge \ell_m\), one has
\[
g_\nu K_\ell g_\nu^{-1}=K_\ell,
\qquad
g'_\nu K'_\ell (g'_\nu)^{-1}=K'_\ell.
\]
\end{lemma}

\paragraph{\textit{Volume comparison.}}
To transport the convolution structure constants appearing in the relations \(R_j\), it suffices (by the
support bound above) to compare, for \(\lambda,\kappa,\mu\in\tilde{\Lambda}\),
\[
   \vol\!\bigl(K_m g_\lambda K_m \;\cap\; g_\kappa K_m g_\mu K_m\bigr).
\]
This is achieved by the following standard stability statement (cf.\ \cite[Lem.~4.6 and the argument on p.~321]{ganapathy2022}),
which we state in a form tailored to our notation:

\begin{lemma}[Volume Stability]\label{lem:volume}
Fix \(\ell\ge \ell_m\) and normalize Haar measures so that \(\vol(K_\ell)=\vol(K'_\ell)=1\).
Let \(g,h,k\in\{g_\nu\}_{\nu\in\tilde\Lambda}\) and let \(g',h',k'\) be the corresponding elements in \(G'(F')\).
Then
\[
   \vol\!\bigl(K_m g K_m \cap k h K_m\bigr)
   \;=\;
   \vol\!\bigl(K'_m g' K'_m \cap k' h' K'_m\bigr).
\]
\end{lemma}

\begin{proof}
By Lemma~\ref{lem:conj}, the subgroup \(K_\ell\) is normalized by \(g,h,k\), and \(K'_\ell\) is normalized by \(g',h',k'\).
Since \(\ell\ge m\), we have \(K_\ell\triangleleft K_m\) and \(K'_\ell\triangleleft K'_m\). Hence both sets
\(K_m g K_m\) and \(k h K_m\) are unions of left cosets of \(K_\ell\), and therefore so is their intersection
\(I:=K_m g K_m\cap k h K_m\). Thus
\[
I=\bigsqcup_{i=1}^t x_i K_\ell
\quad\text{for some }t\in\Z_{\ge 0},
\qquad
\vol(I)=t\cdot \vol(K_\ell)=t,
\]
and similarly on the \(F'\)-side with \(I'\) and \(t'\).

Consider the images of these sets in the finite quotient by \(K_\ell\). Because \(K_\ell\triangleleft K_m\) and
\(g,h,k\) normalize \(K_\ell\), the double cosets \(K_m g K_m\) and \(k h K_m\) project to well-defined subsets of the
finite set \(K_m\backslash G(F)/K_\ell\), and \(|I/K_\ell|=t\).
The truncation isomorphism at level \(\ell\) (coming from \(\alpha_\ell\) and the compatibility of the chosen representatives
\(g_\nu\leftrightarrow g'_\nu\)) identifies the corresponding finite quotients on the \(F\) and \(F'\) sides and carries
the projected intersection to the projected intersection. Hence \(t=t'\), and since \(\vol(K_\ell)=\vol(K'_\ell)=1\),
we get \(\vol(I)=\vol(I')\).
\end{proof}

\paragraph{\textit{Transport of relations and construction of \(\kappa_\ell\).}}
Define \(f_i':=\one_{K'_m g'_{\lambda_i}K'_m}\) and a \(\C\)-algebra map on generators
\[
   \kappa_{\ell}: {\mathscr{H}} \longrightarrow {\mathscr{H}}',
   \qquad f_i\longmapsto f_i' \ (1\le i\le p).
\]
By Lemma~\ref{lem:volume} (applied to all convolution products appearing in the relations \(R_j\), which only involve
elements from \(\tilde\Lambda\)), the structure constants needed to evaluate each \(R_j\) are identical on the \(F\) and \(F'\) sides.
Thus \(R_j(f_1',\dots,f_p')=0\) for every \(j\), so the universal property of the presentation shows that \(\kappa_\ell\) is
well-defined.

By symmetry (swapping \(F\) and \(F'\)) we obtain a homomorphism \(\kappa_\ell':{\mathscr{H}}'\to {\mathscr{H}}\) sending
\(f_i'\mapsto f_i\). The composites \(\kappa_\ell'\circ \kappa_\ell\) and \(\kappa_\ell\circ\kappa_\ell'\) fix the chosen generators,
hence are the identity maps. Therefore \(\kappa_\ell\) is an algebra isomorphism. By construction it is canonical (it depends only
on the fixed congruence datum and the compatible choices of sections as in \cite[Lem.~2.4]{ganapathy2022}).
\end{proof}

\begin{lemma}
\label{lem:idempotents-transport}
For every non-negative rational $r\leq m$, the Deligne-Kazhdan isomorphism
\[
\kappa_{\ell} : \cH\!\bigl(G(F),K_m\bigr) \xrightarrow{\;\sim\;} \cH\!\bigl(G(F'),K_m'\bigr)
\]
sends the central idempotent $e_{r}=\mathbf{1}_{K_{r}}/\!\vol(K_{r})$ to the idempotent $e'_{r}=\mathbf{1}_{K'_{r}}/\!\vol(K'_{r})$ where \(\Phi_{G,x}(m)\leq \ell\) and $F$ and $F'$ are $\ell$-close.
\end{lemma}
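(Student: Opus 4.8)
The plan is to compute $\kappa_\ell(\mathbf{1}_{K_r})$ directly and then compare volumes. The two substantive inputs are: the truncated parahoric isomorphism $\alpha_m=\Psi_{x,m}\colon K/K_m\xrightarrow{\sim}K'/K'_m$ of Theorem~\ref{thm:depth-comp-HH} (available since $\Phi_{G,x}(m)\le\ell$), which is a group isomorphism and which, being respectful of restriction to the minimal congruence filtration, restricts to an isomorphism $K_r/K_m\xrightarrow{\sim}K'_r/K'_m$ for every $0\le r\le m$ (one uses $\Phi_{G,x}(r)\le\Phi_{G,x}(m)\le\ell$); and the fact, read off from the construction of $\kappa_\ell$ in Theorem~\ref{thm:deligne-kazhdan-Hecke-isomorphism}, that $\kappa_\ell$ restricted to the subalgebra of functions supported on the parahoric $K=G(F)_{x,0}$ is the isomorphism induced by $\alpha_m$.

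First I would make the elementary reductions. Since $0\le r\le m$ and the congruence filtration is decreasing with $K_m$ normal in $K$, we have $K_m\subseteq K_r\subseteq K_0=K$ with $K_m$ normal in $K_r$; hence $K_r$ is $K_m$-bi-invariant, $\mathbf{1}_{K_r}$ lies in $\mathcal{H}(G(F),K_m)$, and from $K_r=\bigsqcup_{\gamma\in K_r/K_m}\gamma K_m$ we get the finite sum $\mathbf{1}_{K_r}=\sum_{\gamma\in K_r/K_m}\mathbf{1}_{\gamma K_m}$. With the Haar measure normalised by $\vol(K_m)=1$, the span of $\{\mathbf{1}_{\gamma K_m}\}_{\gamma\in K/K_m}$ — equivalently, the functions in $\mathcal{H}(G(F),K_m)$ supported on $K$ — is a unital subalgebra canonically identified with the group algebra $\mathbb{C}[K/K_m]$ via $\mathbf{1}_{\gamma K_m}\leftrightarrow\gamma K_m$; under this identification $\mathbf{1}_{K_r}$ is the characteristic function of the normal subgroup $K_r/K_m$, so (after dividing by $\vol(K_r)=[K_r:K_m]$) a central idempotent of $\mathbb{C}[K/K_m]$, the idempotency being the trivial identity $\mathbf{1}_{K_r}*\mathbf{1}_{K_r}=\vol(K_r)\mathbf{1}_{K_r}$.

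Next I would feed in the parahoric compatibility and conclude. For $g\in K$ one has $K_m g K_m=gK_m$ (normality of $K_m$ in $K$), and the construction of $\kappa_\ell$ declares $\kappa_\ell(\mathbf{1}_{K_m g K_m})=\mathbf{1}_{K'_m g'K'_m}$ with $g'$ the matching representative over $F'$; for $g\in K$ the $\Omega_S^\sigma$-component is trivial, so $g'K'_m=\alpha_m(gK_m)$, i.e. $\kappa_\ell|_{\mathbb{C}[K/K_m]}$ is the group-algebra isomorphism induced by $\alpha_m$. Therefore
\[
\kappa_\ell(\mathbf{1}_{K_r})=\sum_{\gamma\in K_r/K_m}\mathbf{1}_{\alpha_m(\gamma)K'_m}=\mathbf{1}_{\bigsqcup_{\gamma}\alpha_m(\gamma)K'_m}=\mathbf{1}_{K'_r},
\]
the last equality using $\alpha_m(K_r/K_m)=K'_r/K'_m$. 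The same bijection gives $\vol(K_r)=[K_r:K_m]=[K'_r:K'_m]=\vol(K'_r)$ (both Hecke algebras carrying $\vol(K_m)=\vol(K'_m)=1$), whence $\kappa_\ell(e_r)=\kappa_\ell(\mathbf{1}_{K_r})/\vol(K_r)=\mathbf{1}_{K'_r}/\vol(K'_r)=e'_r$.

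The one point requiring real care — and which I expect to be the main obstacle — is the compatibility used above: that $\kappa_\ell$, restricted to the functions supported on $K$, is induced by $\alpha_m$ rather than by some other isomorphism $\mathbb{C}[K/K_m]\xrightarrow{\sim}\mathbb{C}[K'/K'_m]$. Since $\kappa_\ell$ is defined in Theorem~\ref{thm:deligne-kazhdan-Hecke-isomorphism} only on the algebra generators $f_i=\mathbf{1}_{K_m g_{\lambda_i}K_m}$, one must verify that the induced map on the subalgebra reached by those products of the $f_i$ landing inside $K$ is exactly $\alpha_m$. I would settle this by tracking the construction: the double-coset representatives over $F$ and $F'$ are chosen compatibly — the $\Omega_S^\sigma$-direction via the Kottwitz sections $p,p'$ of \cite[Lemma~2.4]{ganapathy2022}, and the ``within $K$'' direction via the relation $x'\equiv x\pmod{\alpha_m}$ used in the Volume Stability Lemma~\ref{lem:volume} — so that $\kappa_\ell(\mathbf{1}_{K_m g K_m})=\mathbf{1}_{K'_m g'K'_m}$ holds for every $g$ in the family; restricting this to $g\in K$, where $g$ is its own representative modulo $K_m$ and $g'\equiv g$ forces $g'K'_m=\alpha_m(gK_m)$ (using that $\alpha_m$ is a homomorphism), gives precisely the required statement.
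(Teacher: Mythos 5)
Your proposal is correct and follows essentially the same route as the paper: decompose $\mathbf{1}_{K_r}$ into cosets of $K_m$ inside $K_r$, transfer them via the truncated parahoric isomorphism of Theorem~\ref{thm:depth-comp-HH} (the paper uses $\Psi_{x,r}$ together with the single-coset/double-coset bijection, you use the restriction of $\alpha_m$ to $K_r/K_m$ — the same thing), and conclude $\kappa_\ell(\mathbf{1}_{K_r})=\mathbf{1}_{K'_r}$ plus equality of volumes. Your closing discussion of why $\kappa_\ell$ restricted to functions supported on $K$ is the map induced by $\alpha_m$ makes explicit a compatibility the paper only asserts (``compatible with the identifications of normalizer representatives''), which is a welcome clarification but not a different argument.
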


\begin{proof}
Put $K:=G(F)_{x,0}$ and $K_r:=G(F)_{x,r}$, likewise $K':=G'(F')_{x',0}$ and $K'_r:=G'(F')_{x',r}$.
Since $r\le m$, we have $K_m\subset K_r$ and $K'_m\subset K'_r$. Since $\ell\ge \Phi_{G,x}(m)\ge \Phi_{G,x}(r)$, Theorem~\ref{thm:depth-comp-HH} provides an isomorphism
\[
\Psi_{x,r}:\ K/K_r \xrightarrow{\ \sim\ } K'/K'_r,
\]
functorial in $(G,x)$ and compatible with the identifications of normalizer
representatives used to index $K_m$--double cosets.
\smallskip
Write the characteristic function of $K_r$ as a finite disjoint sum over
$K_m$--double cosets inside $K_r$:
\[
\mathbf{1}_{K_r}\;=\;\sum_{D\in\,K_m\backslash K_r/K_m} \mathbf{1}_D
\qquad\text{in }\cH\!\bigl(G(F),K_m\bigr).
\]
The natural map
\[
K_r/K_m \;\longrightarrow\; K_m\backslash K_r/K_m,\qquad gK_m\mapsto K_m g K_m
\]

is a bijection, i.e.,
\[
K_m\backslash K_r/K_m \;\cong\; K_r/K_m \;\cong\; K_m\backslash K_r.
\]
Therefore, employing the isomorphism 
\[
\varphi_\ell: K_r / K_m \xrightarrow{\sim} K_r' / K_m' ,
\]
obtained out of \(\Psi_{x,r}\), we get,
\[
\kappa_\ell\!\left(\mathbf{1}_{K_r}\right)
\;=\;\sum_{D'\in\,K'_m\backslash K'_r/K'_m} \mathbf{1}_{D'}=\;\sum_{D'\in\,K'_r/K'_m} \mathbf{1}_{D'}
\;=\;\mathbf{1}_{K'_r},
\]
where $D'$ denotes the transferred $K'_m$--double coset.
Since $\vol K_{\ell}=\vol K'_{\ell}=1$, $\varphi_\ell$ implies $\vol K_{r}=\vol K'_{r}$. Hence $\kappa_\ell(e_r)=e'_r$.

\end{proof}

\subsection{The Hecke-module realization and the transfer functor $\Kaz_{\ell}$}\label{sec:def-Kaz}

Let $J\subset G(F)$ be a compact open subgroup and let $\tau$ be a smooth finite--dimensional
representation of $J$. Put
\[
  \cH := \cH(G(F),\tau) \;=\; \End_{G(F)}\!\bigl(\mathrm{c\text{-}ind}_{J}^{G(F)} \tau\bigr),
  \qquad
  \cH' := \cH\bigl(G'(F'),\tau'\bigr).
\]
Define the full subcategory of smooth $G(F)$--representations
\[
  \cR_\tau\bigl(G(F)\bigr)
  \;:=\; \{\text{smooth } G(F)\text{-representations generated by their }\tau\text{-isotypic subspace}\}.
\]
The functor
\[
  \mathbf{M}_\ell:\ \cR_\tau\bigl(G(F)\bigr) \longrightarrow \cH\text{-}\mathrm{Mod},\qquad
  \mathbf{M}_\ell(\pi):=\Hom_{J}(\tau,\pi),
\]
endowed with the natural (left) $\cH$--action by precomposition, is exact and faithful and induces a
bijection on isomorphism classes of irreducible objects. In general $\mathbf{M}_\ell$ need not be
full nor essentially surjective; it is an equivalence precisely when $(J,\tau)$ is a Bushnell--Kutzko type.

\medskip
Now fix a vertex $x\in\mathcal{B}^{\mathrm{red}}(G,F)$ and let $x'=\mathcal{B}_\ell(x)\in\mathcal{B}^{\mathrm{red}}(G',F')$ as in
\S \ref{sec:hecke_iso_bernstein}. For $r\ge0$ with $\Phi_{G,x}(r)\le \ell$, set
\[
  K:=G(F)_{x,0},\quad K_r:=G(F)_{x,r}, \qquad
  K':=G'(F')_{x',0},\quad K'_r:=G'(F')_{x',r}.
\]
In this application we take $J=K_r$, $\tau=\mathbf{1}_{K_r}$ and $J'=K'_r$, $\tau'=\mathbf{1}_{K'_r}$.
Let
\[
  \kappa_\ell:\ \cH\bigl(G(F),\mathbf{1}_{K_r}\bigr)\xrightarrow{\ \sim\ } \cH\bigl(G'(F'),\mathbf{1}_{K'_r}\bigr)
\]
be the Hecke--algebra isomorphism of Theorem~\ref{thm:deligne-kazhdan-Hecke-isomorphism}.

\begin{definition}[Kazhdan transfer at level $r$]
\label{def:Kaz-transfer}
For $\pi\in\cR_{\mathbf{1}_{K_r}}\!\bigl(G(F)\bigr)$, regard $\mathbf{M}_\ell(\pi)$ as a left $\cH'$--module via $\kappa_\ell$
(\,$h'\!\cdot m := (\kappa_\ell^{-1}(h'))m$\,). Define
\[
  \Kaz_\ell^{(G,x;r)}(\pi)
  \;:=\;
  \bigl(\mathrm{c\text{-}ind}_{K'_r}^{G'(F')}\mathbf{1}_{K'_r}\bigr)\ \otimes_{\ \cH'}\ \bigl(\mathbf{M}_\ell(\pi)\bigr)_{\kappa_\ell}
  \ \in\ \cR_{\mathbf{1}_{K'_r}}\!\bigl(G'(F')\bigr).
\]
Then $\Kaz_\ell^{(G,x;r)}$ is exact and satisfies a natural isomorphism
\[
  \mathbf{M}'_\ell\bigl(\Kaz_\ell^{(G,x;r)}(\pi)\bigr)\ \cong\ \kappa_\ell\!\bigl(\mathbf{M}_\ell(\pi)\bigr)
  \qquad(\pi\in\cR_{\mathbf{1}_{K_r}}\!(G(F))).
\]
In particular, on irreducibles it realizes the bijection induced by $\kappa_\ell$ on simple
modules.
\end{definition}
Define the depth \emph{relative to $x$} by
\[
  \dep_{x}(\pi)\;:=\;\inf\{\, r\ge 0 \mid \pi^{K_{x,r}}\neq 0\ \text{and}\ \pi^{K_{x,r+}}=0\,\},
\]
where, as before, $K_{x,r}$ denotes the minimal--congruence filtration at $x$ and
$\pi^{K_{x,r+}}:=\bigcup_{t>r}\pi^{K_{x,t}}$.
\begin{proposition}
\label{prop:depth-preserved-Kaz}
For an irreducible smooth representation $\pi$ of $G(F)$ set
\[
  \pi' \;:=\; \Kaz^{(G,x;\,\cdot)}_{\ell}(\pi).
\]
Then
\[
   \dep_{x}(\pi)\;=\;\dep_{x'}(\pi').
\]
\end{proposition}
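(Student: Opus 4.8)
The plan is to reduce the equality of vertex‑depths to an equality of dimensions of filtration‑fixed subspaces, $\dim_\CC\pi^{K_{x,r}}=\dim_\CC(\pi')^{K'_{x',r}}$, for $r$ running over a band that already detects the depth, and then to obtain that equality by transporting the averaging idempotents through the Hecke‑algebra isomorphism $\kappa_\ell$ by means of Lemma~\ref{lem:idempotents-transport}. First I would fix the level: write $m$ for the admissible level at which the Kazhdan transfer is applied to $\pi$, so that $\pi'=\Kaz_\ell^{(G,x;m)}(\pi)$; by the definability of the transfer one has $\Phi_{G,x}(m)\le\ell$ and $\pi\in\cR_{\mathbf{1}_{K_m}}(G(F))$, hence $\pi^{K_{x,m}}\neq0$ and so $\dep_x(\pi)\le m$. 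Put $K:=G(F)_{x,0}$, $K_r:=G(F)_{x,r}$, and write $K',K'_r$ for the $F'$‑counterparts. Then $\pi'$ is a well‑defined irreducible object of $\cR_{\mathbf{1}_{K'_m}}(G'(F'))$, and Definition~\ref{def:Kaz-transfer} furnishes an isomorphism of $\cH'$‑modules
\[
\mathbf{M}'_\ell(\pi')\;\cong\;\kappa_\ell\bigl(\mathbf{M}_\ell(\pi)\bigr),\qquad \mathbf{M}_\ell(\pi)=\Hom_{K_m}(\mathbf{1}_{K_m},\pi)\cong\pi^{K_m},
\]
where $\kappa_\ell=\kappa_{\ell,x}$ is the isomorphism of Theorem~\ref{thm:deligne-kazhdan-Hecke-isomorphism}.

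The second step realizes each fixed subspace as an idempotent cut of the corresponding Hecke module. For $0\le r\le m$ the function $e_r:=\mathbf{1}_{K_r}/\vol(K_r)$ is bi‑$K_m$‑invariant (since $K_m\subseteq K_r$ and $K_r$ is a group), hence a self‑adjoint idempotent lying in $\cH(G(F),K_m)=\mathbf{1}_{K_m}*\cH(G(F))*\mathbf{1}_{K_m}$; the operator by which $e_r$ acts on $\pi$ is the projection onto $\pi^{K_r}$, so under the identification $\mathbf{M}_\ell(\pi)\cong\pi^{K_m}$ one gets $e_r\cdot\mathbf{M}_\ell(\pi)=\pi^{K_r}$, and symmetrically $e'_r\cdot\mathbf{M}'_\ell(\pi')=(\pi')^{K'_r}$. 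Since $\Phi_{G,x}(m)\le\ell$, Lemma~\ref{lem:idempotents-transport} applies and gives $\kappa_\ell(e_r)=e'_r$ for all $0\le r\le m$. Combining this with the module isomorphism above,
\[
(\pi')^{K'_r}=e'_r\cdot\mathbf{M}'_\ell(\pi')=\kappa_\ell(e_r)\cdot\kappa_\ell\bigl(\mathbf{M}_\ell(\pi)\bigr)=\kappa_\ell\bigl(e_r\cdot\mathbf{M}_\ell(\pi)\bigr)=\kappa_\ell\bigl(\pi^{K_r}\bigr),
\]
so $\dim_\CC(\pi')^{K'_{x',r}}=\dim_\CC\pi^{K_{x,r}}$ for every $0\le r\le m$.

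To finish, note that $\dep_x$ is, by its definition, determined by the vanishing/non‑vanishing pattern of $r\mapsto\pi^{K_{x,r}}$. This pattern coincides for $\pi$ and $\pi'$ on the whole of $[0,m]\supseteq[0,\dep_x(\pi)]$; moreover $(\pi')^{K'_m}=\kappa_\ell(\pi^{K_m})\neq0$ shows $\dep_{x'}(\pi')\le m$ as well, so the depth of $\pi'$ is also detected within $[0,m]$. Hence both depths are read off from the same pattern and $\dep_x(\pi)=\dep_{x'}(\pi')$.

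I expect the only genuinely delicate point to be the bookkeeping around the admissible band: one must verify that $\dep_x(\pi)$ really lies in the range $[0,m]$ on which $\kappa_\ell$ and the idempotent transport of Lemma~\ref{lem:idempotents-transport} are simultaneously valid — which, as above, is forced by the mere definability of $\Kaz_\ell^{(G,x;\cdot)}$ on $\pi$, so the constraint $\Phi_{G,x}(\dep_x(\pi))\le\ell$ is not an extra hypothesis but precisely the one the argument uses — together with the mild boundary analysis, via left‑continuity of the minimal‑congruence filtration, in the extremal case $m=\dep_x(\pi)$. Everything else — that $e_r$ is an idempotent of $\cH(G(F),K_m)$, that it acts as the projection onto $K_r$‑invariants, and that $\kappa_\ell$ intertwines the module structures — is formal and already packaged in Theorem~\ref{thm:deligne-kazhdan-Hecke-isomorphism}, Lemma~\ref{lem:idempotents-transport}, and Definition~\ref{def:Kaz-transfer}.
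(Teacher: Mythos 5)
Your proposal is correct and follows essentially the same route as the paper: transport the idempotents $e_r\mapsto e'_r$ through $\kappa_\ell$ (Lemma~\ref{lem:idempotents-transport}), deduce $\dim\pi^{K_{x,r}}=\dim(\pi')^{K'_{x',r}}$ on the admissible range, and read off the depth from the resulting vanishing/non-vanishing pattern. Your extra bookkeeping on the band $[0,m]$ and the identification $e_r\cdot\mathbf{M}_\ell(\pi)=\pi^{K_r}$ only makes explicit what the paper's proof leaves implicit.
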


\begin{proof}
By the (level--$\ell$) Hecke--algebra isomorphism attached to $x$ and $x'$, for every $s\ge 0$
in the truncation range the central idempotent $e_{x,s}$ of $\cH\!\bigl(G(F),K_{x,s}\bigr)$
transfers to $e'_{x',s}$ in $\cH\!\bigl(G'(F'),K'_{x',s}\bigr)$.
Hence, for all such $s$,
\begin{equation*}
   \dim \pi^{K_{x,s}} \;=\; \dim \pi'^{K'_{x',s}}.
\end{equation*}
Since $K_{x,s+}=\bigcup_{t>s}K_{x,t}$ and likewise for $x'$, we have
\[
   \pi^{K_{x,s+}}=\bigcup_{t>s}\pi^{K_{x,t}}
   \quad\text{and}\quad
   \pi'^{K'_{x',s+}}=\bigcup_{t>s}\pi'^{K'_{x',t}},
\]
and the preceding equality at each $t$ implies
\[
   \bigl(\pi^{K_{x,s}}\neq 0\bigr)\iff\bigl(\pi'^{K'_{x',s}}\neq 0\bigr),
   \qquad
   \bigl(\pi^{K_{x,s+}}=0\bigr)\iff\bigl(\pi'^{K'_{x',s+}}=0\bigr).
\]
Taking the infimum over $s$ in the defining condition yields
$\dep_{x}(\pi)=\dep_{x'}(\pi')$.
\end{proof}
Note that since there always exists a vertex $x$ with $\dep(\pi)=\dep_{x}(\pi)$. By choosing this vertex for Kazhdan transfer, we obtain
\[
   \dep(\pi')\;=\;\dep(\pi).
\]

\begin{proposition}
\label{prop:Kaz-preserves-SC}
Let $F$ and $F'$ be $\ell$-close local fields. For an irreducible representation $\pi$ of $G(F)$ of \(\dep_x(\pi)\leq m\), where \(\Phi_{G,x}(m)\leq \ell\), write $\pi':=\Kaz_{\ell}(\pi)$. Then
\[
     \pi \text{ is supercuspidal } \Longleftrightarrow \pi' \text{ is supercuspidal}.
\]
\end{proposition}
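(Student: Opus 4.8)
The plan is to recast supercuspidality of $\pi$ as a finiteness property of the \emph{support of the associated Hecke module} and then transport that property along $\kappa_\ell$. Write $K:=G(F)_{x,0}$, $K_m:=G(F)_{x,m}$, $\cH:=\cH(G(F),K_m)$, and similarly on the $F'$-side; recall $\mathbf M_\ell(\pi)=\pi^{K_m}$. First I would establish the criterion: \emph{an irreducible $\pi$ with $\dep_x(\pi)\le m$ is supercuspidal if and only if the set}
\[
   \Supp_{\cH}\!\bigl(\pi^{K_m}\bigr):=\bigl\{\,D\in K_m\backslash G(F)/K_m \;:\; \one_D\cdot\pi^{K_m}\neq 0 \,\bigr\}
\]
\emph{is finite modulo $Z(G)(F)$}. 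To prove this, recall that $\pi$ is supercuspidal iff every matrix coefficient $c_{v,\widetilde v}(g)=\langle\pi(g)v,\widetilde v\rangle$ is compactly supported modulo $Z(G)(F)$; since $\dep_x(\pi)\le m$, $\pi$ is generated by $\pi^{K_m}$ and $\widetilde\pi$ by $\widetilde\pi^{K_m}$, and compact-support-mod-centre is preserved under the finitely many left/right translates needed to pass from $K_m$-fixed vectors to arbitrary ones, so it suffices to test coefficients with $v\in\pi^{K_m}$, $\widetilde v\in\widetilde\pi^{K_m}$. Such a $c_{v,\widetilde v}$ is $K_m$-biinvariant, hence a function on $K_m\backslash G(F)/K_m$, and on $D=K_m g K_m$ it is the constant $[D:K_m]^{-1}\langle\one_D\cdot v,\widetilde v\rangle$; since $\one_D\cdot v\in\pi^{K_m}$ and the pairing $\pi^{K_m}\times\widetilde\pi^{K_m}\to\CC$ is perfect, $c_{v,\widetilde v}$ vanishes on $D$ for all $K_m$-fixed $v,\widetilde v$ exactly when $\one_D\cdot\pi^{K_m}=0$. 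Using $\dim\pi^{K_m}<\infty$ to make the exceptional set of $D$ uniform in $(v,\widetilde v)$, the criterion follows.

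Next I would transport everything along $\kappa_\ell$. By Theorem~\ref{thm:deligne-kazhdan-Hecke-isomorphism} and Definition~\ref{def:Kaz-transfer}, $\kappa_\ell$ is an algebra isomorphism carrying the standard basis element $\one_D$ to $\one_{D'}$, where $D\mapsto D'$ is the bijection $K_m\backslash G(F)/K_m\xrightarrow{\sim}K'_m\backslash G'(F')/K'_m$ built from the compatible Kottwitz sections and the Cartan parametrisation of Theorem~\ref{thm:KS-double-cosets} (refined to $K_m$-level via $\alpha_m\colon K/K_m\cong K'/K'_m$), and it identifies $\pi^{K_m}=\mathbf M_\ell(\pi)$ with $\pi'^{K'_m}=\mathbf M'_\ell(\pi')$ as modules over the two isomorphic algebras. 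Consequently $\kappa_\ell$ sends $\mathrm{Ann}_{\cH}(\pi^{K_m})$ to $\mathrm{Ann}_{\cH'}(\pi'^{K'_m})$ and therefore matches supports: $D\in\Supp_{\cH}(\pi^{K_m})$ iff $D'\in\Supp_{\cH'}(\pi'^{K'_m})$.

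It then remains to see that ``finite modulo the centre'' transfers. Composing with the finite-to-one projection $K_m\backslash G(F)/K_m\to K\backslash G(F)/K\cong W(G,S)_{x,0}\backslash X_*(S)_I^{\sigma}$ of Theorem~\ref{thm:KS-double-cosets}, a set of $K_m$-double cosets is finite modulo $Z(G)(F)$ exactly when its image in $W(G,S)_{x,0}\backslash X_*(S)_I^{\sigma}$ is bounded modulo the image of $X_*(Z(G))_I^{\sigma}$. The congruence datum $D_\ell$ identifies $X_*(S)_I^{\sigma}$ with $X_*(S')_I^{\sigma}$, the central sublattice with its counterpart, and the finite group $W(G,S)_{x,0}$ with $W(G',S')_{x',0}$ (Proposition~\ref{prop:vertex-transfer}), so this boundedness condition is preserved verbatim, and $\pi,\pi'$ have matching central characters on the bounded part of the centre seen by $K_m$. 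Combining with the previous paragraph, $\Supp_{\cH}(\pi^{K_m})$ is finite modulo $Z(G)(F)$ iff $\Supp_{\cH'}(\pi'^{K'_m})$ is finite modulo $Z(G')(F')$, i.e. $\pi$ is supercuspidal iff $\pi'$ is.

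I expect the main obstacle to be verifying that $\kappa_\ell$ is genuinely \emph{support-preserving} on the full standard basis $\{\one_D\}_{D\in K_m\backslash G(F)/K_m}$ — the proof of Theorem~\ref{thm:deligne-kazhdan-Hecke-isomorphism} isolates a finite generating set, so one must confirm that the induced bijection of double cosets, assembled from $\alpha_m$ and the Kottwitz sections, carries \emph{every} $\one_D$ to $\one_{D'}$ — together with the combinatorial bookkeeping in the third paragraph: pinning down precisely which sublattice of $X_*(S)_I^{\sigma}$ records ``modulo the centre'', checking it is intrinsic to the transferred root datum with its Galois action (so the relative-versus-absolute discrepancy is harmless), and ensuring the finite-to-one projection to $K\backslash G(F)/K$ interacts correctly with $D\mapsto D'$. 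The matrix-coefficient reduction in the first step and the module identification $\mathbf M'_\ell(\pi')\cong\kappa_\ell(\mathbf M_\ell(\pi))$ are routine given the results already in place.
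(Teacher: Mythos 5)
Your proposal is correct and follows essentially the same route as the paper: the paper likewise tests supercuspidality on a matrix coefficient built from $K_m$-fixed vectors (equivalently, on the $\cH(G(F),K_m)$-module $\pi^{K_m}$) and transfers its support coset-by-coset through the Cartan parametrisation and the truncated isomorphisms underlying $\kappa_\ell$. The support-preservation of $\kappa_\ell$ on the full double-coset basis that you flag as the main obstacle is precisely the point the paper also relies on (implicitly, via the matched Cartan representatives $g_{\lambda_j}\mapsto g'_{\lambda_j}$ and $\alpha_\ell$), so your reformulation via $\Supp_{\cH}(\pi^{K_m})$ is a repackaging of the same argument rather than a genuinely different one.
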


\begin{proof}
A smooth irreducible representation \(\pi\) of \(G(F)\) is supercuspidal if and only if all its matrix coefficients are compactly supported modulo the center \(Z(G)(F)\). That is, for any \(v \in \pi\), \(v^\vee \in \pi^\vee\), the function  
\[
h_\pi(g) = \langle v^\vee, \pi(g)v \rangle
\] satisfies:  
\[
\mathrm{Supp}(h_\pi) \subset Z(G)^{\circ}(F) \cdot C \quad \text{for some compact set } C \subset G(F).
\]
Now, choose non-zero \(K_{m}\)-fixed vectors:
 \(v \in \pi^{K_{m}}\),\qquad  \(v^\vee \in (\pi^\vee)^{K_{m}}\),
and define the matrix coefficient:
\[
h_\pi(g) = \langle v^\vee, \pi(g)v \rangle.
\]
Under the Kazhdan transfer, we obtain:
 \(v^{\prime} = \mathrm{Kaz}_{m}(v) \in (\pi^{\prime})^{K_{m}^{\prime}}\), \qquad  \(v^{\vee\prime} = \mathrm{Kaz}_{\ell}(v^\vee) \in (\pi^{\prime\vee})^{K_{\ell}^{\prime}}\),
and the matrix coefficient:
  \[
  h_{\pi^{\prime}}(g^{\prime}) = \langle v^{\vee\prime}, \pi^{\prime}(g^{\prime}) v^{\prime} \rangle.
  \]
  Since \(C\) is compact, it is contained in a finite union of double cosets:
\[
C \subset \bigcup_{j=1}^M K_{m} h_j K_{m}.
\]
Fix \(\ell\ge \Phi_{G,x}(m)\) and the isomorphisms
\[
\alpha_m:K/K_m\xrightarrow{\ \sim\ }K'/K'_m,\qquad
\alpha_\ell:K/K_\ell\xrightarrow{\ \sim\ }K'/K'_\ell,
\]
with the natural projection square commuting. Choose Cartan representatives
\(g_{\lambda_j}\in G(F)\) for the \(K_m\)-double cosets meeting \(C\), and let
\(g'_{\lambda_j}\in G'(F')\) be the corresponding representatives provided by
Theorem~\ref{thm:depth-comp-HH} at level \(m\).
Since \(K_m\supset K_\ell\) and \(K_\ell\lhd K\), we can write
\[
K_m\,g_{\lambda_j}\,K_m\ \subset\ \bigsqcup_{i=1}^{t_j} y_{j,i}\,g_{\lambda_j}\,K_\ell
\qquad (y_{j,i}\in K).
\]
Define \(y'_{j,i}\in K'\) by \(\pi'_\ell(y'_{j,i})=\alpha_\ell\bigl(\pi_\ell(y_{j,i})\bigr)\) and set
\(x'_{j,i}:=y'_{j,i}\,g'_{\lambda_j}\).
Then
\[
x'_{j,i}K'_\ell
= y'_{j,i}\,g'_{\lambda_j}K'_\ell
\subset K'_m\,g'_{\lambda_j}\,K'_\ell
\subset K'_m\,g'_{\lambda_j}\,K'_m,
\]
so
\[
\Supp(h_{\pi'})\ \subset\ Z(G')^\circ(F')\cdot
\bigcup_{j=1}^M\ \bigcup_{i=1}^{t_j} x'_{j,i}K'_\ell\ \subset\
Z(G')^\circ(F')\cdot\bigcup_{j=1}^M K'_m\,g'_{\lambda_j}\,K'_m,
\]
which is compact modulo \(Z(G')^\circ(F')\).
The converse follows from symmetry.

\end{proof}
\begin{lemma}\label{lem:kaz-preserves-genericity}
Let \(F\) and \(F^{\prime}\) be \(\ell\)-close non-archimedean local fields, and let \(G\) be a quasi-split connected reductive group over \(F\) and \(D_\ell\) be a congruence datum. Fix a Whittaker datum \(\mathbf{w} = (B, \psi)\) for \(G\), where \(B\) is a Borel subgroup defined over \(F\) and \(\psi: U(F) \to \mathbb{C}^\times\) is a non-degenerate character of the unipotent radical \(U\) of \(B\). Let \(\pi\) be an irreducible \(\mathbf{w}\)-generic supercuspidal representation of \(G(F)\) of depth \(r\). For  \(\ell \geq \max\left( \Phi_{G,x}(r), \max_{a \in \Phi(G,A)} \phi_{F_a/F}(e_a r_a) \right)\), where \(r_a = \operatorname{depth}(\psi_a)\), the following hold:  
1. There is a canonically transferred Whittaker datum \(\mathbf{w}' = (B', \psi')\) for \(G'/F'\) obtained via \(D_\ell\). 
2. The representation \(\pi' := \mathrm{Kaz}_\ell(\pi)\) is \(\mathbf{w}'\)-generic.  
\end{lemma}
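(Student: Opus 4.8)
The plan is to deduce (1) directly from the congruence datum $D_\ell$, and to deduce (2) by transporting a Bessel-type realization of a Whittaker functional along the Hecke-algebra isomorphism $\kappa_\ell$. First I would transfer the datum. The pair $(B,U)$ is determined by the $I_F$-stable positive system of absolute roots underlying $\mathbf{w}$; since $D_\ell$ carries a $\Del_\ell$-equivariant identification $\Phi_\ell$ of absolute root data (Definition~\ref{def:congruence-datum}, Proposition~\ref{prop:vertex-transfer}), this positive system transports to a Borel $B'\subset G'$ over $F'$ with unipotent radical $U'$. A non-degenerate $\psi$ is equivalent to the family $\{\psi_a\}$ of its restrictions to the simple relative root subgroups $U_a$, each $\psi_a$ a nontrivial additive character of $U_a(F)$ of depth $r_a$. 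Because $\phi_{F_a/F}(e_a r_a)\le\ell$, the $\ell$-closeness of $F$ and $F'$ furnishes, exactly as in Lemma~\ref{lem:root-filtration-HH}, an isomorphism $\mathcal{O}_{F_a}/\mathfrak{p}_{F_a}^{\lceil e_a r_a\rceil}\cong\mathcal{O}_{F_{a'}}/\mathfrak{p}_{F_{a'}}^{\lceil e_a r_a\rceil}$, through which $\psi_a$ transports to a nontrivial character $\psi_{a'}$ of $U_{a'}(F')$; assembling these gives a non-degenerate $\psi'$ and the datum $\mathbf{w}'=(B',\psi')$, which depends only on $D_\ell$. This proves (1).

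For (2) I would first set up a finite-level model of a Whittaker functional. Since $\pi$ is supercuspidal, for any $v\in\pi$ and $v^\vee\in\pi^\vee$ the matrix coefficient $h_{v^\vee,v}(g)=\langle v^\vee,\pi(g)v\rangle$ restricts to a compactly supported function on $U(F)$ (vanishing of the Jacquet module along any proper parabolic), so
\[
   \Lambda_{v^\vee}(v)\ :=\ \int_{U(F)}h_{v^\vee,v}(u)\,\psi(u)^{-1}\,du
\]
converges and defines a twisted bilinear form on $\pi\times\pi^\vee$; it is nonzero precisely because $\pi$ is generic, and by Shalika's multiplicity-one it factors as $\Lambda_{v^\vee}(v)=c\,\Lambda(v)\,\Lambda^\vee(v^\vee)$ with $c\neq 0$, where $\Lambda$ (resp.\ $\Lambda^\vee$) is a $\psi$-Whittaker functional on $\pi$ (resp.\ a $\psi^{-1}$-Whittaker functional on $\pi^\vee$). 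Taking the ambient vertex $x$ to realize $\dep_x(\pi)=\dep(\pi)=r=:m$ and to be compatible with $\mathbf{w}$ — possible for a generic $\pi$, and then $\Lambda$ is already nonzero on the depth-$r$ part $\pi^{K_{x,m}}$ — I may choose $v\in\pi^{K_{x,m}}$, $v^\vee\in(\pi^\vee)^{K_{x,m}}$ with $\Lambda_{v^\vee}(v)\neq 0$. For such vectors $h_{v^\vee,v}$ is bi-$K_{x,m}$-invariant, hence its restriction to $U(F)$ is supported on finitely many double cosets $K_{x,m}\,g_{\lambda_j}\,K_{x,m}$ (in the Cartan bookkeeping of Theorem~\ref{thm:KS-double-cosets} and the proof of Theorem~\ref{thm:deligne-kazhdan-Hecke-isomorphism}), on each of which it is constant, whence
\[
   \Lambda_{v^\vee}(v)\ =\ \sum_j h_{v^\vee,v}(g_{\lambda_j})\int_{U(F)\,\cap\,K_{x,m}g_{\lambda_j}K_{x,m}}\psi(u)^{-1}\,du .
\]

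Next I would transport this along $\kappa_\ell$. Identifying $\mathbf{M}_\ell(\pi)=\pi^{K_{x,m}}$ and, via the standard anti-involution of $\cH(G(F),K_{x,m})$ which $\kappa_\ell$ respects, $\mathbf{M}_\ell(\pi^\vee)=(\pi^\vee)^{K_{x,m}}$, the isomorphism $\kappa_\ell$ of Theorem~\ref{thm:deligne-kazhdan-Hecke-isomorphism} (equivalently the functor $\Kaz_\ell$ of Definition~\ref{def:Kaz-transfer}) sends $v\mapsto v'\in(\pi')^{K'_{x',m}}$, $v^\vee\mapsto v^{\vee\prime}$ with $\pi'=\Kaz_\ell(\pi)$, and $h_{v^\vee,v}(g_{\lambda_j})=\langle v^{\vee\prime},\pi'(g'_{\lambda_j})v'\rangle$ for the transferred Cartan representatives $g'_{\lambda_j}$. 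Since both the matching $g_{\lambda_j}\leftrightarrow g'_{\lambda_j}$ (Theorem~\ref{thm:depth-comp-HH}) and the character $\psi\leftrightarrow\psi'$ (first step) are built from the same $D_\ell$, and since $\ell\ge\max\{\Phi_{G,x}(r),\ \max_{a}\phi_{F_a/F}(e_a r_a)\}$ is large enough that $\psi$ and the relevant unipotent filtration pieces are visible modulo level $\ell$, the identification $U(F)\cap K_{x,m}g_{\lambda_j}K_{x,m}\xrightarrow{\sim}U'(F')\cap K'_{x',m}g'_{\lambda_j}K'_{x',m}$ intertwines $\psi$ with $\psi'$; with the volume stability of Lemma~\ref{lem:volume} this gives
\[
   \int_{U'(F')}\langle v^{\vee\prime},\pi'(u')v'\rangle\,\psi'(u')^{-1}\,du'\ =\ \Lambda_{v^\vee}(v)\ \neq\ 0 .
\]
Hence $\pi'$ carries a nonzero $\psi'$-Whittaker functional, i.e.\ it is $\mathbf{w}'$-generic; running the argument with $F$ and $F'$ exchanged gives the converse, so genericity is preserved in both directions.

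The hard part will be the compatibility invoked in the last paragraph: that the transfer of $\psi$ built in the first step from the $\ell$-closeness of the root-group rings $\mathcal{O}_{F_a}$ agrees, modulo level $\ell$, with the transfer of the unipotent double cosets $K_{x,m}g_\lambda K_{x,m}$ coming from $\kappa_\ell$ and Theorem~\ref{thm:depth-comp-HH}. Making this precise means threading a single congruence datum $D_\ell$ simultaneously through Lemma~\ref{lem:root-filtration-HH} and Theorem~\ref{thm:depth-comp-HH} and checking that the two resulting identifications of $U(F)\cap K_{x,m}g_\lambda K_{x,m}$ coincide; the stated lower bound on $\ell$ — in particular the term $\max_a\phi_{F_a/F}(e_a r_a)$, which records the conductor exponents $\lceil e_a r_a\rceil$ of $\psi$ — is exactly what makes both identifications visible at the truncation level. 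A second, subtler, point is the choice of the ambient vertex $x$ so that it simultaneously realizes $\dep(\pi)$ and is compatible with $\mathbf{w}$ (so that $\Lambda$ does not vanish on $\pi^{K_{x,m}}$); this is available for a generic supercuspidal by taking $x$ in the closure of the facet attached to $\mathbf{w}$, but it must be built into the choice of $S$ (hence of $x=x_S$) in the ambient setup.
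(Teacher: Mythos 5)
Your part (1) is fine and is essentially the paper's argument: the Borel and the simple-root characters are transported through the root-datum identification in $D_\ell$ and the root-group congruence isomorphisms of Lemma~\ref{lem:root-filtration-HH}, with the bound $\ell\ge\max_a\phi_{F_a/F}(e_ar_a)$ making $\psi_a\mapsto\psi'_{a'}$ well defined. The problem is part (2), and it is exactly the point you yourself flag as ``the hard part'': your whole computation hinges on a measure-preserving, $\psi$-to-$\psi'$ intertwining identification of $U(F)\cap K_{x,m}g_\lambda K_{x,m}$ with $U'(F')\cap K'_{x',m}g'_\lambda K'_{x',m}$. Nothing in the paper's toolkit supplies this: Theorem~\ref{thm:depth-comp-HH} and Lemma~\ref{lem:root-filtration-HH} identify quotients of parahoric and root-group filtrations, and Lemma~\ref{lem:volume} compares volumes of intersections of parahoric double cosets inside $G$, but none of these controls how the full unipotent radical $U(F)$ (a non-compact subgroup, not a union of filtration pieces at $x$) meets a double coset $K_{x,m}g_\lambda K_{x,m}$ as a subset of $G(F)$, nor does writing $u=k_1g_\lambda k_2$ let you see the simple-root coordinates of $u$ (hence the value $\psi(u)$) from $(k_1,k_2)$ modulo level $m$. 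The Haar measure $du$ on $U(F)$ is likewise not normalized or compared across the transfer. This is why the paper's proof does not integrate over $U(F)$ at all: it transfers the Whittaker function $W_\pi(g)=\lambda(\pi(g)v)$ coset by coset using only right $K_\ell$-invariance and left $(U,\psi)$-equivariance, and the $\psi=\psi'$ matching is only ever invoked on congruence quotients $U_\alpha(F)_{x,0}/U_\alpha(F)_{x,r_\alpha^+}$, where Lemma~\ref{lem:root-filtration-HH} genuinely applies.

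Two further inputs are asserted without justification. First, the nonvanishing constant $c\neq0$ in the factorization $\Lambda_{v^\vee}(v)=c\,\Lambda(v)\Lambda^\vee(v^\vee)$ (that a generic supercuspidal has a nonzero Bessel-type pairing) is a real theorem, not a formal consequence of Shalika's multiplicity one, and you give no argument or reference. Second, and more seriously, you need $\Lambda$ and $\Lambda^\vee$ to be nonzero on the level-$m$ fixed spaces with $m=r=\dep(\pi)$; a Whittaker functional can a priori vanish on $\pi^{K_{x,r}}$, and your remark that choosing $x$ ``compatible with $\mathbf w$'' makes $\Lambda$ nonzero there is not established (it would require a Rodier/Moy--Prasad-type statement that is nowhere proved or cited, and the level cannot simply be raised because the truncation bound on $\ell$ is tied to the chosen level). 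Until the coset-level identification of $U(F)\cap K_{x,m}g_\lambda K_{x,m}$ and these nonvanishing statements are supplied, the proposal does not yield the lemma.
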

\begin{proof}

Fix a vertex \(x\in\mathcal{B}^{\mathrm{red}}(G,F)\) and write
\(K:=G(F)_{x,0}\), \(K_\ell:=G(F)_{x,\ell}\) with
\(\ell\ge \Phi_{G,x}(m)\). Let \(\alpha_\ell:K/K_\ell\xrightarrow{\sim}K'/K'_\ell\) and
\(\alpha_m:K/K_m\xrightarrow{\sim}K'/K'_m\) be the truncated parahoric
isomorphisms with commuting projection square (Theorem~\ref{thm:depth-comp-HH}).
By Proposition~\ref{prop:vertex-transfer}, a pinning
\(\mathcal P=(G,B,T,\{X_\alpha\}_{\alpha\in\Delta})\) transfers to a pinning
\(\mathcal P'=(G',B',T',\{X'_{\alpha'}\}_{\alpha'\in\Delta'})\) compatible with
the identification of \(\Delta\) with \(\Delta'\).

\smallskip

The pinning \(\mathcal P\) fixes for each \(\alpha\in\Delta\) a root subgroup
parameterization \(x_\alpha:\mathbb{G}_a\!\to U_\alpha\) with
\((dx_\alpha)_0(1)=X_\alpha\). Fix a nontrivial additive character
\(\psi_F:F\to\C^\times\) and constants \(c_\alpha\in F^\times\) (determined by the normalization of \(\{X_\alpha\}\)).
For \(u=\prod_{\alpha\in\Delta}x_\alpha(t_\alpha)\in U(F)\) (ordered by any
fixed convex ordering) define
\[
\psi(u):=\psi_F\!\Bigl(\sum_{\alpha\in\Delta} c_\alpha t_\alpha\Bigr).
\]
This is well defined and generic; thus \(\mathbf w=(B,\psi)\) is the
Whittaker datum attached to \(\mathcal P\). Under transfer,
we get \(x'_{\alpha'}:\mathbb{G}_a\!\to U'_{\alpha'}\) with
\((dx'_{\alpha'})_0(1)=X'_{\alpha'}\) and corresponding constants \(c'_{\alpha'}\),
together with an additive character \(\psi_{F'}\) matched to
\(\psi_F\) at the congruence level \(\ell\). Hence the same formula defines
\(\psi'\) on \(U'(F')\), giving \(\mathbf w'=(B',\psi')\).
In particular, for each simple root, the truncated isomorphisms
\(U_\alpha(F)_{x,0}/U_\alpha(F)_{x,r_\alpha}\!\xrightarrow{\sim}\!
U'_{\alpha'}(F')_{x',0}/U'_{\alpha'}(F')_{x',r_\alpha}\)
(Lemma~\ref{lem:root-filtration-HH}, Theorem~\ref{thm:depth-comp-HH})
transport the characters \(\psi_\alpha\) to \(\psi'_{\alpha'}\), hence \(\psi\) to \(\psi'\) on the relevant congruence quotients.

\smallskip

Let \(\omega_\pi\) (resp.\ \(\omega_{\pi'}\)) be the central character of \(\pi\)
(resp.\ \(\pi'\)). By Theorem~\ref{thm:congruent-isom} the restrictions of
\(\omega_\pi\) and \(\omega_{\pi'}\) to \(Z(G)^\circ(F)\) and
\(Z(G')^\circ(F')\) correspond under transfer. Since \(h_\pi\) has support
compact modulo \(Z(G)^\circ(F)\), choose a compact \(C\subset G(F)\) with
\(\Supp(h_\pi)\subset Z(G)^\circ(F)\!\cdot C\).
Define the descent \(\overline h_\pi\) to \(G(F)/Z(G)^\circ(F)\) by
\(\overline h_\pi(\overline g):=h_\pi(g)\,\omega_\pi(z)^{-1}\) for any
decomposition \(g=zg_C\) with \(z\in Z(G)^\circ(F)\), \(g_C\in C\);
this is well defined and compactly supported. Do the same for \(\pi'\) to get
\(\overline h_{\pi'}\) on \(G'(F')/Z(G')^\circ(F')\). Proving the Whittaker
equivariance for \(\overline h_{\pi'}\) implies it for \(h_{\pi'}\), so we work
with compactly supported functions henceforth and drop the bars from the
notation.

\smallskip

Pick \(v\in\pi^{K_\ell}\) and a nonzero \(\psi\)-Whittaker functional
\(\lambda\in\mathrm{Wh}_\psi(\pi^\vee)\); set
\[
W_\pi(g):=\lambda\bigl(\pi(g)v\bigr)\qquad(g\in G(F)).
\]
Then \(W_\pi(ug)=\psi(u)\,W_\pi(g)\) for all \(u\in U(F)\), and
\(W_\pi(gk)=W_\pi(g)\) for all \(k\in K_\ell\).
Thus \(\Supp(W_\pi)\) is a union of finitely many right \(K_\ell\)-cosets:
\[
\Supp(W_\pi)\ \subset\ \bigcup_{j=1}^M \ \bigcup_{i=1}^{t_j} x_{j,i}K_\ell,
\qquad x_{j,i}\in K\,g_{\lambda_j},
\]
where \(g_{\lambda_j}\) are Cartan representatives for the finitely many
\(K_\ell\)-double cosets meeting a fixed compact set \(C\).
For each \(x_{j,i}\) choose \(x'_{j,i}\in G'(F')\) with
\(\pi'_\ell(x'_{j,i}K'_\ell)=\alpha_\ell\bigl(\pi_\ell(x_{j,i}K_\ell)\bigr)\);
then
\[
\Supp(W_{\pi'})\ \subset\ \bigcup_{j=1}^M \ \bigcup_{i=1}^{t_j} x'_{j,i}K'_\ell,
\qquad\text{and}\qquad g_{\lambda_j}\longleftrightarrow g'_{\lambda_j}
\]
by the Cartan/section compatibility (cf.\ the proof of
Proposition~\ref{prop:Kaz-preserves-SC}). 

\smallskip
Write \(x_{j,i}=y_{j,i}\,g_{\lambda_j}\,k_{j,i}\) with \(y_{j,i}\in K\),
\(k_{j,i}\in K_\ell\). For each simple root \(\alpha\) let \(r_\alpha\ge 0\) be
such that \(\psi_\alpha\) is trivial on \(U_\alpha(F)_{x,r_\alpha^+}\);
choose \(\ell\) with
\(\ell\ge \max_\alpha \phi_{F_\alpha/F}(e_\alpha r_\alpha)\).
Then the truncated maps
\[
U_\alpha(F)_{x,0}\big/U_\alpha(F)_{x,r_\alpha^+}
\longrightarrow
U'_{\alpha'}(F')_{x',0}\big/U'_{\alpha'}(F')_{x',r_\alpha^+}
\]
carry \(x_\alpha(t)\) to \(x'_{\alpha'}(t')\) compatibly with the
characters \(\psi_\alpha\) and \(\psi'_{\alpha'}\). Hence for every
\(u\in U(F)_{x,0}\) with image \(\bar u\) in the product of these congruence
quotients and corresponding \(u'\in U'(F')_{x',0}\) with the same image,
we have
\[
\psi(u)=\psi'(u').
\]
Using right \(K_\ell\)-invariance and the above decompositions we get, for the
corresponding primed data,
\[
W_{\pi'}(u'\,x'_{j,i})
= \lambda'\bigl(\pi'(u')\pi'(x'_{j,i})v'\bigr)
= \psi'(u')\,\lambda'\bigl(\pi'(x'_{j,i})v'\bigr)
= \psi(u)\,W_{\pi}(x_{j,i}),
\]
where \(v'\in(\pi')^{K'_\ell}\), \(\lambda'\in\mathrm{Wh}_{\psi'}\bigl((\pi')^\vee\bigr)\) are the
transfers of \(v,\lambda\) under the Hecke-module isomorphisms induced by
\(\alpha_\ell\) (Theorem~\ref{thm:depth-comp-HH}). Since the right cosets
\(x'_{j,i}K'_\ell\) cover \(\Supp(W_{\pi'})\), this proves that
\(W_{\pi'}\) is \((U',\psi')\)-equivariant:
\[
W_{\pi'}(u'g')=\psi'(u')\,W_{\pi'}(g')\qquad
\bigl(u'\in U'(F'),\ g'\in G'(F')\bigr).
\]

\smallskip
We have produced a nonzero right \(K'_\ell\)-invariant Whittaker function
\(W_{\pi'}\) for \((B',\psi')\). Hence \(\pi'\) is \(\mathbf w'\)-generic.
The converse implication follows by symmetry of the construction.
\end{proof}
\begin{theorem}\label{thm:ell-close-kaz-yu-compatability}
    Let $F$ and $F'$ be $\ell$-close non-archimedean local fields with residue characteristic $p$, and let $G$ be a connected reductive group over $F$ with transfer $G'$ over $F'$ via a congruence datum $D_\ell$. Fix an integer $m \geq 0$. There exists $\ell_\dagger = \ell_\dagger(G, \pi,m) \gg 0$ such that for all $\ell \geq \ell_\dagger$, the following holds:  
For any irreducible supercuspidal representation $\pi$ of $G(F)$ of depth $\leq m$ arising from a Yu datum $\Sigma$,  
\[
\mathrm{Kaz}_\ell^{(G,x;m)}(\pi) = \pi(\Sigma')
\]  
where $\Sigma'$ is the Deligne transfer of $\Sigma$ to $G'$, and $\pi(\Sigma')$ is the supercuspidal representation associated to Yu datum $\Sigma'$ and $x$ is the point that is part of the datum $\Sigma$.

\end{theorem}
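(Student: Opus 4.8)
The plan is to reduce the statement to the already-established depth-zero compatibility (Theorem \ref{thm:ell-close-depth-zero-block} and its proof, in particular the transport of $\rho_L$ under $\Psi_{x,0}$) together with the Hecke-algebra isomorphism of Theorem \ref{thm:deligne-kazhdan-Hecke-isomorphism} at level $m$, by invoking the main structural result of \cite{AFMO24a}\cite{AFMO24b}: the Hecke algebra $\cH(G(F),\tau)$ attached to a Kim--Yu type $(K^d,\tau)$ is canonically isomorphic (as an algebra with its presentation) to the Hecke algebra $\cH(G^0(F),\rho_0)$ attached to a depth-zero type of the twisted Levi $G^0$, and under this isomorphism the Hecke-module realizations $\mathbf{M}_\ell$ match up. First I would fix $\ell_\dagger$ large enough that: $\ell\ge\Phi_{G,x}(m)$; the twisted Levi sequence $(G^i)$, being tamely ramified, transfers to $F'$ via $D_\ell$ with all its structure (this uses Proposition~\ref{prop:vertex-transfer} applied to each $G^i$, together with the close-field transfer of tame tori which is unconditional here); and each generic character $\phi_i$, of some depth $r_i\le m$, transfers via Theorem~\ref{thm:congruent-isom} (note $\Phi_{G^i}(r_i)\le\ell$ for $\ell$ large) to a character $\phi_i'$ of $G^i(F')$ of the same depth, still satisfying Yu's genericity condition — the last point because genericity is a nonvanishing condition on the restriction of $d\phi_i$ to a root subgroup, which is detected at a finite congruence level and hence preserved by $D_\ell$. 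This produces the transferred datum $\Sigma'$ and the representation $\pi(\Sigma')=\text{c-ind}_{K'^d}^{G'(F')}\tau'$.

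Next I would run the comparison at the level of Hecke modules. By the AFMO isomorphism, $\mathbf{M}_\ell(\pi(\Sigma))\cong \mathbf{M}_\ell^{0}(\pi_0)$ as modules over $\cH(G(F),\tau)\cong\cH(G^0(F),\rho_0)$, and similarly on the primed side; and by the depth-zero case (Theorem~\ref{thm:ell-close-depth-zero-block}) the depth-zero block data match: $\rho_0'=\mathrm{Kaz}_\ell^{(G^0,x)}(\rho_0)$ and the depth-zero Hecke algebras are identified compatibly with $D_\ell$. The key point is that the identification $\cH(G(F),\tau)\cong\cH(G^0(F),\rho_0)$ of \cite[5.3.6]{AFMO24a} is canonical and expressed purely in terms of the root-datum combinatorics of the twisted Levi sequence, the $q$-parameters, the finite group $\Omega$ and its cocycle $\mu$ — all of which are, by the argument already carried out in the proof of Theorem~\ref{thm:ell-close-depth-zero-block} (relevance of hyperplanes, equality of $q$-parameters via Theorem~\ref{thm:depth-comp-HH}, transport of $\Omega$ and $[\mu]$), preserved under $D_\ell$ for $\ell\gg0$. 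Chasing the two identifications around the square, one gets that $\mathrm{Kaz}_\ell^{(G,x;m)}(\pi)$ and $\pi(\Sigma')$ have isomorphic $\mathbf{M}'_\ell$-modules, both irreducible; since $\mathbf{M}'_\ell$ induces a bijection on isomorphism classes of irreducibles in $\cR_{\mathbf{1}_{K'_m}}(G'(F'))$ (note both representations have depth $\le m$, hence lie in this subcategory by Proposition~\ref{prop:depth-preserved-Kaz}), this forces $\mathrm{Kaz}_\ell^{(G,x;m)}(\pi)\cong\pi(\Sigma')$.

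I expect the main obstacle to be bookkeeping rather than a conceptual difficulty: one must verify that the \emph{choice} of refined datum in the Kim--Yu construction ($\tau$ built from $\rho_0$ and the $\hat\phi_i$ via Heisenberg--Weil representations of the symplectic spaces $V_i$) transfers compatibly — i.e. that the symplectic modules $(V_i,\langle\,,\rangle_{\phi_i})$ over the residue field and the associated Weil representations are matched by $D_\ell$, which requires $p\neq 2$ (already assumed) and $\ell$ large enough to see the relevant Moy--Prasad quotients $G^i(F)_{x,s_i}/G^i(F)_{x,s_i^+}$; this is where one genuinely uses Theorem~\ref{thm:depth-comp-HH} for the \emph{non-maximal} parahorics/Moy--Prasad groups appearing between $G^0$ and $G$, and where the constant $\ell_\dagger$ depending on $(G,\pi,m)$ is pinned down. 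A secondary subtlety is that the AFMO equivalence is stated for a fixed type, so one must check the transferred type $(K'^d,\tau')$ is precisely the Kim--Yu type attached to $\Sigma'$ and not merely an abstractly isomorphic Hecke-algebra-theoretic substitute — this follows from the functoriality of the construction in the input datum, but should be stated carefully.
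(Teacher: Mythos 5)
Your proposal diverges from the paper's argument at the decisive step, and the divergence opens a genuine gap. The paper does \emph{not} reduce this theorem to the AFMO isomorphism at all (that reduction is reserved for Theorem~\ref{thm:ell-close-arbitrary-blocks}); instead it works directly inside the level-$m$ Hecke algebra $\cH\bigl(G(F),G(F)_{x,m}\bigr)$: it expands the type idempotent $e_{({}^{\circ}K^{d},{}^{\circ}\tau)}$ as a sum of indicator functions of $G(F)_{x,m}$-double cosets weighted by traces of ${}^{\circ}\tau$, shows via the truncated isomorphism $\tilde\Psi_{x,\ell}\colon K^d/G(F)_{x,m}\xrightarrow{\sim}K'^d/G'(F')_{x',m}$ that the transport of $\tau$ is exactly the representation $\tau'$ produced by Yu's construction from $\Sigma'$, and then verifies $\kappa_\ell\bigl(e_{({}^{\circ}K^{d},{}^{\circ}\tau)}\bigr)=e_{({}^{\circ}K'^{d},{}^{\circ}\tau')}$ by matching traces, dimensions and volumes (using Lemma~\ref{lem:idempotents-transport}). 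That idempotent identity is what forces $\mathrm{Kaz}_\ell^{(G,x;m)}(\pi)=\pi(\Sigma')$, because $\mathrm{Kaz}_\ell^{(G,x;m)}$ is \emph{defined} through $\kappa_\ell$ on $\cH(G(F),\mathbf{1}_{K_m})$.

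Your square chase never makes contact with $\kappa_\ell$. The identifications you invoke --- $\cH(G(F),\tau)\cong\cH(G^0(F),\rho_0)$ on each side, plus the depth-zero matching of Theorem~\ref{thm:ell-close-depth-zero-block} --- live at the level of type Hecke algebras and say nothing, a priori, about how the level-$m$ isomorphism $\kappa_\ell$ acts on the type-isotypic pieces of $\pi^{K_m}$. (Indeed, the assertion ``$\mathbf{M}_\ell(\pi(\Sigma))\cong\mathbf{M}^0_\ell(\pi_0)$ as modules over $\cH(G(F),\tau)\cong\cH(G^0(F),\rho_0)$'' conflates two different algebras: $\mathbf{M}_\ell(\pi(\Sigma))=\pi(\Sigma)^{K_m}$ is a module over $\cH(G(F),\mathbf{1}_{K_m})$, not over the type algebra.) Without proving that $\kappa_\ell$ carries the type idempotent of $(K^d,\tau)$ to that of $(K'^d,\tau')$ --- equivalently, that the close-field transfer of $\tau$ through the truncated parahoric isomorphisms \emph{is} the Kim--Yu representation attached to $\Sigma'$, the point you relegate to a ``secondary subtlety'' --- the chase only shows that $\pi(\Sigma)$ and $\pi(\Sigma')$ correspond under \emph{some} abstract identification, not under $\mathrm{Kaz}_\ell^{(G,x;m)}$; an unramified twist, for instance, is not excluded. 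So the missing compatibility is not bookkeeping: it is the heart of the proof, and it is exactly the computation the paper supplies. Your treatment of the transfer of the datum itself (twisted Levi sequence, depth preservation of the $\phi_i$, genericity detected at finite congruence level, GE2 from the hypothesis on $p$) is consistent with the paper's and is fine.
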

\begin{proof}
Let $\Sigma = (\pi_0, (G^i)_{i=0}^d, (\phi_i)_{i=0}^d)$ be a Yu datum for $\pi$, and let 
\[
\pi = \pi(\Sigma) = \mathrm{c\text{-}ind}_{K^d}^{G(F)} \tau.
\]
be the associated irreducible supercuspidal representation.
Transfer $G^i$ to $(G^i)'$ over $F'$ via $\mathscr{B}_\ell$ (Proposition~\ref{prop:vertex-transfer}), and transfer $x \in \mathcal{B}(G^0, F)$ to $x' \in \mathcal{B}((G^0)', F')$.
By Propositions~\ref{prop:Kaz-preserves-SC} and \ref{prop:depth-preserved-Kaz}, the transfer 
\[
\pi_0' := \mathrm{Kaz}_\ell(\pi_0) \in \Irr((G^0)'(F')),
\]
is depth-zero supercuspidal. Let \(r_i = \text{depth}(\phi_i)\). By Theorem \ref{thm:depth-comp-HH}, for \(\ell \geq \Phi_{G^i,x_i}(r_i)\) (where \(x_i \in \mathcal{B}(G^i, F)\) is a vertex), there is a canonical isomorphism:
     \[
     \Psi_{x_i, r_i}: \frac{G^i(F)_{x_i,0}}{G^i(F)_{x_i, r_i+}} \stackrel{\sim}{\longrightarrow} \frac{(G^i)'(F')_{x_i',0}}{(G^i)'(F')_{x_i', r_i+}},
     \]
where \(x_i' = \mathscr{B}_\ell(x_i)\). Since \(\phi_i\) is trivial on \(G^i(F)_{x_i, r_i+}\), it factors through \(G^i(F)_{x_i,0}/G^i(F)_{x_i, r_i+}\). Define \(\phi_i'=\phi_i\circ\Psi_{x_i, r_i}^{-1}\). Extend \(\phi_i'\) trivially to \((G^i)'(F')\). We now show that \(\phi_i'\) is \((G^{i+1})'\)-generic.  Recall that genericity is defined for elements in \(\operatorname{Lie}^*(Z^{i\circ})\)  via conditions GE1 and GE2 of \cite[\S 8]{Yu2001} where \(Z^i=Z(G^i)\) . The character \(\phi_i\) corresponds to an element \(X^*_i \in (\operatorname{Lie}^*(Z^i)^\circ)(F)_{-r_i}\) (via the Moy-Prasad isomorphism), and \(\phi_i'\) corresponds to an element \(X^{*\prime}_i \in (\operatorname{Lie}^*((Z^i)')^\circ)(F')_{-r_i}\). We show \(X^{*\prime}_i\) satisfies GE1 and GE2 for \(((G^i)', (G^{i+1})')\).  GE1 requires \(\operatorname{ord}(X_i^{*\prime}(H_{a^\prime})) = -r_i\) for all \(a^\prime \in \Phi((G^{i+1})^\prime, T^\prime, \overline{F^\prime}) \setminus \Phi((G^{i})^\prime, T^\prime, \overline{F^\prime})\), where \(T^\prime\) is a maximal tamely ramified \(F^\prime\)-torus and \(H_a:=da^\vee(1)\). Since \(\phi_i\) is generic, \(X^*_i\) satisfies GE1 for \((G^i, G^{i+1})\). The transfer preserves root systems: the simplicial isomorphism \(\mathscr{B}_\ell\) and congruence datum \(D_\ell\) induce a bijection \(\Phi_\ell : \Phi(G, T, F) \xrightarrow{\sim} \Phi(G', T', F')\). For \(a' \in \Phi(G^{{i+1}\prime}, T^{\prime}, F^\prime) \setminus \Phi((G^i)', T', F')\), let \(a = \Phi_\ell^{-1}(a') \in \Phi(G, T, F) \setminus \Phi(G^i, T, F)\). Then
  \[
  \operatorname{ord}_{F'}\big( X^{*\prime}_i(H_{a'}) \big) = \operatorname{ord}_F \big( X^*_i(H_a) \big) = -r_i,
  \]
since \(\Psi_{x_i, r_i}\) preserves valuations (as it is induced by the ring isomorphism \(\mathcal{O}_F / \mathfrak{p}_F^\ell \cong \mathcal{O}_{F'} / \mathfrak{p}_{F'}^\ell\)). Thus, \(X^{*\prime}_i\) satisfies GE1 for \(((G^i)', (G^{i+1})')\). The condition GE2 holds automatically by \cite[Lemma 8.1]{Yu2001} since \(p\) is odd and not a torsion prime for the dual root datum.
Thus, the transferred datum
\[
\Sigma' = (\pi_0', ((G^i)')_{i=0}^d, (\phi_i')_{i=0}^d),
\]
is a Yu datum.
\vspace{1em}
\\
\textbf{Transfer of the Idempotent and Hecke Algebra Element:}
Let \(({}^{\circ}K^{d},{}^{\circ}\tau)\) (resp. \(({}^{\circ}K^{d\prime},{}^{\circ}\tau^\prime)\)) be the \([G,\pi]_G\)-type (resp. \([G^\prime,\pi^\prime]_G\)-type)  constructed out of \((K^{d},\tau)\) (resp. \((K'^{d},\tau')\)) by using Yu's construction out of \(\Sigma\) (resp. \(\Sigma'\)). 
Let \(m = \lceil r \rceil\), where \(r = \mathrm{depth}(\pi)\). Then \(G(F)_{x,m} \subset K^d\), and the idempotent \(e_{({}^{\circ}K^{d}, {}^{\circ}\tau)} \in \mathscr{H}(G, G(F)_{x,m})\) satisfies:
\[
e_{({}^{\circ}K^{d}, {}^{\circ}\tau)}(g) = 
\begin{cases}
\frac{\dim({}^{\circ}\tau)}{\mathrm{vol}({}^{\circ}K^{d})} \tr_{{}^{\circ}\tau}(g^{-1}) & \text{if } g \in {}^{\circ}K^{d}, \\
0 & \text{otherwise},
\end{cases}
\]
which expands as:
\[
e_{({}^{\circ}K^{d}, {}^{\circ}\tau)} = 
\frac{\dim({}^{\circ}\tau)}{\mathrm{vol}({}^{\circ}K^{d})} 
\sum_{g \in {}^{\circ}K^{d} / G(F)_{x,m}} 
\tr_{{}^{\circ}\tau}(g^{-1}) \cdot \mathbf{1}_{G(F)_{x,m} g G(F)_{x,m}}.
\]

By Theorem \ref{thm:depth-comp-HH}, for \(\ell \geq \Phi_{G,x}(m)\), we have a canonical isomorphism:
\[
\Psi_{x,\ell}: \frac{G(F)_{x}}{G(F)_{x,m}} \stackrel{\sim}{\longrightarrow} \frac{G'(F')_{x'}}{G'(F')_{x',m}}.\tag{A}
\]
which restricts to give:
  \[
  \frac{G^i(F)_{x,r_i}}{G^i(F)_{x,m}} \xrightarrow{\sim} \frac{(G^i)'(F')_{x',r_i}}{(G^i)'(F')_{x',m}}.
  \tag{B}\]
Isomorphisms (A) and (B) imply,
\[
\tilde{\Psi}_{x,\ell}: \frac{K^d}{G^d(F)_{x,m}} \xrightarrow{\sim} \frac{K'^d}{(G^d)'(F')_{x',m}}.
\]
From the inductive construction of Yu out of datum \(\Sigma'\), it readily follows that the representation of \(K'^d\) obtained by composing:
\[
K'^d \to K'^d/G'(F')_{x',m} \xrightarrow{\tilde{\Psi}_{x,\ell}^{-1}} K^d/G(F)_{x,m} \xrightarrow{(\tau,V)} \operatorname{GL}(V),
\]
is the same as \(\tau'\). Set, \(\pi' = \operatorname{c-ind}_{K'^d}^{G'(F')} \tau'\).
By compatibility of transfers:
\[
\tr_{{}^{\circ}\tau}(g^{-1}) = \tr_{{}^{\circ}\tau'}(\tilde{\Psi}_{x,\ell}(g)^{-1}) 
\quad \text{for } g \in {}^{\circ}K^{d} / G(F)_{x,m}. \tag{C}
\]
By Lemma \ref{lem:idempotents-transport}.,
\[
\kappa_\ell(\mathbf{1}_{G(F)_{x,m} g G(F)_{x,m}}) 
= \mathbf{1}_{G'(F')_{x',m} \psi(g) G'(F')_{x',m}}. \tag{D}
\]
Also notice that \(\dim({}^{\circ}\tau) = \dim({}^{\circ}\tau')\) and \(\mathrm{vol}({}^{\circ}K^{d}) = \mathrm{vol}({}^{\circ}K'^{d})\). Combining these with equations (C) and (D), we obtain,
\begin{align*}
\kappa_\ell(e_{({}^{\circ}K^{d}, {}^{\circ}\tau)}) 
&= \frac{\dim({}^{\circ}\tau)}{\mathrm{vol}({}^{\circ}K^{d})} 
   \sum_{g \in {}^{\circ}K^{d} / G(F)_{x,m}} 
   \tr_{{}^{\circ}\tau}(g^{-1}) \cdot 
   \kappa_\ell(\mathbf{1}_{G(F)_{x,m} g G(F)_{x,m}}) \\
&= \frac{\dim({}^{\circ}\tau')}{\mathrm{vol}({}^{\circ}K'^{d})} 
   \sum_{g' \in {}^{\circ}K'^{d} / G'(F')_{x',m}} 
   \tr_{{}^{\circ}\tau'}(g'^{-1}) \cdot 
   \mathbf{1}_{G'(F')_{x',m} g' G'(F')_{x',m}} \\
&= e_{({}^{\circ}K'^{d}, {}^{\circ}\tau')}.
\end{align*}
It follows that \(\kappa_\ell\) induces an isomorphism of Hecke algebras \(\mathcal{H}(G,{}^\circ\tau)\xrightarrow{\sim}\mathcal{H}(G',{}^\circ\tau') \) associated to types \([G,\pi]_G\) and \([G',\pi']_G'\) respectively. 
Thus,
\[
\mathrm{Kaz}_\ell(\pi) = \pi'.
\]

\textbf{Uniformity in \(\ell\):}

Set
\[
\ell_\dagger = \max\left\{ \Phi_{G,x}(r), \ell_{\mathrm{depth-zero}}, \ell_{\mathrm{Hecke}}(m) \right\},
\]
where:
\begin{itemize}
  \item \(\ell_{\mathrm{depth-zero}}\) from Theorem~\ref{thm:ell-close-depth-zero-block},
  \item \(\ell_{\mathrm{Hecke}}(m)\) from Theorem~\ref{thm:deligne-kazhdan-Hecke-isomorphism}.
\end{itemize}

For \(\ell \geq \ell_\dagger\), all required transfers and isomorphisms apply, and the identity
\[
\mathrm{Kaz}_\ell(\pi(\Sigma)) = \pi'(\Sigma')
\]
holds.

\end{proof}

\section*{Acknowledgment}
The author benefited from correspondence with Thomas Haines and Sandeep Varma who pointed out errors in an earlier draft of this article. It's a pleasure to thank them both. The author is also grateful to Jiu-Kang Yu for helpful correspondence and for conveying to me an observation of Cheng-Chiang Tsai concerning a gap in an earlier formulation.

\bibliographystyle{alpha}
\bibliography{depth_tori_refs}

@article{MishraPattanayak2015,
  author  = {Mishra, Manish and Pattanayak, Basudev},
  title   = {A note on depth preservation},
  journal = {J. Ramanujan Math. Soc.},
  volume  = {30},
  number  = {4},
  pages   = {397--402},
  year    = {2019}
}

@article{Yu2001,
  author  = {Yu, Jiu-Kang},
  title   = {Construction of tame supercuspidal representations},
  journal = {J. Amer. Math. Soc.},
  volume  = {14},
  number  = {3},
  pages   = {579--622},
  year    = {2001}
}

@article{AV24,
  title={On congruent isomorphisms for tori},
  author={Aubert, Anne-Marie and Varma, Sandeep},
  journal={Transactions of the American Mathematical Society},
  year={2025}
}

@incollection{Deligne84,
  author    = {Deligne, Pierre},
  title     = {Les corps locaux de caractéristique $0$, limites de corps locaux},
  booktitle = {Représentations des groupes réductifs sur un corps local},
  series    = {Travaux en Cours},
  publisher = {Hermann},
  pages     = {119--157},
  year      = {1984}
}

@article{KalethaPrasad2016,
  author  = {Kaletha, Tasho and Prasad, Gopal},
  title   = {Bruhat-{T}its theory: a new approach},
  journal = {Cambridge University Press},
  year    = {2016}
}

@book{KimYu2018,
    AUTHOR = {Kim, Ju-Lee and Yu, Jiu-Kang},
     TITLE = {Construction of tame types},
 BOOKTITLE = {Representation theory, number theory, and invariant theory},
    SERIES = {Progr. Math.},
    VOLUME = {323},
     PAGES = {337--357},
 PUBLISHER = {Birkh\"auser/Springer, Cham},
      YEAR = {2017},
      ISBN = {978-3-319-59727-0; 978-3-319-59728-7},
 }

@book{SerreLocal,
  author    = {Serre, Jean-Pierre},
  title     = {Local Fields},
  series    = {Graduate Texts in Mathematics},
  volume    = {67},
  publisher = {Springer},
  year      = {1979},
  note      = {see Chap.\,IV, §3 for the Herbrand function and the tower formula}
}

@article{KalethaRegSC,
  author  = {Kaletha, Tasho},
  title   = {Regular Supercuspidal Representations},
  journal = {Ann. of Math.},
  volume  = {193},
  number  = {3},
  pages   = {1077--1139},
  year    = {2021}
}

@article{ganapathy2019,
  author = {Ganapathy, Radhika},
  title = {Congruences of parahoric group schemes},
  journal = {Algebra Number Theory},
  volume = {13},
  number = {6},
  pages = {1475--1499},
  year = {2019}
}

@article{ganapathy2022,
  author = {Ganapathy, Radhika},
  title = {A {H}ecke algebra isomorphism over close local fields},
  journal = {Pacific J. Math.},
  volume = {319},
  number = {2},
  pages = {307--332},
  year = {2022}
}

@article{hainesrostami2010,
  author = {Haines, Thomas J. and Rostami, Sean},
  title = {The {S}atake isomorphism for special maximal parahoric Hecke algebras},
  journal = {Represent. Theory},
  volume = {14},
  pages = {264--284},
  year = {2010}
}

@article{kazhdan1986,
  author = {Kazhdan, David},
  title = {Representations of groups over close local fields},
  journal = {J. Analyse Math.},
  volume = {47},
  pages = {175--179},
  year = {1986}
}

@article{HR10,
  author = {Haines, Thomas and Rapoport, Michael},
  title = {On parahoric subgroups},
  journal = {Advances in Mathematics},
  volume = {219},
  number = {1},
  pages = {188--198},
  year = {2008}
}

@article {Kot97,
    AUTHOR = {Kottwitz, Robert E.},
     TITLE = {Isocrystals with additional structure. {II}},
   JOURNAL = {Compositio Math.},
  FJOURNAL = {Compositio Mathematica},
    VOLUME = {109},
      YEAR = {1997},
    NUMBER = {3},
     PAGES = {255--339},
      ISSN = {0010-437X,1570-5846},
   MRCLASS = {20G25 (11S25 14F30 14L05)},
  MRNUMBER = {1485921},
MRREVIEWER = {Guy\ Rousseau},
       DOI = {10.1023/A:1000102604688},
       URL = {https://doi.org/10.1023/A:1000102604688},
}

@article{BT84,
  title={Groupes r{\'e}ductifs sur un corps local: II. Sch{\'e}mas en groupes. Existence d'une donn{\'e}e radicielle valu{\'e}e},
  author={Bruhat, Fran{\c{c}}ois and Tits, Jacques},
  journal={Publications Math{\'e}matiques de l'IH{\'E}S},
  volume={60},
  pages={5--184},
  year={1984}
}

@incollection {yu2002smooth,
    AUTHOR = {Yu, Jiu-Kang},
     TITLE = {Smooth models associated to concave functions in
              {B}ruhat-{T}its theory},
 BOOKTITLE = {Autour des sch\'emas en groupes. {V}ol. {III}},
    SERIES = {Panor. Synth\`eses},
    VOLUME = {47},
     PAGES = {227--258},
 PUBLISHER = {Soc. Math. France, Paris},
      YEAR = {2015},
      ISBN = {978-2-85629-820-6},
   MRCLASS = {14L15 (20E42 20G25)},
  MRNUMBER = {3525846},
MRREVIEWER = {Zinovy\ Reichstein},
}

@article {MM15,
    AUTHOR = {Mishra, Manish},
     TITLE = {Langlands parameters associated to special maximal parahoric
              spherical representations},
   JOURNAL = {Proc. Amer. Math. Soc.},
  FJOURNAL = {Proceedings of the American Mathematical Society},
    VOLUME = {143},
      YEAR = {2015},
    NUMBER = {5},
     PAGES = {1933--1941},
      ISSN = {0002-9939,1088-6826},
   MRCLASS = {11R39 (20G05)},
  MRNUMBER = {3314103},
MRREVIEWER = {Michael\ M.\ Schein},
       DOI = {10.1090/S0002-9939-2014-12392-6},
       URL = {https://doi.org/10.1090/S0002-9939-2014-12392-6},
}

@article{aubert2017,
  title={On {L}-packets and depth for $\mathrm{SL}_2(K)$ and its inner form},
  author={Aubert, Anne-Marie and Mendes, Sergio and Plymen, Roger and Solleveld, Maarten},
  journal={International Journal of Number Theory},
  volume={13},
  number={10},
  pages={2545--2568},
  year={2017},
  publisher={World Scientific}
}

@article{ChaiYu2001,
  title={Congruences of N{\'e}ron models for tori and the Artin conductor},
  author={Chai, Ching-Li and Yu, Jiu-Kang},
  journal={Annals of mathematics},
  pages={347--382},
  year={2001},
  publisher={JSTOR}
}

@article{ohara2025,
      title={On parameters of {H}ecke algebras for $p$-adic groups}, 
      author={Kazuma Ohara},
    journal = {arXiv preprint arxiv:2505.16040},
      year={2025},
}

@article{AFMO24a,
  title={Structure of Hecke algebras arising from types},
  author={Adler, Jeffrey D and Fintzen, Jessica and Mishra, Manish and Ohara, Kazuma},
  journal={arXiv preprint arXiv:2408.07801},
  year={2024}
}

@misc{kaletha-prasad2024errata,
  author       = {Tasho Kaletha and Gopal Prasad},
  title        = {Corrections and Additions to the Book ``Bruhat--Tits Theory: A New Approach''},
  howpublished = {\url{https://websites.umich.edu/~kaletha/v1errata.pdf}},
  note         = {Version of November 1, 2024. See Appendix B, page 50.},
  year         = {2024}
}

@article{AFMO24b,
  title={Reduction to depth zero for tame p-adic groups via Hecke algebra isomorphisms},
  author={Adler, Jeffrey D and Fintzen, Jessica and Mishra, Manish and Ohara, Kazuma},
  journal={arXiv preprint arXiv:2408.07805},
  year={2024}
}

@article{FS2025,
      title={Construction of tame supercuspidal representations in arbitrary residue characteristic}, 
      author={Jessica Fintzen and David Schwein},
      journal = {arXiv preprint arxiv:2501.18553},
      year={2025}
}

@article {GanVar17,
    AUTHOR = {Ganapathy, Radhika and Varma, Sandeep},
     TITLE = {On the local {L}anglands correspondence for split classical
              groups over local function fields},
   JOURNAL = {J. Inst. Math. Jussieu},
  FJOURNAL = {Journal of the Institute of Mathematics of Jussieu. JIMJ.
              Journal de l'Institut de Math\'ematiques de Jussieu},
    VOLUME = {16},
      YEAR = {2017},
    NUMBER = {5},
PAGES = {987--1074},
      ISSN = {1474-7480,1475-3030},
   MRCLASS = {11F70 (22E50)},
  MRNUMBER = {3709003},
MRREVIEWER = {Nadir\ Matringe},
       DOI = {10.1017/S147474801500033X},
       URL = {https://doi.org/10.1017/S147474801500033X},
}

@article {Gan2015,
    AUTHOR = {Ganapathy, Radhika},
     TITLE = {The local {L}anglands correspondence for {$\rm GSp_4$} over
              local function fields},
   JOURNAL = {Amer. J. Math.},
  FJOURNAL = {American Journal of Mathematics},
    VOLUME = {137},
      YEAR = {2015},
    NUMBER = {6},
     PAGES = {1441--1534},
      ISSN = {0002-9327,1080-6377},
   MRCLASS = {11F66 (11F80 22E50)},
  MRNUMBER = {3432266},
MRREVIEWER = {Michael\ M.\ Schein},
       DOI = {10.1353/ajm.2015.0045},
       URL = {https://doi.org/10.1353/ajm.2015.0045},
}

@article {Lem01,
    AUTHOR = {Lemaire, Bertrand},
     TITLE = {Repr\'esentations g\'en\'eriques de {${\rm GL}_N$} et corps
              locaux proches},
   JOURNAL = {J. Algebra},
  FJOURNAL = {Journal of Algebra},
    VOLUME = {236},
      YEAR = {2001},
    NUMBER = {2},
     PAGES = {549--574},
      ISSN = {0021-8693,1090-266X},
   MRCLASS = {22E50 (22E35)},
  MRNUMBER = {1813491},
MRREVIEWER = {Rebecca\ Herb},
       DOI = {10.1006/jabr.2000.8524},
       URL = {https://doi.org/10.1006/jabr.2000.8524},
}

@article {HM08,
    AUTHOR = {Hakim, Jeffrey and Murnaghan, Fiona},
     TITLE = {Distinguished tame supercuspidal representations},
   JOURNAL = {Int. Math. Res. Pap. IMRP},
  FJOURNAL = {International Mathematics Research Papers. IMRP},
      YEAR = {2008},
    NUMBER = {2},
     PAGES = {Art. ID rpn005, 166},
      ISSN = {1687-3017,1687-3009},
   MRCLASS = {22E50},
  MRNUMBER = {2431732},
MRREVIEWER = {Shaun\ A. R. Stevens},
}

@article {Kal16,
    AUTHOR = {Kaletha, Tasho},
     TITLE = {Rigid inner forms of real and {$p$}-adic groups},
   JOURNAL = {Ann. of Math. (2)},
  FJOURNAL = {Annals of Mathematics. Second Series},
    VOLUME = {184},
      YEAR = {2016},
    NUMBER = {2},
     PAGES = {559--632},
      ISSN = {0003-486X,1939-8980},
   MRCLASS = {20G25 (14L17 20G10)},
  MRNUMBER = {3548533},
MRREVIEWER = {Wen-Wei\ Li},
       DOI = {10.4007/annals.2016.184.2.6},
       URL = {https://doi.org/10.4007/annals.2016.184.2.6},
}

@article {ABPS18,
    AUTHOR = {Aubert, Anne-Marie and Moussaoui, Ahmed and Solleveld,
              Maarten},
     TITLE = {Generalizations of the {S}pringer correspondence and cuspidal
              {L}anglands parameters},
   JOURNAL = {Manuscripta Math.},
  FJOURNAL = {Manuscripta Mathematica},
    VOLUME = {157},
      YEAR = {2018},
    NUMBER = {1-2},
     PAGES = {121--192},
      ISSN = {0025-2611,1432-1785},
   MRCLASS = {11S37 (20G05 22E50)},
  MRNUMBER = {3845761},
MRREVIEWER = {Volker\ J.\ Heiermann},
       DOI = {10.1007/s00229-018-1001-8},
       URL = {https://doi.org/10.1007/s00229-018-1001-8},
}

@article {FKS23,
    AUTHOR = {Fintzen, Jessica and Kaletha, Tasho and Spice, Loren},
     TITLE = {A twisted {Y}u construction, {H}arish-{C}handra characters,
              and endoscopy},
   JOURNAL = {Duke Math. J.},
  FJOURNAL = {Duke Mathematical Journal},
    VOLUME = {172},
      YEAR = {2023},
    NUMBER = {12},
     PAGES = {2241--2301},
      ISSN = {0012-7094,1547-7398},
   MRCLASS = {22E50 (11S37)},
  MRNUMBER = {4654051},
MRREVIEWER = {Yongqi\ Feng},
       DOI = {10.1215/00127094-2022-0080},
       URL = {https://doi.org/10.1215/00127094-2022-0080},
}

@article {Thang11,
    AUTHOR = {Thang, Nguyen Quoc},
     TITLE = {On {G}alois cohomology and weak approximation of connected
              reductive groups over fields of positive characteristic},
   JOURNAL = {Proc. Japan Acad. Ser. A Math. Sci.},
  FJOURNAL = {Japan Academy. Proceedings. Series A. Mathematical Sciences},
    VOLUME = {87},
      YEAR = {2011},
    NUMBER = {10},
     PAGES = {203--208},
      ISSN = {0386-2194},
   MRCLASS = {12G05 (11E72 20G10)},
  MRNUMBER = {2863415},
MRREVIEWER = {Ramdorai\ Sujatha},
       URL = {http://projecteuclid.org/euclid.pja/1322748851},
}

@article {Kott84,
    AUTHOR = {Kottwitz, Robert E.},
     TITLE = {Stable trace formula: cuspidal tempered terms},
   JOURNAL = {Duke Math. J.},
  FJOURNAL = {Duke Mathematical Journal},
    VOLUME = {51},
      YEAR = {1984},
    NUMBER = {3},
     PAGES = {611--650},
      ISSN = {0012-7094,1547-7398},
   MRCLASS = {11R39 (11F70 11F72 22E55)},
  MRNUMBER = {757954},
MRREVIEWER = {Jean-Pierre\ Labesse},
       DOI = {10.1215/S0012-7094-84-05129-9},
       URL = {https://doi.org/10.1215/S0012-7094-84-05129-9},
}

@misc{DeBackerChenTsai,
      title={Langlands parameters for Moy-Prasad types}, 
      author={Tsao-Hsien Chen and Stephen DeBacker and Cheng-Chiang Tsai},
      year={2025},
      eprint={2509.07780},
      archivePrefix={arXiv},
      primaryClass={math.RT},
      url={https://arxiv.org/abs/2509.07780}, 
}

@inproceedings{GHSBP,
  title={Local parameters of supercuspidal representations},
  author={Gan, Wee Teck and Harris, Michael and Sawin, Will and Beuzart-Plessis, Rapha{\"e}l},
  booktitle={Forum of Mathematics, Pi},
  volume={12},
  pages={e13},
  year={2024},
  organization={Cambridge University Press}
}

@article{Dillery,
  title={RIGID INNER FORMS OVER GLOBAL FUNCTION FIELDS},
  author={Dillery, Peter Emil},
  journal={Journal of the Institute of Mathematics of Jussieu},
  volume={24},
  number={6},
  pages={2181--2256},
  year={2025},
  publisher={Cambridge University Press}
}

\end{document}